\newtheorem{proposition}{Proposition}[section]
\newtheorem{definition}[proposition]{Definition}
\newtheorem{theorem}[proposition]{Theorem}
\newtheorem{lemma}[proposition]{Lemma}
\newtheorem{corollary}[proposition]{Corollary}
\newtheorem{example}[proposition]{Example}
\numberwithin{equation}{section}
\newtheorem{thm}{Theorem}[section]
\newtheorem{thmtheta}{Theorem}[section]
\newtheorem*{examplenonumber}{Example}
\newtheorem{question}{Question}
\newtheorem{remark}[proposition]{Remark}
\newcommand{\G}{\mathbb{G}}
\DeclareMathOperator{\GL}{GL}
\DeclareMathOperator{\Sp}{Sp}
\DeclareMathOperator{\Id}{Id}
\DeclareMathOperator{\Hom}{Hom}
\DeclareMathOperator{\Aut}{Aut}
\DeclareMathOperator{\Gal}{Gal}
\DeclareMathOperator{\Sym}{Sym}
\DeclareMathOperator{\image}{image}
\DeclareMathOperator{\coker}{coker}
\DeclareMathOperator{\Stab}{Stab}
\DeclareMathOperator{\Isom}{Isom}
\DeclareMathOperator{\Mat}{Mat}
\DeclareMathOperator{\Spec}{Spec}
\DeclareMathOperator{\Pic}{Pic}
\DeclareMathOperator{\NS}{NS}
\newcommand{\sh}[1]{\mathscr{#1}}
\newcommand{\A}{\mathbb{A}}
\renewcommand{\P}{\mathbb{P}}
\renewcommand{\O}{\mathcal{O}}
\newcommand{\GIT}{\mathbin{/\mkern-6mu/}}
\newcommand{\HH}{\mathrm{H}}
\DeclareMathOperator{\CH}{CH}
\newcommand{\Q}{\mathbb{Q}}
\newcommand{\Z}{\mathbb{Z}}
\newcommand{\F}{\mathbb{F}}
\DeclareSymbolFont{cyrletters}{OT2}{wncyr}{m}{n}
\DeclareMathSymbol{\Sha}{\mathalpha}{cyrletters}{"58}
\newcommand{\extp}{\@ifnextchar^\@extp{\@extp^{\,}}}
\def\@extp^#1{\mathop{\bigwedge\nolimits^{\!#1}}}
\setlist[itemize]{topsep=0pt}
\setlist[enumerate]{topsep=0pt}
\DeclareMathOperator{\sep}{sep}
\newcommand{\PicS}{\mathbf{Pic}}
\newcommand{\NSS}{\mathbf{NS}}
\DeclareMathOperator{\ThetaCat}{ThetaGrp}
\DeclareMathOperator{\SymThetaCat}{SymThetaGrp}
\newcommand{\AutS}{\mathbf{Aut}}
\newcommand{\HomS}{\mathbf{Hom}}
\newcommand{\IsomS}{\mathbf{Isom}}
\DeclareMathOperator{\sym}{sym}
\DeclareMathOperator{\PolTor}{PolTor}
\newcommand{\pargroup}[2]{\mathrm{Sp}_{2 #1}^{#2}}
\newcommand{\SpS}{\mathbf{Sp}}
\title{Polarizations, torsors and theta groups}
\author{Jef Laga}
\begin{document}

\maketitle

\begin{abstract}
    Let $\lambda\colon A\rightarrow A^{\vee}$ be a polarization on an abelian variety over a field $k$.
    If $k$ is not algebraically closed, there might not exist an ample line bundle on $A$ defined over $k$ that represents $\lambda$.
    To remedy this, Poonen and Stoll have asked the following question: does there exist a line bundle on an $A$-torsor that represents $\lambda$?

    We give a criterion for the existence of such a torsor and line bundle which only depends on the kernel of $\lambda$. Using this criterion, we show that the answer to the question is yes when the polarization has odd or small even degree. On the other hand, we show that for every $g\geq 7$, there exists a polarized $g$-dimensional abelian variety for which the answer to the question is no.
\end{abstract}

%\tableofcontents

\section{Introduction}

\subsection{Context}\label{subsection: intro context}

Let $A$ be an abelian variety over a field $k$.
If $L$ is a line bundle on $A$, define the homomorphism 
\begin{align*}
\phi_L\colon A\rightarrow A^{\vee}, \;
a\mapsto t_a^{*}L \otimes L^{-1},
\end{align*}
where $t_a\colon A\rightarrow A$ denotes translation by $a$ and $A^{\vee} = \PicS_A^0$ denotes the dual abelian variety.
By definition, a polarization on $A$ is a homomorphism $\lambda\colon A\rightarrow A^{\vee}$ such that, over an algebraic closure $\bar{k}$ of $k$, $\lambda_{\bar{k}} = \phi_L$ for some ample line bundle $L$ on $A_{\bar{k}}$.
It is natural to ask: is this base change to $\bar{k}$ necessary? In other words, can we choose $L$ to be defined over $k$?
The following example shows that this is not always possible:
\begin{examplenonumber}
{
Let $C/k$ be a (smooth, projective, geometrically connected) curve of genus $g$ with Jacobian variety $J = \PicS_C^0$, a $g$-dimensional abelian variety. 
The locus of effective line bundles defines a divisor $\Theta\subset \PicS_{C}^{g-1}$, the theta divisor.
If $x\in \PicS_C^{g-1}(k)$, then the translate of $\O_{\PicS_{C}^{g-1}}(\Theta)$ by $x$ defines a line bundle $L_x$ on $J_{\bar{k}}$.
The morphism $\phi_{L_x}\colon J_{\bar{k}}\rightarrow J_{\bar{k}}^{\vee}$ is independent of the choice of $x$, hence descends to a morphism $\lambda\colon J\rightarrow J^{\vee}$; this is the canonical principal polarization on a Jacobian.
The association $x\mapsto L_x$ induces a bijection between $\PicS_C^{g-1}(k)$ and the set $\{[L]\in \Pic(J)\colon \phi_L = \lambda\}$.
In particular, the latter set is empty if $\PicS^{g-1}_C(k) = \varnothing$.
If $g=2$, there are many curves $C$ over $\Q$ with $\PicS_{C}^{g-1}(\Q) = \varnothing$; see \cite[Theorem 23]{PoonenStoll-CasselsTate}.
}
%If $g\geq 3$ and $C$ is the generic curve over the function field of $M_g$, then $\PicS_{C}^{g-1}(k)= \varnothing$; this follows from the resolution of the Franchetta conjecture and gives a more geometric example.
\end{examplenonumber}
In this example, the polarization on $J$ is still constructed by a line bundle that is defined over $k$, namely the one induced by the theta divisor on the $J$-torsor $\PicS_C^{g-1}$.
In general, given an $A$-torsor $X$ and a line bundle $L$ on $X$, the definition of $\phi_L\colon A\rightarrow A^{\vee}$ still makes sense since the translation-by-$a$ map $t_a\colon X\rightarrow X$ is well defined and since there is a canonical identification $\PicS^0_X = \PicS^0_A$, see Section \ref{subsec: polarizations on abelian varieties} for details. 
The following basic question is therefore very natural, and has been explicitly raised by Poonen and Stoll \cite[\S4, p.\,1120]{PoonenStoll-CasselsTate} around 25 years ago.

\begin{question}\label{question: main question poonen-stoll}
    If $\lambda\colon A\rightarrow A^{\vee}$ is a polarization on an abelian variety, does there always exist an $A$-torsor $X$ and a line bundle $L$ on $X$ such that $\lambda = \phi_L$?
\end{question}

It is known that the answer to Question \ref{question: main question poonen-stoll} is yes in all of the following cases:
\begin{itemize}
\item $(A, \lambda)$ is principally polarized \cite[Corollary 3.0.7]{alexeev-completemodulisemiabelian}. 
%More precisely, Poonen--Stoll only give an argument when A is a Prym associated to an unramified double cover, in which case A is principally polarized. If A is ramified, then ker(\lambda) is actually isomorphic to J[2], where J = Jac(X) and X is the quotient curve. So this result becomes obvious in our setting.
\item $k$ is a local field; then there even exists a line bundle $L$ on $A$ with $\lambda = \phi_L$ by \cite[\S4, Lemma 1]{PoonenStoll-CasselsTate}.
\item $\lambda = 2\mu$ for some polarization $\mu$; in that case the pullback $L = (1, \mu)^*\mathcal{P}$ of the Poincar\'e bundle $\mathcal{P}$ on $A\times A^{\vee}$ satisfies $\phi_L = \lambda$ by \cite[Proposition 6.10]{MumfordFogartyKirwan-GIT}.
\item At least one of the conditions of \cite[Proposition 3.12]{PoonenRains-thetacharacteristics} is satisfied (for example, $k$ is finite); in that case there even exists a \emph{symmetric} $L$ on $A$ representing $\lambda$.
\end{itemize}

This question also relates to the works of Alexeev and Olsson on compactifying moduli spaces of polarized abelian varieties \cite{alexeev-completemodulisemiabelian, olsson-compactifymoduli}: in these works the moduli space of pairs $(A, \lambda)$ is replaced by a moduli space of triples $(A, X, L)$, and it is natural to wonder whether the association $(A, X, L)\mapsto (A, \phi_L)$ admits an inverse in general.

The definition of $\phi_L$ even makes sense if we only assume the isomorphism class of $L$ to be Galois-invariant, i.e., if $L$ defines an element of $\PicS_X(k) = \Pic(X_{k^{\sep}})^{\Gal(k^{\sep}|k)}$, where $k^{\sep}$ is a separable closure of $k$.
We can therefore ask the (a priori) weaker variant of Question \ref{question: main question poonen-stoll}:
\begin{question}\label{question: main question weaker variant}
     If $\lambda\colon A\rightarrow A^{\vee}$ is a polarization on an abelian variety over $k$, does there always exist an $A$-torsor $X$ and an element $[L]\in \Pic(X_{k^{\sep}})^{\Gal(k^{\sep}|k)}$ such that $\lambda = \phi_L$?
\end{question}

The purpose of this paper is to show that even though Questions \ref{question: main question poonen-stoll} and \ref{question: main question weaker variant} often have a positive answer, they have a negative answer in general.

\subsection{Results}

\begin{thm}\label{theorem: intro odd degree case}
    Let $\lambda\colon A\rightarrow A^{\vee}$ be a polarization whose degree is odd and invertible in $k$.
    Then there exists an $A$-torsor $X$ and a line bundle $L$ on $X$ with $\phi_L = \lambda$.
\end{thm}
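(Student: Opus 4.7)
The plan is to apply the paper's criterion---which depends only on the kernel $K(\lambda)$---and verify it for odd-degree polarizations via a short Galois-cohomology computation built around the Poincar\'e bundle.

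First I would set up the obstruction. Fix a separable closure $k^{\mathrm{sep}}$, choose any $L_{0}\in\Pic(A_{k^{\mathrm{sep}}})$ with $\phi_{L_{0}}=\lambda$, and let $c\in H^{1}(k,A^{\vee})$ be the class of the 1-cocycle $\sigma\mapsto\sigma(L_{0})\otimes L_{0}^{-1}\in \Pic^{0}(A_{k^{\mathrm{sep}}})=A^{\vee}(k^{\mathrm{sep}})$. A standard twisting argument shows that a pair $(X,L)$ with $\phi_{L}=\lambda$ exists over $k$ if and only if $c$ lies in the image of $\lambda_{*}\colon H^{1}(k,A)\to H^{1}(k,A^{\vee})$---the preimage being the class of $X$---or equivalently, if and only if $\delta(c)=0$ in $H^{2}(k,K(\lambda))$, where $\delta$ is the connecting homomorphism of the short exact sequence $0\to K(\lambda)\to A\xrightarrow{\lambda}A^{\vee}\to 0$.

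Next I would invoke the Poincar\'e bundle. The line bundle $L_{2}:=(1,\lambda)^{*}\mathcal{P}$ is $k$-rational on $A$ with $\phi_{L_{2}}=2\lambda$, and so $L_{0}^{\otimes 2}\otimes L_{2}^{-1}$ lies in $A^{\vee}(k^{\mathrm{sep}})$. A direct computation shows its Galois coboundary is $2c$; hence $2c=0$ in $H^{1}(k,A^{\vee})$, and in particular $2\delta(c)=0$ in $H^{2}(k,K(\lambda))$.

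Since $K(\lambda)$ is annihilated by $d=\deg(\lambda)$, so is $H^{2}(k,K(\lambda))$; with $d$ odd this forces $H^{2}(k,K(\lambda))[2]=0$, whence $\delta(c)=0$ and the desired torsor exists. The main obstacle I anticipate is promoting the Galois-invariant line-bundle class one thereby obtains on the resulting torsor $X$ to an honest element of $\Pic(X)$: controlling the residual Brauer-group obstruction in the Leray sequence $\Pic(X)\to\Pic(X_{k^{\mathrm{sep}}})^{\Gal}\to H^{2}(k,\mathbb{G}_{m})$ should reduce to the same parity argument, since the relevant Brauer class is likewise killed by both $2$ and $d$, and it is here that the invertibility of $d$ in $k$ plays its role.
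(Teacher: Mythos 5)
Your first half is fine, and it is a genuinely more elementary route to the \emph{weak} statement than the paper takes: the identity $2[\PicS_A^{\lambda}]=0$ via the Poincar\'e bundle, pushed through the connecting map for $0\to A[\lambda]\to A\xrightarrow{\lambda}A^{\vee}\to 0$ and combined with the fact that $\HH^2(k,A[\lambda])$ has odd exponent, correctly produces an $A$-torsor $X$ and a Galois-invariant class $\ell\in\PicS_X(k)$ with $\phi_{\ell}=\lambda$ (this is the content of the paper's Corollaries \ref{corollary:PiclambdaX nonempty iff lifts under H^1(lambda)} and \ref{corollary: theorem theta first part body of text}, and your ``standard twisting argument'' does not need the theta-group machinery). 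Note only that your displayed ``if and only if'' is stated for an honest line bundle $L$, whereas the equivalence with $c\in\mathrm{im}(\lambda_*)$ only concerns classes in $\PicS_X(k)$ — a slip you implicitly acknowledge afterwards.

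The genuine gap is in the last step. You assert that the obstruction class $\mathrm{ob}(\ell)\in\HH^2(k,\G_m)$ to lifting $\ell$ to a line bundle on $X$ is ``killed by both $2$ and $d$,'' but no argument is given for the $2$-torsion, and none of your earlier parity tricks applies: the Poincar\'e-bundle argument kills $2[\PicS_A^{\lambda}]$ in $\HH^1(k,A^{\vee})$, which says nothing about the Brauer obstruction attached to a particular $\ell$ on a particular torsor. What is true in general (Polishchuk, i.e.\ Proposition \ref{proposition: bounds order line bundle obstruction class} via Theorem \ref{theorem:shrodinger gerbe and obstruction gerbe are isomorphic}) is that $\mathrm{ob}(\ell)$ is killed by the \emph{odd} number $\#D$, which by itself gives nothing; a $2$-power bound is only available when the theta group $\mathcal{G}(\ell)$ carries an inversion, and even then the bound is $8$, not $2$, and rests on Polishchuk's analysis of Weil representations (Lemma \ref{lemma: symmetric theta group has 8-torsion obstruction class}). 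The $\ell$ produced by your twisting argument comes with no such symmetric structure, so the claimed $2$-torsion is unsupported and the proof of the actual theorem (an honest line bundle $L$ on $X$) is incomplete. The repair is essentially the paper's proof: because $\deg\lambda$ is odd, $\lambda\colon A[2]\to A^{\vee}[2]$ is an isomorphism, so one can lift the class of $\PicS_A^{\sym,\lambda}$ (a torsor under $A^{\vee}[2]$) through $\HH^1(k,A[2])\to\HH^1(k,A^{\vee}[2])$ and obtain a symmetrized torsor $(X,\tau)$ with a $\tau$-symmetric $\ell$; then $\mathrm{ob}(\ell)$ is killed by $8$ and by the odd number $\#D$, hence vanishes — which is exactly how Theorem \ref{theorem: symplectic module odd order admits theta group} and Corollary \ref{corollary: strong form odd order polarization represented by symmetric bundle} proceed.
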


To state a more general theorem, recall that if $\lambda\colon A\rightarrow A^{\vee}$ is a polarization whose degree is invertible in $k$, there exists a unique sequence of positive integers $d_1, \dots, d_g$, each one dividing the next, with $g=\dim(A)$ and such that the kernel $A[\lambda]$ of $\lambda$ satisfies
\[
A[\lambda](\bar{k}) \simeq (\Z/d_1\Z)^2 \times \cdots \times (\Z/d_g\Z)^2.
\]
The tuple $D = (d_1, \dots, d_g)$ is called the type of $(A, \lambda)$.
For each $i$, let $2^{n_i}$ be the largest power of $2$ dividing $d_i$ and write $D_2 = (2^{n_1}, \dots, 2^{n_g})$. 

\begin{thm}\label{theorem: intro positive result}
    Let $\lambda\colon A\rightarrow A^{\vee}$ be a polarization whose degree is invertible in $k$. 
    Suppose that $D_2 = (1, \dots, 1)$, $(1, \dots, 1,2)$, $(1,\dots, 1,2,2)$ or $(1, \dots, 1,2,2,2)$. 
    In the last case, additionally suppose that $k$ has characteristic zero.
    Then there exists an $A$-torsor $X$ and a line bundle $L$ on $X$ with $\phi_L = \lambda$.
\end{thm}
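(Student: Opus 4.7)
The plan is to leverage the paper's kernel-only criterion: the existence of $(X, L)$ representing $\lambda$ is reduced to a condition on the finite group scheme $A[\lambda]$ equipped with its Weil pairing $e_\lambda$. Since $\deg(\lambda)$ is invertible in $k$, the kernel $A[\lambda]$ is étale and splits as $\bigoplus_\ell A[\lambda][\ell^\infty]$; the Weil pairing is block-diagonal with respect to this decomposition. Expecting the criterion to be additive across primes, I would use Theorem \ref{theorem: intro odd degree case} to dispose of the odd-primary parts entirely, reducing the problem to polarizations whose degree is a power of $2$ and whose type agrees with $D_2$.

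For $D_2 = (1,\dots,1)$ there is nothing left to prove. The case $D_2=(1,\dots,1,2)$ reduces to a Klein-four kernel carrying the unique non-degenerate alternating pairing; the criterion should hold essentially because every non-zero vector in $\F_2^2$ is isotropic, forcing the existence of a $\Gal(k^{\sep}|k)$-equivariant Lagrangian and hence trivializing the relevant extension class. The case $D_2=(1,\dots,1,2,2)$ concerns a rank-$4$ symplectic $\F_2$-space; its symplectic group $\Sp_4(\F_2)\cong S_6$ is small enough that a direct cohomological computation (or an explicit isogeny decomposition of $\lambda$ into a sum $2\mu + \lambda'$ with $\lambda'$ of odd degree, combined with the Poincaré bundle construction and Theorem \ref{theorem: intro odd degree case}) should dispatch it.

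The case $D_2=(1,\dots,1,2,2,2)$ is where the real work lies. The relevant $\F_2$-symplectic space now has rank $6$, with automorphism group $\Sp_6(\F_2)$ of order $1{,}451{,}520$; cohomological vanishing no longer follows from size alone. My plan is to directly analyze the theta-group extension class in this case, showing that it lies in a specific subgroup that admits a uniform trivialization, perhaps by exhibiting a canonical Galois-equivariant isotropic flag or by explicit cocycle manipulations in $\HH^2(\Gal(k^{\sep}|k), \mu_2)$. The characteristic-zero hypothesis presumably enters either to exclude subtle wild phenomena at $2$ or to allow a deformation-theoretic argument reducing to a universal case over a characteristic-$0$ base. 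This last case is the main obstacle, and the constraint is expected to be tight: the parallel negative result for $g\geq 7$ announced in the abstract shows that enlarging $D_2$ by even one more $2$ genuinely produces polarizations admitting no representing $(X,L)$, so no cheap strengthening of the argument is available.
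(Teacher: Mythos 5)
Your reduction framework is exactly the paper's: by Theorem \ref{theorem: intro theta group condition} everything depends only on $(A[\lambda],e_{\lambda})$, and the odd-primary part is split off using direct sums of theta groups (Lemma \ref{lemma: theta groups direct sum module}, Corollary \ref{corollary: existence theta group only depends on 2-primary part}) together with the odd-degree result. But the heart of the theorem --- the $2$-primary types $(2)$, $(2,2)$, $(2,2,2)$ --- is not actually proved in your proposal, and the mechanisms you sketch would fail. A Galois-equivariant Lagrangian, or isotropic flag, need not exist: the representation $\Gal_k\rightarrow \Sp_{2m}(\F_2)$ can be surjective (this is precisely the generic situation), so there is no invariant subspace at all, already for $D_2=(1,\dots,1,2)$ where the three nonzero vectors of $\F_2^2$ may be permuted transitively. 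Likewise the decomposition $\lambda = 2\mu + \lambda'$ is unavailable: a generic $A$ has $\NSS_A(k)=\Z\lambda$, so there are no auxiliary polarizations to sum. Finally, one cannot hope to trivialize the extension class itself by cocycle manipulation: for $D=(2,2,2)$ (and reportedly already for $(2,2)$) the class $c_D\in \HH^2(\Sp(M_D),M_D)$ is \emph{nonzero} (Proposition \ref{proposition: extension class nonzero type (2,..,2)}); what is needed is that $c_D$ is negligible, i.e.\ dies after pullback along every continuous homomorphism $\Gal_k\rightarrow \Sp(M_D)$, which is a genuinely different and subtler statement.

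The missing idea, which is the paper's actual argument, is to realize the $2$-primary symplectic module geometrically: every symplectic module of type $(2,\dots,2)$ of length $g\in\{1,2,3\}$ is isomorphic to $(J[2],e_2)$ for the Jacobian $J$ of a genus-$g$ curve (Proposition \ref{proposition: symplectic modules of F2 dimension at most 6 are J[2]}) --- for $g=1$ via an elliptic curve $y^2=f(x)$, for $g=2$ via genus-$2$ hyperelliptic curves and the isomorphism $\Sp_4(\F_2)\simeq S_6$, and for $g=3$ via plane quartics with a flex, the isomorphism $\Sp_6(\F_2)\simeq W(E_7)^+$ and a versal-torsor argument over an $E_7$-family of curves; the characteristic-zero hypothesis enters only because that family is used in characteristic zero, not because of wild phenomena at $2$ or a deformation argument. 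Once $(M(2),e(2))\simeq (J[2],e_2)$, the polarization $2\lambda_J$ (twice the principal polarization) is represented by an honest symmetric line bundle on $J$, namely the pullback of the Poincar\'e bundle, so $\mathcal{G}(L)$ is a linear symmetric theta group for $(J[2],e_2)$ (Corollary \ref{corollary: symplectic modules small F2 vector spaces admit theta groups}); the kernel-only criterion then transports the conclusion back to $(A,\lambda)$. Without this realization step (or some substitute proof of negligibility of $c_D$ for these types), your outline does not close the gap, and as you note the negative result for four or more $2$'s shows the statement is at the boundary of what is true.
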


Using Theorem \ref{theorem: intro positive result} and previous results, we show that the answer to Question \ref{question: main question poonen-stoll} is always yes when $\dim A\leq 2$, see Proposition \ref{proposition: answer to question yes when dim A is 2}.
However, this pattern does not persist: the next theorem shows that there exist polarized abelian varieties of every dimension at least $7$ for which the answer to Questions \ref{question: main question poonen-stoll} and \ref{question: main question weaker variant} is no.

\begin{thm}\label{theorem: intro negative result}
    Consider a sequence of the form
    \[D = (\overbrace{1, \dots, 1}^{\geq 3\text{ times}}, \overbrace{2,\dots, 2}^{\geq 4\text{ times}}).\]
    %In a more wordy way: consider a sequence of the form $D = (1, \dots, 1, 2, \dots, 2)$, where $1$ occurs at least $3$ times and $2$ occurs at least $4$ times.
    Then there exists a field $k$ of characteristic zero and a polarized abelian variety $(A, \lambda)$ of type $D$ over $k$ with the property that there does not exist an $A$-torsor $X$ and an element $[L]\in \PicS_X(k)$ such that $\lambda = \phi_L$.
    In other words, the answer to Question \ref{question: main question weaker variant} (and hence to Question \ref{question: main question poonen-stoll}) is no for $(A, \lambda)$.
\end{thm}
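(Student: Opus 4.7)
The plan is to invoke the cohomological criterion established earlier in the paper (which reduces the existence of $(X, L)$ representing $\lambda$ to the vanishing of a class built out of $(A[\lambda], e_\lambda)$ together with its theta-group extension), and then to exhibit an abelian variety whose theta-group data yield a nonzero obstruction. The argument breaks into three steps.

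First I would reduce to the $2$-primary component of $A[\lambda]$. Writing $A[\lambda] \simeq A[\lambda]_{\mathrm{odd}} \oplus A[\lambda][2^\infty]$, Theorem \ref{theorem: intro odd degree case} accounts for the odd part, so only the $2$-torsion is relevant. Under the hypothesis on $D$, this $2$-torsion is isomorphic to $(\Z/2\Z)^{2m}$ with $m \geq 4$, equipped with its standard symplectic pairing, and the Galois action factors through $\Sp_{2m}(\F_2)$. The criterion then translates the existence question into the vanishing of an obstruction class $c(\rho) \in H^2(k, \G_m) = \mathrm{Br}(k)$ attached to each continuous representation $\rho\colon \Gal(k^{\sep}|k) \to \Sp_{2m}(\F_2)$.

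Second, I would find a field $k$ and a representation $\rho$ with $c(\rho) \neq 0$. The class $c(\rho)$ is the pullback along $\rho$ of a universal class $\omega_m \in H^2(\Sp_{2m}(\F_2), \G_m)$ associated to the abstract theta-group extension of the standard symplectic $\F_2$-module, so the task splits into (i) proving that $\omega_m \neq 0$ for $m \geq 4$ and (ii) finding a field over which a pullback remains nonzero. For (i), I would analyse the theta-group extension as a cocycle class in $H^2(\Sp_{2m}(\F_2), \G_m)$, comparing it to known double covers of $\Sp_{2m}(\F_2)$; the sharp cutoff $m \geq 4$ should match the failure of Theorem \ref{theorem: intro positive result} for a concrete reason (a specific cocycle ceasing to be a coboundary). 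For (ii), I would take $k$ to be the function field of a suitable classifying variety (for instance the generic point of a level cover of the moduli of type-$D$ polarized abelian varieties) and detect non-vanishing by specialising to a discretely valued field where the Brauer class has a nonzero residue.

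Third, I would realize the Galois-theoretic example by a concrete polarized abelian variety. By irreducibility of the moduli of $D$-polarized abelian varieties over $\Q$ together with the classical fact that its monodromy on $\lambda$-torsion is the full symplectic group $\Sp(A[\lambda], e_\lambda)$, a generic $k$-point produces a polarized $(A, \lambda)/k$ of type $D$ over a characteristic-zero field whose associated $\rho$ agrees with the one from Step 2. Since the criterion depends only on this Galois datum, $(A, \lambda)$ is the desired counterexample; the hypothesis of at least three $1$'s in $D$ enters to ensure $\dim A \geq 7$ and hence the full strength of the monodromy result.

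The main obstacle will be Step 2, specifically the non-vanishing of $\omega_m$ for $m \geq 4$. Because the Schur multiplier of $\Sp_{2m}(\F_2)$ is already nontrivial for $m = 2, 3$, the argument cannot rely on a crude Schur-multiplier computation; the non-vanishing must come from a specific feature of the theta-group extension that becomes nontrivial precisely at $m = 4$, exactly where Theorem \ref{theorem: intro positive result} stops. Isolating this feature and proving its nontriviality (while simultaneously explaining why the analogous class is a coboundary for $m \leq 3$) is the delicate core of the argument.
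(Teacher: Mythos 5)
Your overall skeleton (theta-group criterion, reduction to a lifting problem for $\Gal_k\rightarrow \Sp_{2m}(\F_2)$, realization at the generic point of a moduli space) matches the paper, but there are three concrete gaps. First, you place the obstruction in the wrong group: for Question \ref{question: main question weaker variant} the relevant obstruction is the existence of a \emph{theta group} for $(A[\lambda],e_{\lambda})$, and this is controlled by the class $\rho^*c_D\in \HH^2(k,M_D)$ pulled back from $c_D\in \HH^2(\Sp(M_D),M_D)$, the class of the extension $1\rightarrow M_D\rightarrow \Aut(\mathcal{G}_D)\rightarrow \Sp(M_D)\rightarrow 1$ (Lemma \ref{lemma: theta group exists iff embedding problem has solution}) — coefficients in the module $M_D$, not in $\G_m$. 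The Brauer class you invoke is the separate \emph{linearity} obstruction $[\mathrm{Sch}_{\mathcal{G}}]\in \HH^2(k,\G_m)$, which only matters for representing $\lambda$ by an actual line bundle and is irrelevant to the statement being proved; correspondingly, your plan to compare with double covers of $\Sp_{2m}(\F_2)$ (Schur multiplier, trivial coefficients) and to detect the class by Brauer residues at a DVR does not engage the right object. Second, your expectation that the universal class first becomes nonzero at $m=4$ is false: $c_D\neq 0$ already for $m\geq 3$ (Proposition \ref{proposition: extension class nonzero type (2,..,2)}, via Griess), and for $m=3$ every Galois pullback nonetheless vanishes — the class is nonzero but \emph{negligible}, as Theorem \ref{theorem: intro positive result} shows. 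The true dichotomy at $m=4$ is negligibility, and establishing non-negligibility requires the Merkurjev–Scavia description of $\HH^2(G,M)_{\mathrm{neg}}$ as generated by corestrictions from stabilizers, combined with the computation that $\Stab_{\Sp_{2m}(\F_2)}(m)$ has trivial abelianization for $m\geq 4$ (Lemma \ref{lemma: abelianization of stabilizer Sp2g(F2) trivial}); a search for ``a specific cocycle ceasing to be a coboundary'' cannot succeed.

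Third, your Step 3 is the genuine crux and is not addressed: given an abstract non-liftable $\rho$ over some field, nothing guarantees it is of the form $\rho_{A,\lambda}$ for a polarized abelian variety of type $D$ over that field, and full monodromy of the moduli space does not let you prescribe $\rho$ over a prescribed $k$. The paper circumvents this by working universally: it defines the class $c^{\mathrm{et}}_{D,\Gamma}\in \HH^2(\mathcal{A}_{\Gamma},M_D)$ on a level cover of the moduli space and shows, using Totaro's twisted Chow group/negligible \'etale cohomology theorem, that it survives restriction to the generic point (Propositions \ref{proposition: reduce etale negligible to group cohomology calculation} and \ref{proposition: c_Gamma not in subgroup generated by phiGamma}); this in turn requires substantial arithmetic-group input — Borel's stable cohomology, property (T), the congruence subgroup property, and an explicit computation that the relevant stabilizer subgroups $\Gamma_m\subset \pargroup{g}{D}(\Z)$ have abelianization of odd order (including a $2$-adic computation occupying the appendix). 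This is also where the hypothesis of at least three $1$'s in $D$ enters (perfectness and abelianization statements for the paramodular congruence subgroups), not via ``$\dim A\geq 7$ ensuring full monodromy'' as you suggest. Without some substitute for this universal, moduli-theoretic detection, your outline does not yield an actual polarized abelian variety witnessing the failure.
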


The counterexample of Theorem \ref{theorem: intro negative result} is generic, in the following sense.
For a type $D = (d_1, \dots, d_g)$, let $\mathcal{A}_{g,D}\rightarrow \Spec(\mathbb{C})$ be the moduli stack of $g$-dimensional abelian varieties with a polarization of type $D$.
If $p\geq 3$ is a prime not dividing $d_g$, let $\mathcal{A}_{g,D}[p]\rightarrow \mathcal{A}_{g,D}$ be the cover given by adding full level-$p$ structure, a smooth quasi-projective scheme.
Denote the generic fiber of the universal abelian scheme over $\mathcal{A}_{g,D}[p]$ by $A$; this is an abelian variety over the function field $k = \mathbb{C}(\mathcal{A}_{g,D}[p])$ equipped with a polarization $\lambda$ of type $D$.
We prove that if $D$ is of the form of Theorem \ref{theorem: intro negative result}, then the answer to Question \ref{question: main question weaker variant} is no for $(A, \lambda)$.
We don't know if such examples exists over arithmetically interesting fields such as $\mathbb{Q}$ or number fields.
We also don't know whether the answer to Question \ref{question: main question poonen-stoll} is always yes when $\dim A \in \{3,4,5,6\}$; this would involve analyzing types $D$ where some $d_i$ is divisible by $4$.
See Section \ref{subsection: survey of known cases} for a survey of known results towards Question \ref{question: main question poonen-stoll} and its variants.

\subsection{Methods}

Assume that the degree of $\lambda\colon A\rightarrow A^{\vee}$ is invertible in $k$ and let $D = (d_1, \dots,d_g)$ be the type of $(A, \lambda)$.
Then the kernel $A[\lambda]$ is a finite \'etale group scheme (in other words, a Galois module) of order $(d_1\cdots d_g)^2$ equipped with an alternating nondegenerate pairing $e_{\lambda}\colon A[\lambda]\times A[\lambda]\rightarrow \G_m$, the Weil pairing.
The key first step is to prove that the answer to Question \ref{question: main question poonen-stoll} only depends on the isomorphism class of the pair $(A[\lambda],e_{\lambda})$, as we now explain.

Define a \emph{theta group} for $(A[\lambda],e_{\lambda})$ to be a central extension of algebraic $k$-groups
\[
1\rightarrow \mathbb{G}_m \rightarrow \mathcal{G}\rightarrow A[\lambda]\rightarrow 1
\]
such that for all $\tilde{x}, \tilde{y} \in \mathcal{G}$ lifting $x,y\in A[\lambda]$, the commutator $[\tilde{x}, \tilde{y}]\in \mathbb{G}_m$ equals $e_{\lambda}(x,y)$. 
If $k$ is algebraically closed, there is a unique isomorphism class of theta groups for $(A[\lambda], e_{\lambda})$.
In general, both the existence and uniqueness of theta groups can fail.
The primordial example of a theta group is due to Mumford \cite{Mumford-equationsdefiningabelianvarieties}: if $L$ is a line bundle on $A$ with $\phi_L = \lambda$, then
\[
\mathcal{G}(L) = \{(a,\varphi) \colon a\in A[\lambda],\, \varphi\colon L \xrightarrow{\sim} t_a^* L\}
\]
is a theta group for $(A[\lambda], e_{\lambda})$.
We call a theta group $\mathcal{G}$ for $(A[\lambda], e_{\lambda})$ \emph{linear} if it admits an algebraic representation of dimension $d_1\cdots d_g$ on which $\G_m$ acts via scalar multiplication.
The formula $(a, \varphi)\cdot s = t_{-a}^*(\varphi(s))$ defines an action of $\mathcal{G}(L)$ on $\HH^0(A, L)$ (which has dimension $d_1\cdots d_g$ by Riemann--Roch), showing that Mumford theta groups $\mathcal{G}(L)$ are linear.
In this notation, we can state: 
\begin{thmtheta}\label{theorem: intro theta group condition}
    Let $\lambda\colon A\rightarrow A^{\vee}$ be a polarization whose degree is invertible in $k$.
    Then the following statements are equivalent:
 \begin{enumerate}
       \item There exists an $A$-torsor $X$ and a line bundle $L$ on $X$ such that $\lambda =\phi_L$.
        \item There exists a linear theta group for $(A[\lambda], e_{\lambda})$.
    \end{enumerate}
    The following statements are also equivalent:
    \begin{enumerate}
        \item There exists an $A$-torsor $X$ and an element $[L]\in \PicS_X(k) = \Pic(X_{k^{\sep}})^{\Gal(k^{\sep}|k)}$ such that $\lambda =\phi_L$.
        \item There exists a theta group for $(A[\lambda], e_{\lambda})$.
    \end{enumerate}
\end{thmtheta}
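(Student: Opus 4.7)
Write $G = A[\lambda]$ and $e = e_{\lambda}$. The implications $(1) \Rightarrow (2)$ are direct generalizations of Mumford's construction. Given $(X, L)$ with $\phi_L = \lambda$ I would define
\[
\mathcal{G}(L) = \{(a, \varphi) : a \in G,\ \varphi : L \xrightarrow{\sim} t_a^* L\}
\]
as a group scheme over $k$; this makes sense because the translation maps $t_a : X \to X$ are defined for all $a \in A$. Mumford's commutator calculation then goes through unchanged, identifying $\mathcal{G}(L)$ as a theta group for $(G, e)$. For the first equivalence, linearity follows because $\mathcal{G}(L)$ acts algebraically on $\HH^0(X, L)$ by $(a, \varphi) \cdot s = t_{-a}^* \varphi(s)$, a space of dimension $d_1 \cdots d_g$ by Riemann--Roch on $L_{\bar{k}}$. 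For the second equivalence, I would check that $\mathcal{G}(L)$ depends only on $[L]$ up to canonical isomorphism (distinct isomorphisms of line bundles differ by a scalar, inducing only the trivial central automorphism), so that $\mathcal{G}(L)$ descends to $k$ whenever $[L] \in \PicS_X(k)$.

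The backward direction contains the substance. For $(2) \Rightarrow (1)$ in the weaker equivalence, let $\mathcal{G}$ be a theta group over $k$. Fix any $L_0$ on $A_{\bar{k}}$ with $\phi_{L_0} = \lambda_{\bar{k}}$, giving Mumford's $\mathcal{G}(L_0)$; uniqueness over algebraically closed fields provides a non-canonical $\bar{k}$-isomorphism $\mathcal{G}_{\bar{k}} \simeq \mathcal{G}(L_0)$. The central claim I would prove is a Galois-cohomological identification: the set of $k$-forms of $(A_{\bar{k}}, [L_0])$ as a pair $(X, [L])$ with $X$ an $A$-torsor and $[L] \in \PicS_X(k)$ recovering $[L_0]$ over $\bar{k}$, and the set of $k$-forms of $\mathcal{G}(L_0)$ as a theta group, are classified by the same non-abelian cohomology set. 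This match is mediated by the functor $(X, [L]) \mapsto \mathcal{G}(L)$: computing automorphism sheaves on both sides and using the Weil pairing to identify $G \simeq G^{\vee}$ shows that the two obstruction sheaves are canonically isomorphic, so a $k$-form $\mathcal{G}$ of the theta group yields a $k$-form $(X, [L])$ of the pair.

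For $(2) \Rightarrow (1)$ in the linear equivalence, I start from $(X, [L])$ constructed above. The obstruction to representing $[L] \in \PicS_X(k)$ by an honest line bundle on $X$ is a Brauer class in $\HH^2(\Gal(k^{\sep}|k), \G_m)$ coming from the failure of surjectivity of $\Pic(X) \to \PicS_X(k)$. A linear representation $V$ of $\mathcal{G}$ of dimension $d_1 \cdots d_g$ trivializes this Brauer class because $\P(V)$ is a genuine projective space over $k$ rather than a Severi--Brauer variety; the resulting $\mathcal{G}$-equivariant $k$-morphism $X \to \P(V^{\vee})$ then recovers $L$ as the pullback of $\O(1)$.

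The main obstacle is the Galois-cohomological identification in the second paragraph. The heuristic is clear --- both sides are classified by $\HH^1$ of a common sheaf identifiable with $G$ via the Weil pairing --- but a rigorous treatment requires either Mumford's detailed analysis of $\Aut(\mathcal{G}(L_0))$ or gerbe-theoretic methods banded by $\G_m$. Once this matching is in place, everything else is essentially formal.
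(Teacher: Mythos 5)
Your treatment of the second (weaker) equivalence is essentially the paper's argument: the forward direction is Mumford's construction extended to torsors and to classes in $\PicS_X(k)$, and the backward direction is obtained by showing that $(X,[L])\mapsto \mathcal{G}(L)$ matches the two classification problems through their automorphism sheaves, both identified with $A[\lambda]$ (the identification $\Aut(X,[L])\simeq A[\lambda]\simeq \Aut(\mathcal{G}(L);\Id)$ indeed goes through the Weil pairing, via $m\mapsto \alpha_m$ with $\alpha_m(\tilde x)=e_\lambda(m,x)\tilde x$). One correction of wording: since neither side need have a $k$-point, this cannot literally be phrased as ``both sets of forms are classified by the same $\HH^1$''; the honest statement is that both fibered categories are gerbes banded by $A[\lambda]$ (not by $\G_m$, as you write) and that a functor between gerbes inducing isomorphisms on automorphism sheaves is an isomorphism, so existence transfers. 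You flag this yourself, so this half is fine in outline.

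The genuine gap is in your deduction of the first (line-bundle) equivalence from a Schr\"odinger representation $V$. Your mechanism is to produce a $\mathcal{G}$-equivariant morphism $X\rightarrow \P(V^{\vee})$ and recover $L$ as the pullback of $\O(1)$. This fails whenever the linear system $|L|$ is not base-point free: for a principal polarization one has $\dim \HH^0(X_{\bar k},L)=1$, so $\P(V^{\vee})$ is a point and nothing can be pulled back, and more generally for small types the map defined by $|L|$ is only rational. Moreover, the preceding assertion that ``a linear representation $V$ of dimension $d_1\cdots d_g$ trivializes the Brauer class'' is exactly the nontrivial point: it requires knowing that weight-$1$ representations of rank $\#D$ are unique up to tensoring by a line bundle on the base (Stone--von Neumann, in Moret-Bailly's relative form), so that $V_{k^{\sep}}$ can be compared with $\HH^0(X_{k^{\sep}},L)$ and the obstruction classes identified; you assert the conclusion without this input. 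The paper avoids projective geometry of $|L|$ altogether by proving that $L\mapsto \pi_*L$ is an isomorphism of $\G_m$-gerbes $\mathrm{Ob}(\ell)\xrightarrow{\sim}\mathrm{Sch}_{\mathcal{G}(\ell)}$ (Theorem \ref{theorem:shrodinger gerbe and obstruction gerbe are isomorphic}, resting on Proposition \ref{proposition:categoryofweight1representations}), which gives $[\mathrm{Ob}(\ell)]=[\mathrm{Sch}_{\mathcal{G}(\ell)}]$ and hence ``$\ell$ is a line bundle class $\Leftrightarrow$ $\mathcal{G}(\ell)$ is linear'' in all types, including the base-point-ridden ones. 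As written, your proposal does not prove $(2)\Rightarrow(1)$ of the first equivalence.
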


The implications $(1) \Rightarrow (2)$ are straightforward and follow from generalizing the definition of $\mathcal{G}(L)$ to line bundles on $A$-torsors.
The reverse implications are more significant.
Once the situation is appropriately categorified, they follow from the fact that a fully faithful morphism between two gerbes is an isomorphism.
Theorem \ref{theorem: intro theta group condition} can be upgraded to an equivalence of categories and works over an arbitrary base scheme; see Theorems \ref{theorem: main iso gerbes} and \ref{theorem:shrodinger gerbe and obstruction gerbe are isomorphic}.
We also prove a version of Theorem \ref{theorem: intro theta group condition} for symmetric line bundles on ``symmetric torsors''; see Theorem \ref{theorem: main iso symmetric version} and Corollary \ref{corollary: equivalent conditions existence symmetric line bundle class on torsor}.

To prove Theorem \ref{theorem: intro odd degree case}, it suffices to prove by Theorem \ref{theorem: intro theta group condition} that if the degree of $\lambda$ is odd then there exists a linear theta group for $(A[\lambda], e_{\lambda})$. 
In this case, we can construct such a group ``by hand''.
Since the degree is odd the alternating pairing $e_{\lambda}$ admits a square root, i.e., there exists an alternating pairing $b$ on $A[\lambda]$ such that $b^2 = e$.
Endowing $\mathcal{G} = \G_m \times M$ with the multiplication $(\lambda,x)\cdot (\lambda',y) = (\lambda \lambda' b(x,y), x+y)$ defines a theta group for $(A[\lambda],e_{\lambda})$, and a result of Polishchuk \cite{Polishchuk-analogueWeilrepresentation} shows that $\mathcal{G}$ is linear. 
%The linearity of $\mathcal{G}$ is closely related to the fact that the projective Weil representation of a symplectic group associated to a finite symplectic module of odd order lifts to an honest representation.
Again, Theorem \ref{theorem: intro odd degree case} works over an arbitrary base.
If we insist that the pair $(X,L)$ representing $\lambda$ is symmetric in a certain sense, then this pair is essentially unique; see Corollary \ref{corollary: strong form odd order polarization represented by symmetric bundle}.

The proof of Theorem \ref{theorem: intro positive result} relies on the following surprising consequence of Theorem \ref{theorem: intro theta group condition}: if there exists a different polarized abelian variety $(B, \mu)$ and an isomorphism $(A[\lambda], e_{\lambda}) \simeq (B[\mu], e_{\mu})$, then the answer to Question \ref{question: main question poonen-stoll} is the same for $(A, \lambda)$ and $(B, \mu)$!
To see this in action, consider the simplest nontrivial case where $(A, \lambda)$ is of type $D = (1, \dots, 1,2)$, so that $A[\lambda](\bar{k}) \simeq (\Z/2\Z)^2$.
The Galois action on $A[\lambda]$ is encoded by a representation $\rho_{A,\lambda}\colon \Gal_k \rightarrow \Sp_2(\F_2) = \GL_2(\F_2)$,  where $\Gal_k = \Gal(k^{\sep}|k)$.
But every such representation is isomorphic to the $2$-torsion representation of an elliptic curve: there exists an elliptic curve $E/k$ and an isomorphism of Galois modules $A[\lambda]\simeq E[2]$ intertwining the Weil pairings on both sides.
Since the answer to Question \ref{question: main question poonen-stoll} is clearly yes for $E$ with the polarization $[2]\colon E\rightarrow E$, the same is true for $(A, \lambda)$.
In the cases where $D = (1, \dots, 1,2,2)$ or $(1, \dots, 1,2,2,2)$, we similarly show that there exists a curve $C/k$ with Jacobian $J$ and an isomorphism $(A[\lambda],e_{\lambda})\simeq (J[2], e_2)$. 
In the first case, we use genus-$2$ hyperelliptic curves and the exceptional isomorphism $\Sp_4(\F_2)\simeq S_6$. 
In the second case, we use plane quartic curves with a flex point and the isomorphism $\Sp_6(\F_2)\simeq W(E_7)^+$, where $W(E_7)^+$ is the index-$2$ subgroup of the Weyl group of $E_7$.
As a by-product of our argument, we answer a question of Chidambaram \cite[Question 1.2]{Chidambaram-modpgaloisrepsnotarisingfromAVs} affirmatively.

To explain the proof of Theorem \ref{theorem: intro negative result}, we relate our problem to Galois cohomology.
For simplicity, assume that $k$ is of characteristic zero and contains a primitive $n$-th root of unity $\zeta_n$ for all $n\geq 1$.
Define the abelian group $M_D = (\Z/d_1\Z)^2\times \cdots \times (\Z/d_g\Z)^2$ and let $e_D\colon M_D\times M_D \rightarrow k^{\times}$ be the direct sums of the standard alternating pairings $e_i$ on $(\Z/d_i\Z)^2$ defined by $e_i(v,w) = \zeta_{d_i}^{\det(v \, w)}$.
Then $(A[\lambda],e_{\lambda})$ is a twist of the split pair $(M_D,e_D)$, in the sense that there exists an isomorphism $A[\lambda](\bar{k})\simeq M_D$ intertwining $e_{\lambda}$ with $e_D$.
A choice of such an isomorphism determines a continuous homomorphism $\rho_{A,\lambda}\colon \Gal_k\rightarrow \Sp(M_D)$, where we write $\Sp(M_D) = \Aut(M_D, e_D)$.

We can also explicitly construct a standard theta group $\mathcal{G}_D$ for the split pair $(M_D,e_D)$; see Section \ref{subsec: abstract theta groups}.
Let $\Aut(\mathcal{G}_D)$ be the group of automorphisms $\mathcal{G}_D$ that induce the identity on $\G_m$. 
This determines an exact sequence of finite groups
\begin{align}\label{equation: intro Aut exact sequence}
    1\rightarrow M_D\rightarrow \Aut(\mathcal{G}_D)\rightarrow \Sp(M_D)\rightarrow 1,
\end{align}
where $M_D$ is identified with the set of inner automorphisms of $\mathcal{G}_D$.
Since theta groups for $(A[\lambda],e_{\lambda})$ are twists of $\mathcal{G}_D$, Theorem \ref{theorem: intro theta group condition} has the following concrete consequence (Lemma \ref{lemma: theta group exists iff embedding problem has solution}): the answer to Question \ref{question: main question weaker variant} is yes if and only if
\begin{center}
$\rho_{A, \lambda}\colon \Gal_k\rightarrow \Sp(M_D)$ lifts to a homomorphism $\tilde{\rho}_{A, \lambda}\colon \Gal_k\rightarrow \Aut(\mathcal{G}_D)$ under \eqref{equation: intro Aut exact sequence}.
\end{center}
Lifting problems (sometimes called embedding problems) for Galois representations such as this one are well studied, see for example \cite{embeddingproblem}.
Let $c_D\in \HH^2(\Sp(M_D), M_D)$ be the element in group cohomology classifying the extension \eqref{equation: intro Aut exact sequence}.
If $d_1\dots d_g$ is odd, then \eqref{equation: intro Aut exact sequence} canonically splits and hence $c_D=0$; this explains the bare hands construction of a theta group for $(A[\lambda],e_{\lambda})$ in the proof of Theorem \ref{theorem: intro odd degree case}.
On the other hand, if $\lambda$ has even degree then \eqref{equation: intro Aut exact sequence} is typically not split.
For example, if $D = (1, \dots, 1, 2, \dots, 2)$, where $2$ occurs $m$ times, then $c_D\neq 0$ if $m\geq 3$.
Even so, the proof of Theorem \ref{theorem: intro positive result} shows that if $m=3$ then every continuous representation $\rho\colon \Gal_k\rightarrow \Sp(M_D)$ lifts to a representation $\tilde{\rho}\colon \Gal_k\rightarrow \Aut(\mathcal{G}_D)$!
Serre has studied this curious phenomenon and has dubbed it negligible cohomology \cite[Chapter III, Appendix 2]{Serre-galoiscohomology}.
Generally speaking, if $K$ is a field, $G$ is a finite group and $M$ a $G$-module, a group cohomology class $c\in \HH^i(G,M)$ is said to be \emph{negligible over $K$} if for every field extension $L/K$ and continuous homomorphism $f\colon \Gal_L\rightarrow G$, $f^*(c) = 0$ in $\HH^i(\Gal_L, M)$. 
In this terminology, every Galois representation $\Gal_k\rightarrow \Sp(M_D)$ lifts to $\Aut(\mathcal{G}_D)$ if and only if $c_D\in \HH^2(\Sp(M_D), M_D)$ is negligible over $\Q(\mu_{\infty}) = \cup_{n\geq 1} \Q(\zeta_n)$.

In a remarkable recent advance, Merkurjev and Scavia have determined the subgroup of negligible classes of $\HH^2(G, M)$ over fields containing sufficiently many roots of unity \cite{MerkurjevScavia-negligiblecohomology}.
As an application of their result, they show that there exist $3$-dimensional Galois representations mod $p$ that do not lift mod $p^2$ for every odd prime $p$.
Applying their computation to our setting, we show that $c_D$ is \emph{not} negligible if $D=(1, \dots, 1,2, \dots, 2)$ and $2$ occurs at least $4$ times.
This implies that there exists a field $k$ and Galois representation $\rho\colon \Gal_k\rightarrow \Sp(M_D)$ that does not lift to $\Aut(\mathcal{G}_D)$. 
However, this does not quite show Theorem \ref{theorem: intro negative result} yet, since we don't know whether the non-liftable representation $\rho\colon \Gal_k\rightarrow \Sp(M_D)$ is of the form $\rho_{A,\lambda}$ for some polarized abelian variety $(A, \lambda)$ of type $D$.

To overcome this wrinkle, we use an even more recent result of Totaro who generalized the Merkurjev--Scavia computation to the setting of \'etale cohomology and twisted Chow groups \cite{Totaro-Chowringtwistedcoefficients}.
Let $\mathcal{A}_{g,D}[p]\rightarrow \Spec(\mathbb{C})$ denote the moduli space of $g$-dimensional polarized abelian varieties of type $D$ with symplectic level-$p$ structure for some prime $p$ not dividing $d_g$.
Then we define a class $\tilde{c}_{D}\in \HH^2(\mathcal{A}_{g,D}[p], M_D)$, which (ignoring the level-$p$ structure) can be thought of as the universal obstruction to lifting the representations $\rho_{A,\lambda}$ to $\Aut(\mathcal{G}_D)$. 
Using Totaro's results and calculations with Picard groups of covers of $\mathcal{A}_{g,D}[p]$, we show that, if $D$ is of the form of Theorem \ref{theorem: intro negative result}, then $\tilde{c}_D$ is nonzero when restricted to the generic point of $\mathcal{A}_{g,D}[p]$. 
This implies that the answer to Question \ref{question: main question weaker variant} is no for the generic polarized abelian variety over the function field of $\mathcal{A}_{g,D}[p]$, proving Theorem \ref{theorem: intro negative result}.
To calculate these Picard groups, we use Borel's determination of the stable cohomology of arithmetic groups \cite{Borel-stablerealcohomologyarithmeticgroupsI} and we explicitly calculate abelianizations of certain arithmetic subgroups of $\Sp_{2g}(\Q)$.

\subsection{Organization}

In the preliminary Section \ref{section: preliminaries}, we fix our notation and collect some facts from algebraic geometry and abelian schemes over general bases.
In Section \ref{sec: polarizations and theta groups}, we systematically analyze theta groups and prove Theorem \ref{theorem: intro theta group condition} and its analogue for symmetric line bundles.
In Section \ref{section: symplectic modules admitting a theta group}, we prove Theorems \ref{theorem: intro odd degree case} and \ref{theorem: intro positive result}.
We also collect all known results towards Question \ref{question: main question poonen-stoll} and its variants and discuss logical implications between these variants in Section \ref{subsection: survey of known cases}.
In the final Section \ref{section: negative result}, we discuss moduli spaces of abelian varieties and group cohomology of certain subgroups of twisted symplectic groups $\pargroup{g}{D}(\Z)$, proving Theorem \ref{theorem: intro negative result}.

Readers who are only interested in the proof of Theorem \ref{theorem: intro negative result} (which only uses the easy direction of Theorem \ref{theorem: intro theta group condition}) can jump straight to Section \ref{section: negative result} after reading Sections \ref{subsec: symplectic modules}, \ref{subsec: abstract theta groups} and \ref{subsec: Mumford theta groups}.

\subsection{Acknowledgements}

As will be clear to the reader, this paper owes a great intellectual debt to the mathematics of David Mumford.
We thank 
Shiva Chidambaram, 
Adam Morgan, 
Ben Moonen, 
Bjorn Poonen, 
Oscar Randal-Williams, 
Federico Scavia and 
Gerard van der Geer
for helpful conversations.
This research was carried out while the author was a Research Fellow at St John's College, University of Cambridge, which we thank for excellent working conditions.

\section{Preliminaries}\label{section: preliminaries}

We start by setting up notation and recalling facts about torsors, Picard functors, abelian schemes, polarizations, stacks and gerbes. 
These are mostly standard and can be skipped on a first reading.

\subsection{Schemes and sheaves}\label{subsec: notation}

If $X\rightarrow S$ is a morphism of schemes and $T\rightarrow S$ a morphism we write $X_T$ for the base change $X\times_ST$, viewed as a $T$-scheme. If $T = \Spec A$ is an affine scheme we also write $X_A$ for $X_T$. 
Write $X(T)$ for the set of sections of $X_T\rightarrow T$.
Again we write $X(A)$ when $T = \Spec A$.
We will often use the Yoneda embedding of the category of $S$-schemes into the category of sheaves on the (big) \'etale or fppf site on $S$.
Under this embedding, finite \'etale morphisms $X\rightarrow S$ correspond to 
finite \'etale locally constant sheaves \cite[Tag \href{https://stacks.math.columbia.edu/tag/03RV}{03RV}]{stacksproject}.
If $\sh{F}$ is a sheaf of abelian groups on the fppf site on $S$, write $\HH^i(S, \sh{F})$ for its associated fppf cohomology groups.
If this sheaf arises as the fppf sheaffication of an \'etale sheaf $\sh{G}$, then $\HH^i(S, \sh{F})$ coincides with \'etale cohomology of $\sh{G}$ \cite[Tag \href{https://stacks.math.columbia.edu/tag/0DDU}{0DDU}]{stacksproject}, which we also denote by $\HH^i(S, \sh{G})$.

If $k$ is a field, let $k^{\sep}\subset \bar{k}$ be a choice of separable and algebraic closure and let $\Gal_k = \Gal(k^{\sep}/k)$ be its absolute Galois group.
We will often use the equivalence of categories $X\mapsto X(k^{\sep})$ between finite \'etale schemes over $k$ (called finite $k$-sets) and finite sets with a continuous $\Gal_k$-action.
Given a finite \'etale $k$-scheme $X$, denote the cardinality of $X(k^{\sep})$ by $\#X$.
Similar remarks apply to the equivalence between finite \'etale group schemes over $k$ (finite $k$-groups) and finite groups with a continuous $\Gal_k$-action.

\subsection{Torsors}

Let $G$ be a fppf group scheme or group sheaf on the fppf site over a scheme $S$.
We write $\HH^1(S,G)$ for the set of isomorphism classes of (left) sheaf torsors under $G$ over $S$. If $S = \Spec R$ we write $\HH^1(R,G)$ for the same object.
If $G$ is commutative, $\HH^1(S,G)$ coincides with fppf cohomology with coefficients in $G$, so there is no conflict of notation \cite[Tag \href{https://stacks.math.columbia.edu/tag/03AG}{03AG}]{stacksproject}. 
If $G$ is smooth, then every torsor is trivialized \'etale locally, and if further $S$ is the spectrum of a field $k$, then $\HH^1(k,G)$ coincides with (possibly non-abelian) Galois cohomology $\HH^1(\Gal_k, G(k^{\sep}))$ defined using cocycles \cite[Chapter III]{Serre-galoiscohomology}.

Every $G$-torsor is represented by an algebraic space, but not necessarily by a scheme, even if $G\rightarrow S$ is an abelian scheme \cite[Section XIII 3.2, page 200]{Raynaud-Faisceauxamples}.
However, if either $G\rightarrow S$ is affine, or $G\rightarrow S$ is separated of finite presentation and $S$ is locally noetherian and $\dim S\leq 1$, then every $G$-torsor is representable by a scheme, see \cite[Theorem 6.5.10]{Poonen-rationalpointsonvarieties}.

\subsection{Picard schemes}

If $X$ is a scheme, we denote the Picard group of isomorphism classes of line bundles (equivalently, invertible sheaves) on $X$ by $\Pic(X)$.
Let $X\rightarrow S$ be a smooth proper morphism of schemes with geometrically connected fibers. 
The Picard functor $\PicS_{X/S}$ (or simply $\PicS_X$ whenever $S$ is clear from the context) is the \'etale sheaffication on the category of $S$-schemes of the presheaf $T\mapsto \Pic(X_T)/\Pic(T)$, see \cite[Section 8.1]{BLR-NeronModels}. (This presheaf is already a sheaf in the Zariski topology. Moreover, the sheaffication step is unnecessary when $X\rightarrow S$ has a section.)
The Picard functor is represented by an algebraic space over $S$ by results of Raynaud \cite[Section 8.3]{BLR-NeronModels}, and we denote this algebraic space again by $\PicS_{X/S}$ or $\PicS_X$.
If $X\rightarrow S$ is projective or if $S$ is the spectrum of a field, then $\PicS_X$ is represented by a scheme \cite[Section 8.2]{BLR-NeronModels}.
Let $\PicS_{X/S}^0\subset \PicS_{X/S}$ (or simply $\PicS_X^0$) be the subsheaf consisting of those elements whose restriction to every closed point $s\colon \Spec(k) \rightarrow S$ lies in the identity component of the group scheme $\PicS_{X_s/k}$.
This is an open subfunctor of $\PicS_{X}$ \cite[Section 8.4, p.\,233]{BLR-NeronModels}, so is representable whenever $\PicS_{X}$ is.

There exists an exact sequence \cite[Section 8.1, Proposition 4]{BLR-NeronModels}
\begin{align}\label{eq: exact sequence usual pic scheme pic}
    1\rightarrow \Pic(S) \rightarrow \Pic(X) \rightarrow \PicS_{X/S}(S) 
    \xrightarrow{\mathrm{ob}} \HH^2(S, \G_m) \rightarrow \HH^2(X, \G_m)
\end{align}
and $\mathrm{ob}$ measures the obstruction for a class $\ell\in \PicS_{X/S}(S)$ to be represented by a line bundle on $X$.
Given a line bundle $L$ on $X$, we denote the image of its isomorphism class under the map $\Pic(X)\rightarrow \PicS_X(S)$ by $[L] \in \PicS_X(S)$. 

Suppose that $X\rightarrow S$ admits a section $e\colon S\rightarrow X$.
Then the obstruction map $\mathrm{ob}\colon \PicS_X(S)\rightarrow \HH^2(S, \G_m)$ of \eqref{eq: exact sequence usual pic scheme pic} vanishes.
Moreover $\PicS_{X}$ is isomorphic to the functor sending an $S$-scheme $T$ to isomorphism classes rigidified line bundles on $X_T$, i.e., the data of a line bundle $L$ on $X_T$ together with an isomorphism between $e^*L$ and the trivial line bundle on $T$.
If $S = \Spec(k)$ where $k$ is a field, then $\PicS_{X}(k)=\Pic(X_{k^{\sep}})^{\Gal_k}$ and the inclusion $\Pic(X) \subset \PicS_{X/k}(k)$ is an equality when $X(k) \neq  \varnothing$, but not in general.

Define the N\'eron--Severi group sheaf as the sheaf quotient $\NSS_X = \NSS_{X/S} = \PicS_{X/S}/\PicS_{X/S}^0$ and define the N\'eron--Severi group as its group of $S$-points $\NSS_{X/S}(S)$.
The exact sequence of sheaves $1\rightarrow \PicS^0_X \rightarrow \PicS_{X}\rightarrow \NSS_{X}\rightarrow 1$ induces an exact sequence 
\begin{align}\label{equation:exactseqPicNS}
1\rightarrow \PicS_{X}^0(S) \rightarrow \PicS_{X}(S) \rightarrow \NSS_{X}(S)\rightarrow \HH^1(S, \PicS_{X}^0).
\end{align}
%Strictly speaking, we are sometimes using these concepts when $X\rightarrow S$ is an algebraic space. But this is not a problem, since we will only consider torsors for which there exists a line bundle L representing a polarization. Those torsors are automatically representable, since they are projective.

\subsection{Polarizations on abelian varieties}\label{subsec: polarizations on abelian varieties}

For general references concerning abelian schemes, see \cite[Chapter 6]{MumfordFogartyKirwan-GIT} and \cite[Chapter 1, Section 1]{FaltingsChai-degenerations}, or \cite[Section 1]{AchterCasalainaMartin-puttingpinPrym} for a nice summary. 
Let $A\rightarrow S$ be an abelian scheme, i.e., a smooth proper group scheme with geometrically connected fibers.
Then $A^{\vee} \coloneqq \PicS_{A/S}^0$ is representable by an abelian scheme over $S$ (see \cite[Theorem 1.9]{FaltingsChai-degenerations}) and is called the \emph{dual abelian scheme} of $A$.
If $L$ is a line bundle on $A$, then $[L] \in A^{\vee}(S)$ if and only if for every geometric point $s= \Spec(k)\rightarrow S$ and every $x\in A_{s}(k)$, $L_{s}$ is isomorphic to $t_x^* L_{s}$, where $t_x\colon A_{s}\rightarrow A_s$ denotes translation by $x$; see \cite[Section 2.1]{olsson-compactifymoduli}.

Let $X\rightarrow S$ be a (left) $A$-torsor. 
If $T\rightarrow S$ is a surjective \'etale morphism and $\alpha\colon X_T\xrightarrow{\sim} A_T$ is a trivialization, then $\alpha$ induces isomorphisms $(\PicS_{X}^0)_T\xrightarrow{\sim} (\PicS^0_{A})_T$ and $(\NSS_{X})_T\xrightarrow{\sim} (\NSS_{A})_T$. 
Since translation by a point $t_x\colon A\rightarrow A$ induces the identity on $\PicS_A^0$ and $\NSS_A$, these isomorphisms do not depend on the choice of trivialization.
Therefore, they descend to canonical isomorphisms $\PicS_{X}^0 \simeq \PicS_{A}^0$ and  $\NSS_{X} \simeq \NSS_{A}$; we will make these identifications without further mention (see also \cite[Theorem 3.0.3]{alexeev-completemodulisemiabelian}). 
%(See also \cite[Section 2.1 and Proposition 2.1.5]{olsson-compactifymoduli}.)
We thus obtain the fundamental exact sequence
\begin{align}\label{equation: exact sequence Pic_X of torsor}
    1 \rightarrow A^{\vee}\rightarrow \PicS_X \rightarrow \NSS_A\rightarrow 1.
\end{align}
Every line bundle $L$ on $A$ induces a morphism $\phi_{L} \colon A\rightarrow A^{\vee}$ that on $T$-points is given by $a\mapsto [t^*_aL \otimes L^{-1}]$, where $t_a\colon A_T\rightarrow A_T$ denotes translation by $a\in A(T)$.
By the theorem of the square, $\phi_L$ is a homomorphism.
It only depends on the image of $L$ in $\NSS_{A/S}(S)$.
%A slightly different definition is given in\cite[Chapter 6, Section 2]{MumfordFogartyKirwan-GIT}, but this is because they write down the rigidified line bundle; its class in $\Pic(A)/\Pic(S)$ is equal to ours.
It is symmetric, in the sense that $\phi_L^{\vee} = \phi_L$ using the double duality $A = A^{\vee \vee}$. 
More generally, if $X$ is an $A$-torsor and $L$ is a line bundle on $X$, the same formula defines a homomorphism $\phi_{L}\colon A\rightarrow A^{\vee}$, using the identification $\PicS_X^0 = A^{\vee}$ (see also \cite[Lemma 3.0.1]{alexeev-completemodulisemiabelian}).
Even more generally, if $\ell \in \PicS_{X}(S)$ then by descent there exists a unique morphism $\phi_{\ell}\colon A\rightarrow A^{\vee}$ such that $(\phi_{\ell})_T = \phi_{L}$ whenever $T\rightarrow S$ is surjective \'etale and $L$ is a line bundle on $X_T$ with $[L] = \ell_T$ under \eqref{eq: exact sequence usual pic scheme pic}.

By definition (\cite[Definition 1.6]{FaltingsChai-degenerations} or \cite[Chapter 6, Section 2]{MumfordFogartyKirwan-GIT}), a homomorphism of abelian $S$-schemes $\lambda\colon A\rightarrow A^{\vee}$ is called a \emph{polarization} if there exists a surjective \'etale morphism $T\rightarrow S$ such that $\lambda_T = \phi_{L}$ for some relatively ample line bundle $L$ on $A_T\rightarrow T$.
If $L$ and $L'$ are line bundles on $A$, then $\phi_L = \phi_{L'}$ if and only if $L$ and $L'$ have the same image in $\NSS_{A/S}(S)$.
Therefore the assignment $L\mapsto \phi_L$ induces a bijection between the set of elements of $\NSS_{A/S}(S)$ that are \'etale locally the image of an ample line bundle on $A$ and the set of polarizations on $A$. 
Consequently, we may and often do identify a polarization $\lambda$ with the corresponding element of $\NSS_{A/S}(S)$.
%(In fact, there is an isomorphism of sheaves between $\NSS_A$ and the subsheaf of symmetric homomorphisms in $\HomS(A, A^{\vee})$.)
If $X$ is an $A$-torsor and $\ell\in \PicS_X(S)$ satisfies $\phi_{\ell} = \lambda$, then we say that $\ell$ \emph{represents} $\lambda$.
Given any class $\lambda \in \NSS_{A/S}(S)$, write $\PicS^{\lambda}_A$ for the preimage of $\lambda$ under the projection $\PicS_A\rightarrow \NSS_A$, and similarly define $\PicS_X^{\lambda}$ for any $A$-torsor $X$ using \eqref{equation: exact sequence Pic_X of torsor}.
The class $\PicS_X^{\lambda}$ is a torsor under $\PicS^0_X  = \PicS^0_A = A^{\vee}$.
By definition, the class of $\PicS_X^{\lambda}$ in $\HH^1(S, \PicS_X^0)$ equals the image of $\lambda \in \NS_X(S)$ under the connecting map of \eqref{equation:exactseqPicNS}.

Given a polarization $\lambda$ on an abelian scheme $A$ over a field $k$, the order of the kernel $A[\lambda]$ (as a group scheme) is called the degree of $\lambda$, denoted by $\deg(\lambda)$.
If $\deg(\lambda)$ is invertible in $k$, there further exist unique positive integers $d_1 \mid d_2 \mid\dots \mid d_g$ with $g = \dim A$ such that $A[\lambda](k^{\sep}) \simeq (\Z/d_1\Z)^2\times \dots \times (\Z/d_g\Z)^2$; we call $D = (d_1, \dots, d_g)$ the type of $\lambda$.
If $(A, \lambda)$ is a polarized abelian scheme over a general scheme $S$, then the function which associates to a point $s\in S$ the degree of $(A, \lambda)_s$ is locally constant; if this degree is invertible for every $s$, we say the degree of $(A, \lambda)$ is invertible on $S$.
If the degree of $(A, \lambda)$ is invertible on $S$, then the type of $(A, \lambda)_s$ is also locally constant.
%So if $S$ is connected, $(A, \lambda)$ has a well-defined degree and (provided this degree is invertible) a  well-defined type.
In this paper, we will only consider polarizations whose degree is invertible on the base scheme.

\subsection{Stacks and gerbes}

We will make use of basic properties of stacks and gerbes; we briefly recall these notions here.
Let $\mathcal{C}$ be a site, for example the (big) \'etale or fppf site of a scheme $S$.
Given an object $U$ of $\mathcal{C}$, let $\mathcal{C}/U$ be the site whose underlying category is the slice category over $U$ and whose coverings are restrictions of coverings in $\mathcal{C}$.
Given a category fibered in groupoids $\sh{X}\rightarrow \mathcal{C}$ and two objects $x,y$ of $\sh{X}(U)$, let $\IsomS(x,y)$ denote the presheaf on $\mathcal{C}/U$ with the property that $\IsomS(x,y)(T) = \Hom_{\sh{X}(T)}(f^*x,f^*y)$ for all objects $f\colon T\rightarrow U$ in $\mathcal{C}/U$. Similarly write $\AutS(x) = \IsomS(x,x)$.

Recall from \cite[Section 4]{Vistoli-grothendiecktopologies} or \cite[Tag \href{https://stacks.math.columbia.edu/tag/02ZH}{02ZH}]{stacksproject} that a \emph{stack} is a category fibered in groupoids $\sh{X} \rightarrow \mathcal{C}$ such that for every object $U$ of $\mathcal{C}$, the presheaf $\IsomS(x,y)$ is a sheaf for all objects $x,y\in\sh{X}(U)$ and such that the pullback functor $\sh{X}(U)\rightarrow \sh{X}(\{U_i\rightarrow U\})$ to the groupoid $\sh{X}(\{U_i\rightarrow U\})$ of descent data in the sense of \cite[Section 4.1.2]{Vistoli-grothendiecktopologies} is an equivalence of categories for every covering $\{U_i\rightarrow U\}$.

As an example, let $\sh{G}$ be a sheaf of groups on $\mathcal{C}$.
The \emph{classifying stack} for $\sh{G}$, denoted $\mathrm{B}\sh{G}$, is the stack on $\mathcal{C}$ such that for each $U$ of $\mathcal{C}$, the objects of the groupoid $(\mathrm{B}\sh{G})(U)$ are $\sh{G}_U$-torsors on $\mathcal{C}/U$, and morphisms given by isomorphisms of $\sh{G}_U$-torsors.
This is indeed a stack by \cite[Tag \href{https://stacks.math.columbia.edu/tag/04TQ}{04TQ}]{stacksproject}.

A stack $\sh{X}$ on $\mathcal{C}$ is a \emph{gerbe} \cite[Tag \href{https://stacks.math.columbia.edu/tag/06NY}{06NY}]{stacksproject} if it satisfies the following two properties: for every object $U$ of $\mathcal{C}$, there exists a cover $\{U_i \rightarrow U\}$ such that the groupoid $\sh{X}(U_i)$ is non-empty for all $i$; for every object $U$ of $\mathcal{C}$ and objects $x,y$ in $\sh{X}(U)$, there exists a covering $\{U_i \rightarrow U\}$ such that $x|_{U_i} \simeq y|_{U_i}$ for all $i$.
For example, classifying stacks of sheaves of groups are gerbes.

We will encounter a few gerbes in this paper in Section \ref{sec: polarizations and theta groups}. 
We will often use the following formal but very useful lemma: 

\begin{lemma}\label{lemma: fully faithful morphism gerbes}
    Let $F\colon \sh{X}\rightarrow \sh{Y}$ be a morphism between gerbes such that for every object $U$ of $\mathcal{C}$ and $x$ of $\sh{X}(U)$, the induced map $\Aut_{\sh{X}(U)}(x)\rightarrow \Aut_{\sh{Y}(U)}(F(x))$ is an isomorphism. 
    Then $F$ is an isomorphism. 
    In particular, $F_U$ is an equivalence of groupoids for all $U$.
\end{lemma}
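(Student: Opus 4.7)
The plan is to show that $F\colon \sh{X}\to \sh{Y}$ is an equivalence of stacks by verifying it is fully faithful and locally essentially surjective; the statement that $F_U$ is an equivalence of groupoids for each $U$ will then follow from fully faithfulness combined with a descent argument. Concretely, I will promote the hypothesis on $\AutS$-sheaves to the statement that the sheaf map $\IsomS(x,x')\to \IsomS(F(x),F(x'))$ on $\mathcal{C}/U$ is an isomorphism for all $x,x'\in \sh{X}(U)$, and then use the gerbe axioms together with the stack axiom for $\sh{X}$ to lift objects of $\sh{Y}$ to objects of $\sh{X}$.

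For fully faithfulness, faithfulness is immediate: if $\phi,\psi\colon x\to x'$ in $\sh{X}(U)$ satisfy $F(\phi)=F(\psi)$, then $F(\psi^{-1}\phi)=\id_{F(x)}$, and injectivity of the hypothesized map $\Aut_{\sh{X}(U)}(x)\to \Aut_{\sh{Y}(U)}(F(x))$ forces $\psi^{-1}\phi=\id_x$. For fullness, consider a section $g\colon F(x)|_V\to F(x')|_V$ of $\IsomS(F(x),F(x'))$ over some $V\to U$. The gerbe axiom applied to $\sh{X}$ yields a covering $\{W_j\to V\}$ and isomorphisms $\alpha_j\colon x|_{W_j}\xrightarrow{\sim} x'|_{W_j}$. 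The element $g|_{W_j}\circ F(\alpha_j)^{-1}\in \Aut(F(x')|_{W_j})$ lifts uniquely to $\beta_j\in \Aut(x'|_{W_j})$ by the hypothesis, and $\beta_j\alpha_j$ covers $g|_{W_j}$. Faithfulness (already established) ensures that these local preimages agree on overlaps; since $\IsomS(x,x')$ is a sheaf, they glue to a global preimage of $g$, proving that the sheaf map is an isomorphism.

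Finally, for essential surjectivity, fix $y\in \sh{Y}(U)$. The gerbe axioms give a covering $\{U_i\to U\}$, objects $x_i\in \sh{X}(U_i)$ (axiom (a) for $\sh{X}$), and isomorphisms $\phi_i\colon F(x_i)\xrightarrow{\sim} y|_{U_i}$ after a further refinement (axiom (b) for $\sh{Y}$). On overlaps $U_{ij}=U_i\times_U U_j$, the composition $\phi_j^{-1}\phi_i\colon F(x_i)|_{U_{ij}}\to F(x_j)|_{U_{ij}}$ lifts uniquely via the fully faithfulness of the preceding step to an isomorphism $\psi_{ij}\colon x_i|_{U_{ij}}\to x_j|_{U_{ij}}$. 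Faithfulness propagates the cocycle relation on the $\phi_j^{-1}\phi_i$'s to the $\psi_{ij}$'s, so the stack axiom for $\sh{X}$ produces a global object $x\in \sh{X}(U)$ restricting to the $x_i$'s, and the $\phi_i$'s then glue (using the stack axiom for $\sh{Y}$ on morphisms) to an isomorphism $F(x)\xrightarrow{\sim} y$. The most intricate step is this descent argument, but it is entirely formal once the sheaf-level fully faithfulness of the previous paragraph is in hand; the real content of the lemma is the combination of the hypothesis on $\AutS$'s with the ``locally isomorphic'' gerbe axiom to upgrade automorphism information to isomorphism information.
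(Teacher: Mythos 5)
Your proof is correct and follows essentially the same route as the paper: reduce full faithfulness of the sheaf maps $\IsomS(x,x')\to\IsomS(F(x),F(x'))$ to the automorphism hypothesis by working locally where $x\simeq x'$, then obtain essential surjectivity by lifting the descent datum of $y$ through the (now known) fully faithful $F$ and invoking the stack axiom for $\sh{X}$. You merely spell out explicitly the cocycle check and the gluing of the $\phi_i$'s, which the paper leaves implicit in the phrase "unique descent datum mapping to the descent datum of $y$."
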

\begin{proof}
    This must be well known; we give a quick proof.
    It suffices to prove that $F$ is fully faithful and essentially surjective.
    To prove $F$ is fully faithful, we need to show that for every $U$ of $\mathcal{C}$ and $x,y$ of $\sh{X}(U)$, the map of sheaves $\IsomS(x,y) \rightarrow \IsomS(F(x),F(y))$ is an isomorphism.
    Since this can be checked locally and since $\sh{X}$ is a gerbe, we may assume $x = y$, in which case it follows from our assumptions.
    We now prove that for every $U$ and $y$ in $\sh{Y}(U)$, $y$ lies in the essential image of $F_U$.
    Since objects in $\sh{X}$ exists locally and since all objects in $\sh{Y}$ are locally isomorphic, there exists a covering $\{U_i \rightarrow U\}$, objects $x_i$ in $\sh{X}(U_i)$ and isomorphisms $F_{U_i}(x_i)\simeq y|_{U_i}$.
    Since $F_V$ is fully faithful for each $V$, there exists a unique descent datum $D = (\{x_i\}, \{\alpha_{ij}\})$ with respect to $\{U_i\rightarrow U\}$ that maps to the descent datum of $y$ under $F$.
    Since $\sh{X}$ is a stack, there exists an object $x$ in $\sh{X}(U)$ corresponding to this descent datum, and this object satisfies $F_U(x) \simeq y$.
\end{proof}

Gerbes are twisted versions of classifying stacks, in the following sense.

\begin{lemma}\label{lemma: nonempty gerbe is neutral}
Let $\sh{X}$ be a gerbe, suppose that $S$ is a terminal object of $\mathcal{C}$ and suppose that there exists an object $x$ in $\sh{X}(S)$.
Then the assignment $y\mapsto \IsomS(y,x)$ induces an isomorphism $\sh{X}\simeq \mathrm{B}\AutS(x)$.
\end{lemma}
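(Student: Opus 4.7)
The plan is to reduce the claim to Lemma~\ref{lemma: fully faithful morphism gerbes}. First I would define the morphism of stacks
\[
F\colon \sh{X}\longrightarrow \mathrm{B}\AutS(x), \qquad y\longmapsto \IsomS(y,x|_U)
\]
for $y$ in $\sh{X}(U)$, where $x|_U$ denotes the pullback of $x$ along the (unique, since $S$ is terminal) morphism $U\rightarrow S$. To check this lands in $\mathrm{B}\AutS(x)$, note that $\IsomS(y,x|_U)$ is a sheaf on $\mathcal{C}/U$ because $\sh{X}$ is a stack, and it carries an action of $\AutS(x|_U) = \AutS(x)|_U$ by composition. Since $\sh{X}$ is a gerbe, there exists a cover $\{U_i\rightarrow U\}$ and isomorphisms $y|_{U_i}\xrightarrow{\sim} x|_{U_i}$; each such isomorphism trivializes $\IsomS(y,x|_U)|_{U_i}$ as a torsor under $\AutS(x|_{U_i})$. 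Hence $F(y)$ is an $\AutS(x|_U)$-torsor. On morphisms, a map $y\rightarrow y'$ in $\sh{X}(U)$ (automatically an isomorphism, since fibers are groupoids) induces an equivariant isomorphism of torsors by composition; this is compatible with pullbacks and so defines a morphism of stacks.

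Next I would verify the two hypotheses of Lemma~\ref{lemma: fully faithful morphism gerbes}. The target $\mathrm{B}\AutS(x)$ is a gerbe, being the classifying stack of a sheaf of groups, and $\sh{X}$ is a gerbe by assumption. It remains to check that for every $y$ in $\sh{X}(U)$, the induced map of sheaves
\[
\AutS(y)\longrightarrow \AutS(F(y))
\]
is an isomorphism. On the left, an element $\beta\in \AutS(y)(V)$ acts on $\IsomS(y|_V, x|_V)$ by pre-composition with $\beta^{-1}$, and this is easily seen to commute with the post-composition action of $\AutS(x|_V)$. To see bijectivity, I would work locally on a cover trivializing $\IsomS(y,x|_U)$: once a trivialization is chosen, $F(y)$ is identified with $\AutS(x|_V)$ acting on itself, and its sheaf of torsor automorphisms is canonically identified with $\AutS(x|_V)$; moreover the chosen trivialization identifies $\AutS(y)|_V$ with $\AutS(x|_V)$ via conjugation, and the resulting square commutes. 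Since bijectivity can be checked locally, the map of sheaves is an isomorphism.

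Applying Lemma~\ref{lemma: fully faithful morphism gerbes} to $F$ then gives that $F$ is an isomorphism of stacks. The proof is essentially formal: there is no real obstacle, and the only mild subtlety is keeping track of left/right conventions for the torsor structure on $\IsomS(y,x|_U)$.
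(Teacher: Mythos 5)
Your proposal is correct and follows essentially the same route as the paper: show that $\IsomS(y,x)$ is an $\AutS(x)$-torsor using local isomorphism of objects in a gerbe, and then invoke Lemma \ref{lemma: fully faithful morphism gerbes} after checking the induced map on automorphism sheaves is an isomorphism (which the paper phrases as a bijection on Isom sets). The extra care you take with the torsor structure and local trivializations is just a more detailed write-up of the same argument.
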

\begin{proof}
    Since $\sh{X}$ is a gerbe, $x$ and $y$ are locally isomorphic, so $\IsomS(y,x)$ with its natural left action is a torsor under $\AutS(x)$.
    Moreover if $y,y'$ are objects of $\sh{X}(T)$, then the map $\Isom_{\sh{X}(T)}(y,y') \rightarrow \Isom_{B\sh{G}(T)}(\IsomS(y,x),\IsomS(y',x))$ is a bijection.
    Now apply Lemma \ref{lemma: fully faithful morphism gerbes}.
\end{proof}
Whenever we are in the situation of Lemma \ref{lemma: nonempty gerbe is neutral}, we say the gerbe is \emph{neutral}.

When all automorphism groups are commutative, we can say more.
Suppose that $\sh{X}$ is a gerbe such that for every object $U$ of $\mathcal{C}$ and object $x$ of $\sh{X}(U)$, the group sheaf $\AutS(x)$ on the slice category $\mathcal{C}/U$ is commutative. 
Then by \cite[Tag \href{https://stacks.math.columbia.edu/tag/0CJY}{06NY}]{stacksproject} there exists a sheaf of commutative groups $\sh{G}$ on $\mathcal{C}$ such that $\sh{G}|_U\simeq \AutS(x)$ for every object $x$ of $\sh{X}(U)$ and every $U$ of $\mathcal{C}$.
In that case, we say the gerbe $\sh{X}$ is \emph{banded by} $\sh{G}$.

\begin{lemma}\label{lemma:classificationgerbesH2}
    Let $\mathcal{C}$ be the big etale or fppf site of a scheme $S$ and let $\sh{G}$ be a sheaf of commutative groups on $\mathcal{C}$.
    Then there exists a canonical bijection $\sh{X}\mapsto [\sh{X}]$, compatible with base change on $S$, between the set of isomorphism classes of gerbes banded by $\sh{G}$ and $\HH^2(S, \sh{G})$, sending a neutral gerbe to the trivial class.
\end{lemma}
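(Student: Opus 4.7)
The plan is to construct the map $\sh{X}\mapsto [\sh{X}]$ via Čech cocycles and then verify bijectivity directly. Given a gerbe $\sh{X}$ banded by $\sh{G}$, the gerbe axioms guarantee a covering $\{U_i\to S\}$ in $\mathcal{C}$ admitting objects $x_i\in \sh{X}(U_i)$, and after passing to a common refinement one may assume isomorphisms $\alpha_{ij}\colon x_j|_{U_{ij}}\xrightarrow{\sim} x_i|_{U_{ij}}$ exist. On triple overlaps $U_{ijk}$, the composition $\alpha_{ij}\circ\alpha_{jk}\circ\alpha_{ki}$ is an automorphism of $x_i|_{U_{ijk}}$, which via the banding corresponds to an element $c_{ijk}\in \sh{G}(U_{ijk})$, and a direct check shows $(c_{ijk})$ is a Čech $2$-cocycle. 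I would define $[\sh{X}]$ to be its image in $\HH^2(S,\sh{G})$ under the canonical map from Čech cohomology. Standard verifications then show the class is independent of the choices of $x_i$, $\alpha_{ij}$, and cover (different choices modify the cocycle by a coboundary), is compatible with base change, and depends only on the isomorphism class of $\sh{X}$. If $\sh{X}$ is neutral in the sense of Lemma \ref{lemma: nonempty gerbe is neutral}, then the trivial cover $\{S\to S\}$ equipped with a global object and the identity isomorphism yields the zero cocycle.

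For surjectivity, given a Čech $2$-cocycle $c=(c_{ijk})$ on a covering $\{U_i\to S\}$ of $S$, I would build a gerbe $\sh{X}_c$ by gluing the classifying stacks $\mathrm{B}(\sh{G}|_{U_i})$ along $U_{ij}$ via the identity functor, but with the $2$-morphism on triple overlaps rescaled by $c_{ijk}$ acting by translation on $\sh{G}$-torsors. The cocycle condition on $(c_{ijk})$ is exactly the coherence required for this to define a stack on $\mathcal{C}$, which is locally isomorphic to $\mathrm{B}\sh{G}$ and hence a gerbe banded by $\sh{G}$; its associated cocycle recovers $c$, so $[\sh{X}_c] = [c]$. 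For injectivity, suppose $\sh{X}$ and $\sh{X}'$ yield the same class in $\HH^2(S,\sh{G})$. After passing to a common cover, their cocycles differ by a Čech coboundary $(g_{ij})$, and I would modify the chosen $\alpha'_{ij}$ by the $g_{ij}$ so that the triple-overlap cocycles match those of $\sh{X}$. The resulting isomorphisms $x_i\xrightarrow{\sim} x'_i$ over $U_i$, compatible on double overlaps, form descent data which, by the stack condition applied to $\sh{X}'$, assemble into a morphism $F\colon\sh{X}\to\sh{X}'$ respecting bands. Since $F$ induces the identity on $\sh{G}$-valued automorphism sheaves, Lemma \ref{lemma: fully faithful morphism gerbes} forces $F$ to be an isomorphism.

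The principal technical obstacle is that Čech cohomology on a general site need not compute derived functor cohomology, so the construction above only manifestly yields a bijection with $\check{\HH}^2(\{U_i\},\sh{G})$ and one must justify both that every class in $\HH^2(S,\sh{G})$ arises and that the coboundary relation used in the injectivity step is visible on some cover. I would handle this by working with hypercoverings rather than ordinary coverings in the Čech construction (so that every fppf or étale $2$-class is represented by a cocycle) and by appealing to the Čech-to-derived spectral sequence, which in degree $2$ implies that two cocycles defining the same derived class become cohomologous after refinement. With this caveat in place, the three steps above combine to produce the desired canonical, base-change-compatible bijection sending neutral gerbes to zero.
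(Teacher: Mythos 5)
The paper does not prove this lemma at all: it simply cites Giraud (Th\'eor\`eme IV.3.4.2), so your self-contained \v{C}ech/hypercovering construction is a genuinely different route, and its overall architecture (extract a $2$-cocycle from local objects and local isomorphisms, build a gerbe from a cocycle by twisted gluing of $\mathrm{B}(\sh{G}|_{U_i})$, and use Lemma \ref{lemma: fully faithful morphism gerbes} to upgrade a band-preserving morphism to an isomorphism) is the standard direct proof and can be completed. Two points need repair, one of them a real gap. First, a smaller issue: in your opening paragraph you claim that after refining the covering one may assume the isomorphisms $\alpha_{ij}$ exist on the double overlaps $U_{ij}$. In general, local isomorphy of objects in a gerbe only produces such isomorphisms after covering each $U_{ij}$ separately, and no refinement of the covering $\{U_i\}$ need remove this obstruction; this is exactly why hypercoverings (or Giraud's formalism) are forced on you, as you concede in the last paragraph. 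The construction should be phrased with hypercovers from the start rather than patched at the end.

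The genuine gap is in the injectivity step. You speak of ``isomorphisms $x_i\xrightarrow{\sim}x'_i$ over $U_i$'' forming descent data that, ``by the stack condition applied to $\sh{X}'$,'' assemble into a morphism $F\colon\sh{X}\to\sh{X}'$. But $x_i$ and $x'_i$ are objects of two different stacks, so there is no category in which such isomorphisms live, and the stack condition on $\sh{X}'$ only lets you descend objects and morphisms \emph{inside} $\sh{X}'$, never a morphism of stacks. What you actually have to glue is a family of morphisms of stacks $F_i\colon\sh{X}|_{U_i}\to\sh{X}'|_{U_i}$ (for instance the composites of the equivalences $\sh{X}|_{U_i}\simeq\mathrm{B}(\sh{G}|_{U_i})\simeq\sh{X}'|_{U_i}$ supplied by $x_i$ and $x'_i$ via Lemma \ref{lemma: nonempty gerbe is neutral}), together with $2$-isomorphisms $F_i|_{U_{ij}}\simeq F_j|_{U_{ij}}$ and a coherence condition on triple overlaps; the coincidence of the (hyper\v{C}ech) cocycles after modifying $\alpha'_{ij}$ by the coboundary is precisely what furnishes that coherence. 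Assembling the $F_i$ then requires descent for morphisms of stacks (the fact that stacks form a $2$-stack on $\mathcal{C}$), a principle of a different nature from the one you invoke, and it must moreover be applied along the hypercovering you were forced to use. With that ingredient supplied --- or, alternatively, by observing that the ``difference'' gerbe $\IsomS$-stack of band-preserving equivalences is a $\sh{G}$-gerbe whose class is the difference of the two classes, hence neutral, hence has a global object --- the injectivity argument closes, and Lemma \ref{lemma: fully faithful morphism gerbes} finishes as you say; but as written the key gluing step does not go through.
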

\begin{proof}
See \cite[Th\'eor\`eme IV.3.4.2]{Giraud-cohomologienonabelienne}.
\end{proof}

\section{Polarizations and theta groups}\label{sec: polarizations and theta groups}

We start by discussing formal properties of commutative finite \'etale group schemes with a nondegenerate alternating pairing such as $(A[\lambda],e_{\lambda})$, which we call symplectic modules.
We then discuss abstract theta groups, Mumford theta groups and linear theta groups.
Theorem \ref{theorem: intro theta group condition} follows from combining Corollaries \ref{corollary: theorem theta first part body of text} and \ref{corollary: theorem theta second part body of text}.
We end by proving an analogue of Theorem \ref{theorem: intro theta group condition} for symmetric line bundles on symmetric torsors.

\subsection{Symplectic modules}\label{subsec: symplectic modules}

Let $S$ be a scheme.

\begin{definition}
A \emph{symplectic module} over $S$ is a pair $(M,e)$, where $M\rightarrow S$ is a commutative finite \'etale group scheme and $e\colon M\times M \rightarrow \G_m$ is a nondegenerate alternating pairing. 
(Nondegenerate here means that $e$ induces an isomorphism $M\xrightarrow{\sim} M^{\vee}\coloneqq \HomS(M,\G_m)$.)
\end{definition}

If $(M,e)$ is a symplectic module and $T\rightarrow S$ a morphism, the base change $(M,e)_T = (M_T,e_T)$ is a symplectic module over $T$.
An isomorphism of symplectic modules $(M,e)\xrightarrow{\sim} (M',e')$ is simply an isomorphism of $S$-group group schemes $M\rightarrow M'$ intertwining $e$ and $e'$; denote the set of such isomorphisms by $\Isom((M,e), (M',e'))$.
We recall the classification of symplectic modules over a separably closed field.

\begin{definition}\label{definition:type}
    Let $g\geq 1$ be an integer.
    A \emph{type of length }$g$ is a sequence of positive integers $D = (d_1, \dots, d_g)$ such that  $d_i$ divides $d_{i+1}$ for all $i=1,\dots, g-1$.
    Let $\#D = d_1\cdots d_g$. 
\end{definition}

Given a type $D$, we define the \emph{standard symplectic module $(M_D,e_D)$ of type $D$} over $\Z[1/\#D]$ as follows. 
Consider the constant group scheme $K_D = \Z/d_1\Z \times \dots \times \Z/d_g \Z$ and let $K_D^{\vee} = \HomS(K_D, \G_m) = \mu_{d_1}\times \cdots\times\mu_{d_g}$ be its Cartier dual.
Let $M_D = K_D \times K_D^{\vee}$, and define the pairing $e_D$ via $e_D((x,\chi), (x',\chi')) = \chi'(x)\chi(x')^{-1}$.
Since $e_D$ is isotropic on $K_D$ and $K_D^{\vee}$ and puts these two groups in perfect duality, this pairing is nondegenerate.
If $\#D$ is invertible on $S$, we obtain by base change the standard symplectic module of type $D$ over $S$ which we again denote by $(M_D,e_D)$.

\begin{lemma}\label{lemma: symplectic modules locally constant}
    Let $(M,e)$ be a symplectic module over $S$ and assume $S$ connected.
    Then there exists a unique type $D = (d_1, \dots, d_g)$ with the property that $d_1\geq 2$, $\#D$ invertible on $S$ and such that $(M,e)_T \simeq (M_D, e_D)$ for some surjective \'etale $T\rightarrow S$.
\end{lemma}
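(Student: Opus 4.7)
The plan is to reduce the statement to the classification of finite symplectic modules over a separably closed field and then spread that classification out over $S$ via representability of the Isom-scheme between finite \'etale schemes.

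First I would observe that $\#M$ is constant on $S$ (since $M\to S$ is finite \'etale and $S$ is connected) and invertible in $\mathcal{O}_S$ (\'etaleness of a finite commutative group scheme forces its order to be invertible on the base). Picking a geometric point $\bar{s}\to S$, we get a finite abelian group $(M,e)_{\bar{s}}$ with a nondegenerate alternating pairing valued in $\mu_\infty$. The classical structure theorem for such pairs---invariant factor decomposition followed by the observation that nondegeneracy of $e$ forces an orthogonal splitting into standard hyperbolic planes---yields a unique type $D=(d_1,\dots,d_g)$ with $d_1\geq 2$ such that $(M,e)_{\bar{s}}\simeq (M_D,e_D)_{\bar{s}}$. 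Uniqueness here is the uniqueness of invariant factors, together with the constraint $d_1\geq 2$ that rules out padding by $1$'s. Since $\#D=\#M$ is invertible on $S$, the standard symplectic module $(M_D,e_D)$ is defined over $S$.

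Next I would let $T=\IsomS((M_D,e_D),(M,e))$ be the functor of isomorphisms of symplectic modules between the base changes. Since $M_D$ and $M$ are finite \'etale over $S$, the scheme $\IsomS_{\mathrm{gp}}(M_D,M)$ of group scheme isomorphisms is finite \'etale over $S$, and the condition of preserving the pairing cuts out an open-and-closed subscheme; hence $T$ itself is finite \'etale over $S$. To check $T\to S$ is surjective, I would use local constancy of finite \'etale group schemes on connected bases: $M$ corresponds to a $\pi_1^{\mathrm{\acute{e}t}}(S,\bar{s})$-equivariant finite abelian group and $e$ is a $\pi_1^{\mathrm{\acute{e}t}}$-equivariant pairing, so the isomorphism class of $(M,e)_{\bar{t}}$ as a symplectic module is the same at every geometric point $\bar{t}$. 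Hence $(M,e)_{\bar{t}}\simeq (M_D,e_D)_{\bar{t}}$ everywhere and $T_{\bar{t}}\neq\varnothing$, making $T\to S$ surjective \'etale. Global uniqueness of $D$ follows from uniqueness over $\bar{s}$.

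The main step, such as it is, is the classification of finite symplectic modules over an algebraically closed field; the rest is a routine spreading-out via finite \'etale Isom-schemes.
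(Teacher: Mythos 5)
Your overall route is essentially the paper's: both proofs rest on the folklore classification of a finite abelian group with a nondegenerate alternating pairing (Lagrangian/hyperbolic decomposition, giving the unique type $D$ with $d_1\geq 2$), and then spread this out \'etale-locally. The paper does the spreading by passing to a cover over which $M$ and $M^{\vee}$ are constant and decomposing there, while you classify at one geometric point and take the finite \'etale Isom-scheme of symplectic modules as the cover, with surjectivity obtained from $\pi_1$-equivariance on the connected base; this is a cosmetic difference in packaging, not a different argument.

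There is, however, one incorrectly justified step. You claim that \'etaleness of a finite commutative group scheme forces its order to be invertible on the base; this is false as a general statement --- the constant group scheme $\Z/p\Z$ over a field of characteristic $p$ is finite \'etale of order $p$. The correct reason, and the one the paper uses, is the nondegeneracy of $e$: it identifies $M$ with its Cartier dual $M^{\vee}=\HomS(M,\G_m)$, and if the residue characteristic $p$ at some point of $S$ divided $\#M$, then the fiber of $M^{\vee}$ there would contain a non-\'etale piece (a form of $\mu_p$), contradicting $M^{\vee}\simeq M$ being \'etale. Since invertibility of $\#D$ on $S$ is part of the conclusion, is needed even to define $(M_D,e_D)$ over $S$, and also underlies your claim that the pairing-preserving locus in the Isom-scheme is open and closed (via \'etaleness of $\mu_{\#M}$ over $S$), you need this corrected argument; once it is in place, the rest of your proof goes through.
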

\begin{proof}
    There exists a surjective \'etale morphism $T\rightarrow S$ such that $M_T$ is constant, corresponding to a finite abelian group $A$.
    Since $M^{\vee}\simeq M$ is also \'etale, the order of $A$ is invertible on $S$. After a further base change we may assume $M_T^{\vee}$ is constant and corresponds to $\Hom(A, \Q/\Z)$. 
    Then $e$ corresponds to a nondegenerate bilinear alternating form $A\times A\rightarrow \Q/\Z$.
    In this case, it is folklore and elementary that there exists a Lagrangian decomposition $A \simeq L  \times L'$ that puts $L'$ and $L$ in duality; see for example \cite[Corollary 5.7]{PrasadShapiroVemuri-symplectic}.
    The lemma follows by taking $D$ to be the sequence of elementary divisors of $L$.
    %See also Bourbaki  \cite[Section 5.1, Th\'eor\`eme 1]{Bourbaki-algebreChapter9}, which gives a classification of free Z-modules with a symplectic form.
\end{proof}

We say a symplectic module $(M,e)$ \emph{has type $D$} if for every geometric point $\bar{s}\colon \Spec(k)\rightarrow S$ with $k$ a separably closed field, the pullback $(M,e)_{\bar{s}}$ is isomorphic to $(M_D,e_D)$ over $k$. 
In that case, an argument similar to the proof of Lemma \ref{lemma: symplectic modules locally constant} shows that $(M,e)_T\simeq (M_D,e_D)$ for some \'etale surjective base change $T\rightarrow S$.

\begin{definition}
    Let $(M,e)$ and $(M',e')$ be symplectic modules over $S$.
    Let $\IsomS((M,e),(M',e'))$ be the \'etale sheaf on $S$ with $\IsomS((M,e),(M',e'))(T) = \Isom((M,e)_T,(M',e')_T)$ for all morphisms $T\rightarrow S$.
    We write $\SpS(M) = \IsomS((M,e),(M,e))$ if the pairing $e$ is clear from the context.
    Similarly we write $\Sp(M) = \Isom((M,e),(M,e))$.
\end{definition}
Since $\IsomS((M,e), (M',e'))$ is finite and locally constant by Lemma \ref{lemma: symplectic modules locally constant}, it is represented by a finite \'etale scheme over $S$.
Hence $\SpS(M)\rightarrow S$ is a finite \'etale group scheme too.
Given a symplectic module $(M,e)/S$ of type $D$, $\IsomS((M,e),(M_D,e_D))$ receives a left action of $\SpS(M_D)$.
\begin{lemma}\label{lemma: symplectic modules vs SpD torsors}
    The assignment $(M,e) \mapsto \IsomS((M,e),(M_D,e_D))$ induces a bijection between isomorphism classes of the following objects:
    \begin{itemize}
        \item Symplectic modules over $S$ of type $D$;
        \item $\SpS(M_D)$-torsors over $S$.
    \end{itemize}
    %This bijection is compatible with base change, but we probably don't need to say that.
\end{lemma}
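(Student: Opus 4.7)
The plan is to apply the standard ``forms and torsors'' dictionary: objects that become isomorphic to a fixed reference object $(M_D,e_D)$ \'etale-locally are classified by torsors under its automorphism sheaf $\SpS(M_D)$.

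First I would verify that the assignment is well defined, i.e., that $\IsomS((M,e),(M_D,e_D))$ really is a (left) $\SpS(M_D)$-torsor on $S$. The group $\SpS(M_D)$ acts on the sheaf $\IsomS((M,e),(M_D,e_D))$ by post-composition, and this action is simply transitive whenever the sheaf has a section. By the definition of ``type $D$'' recalled after Lemma~\ref{lemma: symplectic modules locally constant}, there exists a surjective \'etale $T\rightarrow S$ over which $(M,e)_T\simeq (M_D,e_D)_T$; such an isomorphism is precisely a section of $\IsomS((M,e),(M_D,e_D))$ over $T$, so the sheaf is \'etale-locally trivial. Combined with the fact that $\IsomS((M,e),(M_D,e_D))$ is a sheaf (which was already used to make sense of $\SpS(M_D)$ itself), this makes it a $\SpS(M_D)$-torsor.

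Next I would construct an inverse, which is the twisting construction. Given a $\SpS(M_D)$-torsor $P$ on $S$, form the contracted product
\[
M_P \;\coloneqq\; P \wedge^{\SpS(M_D)} M_D,
\]
i.e., the sheaf quotient of $P\times M_D$ by the diagonal action $g\cdot(p,m)=(p\cdot g^{-1}, g(m))$. Since $\SpS(M_D)$ acts on $M_D$ by group automorphisms preserving $e_D$, this quotient inherits the structure of a commutative group scheme (\'etale locally isomorphic to $M_D$, hence finite \'etale) together with an induced nondegenerate alternating pairing $e_P$. Thus $(M_P,e_P)$ is a symplectic module of type $D$.

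Finally I would check the two assignments are mutually inverse. Given $(M,e)$ of type $D$, there is a canonical evaluation map
\[
\IsomS((M,e),(M_D,e_D))\wedge^{\SpS(M_D)} M_D \longrightarrow M,\qquad [\varphi,m]\mapsto \varphi^{-1}(m),
\]
which is a well-defined morphism of symplectic modules; it is an isomorphism because it becomes one after an \'etale cover trivializing the torsor (both sides are then canonically $M_D$). Conversely, given a torsor $P$, the map
\[
P \longrightarrow \IsomS((M_P,e_P),(M_D,e_D)),\qquad p\mapsto \bigl([p,m]\mapsto m\bigr),
\]
is a morphism of $\SpS(M_D)$-torsors, hence an isomorphism. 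Both checks reduce, after passing to a trivializing cover, to the tautology that the automorphism sheaf of $(M_D,e_D)$ is $\SpS(M_D)$ by definition. The only mild obstacle is to make sure the contracted product $P\wedge^{\SpS(M_D)} M_D$ exists as a finite \'etale group scheme (rather than only as an fppf sheaf), but this is automatic since $\SpS(M_D)$ is finite \'etale and $P$ is representable by a finite \'etale scheme, so the quotient is effective and again finite \'etale.
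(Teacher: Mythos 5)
Your proof is correct and is essentially the paper's argument: the paper also proves this by the twisting principle/\'etale descent, using the fact (recorded after Lemma~\ref{lemma: symplectic modules locally constant}) that a symplectic module of type $D$ becomes isomorphic to $(M_D,e_D)$ after a surjective \'etale base change, and then formally invoking the gerbe formalism (Lemma~\ref{lemma: nonempty gerbe is neutral}) where you instead write out the contracted-product quasi-inverse and the representability check by hand.
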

\begin{proof}
    This follows from \'etale descent (sometimes called the twisting principle) and Lemma \ref{lemma: symplectic modules locally constant}.
    (More formally, the category of symplectic modules of type $D$ is a stack in the \'etale topology on $S$ and is a gerbe by Lemma \ref{lemma: symplectic modules locally constant}. Now apply Lemma \ref{lemma: nonempty gerbe is neutral}.)    
\end{proof}

\begin{example}
If $D = (2,2,\dots,2)$ has length $n$, then $\SpS(M_D)$ is the constant group scheme $\Sp_{2n}(\F_2)$.
If $S$ is the spectrum of a field $k$, then isomorphism classes symplectic modules of type $D$ are in bijection with $\HH^1(k, \Sp_{2n}(\F_2))$, which is itself in bijection with conjugacy classes of continuous representations $\Gal_k\rightarrow \Sp_{2n}(\F_2)$.
\end{example}

\subsection{Abstract theta groups}\label{subsec: abstract theta groups}

Let $S$ be a scheme and $M\rightarrow S$ a finite \'etale commutative group scheme.
Let $1\rightarrow \G_m\rightarrow \mathcal{G}\rightarrow M\rightarrow 1$ be a central extension of group schemes.
If $T\rightarrow S$ is a morphism and $x,y\in M(T)$ lift to elements $\tilde{x}, \tilde{y} \in \mathcal{G}(T)$, then the commutator $[\tilde{x}, \tilde{y}] = \tilde{x}\tilde{y}\tilde{x}^{-1} \tilde{y}^{-1}$ lies in the subgroup $\G_m(T)$ and is independent of the choice of lift of $x$ and $y$.
By descent, it follows that the assignment $(x,y)\mapsto [\tilde{x}, \tilde{y}]$ defines a morphism $M \times M\rightarrow \G_m$.
This morphism is easily checked to be bilinear and alternating; it is called the \emph{commutator pairing} associated to the central extension.

\begin{definition}
Let $(M,e)$ be a symplectic module over $S$.
A \emph{theta group for $(M,e)$} is a central extension of group schemes
\begin{align}
1 \rightarrow \G_m \rightarrow \mathcal{G} \rightarrow M \rightarrow 1
\end{align}
whose commutator pairing equals $e$.
\end{definition}
The data of a theta group includes not just the group scheme $\mathcal{G}$ but also the central extension structure; we will usually suppress this additional data in the notation.
If $\mathcal{G}$ is a theta group for $(M,e)$ and $T\rightarrow S$ is a morphism, then $\mathcal{G}_T$ is a theta group for $(M,e)_T$.

Let $\mathcal{G}$ and $\mathcal{G}'$ be theta groups for the symplectic modules $(M,e)$ and $(M',e')$ respectively.
Define $\Isom(\mathcal{G}, \mathcal{G}')$ to be the set of isomorphisms of group schemes $\alpha\colon \mathcal{G} \rightarrow \mathcal{G}'$ such that $\alpha|_{\G_m}\colon \G_m\rightarrow \G_m$ is the identity.
If such an $\alpha$ exists, we say $\mathcal{G}$ and $\mathcal{G}'$ are isomorphic.
Define $\IsomS(\mathcal{G}, \mathcal{G}')$ to be \'etale sheaf on $S$ with $\IsomS(\mathcal{G}, \mathcal{G}')(T)=\Isom(\mathcal{G}_T, \mathcal{G}'_T)$ for every morphism $T\rightarrow S$.
There exists a morphism 
\begin{align}\label{eqn: iso theta groups induces iso symplectic modules}
\IsomS(\mathcal{G}, \mathcal{G}') \rightarrow \IsomS((M,e),(M',e'))
\end{align}
sending $\alpha\colon \mathcal{G} \rightarrow \mathcal{G}'$ to the induced isomorphism when quotienting out $\G_m$.
Given $\beta\in \Isom((M,e),(M',e'))$, let $\IsomS(\mathcal{G},\mathcal{G}';\beta)\subset \IsomS(\mathcal{G}, \mathcal{G}')$ be the fiber of \eqref{eqn: iso theta groups induces iso symplectic modules} above $\beta$, and similarly let $\Isom(\mathcal{G},\mathcal{G}';\beta)=\IsomS(\mathcal{G},\mathcal{G}';\beta)(S)$.
If $(M,e) = (M',e')$, we call elements of $\Isom(\mathcal{G}, \mathcal{G}';\Id_M)$ \emph{framed isomorphisms} and we say $\mathcal{G}$ and $\mathcal{G}'$ are \emph{framed isomorphic} if $\Isom(\mathcal{G}, \mathcal{G}';\Id_M)\neq \varnothing$. 
If $\mathcal{G} = \mathcal{G}'$, write $\AutS(\mathcal{G}) = \IsomS(\mathcal{G}, \mathcal{G})$ and $\AutS(\mathcal{G}; \Id)=\IsomS(\mathcal{G}, \mathcal{G}; \Id_M)$ and their $S$-points by $\Aut(\mathcal{G})$ and $\Aut(\mathcal{G};\Id)$.

We first classify theta groups over separably closed fields.
Given a type $D$ such that $\#D$ is invertible on $S$, we define the \emph{standard theta group of type }$D$ over $S$, following \cite[page 294]{Mumford-equationsdefiningabelianvarieties}.
In the notation of \S\ref{subsec: symplectic modules}, let $\mathcal{G}_D  =\G_m \times M_D =  \G_m \times K_D \times K_D^{\vee}$ and define the group law via 
\begin{align}\label{equation:standardthetagroupformula}
(\lambda, x, \chi)\cdot (\lambda', x', \chi') = (\lambda \lambda' \chi'(x), x+x', \chi+ \chi').
\end{align}
(We write the group law on $\G_m$ multiplicatively and on $K_D, K_D^{\vee}$ additively.)
A calculation then shows that, with the obvious inclusion from $\G_m$ and projection to $M_D$, $\mathcal{G}_D$ is a theta group for $(M_D,e_D)$.

\begin{lemma}\label{lemma: theta groups locally isomorphic}
    Let $(M,e)$ be a symplectic module over $S$ and let $\mathcal{G}, \mathcal{G}'$ be theta groups for $(M,e)$.
    Then there exists an \'etale surjective $T\rightarrow S$ such that $\mathcal{G}_T$ and $\mathcal{G}'_T$ are framed isomorphic, in other words such that $\IsomS(\mathcal{G}, \mathcal{G}';\Id)(T)\neq \varnothing$.
\end{lemma}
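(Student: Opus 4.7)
The plan is to analyze the sheaf $\sh{F} := \IsomS(\mathcal{G}, \mathcal{G}'; \Id_M)$ on the \'etale site of $S$ and show that it is represented by a finite \'etale surjection $T \to S$, which will then provide the required base change. A direct calculation using centrality of $\mathcal{G}'$ shows that every element of $\Aut(\mathcal{G}'_T; \Id_{M_T})$ has the form $g \mapsto \chi(\pi(g)) \cdot g$ for a unique character $\chi \in M^{\vee}(T)$, where $\pi \colon \mathcal{G}' \to M$ denotes the projection and $M^{\vee} := \HomS(M, \G_m)$ is the Cartier dual. This yields a canonical identification $\AutS(\mathcal{G}'; \Id_M) \simeq M^{\vee}$, a finite \'etale $S$-group scheme; by postcomposition, $\sh{F}$ acquires the structure of a pseudo-torsor under $M^{\vee}$.

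The main step is to verify that $\sh{F}(\bar{s}) \neq \varnothing$ for every geometric point $\bar{s} = \Spec(k) \to S$. Since $\#M$ is invertible on $S$, $M_{\bar{s}}$ is a constant finite abelian group $N$ whose order is invertible in $k$, and both $\mathcal{G}_{\bar{s}}$ and $\mathcal{G}'_{\bar{s}}$ are central extensions of $N$ by $k^{\times}$ with commutator pairing $e_{\bar{s}}$. Isomorphism classes of such extensions that induce the identity on both $\G_m$ and $N$ are classified by the group cohomology $\HH^2(N, k^{\times})$, and the commutator map gives a homomorphism $\HH^2(N, k^{\times}) \to \bwedge{2} \Hom(N, k^{\times})$ whose kernel is $\mathrm{Ext}^1(N, k^{\times})$. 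Writing $N$ as a direct sum of cyclic factors $\Z/d\Z$ with $d$ invertible in $k$, each factor satisfies $\mathrm{Ext}^1(\Z/d\Z, k^{\times}) = k^{\times}/(k^{\times})^d = 0$ since $k$ is separably closed. Hence any two such extensions with the same commutator pairing are framed isomorphic over $\bar{s}$.

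To finish, a pseudo-torsor under a finite \'etale group scheme that has a point over every geometric point is automatically an \'etale torsor, and is therefore represented by a finite \'etale surjection $T \to S$; over this $T$, the tautological section of $\sh{F}$ provides the desired framed isomorphism $\mathcal{G}_T \simeq \mathcal{G}'_T$. The main difficulty is the $\mathrm{Ext}^1$-vanishing step, which is elementary once the setup is in place; the remaining pseudo-torsor and descent arguments are formal.
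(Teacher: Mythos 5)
Your fibrewise computation is correct, and it is a genuinely different route from the paper's: instead of constructing explicit splittings of $\mathcal{G}|_{K_D}$ and $\mathcal{G}|_{K_D^{\vee}}$ after an \'etale base change (Mumford's argument, which is what the paper does), you classify central extensions of the constant group $N$ by $k^{\times}$ over a separably closed field by $\HH^2(N,k^{\times})$, use that the commutator map is additive with kernel $\mathrm{Ext}^1_{\Z}(N,k^{\times})$, and kill the latter via $k^{\times}/(k^{\times})^{d}=0$. Also the identification of the framed automorphism sheaf with $M^{\vee}\simeq M$ and the resulting pseudo-torsor structure on $\IsomS(\mathcal{G},\mathcal{G}';\Id)$ are fine and consistent with Lemma \ref{lemma: automorphism groups theta groups}(2).

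The genuine gap is the globalization step: it is \emph{not} true that a pseudo-torsor under a finite \'etale group scheme with a point over every geometric point is automatically an \'etale torsor. Already for the trivial group this fails: the subsheaf $\sh{F}$ of the final object on the big \'etale site with $\sh{F}(T)\neq\varnothing$ exactly when $T\rightarrow S$ factors Zariski-locally through a residue field of $S$ is a pseudo-torsor under the trivial group and has a point over every geometric point, but it is not a torsor unless $S$ is a disjoint union of spectra of fields. Nonemptiness over separably closed fields does not by itself yield sections \'etale-locally on $S$; you need extra input about the specific sheaf $\IsomS(\mathcal{G},\mathcal{G}';\Id)$, for instance representability by a scheme \'etale and of finite presentation over $S$, or sections over strictly henselian local rings together with limit-preservation. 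Both repairs are available: your cocycle argument works verbatim over a strictly henselian local ring $R$ (there $M_R$ is constant, the $\G_m$-torsor $\mathcal{G}\rightarrow M_R$ is trivial because $\Pic(\Spec R)=0$, and $R^{\times}/(R^{\times})^{d}=0$ since $d$ is invertible and $R$ is henselian with separably closed residue field), and since $\mathcal{G},\mathcal{G}'$ are affine of finite presentation over $S$ the Isom-sheaf is limit-preserving, so a section over $\Spec\mathcal{O}_{S,s}^{\mathrm{sh}}$ spreads out to an \'etale neighbourhood of each $s$; alternatively, use Lemma \ref{lemma: equivalence Gm-theta groups and mun-theta groups} to identify $\IsomS(\mathcal{G},\mathcal{G}';\Id)$ with an Isom-sheaf of the finite \'etale extensions $\mathcal{G}[n]$, $\mathcal{G}'[n]$, which is representable by a finite \'etale $S$-scheme, so that nonempty geometric fibres do give surjectivity. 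As written, the ``formal'' final step is the real missing content, not the $\mathrm{Ext}^1$ computation.
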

\begin{proof}
    The usual proof when $S$ is the spectrum of an algebraically closed field generalizes to an arbitrary base scheme $S$.
    Indeed, since the statement is local on $S$ we may assume by a standard argument that $S$ is connected. 
    (We explain the standard argument once: we may assume $S = \Spec(A)$ is affine, so there exists a finitely generated $\Z$-algebra $A_0\subset A$ such that $\mathcal{G}, \mathcal{G}'$ and $(M,e)$ are defined over $S_0 = \Spec(A_0)$, so we may replace $S$ by $S_0$ and assume $S$ is Noetherian, in which case the connected components of $S$ are open.)
    By Lemma \ref{lemma: symplectic modules locally constant} we may assume after \'etale base change that $(M,e) = (M_D,e_D)$ and $\mathcal{G}' = \mathcal{G}_D$ for some type $D$.
    After a further base change, there exist homomorphisms $K_D\rightarrow \mathcal{G}$, $K_D^{\vee}\rightarrow \mathcal{G}$ that split the homomorphisms $\mathcal{G}|_{K_D} \rightarrow K_D$ and $\mathcal{G}|_{K_D^{\vee}} \rightarrow K_D^{\vee}$ respectively.
    Using these splittings, we can write down an explicit framed isomorphism between $\mathcal{G}$ and $\mathcal{G}_D$, as in \cite[Corollary of Theorem 1]{Mumford-equationsdefiningabelianvarieties}.
\end{proof}

\begin{lemma}\label{lemma: automorphism groups theta groups}
    Let $\mathcal{G}$ be a theta group for a symplectic module $(M,e)$ over $S$.
    \begin{enumerate}
    \item The forgetful map $\AutS(\mathcal{G})\rightarrow \SpS(M)$ is surjective.
    \item Given a morphism $T\rightarrow S$ and $m \in M(T)$, let $\alpha_m$ be the isomorphism $\mathcal{G}_T\rightarrow \mathcal{G}_T$ defined by $\tilde{x}\mapsto e(m,x)\tilde{x}$, where $x\in M(T)$ is the image of $\tilde{x}$.
    Then the assignment $m\mapsto \alpha_m$ induces an isomorphism of group schemes $M\simeq \AutS(\mathcal{G};\Id)$. 
    %With this convention for the isomorphism $\alpha$, it coincides with conjugating x by a lift of m: \tilde{m}\tilde{x} \tilde{m}^{-1} = e(m,x)\tilde{x}.
    \end{enumerate}
\end{lemma}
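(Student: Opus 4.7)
The plan is to establish part~(2) first, as it gives a clean identification of the framed automorphism sheaf, and then to deduce part~(1) by combining Lemma~\ref{lemma: theta groups locally isomorphic} with a pullback/twisting argument.

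For part~(2), I would first check that $\alpha_m$ is a well defined framed automorphism for each $T\to S$ and each $m \in M(T)$: it is the identity on $\G_m$, induces the identity on $M$ by construction, and respects multiplication by bilinearity of $e$, since
\[
\alpha_m(\tilde x \tilde y) = e(m, x+y)\,\tilde x \tilde y = e(m,x)\,e(m,y)\,\tilde x \tilde y = \alpha_m(\tilde x)\,\alpha_m(\tilde y).
\]
Additivity in $m$ is similarly immediate, so $m \mapsto \alpha_m$ defines a morphism of group sheaves $M \to \AutS(\mathcal{G};\Id)$. Injectivity follows from the nondegeneracy of $e$: if $\alpha_m$ is the identity then $e(m,-) = 1$ on $M_T$, forcing $m=0$. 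For surjectivity, given a framed automorphism $\alpha$ of $\mathcal{G}_T$ and writing $\alpha(\tilde x) = f(\tilde x)\,\tilde x$ with $f\colon \mathcal{G}_T \to \G_{m,T}$, centrality of $\G_m$ gives $f(\lambda \tilde x) = f(\tilde x)$ for $\lambda \in \G_m$, so $f$ descends to $\bar f\colon M_T \to \G_{m,T}$. The requirement that $\alpha$ be a homomorphism forces $\bar f$ to be a character of $M_T$, and nondegeneracy of $e$ yields a unique $m \in M(T)$ with $\bar f = e(m,-)$, i.e.\ $\alpha = \alpha_m$. Since this description is functorial in $T$, it upgrades to the claimed isomorphism of group schemes $M \simeq \AutS(\mathcal{G};\Id)$.

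For part~(1), surjectivity of the sheaf map $\AutS(\mathcal{G}) \to \SpS(M)$ is a local condition in the \'etale topology, so it suffices after \'etale base change to lift each $\beta \in \Sp(M)(T)$ to an element of $\Aut(\mathcal{G}_T)$. The key construction is the twisted theta group $\beta^{*}\mathcal{G}_T$, the pullback of the central extension along $\beta \colon M_T \to M_T$. Because $\beta$ preserves $e$, the commutator pairing of $\beta^{*}\mathcal{G}_T$ is again $e$, so $\beta^{*}\mathcal{G}_T$ is another theta group for $(M,e)_T$. Lemma~\ref{lemma: theta groups locally isomorphic} then produces, after a further \'etale cover, a framed isomorphism $\mathcal{G}_T \xrightarrow{\sim} \beta^{*}\mathcal{G}_T$; composed with the tautological map $\beta^{*}\mathcal{G}_T \to \mathcal{G}_T$ covering $\beta$, this gives the desired lift. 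The main obstacle is purely bookkeeping in the sheaf formalism of part~(1): verifying that pullback along $\beta$ produces a theta group with the same commutator pairing, and that framed isomorphisms into $\beta^{*}\mathcal{G}_T$ correspond bijectively to lifts of $\beta$ in $\AutS(\mathcal{G})_T$. No deeper input beyond the standard Heisenberg-group structure of $\mathcal{G}$ is required.
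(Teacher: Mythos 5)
Your proof is correct and follows essentially the same route as the paper: part (1) is exactly the paper's argument that $\beta^{*}\mathcal{G}_T$ is again a theta group for $(M,e)_T$, hence \'etale-locally framed-isomorphic to $\mathcal{G}_T$ by Lemma \ref{lemma: theta groups locally isomorphic}, giving local lifts of $\beta$; part (2) is the same direct computation (descend $f(\tilde x)=\alpha(\tilde x)\tilde x^{-1}$ to a character of $M$ and invoke nondegeneracy of $e$) that the paper cites from Birkenhake--Lange. The only cosmetic difference is that you work with a general $(M,e)$ throughout instead of first reducing to the split model $(M_D,\mathcal{G}_D)$, which is harmless since neither computation needs the reduction.
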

\begin{proof}
    We may assume $S$ is connected.
    Furthermore we may assume by Lemmas \ref{lemma: symplectic modules locally constant} and \ref{lemma: theta groups locally isomorphic} that $M = M_D$ and $\mathcal{G} = \mathcal{G}_D$ for some type $D$.
    Part 1 then follows from \cite[Lemma 6.3.7]{olsson-compactifymoduli}.
    (Sketch of proof: it suffices to prove that for every $S$-scheme $T$, every $\alpha \in \SpS(M)(T)$ lifts locally to $\AutS(\mathcal{G})$.
    This follows from the fact that $\alpha^*\mathcal{G}_T$ is a theta group for $(M,e)_T$, so must be \'etale locally isomorphic to $\mathcal{G}_T$.)
    Part 2 follows from a direct computation whose proof is identical to the case where $S$ is the spectrum of a field \cite[Lemma 6.6.6]{BirkenhakeLange-AVs}.
    The exactness of \eqref{eq: first exact sequence AutG} follows from Part 1 and using the isomorphism $M\simeq \AutS(\mathcal{G};\Id)$ of Part 2.
\end{proof}

Consequently, by using the isomorphism $\alpha\colon M\xrightarrow{\sim} \AutS(\mathcal{G};\Id)$ we obtain the exact sequence of finite \'etale group schemes
\begin{align}\label{eq: first exact sequence AutG}
    1\rightarrow M \rightarrow \AutS(\mathcal{G})\rightarrow \SpS(M)\rightarrow 1.
\end{align}
This sequence will play a fundamental role in this paper, because of Lemma \ref{lemma: theta group exists iff class lifts}.

\begin{corollary}\label{corollary: iso classes theta groups vs torsors}
    Let $S$ be a scheme and $D$ a type with $\#D$ invertible on $S$.
    Then the assignment $\mathcal{G} \mapsto \IsomS(\mathcal{G}, \mathcal{G}_D)$ induces a bijection between:
    \begin{itemize}
        \item Isomorphism classes of theta groups for $(M,e)$ over $S$, where $(M,e)$ is a symplectic module of type $D$ over $S$;
        \item $\AutS(\mathcal{G}_D)$-torsors over $S$.
    \end{itemize}
    Suppose that there exists a theta group $\mathcal{G}$ for the symplectic module $(M,e)$ over $S$.
    Then the assignment $\mathcal{G}' \mapsto \IsomS(\mathcal{G}', \mathcal{G};\Id_M)$ induces a bijection between:
    \begin{itemize}
        \item Framed isomorphism classes of theta groups for $(M,e)$ over $S$;
        \item $\AutS(\mathcal{G};\Id)$-torsors over $S$.
    \end{itemize}
\end{corollary}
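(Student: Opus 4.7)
The plan is to view both bijections as instances of the slogan ``neutral gerbe bound by $\sh{G}$ is equivalent to $\mathrm{B}\sh{G}$'', in the form of Lemma \ref{lemma: nonempty gerbe is neutral}. Concretely, I would assemble theta groups into stacks on the big \'etale site of $S$ that turn out to be neutral gerbes, and then read off the automorphism sheaves at a fixed section.

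For the first bijection, I define a stack $\sh{X}_D$ whose $T$-points form the groupoid of pairs $((M,e),\mathcal{G})$ with $(M,e)$ a symplectic module of type $D$ over $T$ and $\mathcal{G}$ a theta group for $(M,e)$, with morphisms given by isomorphisms $\mathcal{G}\to\mathcal{G}'$ restricting to the identity on $\G_m$ (which automatically induce isomorphisms of the underlying symplectic modules). That $\sh{X}_D$ is a stack follows from descent for (quasi-)affine group schemes. It is a gerbe: every fiber is non-empty because $((M_D,e_D),\mathcal{G}_D)$ gives a global section, and any two objects of $\sh{X}_D(T)$ are \'etale-locally isomorphic by combining Lemma \ref{lemma: symplectic modules locally constant} with Lemma \ref{lemma: theta groups locally isomorphic}. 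Since $((M_D,e_D),\mathcal{G}_D)\in\sh{X}_D(S)$, the gerbe is neutral, and its automorphism sheaf at this section is $\AutS(\mathcal{G}_D)$ by definition. Lemma \ref{lemma: nonempty gerbe is neutral} then produces an isomorphism $\sh{X}_D\simeq\mathrm{B}\AutS(\mathcal{G}_D)$ given on objects by $((M,e),\mathcal{G})\mapsto\IsomS(((M,e),\mathcal{G}),((M_D,e_D),\mathcal{G}_D))=\IsomS(\mathcal{G},\mathcal{G}_D)$. Passing to $S$-points and taking isomorphism classes yields exactly the first bijection, since isomorphism classes in $\mathrm{B}\AutS(\mathcal{G}_D)(S)$ are by definition isomorphism classes of $\AutS(\mathcal{G}_D)$-torsors over $S$.

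For the second bijection, I fix $(M,e)$ and instead define a stack $\sh{Y}_{(M,e)}$ whose $T$-points are theta groups for $(M,e)_T$, with morphisms the framed isomorphisms (those inducing $\Id_M$). The same descent argument shows this is a stack, and now only Lemma \ref{lemma: theta groups locally isomorphic} is needed to conclude that any two framed theta groups are \'etale-locally framed isomorphic, making $\sh{Y}_{(M,e)}$ a gerbe. The hypothesis that a theta group $\mathcal{G}$ exists over $S$ neutralises this gerbe. The automorphism sheaf at $\mathcal{G}$ is $\AutS(\mathcal{G};\Id)$, so Lemma \ref{lemma: nonempty gerbe is neutral} gives $\sh{Y}_{(M,e)}\simeq\mathrm{B}\AutS(\mathcal{G};\Id)$ via $\mathcal{G}'\mapsto\IsomS(\mathcal{G}',\mathcal{G};\Id_M)$. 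Taking $S$-points delivers the second bijection.

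There is essentially no hard step: everything reduces to the two local-triviality statements already proved (Lemmas \ref{lemma: symplectic modules locally constant} and \ref{lemma: theta groups locally isomorphic}) plus the formal gerbe lemma (Lemma \ref{lemma: nonempty gerbe is neutral}). The only mildly subtle point to check carefully is that a morphism in $\sh{X}_D$, defined as an isomorphism of theta groups fixing $\G_m$, indeed uniquely determines (and is determined by) the induced isomorphism of symplectic modules together with a framed isomorphism after transport, so that the assignment $((M,e),\mathcal{G})\mapsto\IsomS(\mathcal{G},\mathcal{G}_D)$ really does recover the claimed bijection on isomorphism classes; this is a bookkeeping verification using Lemma \ref{lemma: automorphism groups theta groups}.
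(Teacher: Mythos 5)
Your proposal is correct and follows essentially the same route as the paper: the paper's proof is precisely the combination of Lemmas \ref{lemma: symplectic modules locally constant} and \ref{lemma: theta groups locally isomorphic} with the neutral-gerbe Lemma \ref{lemma: nonempty gerbe is neutral}, with effectivity of descent data handled (as you do) by the affineness of theta groups over $S$. Your write-up merely spells out the two gerbes implicitly used there, so there is nothing to add.
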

\begin{proof}
     This follows from Lemmas \ref{lemma: symplectic modules locally constant} and \ref{lemma: theta groups locally isomorphic} and \'etale descent, more precisely Lemma \ref{lemma: nonempty gerbe is neutral}. (Noting that theta groups are always affine over $S$, so every descent datum is effective by \cite[Theorem 4.3.5]{Poonen-rationalpointsonvarieties}.)
\end{proof}

Let $(M,e)$ be a finite symplectic module over $S$ of type $D$.
Its isomorphism class corresponds to an element $c_M\in \HH^1(S, \SpS(M_D))$ using Lemma \ref{lemma: symplectic modules vs SpD torsors}.
\begin{lemma}\label{lemma: theta group exists iff class lifts}
    There exists a theta group for $(M,e)$ over $S$ if and only if the class $c_M$ lifts along the map $\HH^1(S, \AutS(\mathcal{G}_D)) \rightarrow \HH^1(S,\SpS(M_D))$.
\end{lemma}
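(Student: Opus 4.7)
The plan is to identify both conditions as images in $\HH^1(S, \SpS(M_D))$ of the same torsor-classification bijection, using the first part of Corollary \ref{corollary: iso classes theta groups vs torsors} and Lemma \ref{lemma: symplectic modules vs SpD torsors}, and matching them via the exact sequence \eqref{eq: first exact sequence AutG}.

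First I would record the precise compatibility underlying the lemma. The assignment $\mathcal{G} \mapsto \IsomS(\mathcal{G}, \mathcal{G}_D)$ of Corollary \ref{corollary: iso classes theta groups vs torsors} classifies theta groups for \emph{all} symplectic modules of type $D$, with the ambient $(M,e)$ allowed to vary. Given a theta group $\mathcal{G}$ for $(M,e)$, the quotient-by-$\G_m$ map \eqref{eqn: iso theta groups induces iso symplectic modules} gives a morphism of sheaves
\[
\IsomS(\mathcal{G}, \mathcal{G}_D) \longrightarrow \IsomS((M,e), (M_D, e_D))
\]
that is equivariant for $\AutS(\mathcal{G}_D) \to \SpS(M_D)$. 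The key observation is that this map is the change-of-structure-group (pushforward) morphism of torsors associated to \eqref{eq: first exact sequence AutG}: this is tautological in the special case $\mathcal{G} = \mathcal{G}_D$, where the automorphism sheaves are $\AutS(\mathcal{G}_D)$ and $\SpS(M_D)$ and the map between them is the projection of \eqref{eq: first exact sequence AutG} by Lemma \ref{lemma: automorphism groups theta groups}(1). The general case then follows by \'etale descent, since Lemma \ref{lemma: theta groups locally isomorphic} guarantees that $\mathcal{G}$ and $(M,e)$ become standard after a suitable \'etale cover.

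For the forward implication, suppose $\mathcal{G}$ is a theta group for $(M,e)$. Let $[T] \in \HH^1(S, \AutS(\mathcal{G}_D))$ be the class of $\IsomS(\mathcal{G}, \mathcal{G}_D)$ given by Corollary \ref{corollary: iso classes theta groups vs torsors}. By the previous paragraph, its image in $\HH^1(S, \SpS(M_D))$ is the class of $\IsomS((M,e), (M_D, e_D))$, which is $c_M$ by definition. Hence $c_M$ lifts.

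Conversely, suppose $[T] \in \HH^1(S, \AutS(\mathcal{G}_D))$ maps to $c_M$. Applying the first part of Corollary \ref{corollary: iso classes theta groups vs torsors} to $T$ produces a theta group $\mathcal{G}$ for \emph{some} symplectic module $(M',e')$ of type $D$ with $\IsomS(\mathcal{G}, \mathcal{G}_D) \cong T$. Pushing forward to $\SpS(M_D)$ and using the compatibility above, the class of $(M',e')$ in $\HH^1(S, \SpS(M_D))$ equals the image of $[T]$, which is $c_M$; hence $(M',e') \cong (M,e)$ by the bijection of Lemma \ref{lemma: symplectic modules vs SpD torsors}. Transporting the theta group structure along any such isomorphism yields a theta group for $(M,e)$, completing the proof. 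The only nontrivial point is the compatibility between the two classifying bijections, but once that is verified the rest is a formal chase.
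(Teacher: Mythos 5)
Your proof is correct and follows essentially the same route as the paper: the paper's proof is exactly the combination of Lemma \ref{lemma: symplectic modules vs SpD torsors}, Corollary \ref{corollary: iso classes theta groups vs torsors}, and the compatibility fact that the quotient-by-$\G_m$ map realizes $\IsomS((M,e),(M_D,e_D))$ as the pushout of the $\AutS(\mathcal{G}_D)$-torsor $\IsomS(\mathcal{G},\mathcal{G}_D)$ along $\AutS(\mathcal{G}_D)\rightarrow \SpS(M_D)$. You merely spell out this torsor chase (including the transport of structure in the converse direction) in more detail than the paper does.
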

\begin{proof}
    This follows from Lemma \ref{lemma: symplectic modules vs SpD torsors}, Corollary \ref{corollary: iso classes theta groups vs torsors} and the fact that if $\mathcal{G}$ is a theta group for $(M',e')$, then the pushout of the $\AutS(\mathcal{G}_D)$-torsor $\IsomS(\mathcal{G}', \mathcal{G}_D)$ along $\AutS(\mathcal{G}_D)\rightarrow \Sp(M_D)$ is isomorphic to $\IsomS(M',M_D)$. 
\end{proof}

Let $(M,e)$ be a symplectic module over $S$. 
Consider the fibered category $\ThetaCat_{(M,e)}$ where for an $S$-scheme $T$, the groupoid $\ThetaCat_{(M,e)}(T)$ has objects theta groups for $(M,e)_T$, and morphisms given by framed isomorphisms between theta groups.
Then $\ThetaCat_{(M,e)}$ is a stack in the \'etale topology on $S$.
By Lemma \ref{lemma: theta groups locally isomorphic}, it is in fact a gerbe, and by Part 2 of Lemma \ref{lemma: automorphism groups theta groups}, this gerbe is banded by the group scheme $M$.
By Lemma \ref{lemma:classificationgerbesH2}, this gerbe defines a class $[\ThetaCat_{(M,e)}]\in \HH^2(S,M)$, and this class vanishes if and only if there exists a theta group for $(M,e)$.
It is possible to use the formalism of gerbes to show that $[\ThetaCat_{(M,e)}]$ is the image of $c_M$ under a connecting homomorphism associated to the exact sequence \eqref{eq: first exact sequence AutG} for the pair $(M_D, \mathcal{G}_D)$, but we will not need this.

It will sometimes be useful to refine theta groups to finite \'etale group schemes (this is somewhat implicit in \cite[Chapitre I, Proposition 5.7]{moretbailly-pinceaux}).
Let $(M,e)$ be a symplectic module of type $D = (d_1, \dots, d_g)$.
Given an integer $n\geq 1$ and a group scheme $G\rightarrow S$, let $G[n]$ denote the kernel of the multiplication-by-$n$ morphism $[n]\colon G\rightarrow G$.

\begin{lemma}\label{lemma: equivalence Gm-theta groups and mun-theta groups}
    Let $n = d_g$ if $d_g$ is odd and $n = 2d_g$ if $d_g$ is even.
    If $\mathcal{G}$ is a theta group for $(M,e)$, then $\mathcal{G}[n]$ is a finite \'etale closed subgroup scheme of $\mathcal{G}$ fitting in a central extension $1\rightarrow \mu_n \rightarrow \mathcal{G}[n]\rightarrow M\rightarrow 1$.
    Moreover, the assignment $\mathcal{G}\rightarrow \mathcal{G}[n]$ induces an equivalence between the following groupoids:
    \begin{itemize}
        \item Theta groups for $(M,e)$ over $S$, with morphisms given by isomorphisms of theta groups;
        \item Central extensions of the form $1\rightarrow \mu_{n} \rightarrow \mathcal{H}\rightarrow M\rightarrow 1$ whose commutator pairing equals $e$, with morphisms given by isomorphisms of group schemes $\mathcal{H}\rightarrow \mathcal{H}'$ that restrict to the identity on $\mu_n$.
    \end{itemize}
\end{lemma}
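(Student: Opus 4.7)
The plan is to show that the $n$-th power map on $\mathcal{G}$ is a group-scheme homomorphism landing in $\mathbb{G}_m$; its kernel is then $\mathcal{G}[n]$, and everything else follows formally. Note first that $n$ is invertible on $S$: since $d_g$ divides $\#D$ it is invertible, and in the even case $2 \mid d_g$ forces $2$ to be invertible as well.

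Because $\mathbb{G}_m$ is central in $\mathcal{G}$, the group scheme $\mathcal{G}$ is nilpotent of class at most two, and an induction using $hg = [h,g]gh$ gives the identity $(gh)^n = g^n h^n [h,g]^{n(n-1)/2}$. The commutator $[h,g]$ equals the Weil pairing value $e(\bar h, \bar g)$, where $\bar h, \bar g$ are the images of $g,h$ in $M$; since $d_g$ annihilates $M$, this value lies in $\mu_{d_g}$. A direct divisibility check shows that $d_g$ divides $n(n-1)/2$ in both relevant cases: if $n = d_g$ is odd then $n(n-1)/2 = d_g \cdot (d_g-1)/2$, and if $n = 2d_g$ then $n(n-1)/2 = d_g(2d_g-1)$. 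Hence the commutator factor is trivial, and $[n]\colon \mathcal{G} \to \mathcal{G}$, $g \mapsto g^n$, is a homomorphism of group schemes. Its image lies in $\mathbb{G}_m$ because $n$ annihilates $M$. Restricted to $\mathbb{G}_m$, $[n]$ is the $n$-th power map, which is étale surjective since $n$ is invertible on $S$; therefore $[n]\colon \mathcal{G} \to \mathbb{G}_m$ is itself étale surjective, and $\mathcal{G}[n] = \ker([n])$ is a closed finite étale subgroup scheme. The kernel of $\mathcal{G}[n] \to M$ is tautologically $\mathcal{G}[n] \cap \mathbb{G}_m = \mu_n$; surjectivity follows by lifting $x \in M$ to some $\tilde x \in \mathcal{G}$, picking étale-locally a $\lambda \in \mathbb{G}_m$ with $\lambda^n = \tilde x^{-n}$, and noting that $\lambda\tilde x \in \mathcal{G}[n]$ lifts $x$. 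The commutator pairing of $\mathcal{G}[n]$ is inherited from that of $\mathcal{G}$, hence equals $e$.

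For the equivalence of groupoids, I would invoke Lemma \ref{lemma: fully faithful morphism gerbes}. Both source and target are gerbes in the étale topology on $S$: they are locally non-empty (take the standard theta group $\mathcal{G}_D$, resp.\ its $n$-torsion $\mathcal{G}_D[n]$) and locally isomorphic by Lemma \ref{lemma: theta groups locally isomorphic} and the evident $\mu_n$-analogue of its proof (run the same Lagrangian-splitting argument inside $\mathcal{G}_D[n]$). By Lemma \ref{lemma: automorphism groups theta groups} and its analogue for $\mu_n$-extensions, the automorphism group of either object sits in an exact sequence $1 \to M \to \Aut \to \SpS(M) \to 1$. The functor $\mathcal{G} \mapsto \mathcal{G}[n]$ sends the automorphism $\alpha_m$ of $\mathcal{G}$ to its restriction to $\mathcal{G}[n]$, which is precisely the analogous inner automorphism of the $\mu_n$-extension, and it induces the identity on the $\SpS(M)$-quotient. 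The five lemma then gives an isomorphism on automorphism sheaves, and Lemma \ref{lemma: fully faithful morphism gerbes} yields the desired equivalence.

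The main obstacle is the divisibility check in the power-map identity: the choice $n = 2d_g$ in the even case is dictated precisely by the fact that $d_g (d_g-1)/2$ fails to be a multiple of $d_g$ when $d_g$ is even, so that the commutator factor would survive. Once the identity $(gh)^n = g^n h^n$ is established for our choice of $n$, the rest of the argument is essentially formal, and the gerbe structure on the target is obtained by minor variations of the constructions already carried out in Section \ref{sec: polarizations and theta groups}.
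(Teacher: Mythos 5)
The first half of your argument is correct and is essentially the paper's own: the paper also deduces that $\mathcal{G}[n]$ is a closed subgroup scheme from the identity $(xy)^n=x^ny^n\,e(x,y)^{\pm n(n\mp 1)/2}$ together with exactly your divisibility check (this is where the choice $n=2d_g$ in the even case enters), and gets finite \'etaleness from the invertibility of $n$. The extra details you supply — identifying the kernel of $\mathcal{G}[n]\to M$ with $\mu_n$ and proving surjectivity onto $M$ by correcting a lift by an \'etale-local $n$-th root — are correct and fill in verifications the paper leaves implicit.

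The second half is where you diverge from the paper, and your route has a genuine gap. The paper does not use the gerbe formalism here: it writes down an explicit quasi-inverse, sending $\mathcal{H}$ to the pushout $(\G_m\times\mathcal{H})/\{(\lambda,\lambda^{-1})\colon\lambda\in\mu_n\}$. You instead want both categories to be gerbes and to apply Lemma \ref{lemma: fully faithful morphism gerbes}, and the step you call ``the evident $\mu_n$-analogue'' of Lemma \ref{lemma: theta groups locally isomorphic} is false. Mumford's local-isomorphism argument splits the extension over a Lagrangian $K_D\subset M_D$, and that splitting exists because $\G_m$ is $n$-divisible as an \'etale sheaf, so commutative extensions of $K_D$ by $\G_m$ die \'etale-locally; for $\mu_n$-coefficients $\mathrm{Ext}^1(K_D,\mu_n)\neq 0$ and the splitting step breaks. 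Concretely, central $\mu_n$-extensions of $M$ with fixed commutator pairing $e$ differ from one another by commutative extension classes in $\mathrm{Ext}^1(M,\mu_n)\neq 0$: already for $D=(2)$, $n=4$, over $\mathbb{C}$, both $\mathcal{G}_D[4]\simeq \Z/4\Z\circ D_8$ (exponent $4$) and the modular group of order $16$, i.e.\ $\Z/8\Z\rtimes\Z/2\Z$ with the action by $5$ (exponent $8$), are central extensions of $M_D$ by $\mu_4$ with commutator pairing $e_D$, and they are not even abstractly isomorphic; for odd $d_g=p$ the two nonabelian groups of order $p^3$ play the same role. So the target category is not locally connected, the analogue of the exact sequence of Lemma \ref{lemma: automorphism groups theta groups} fails for such objects (their automorphism groups do not surject onto $\Sp(M)$ — in the order-$16$ example an automorphism fixing $\mu_4$ cannot interchange the class covered by order-$8$ elements with the class covered by involutions), and Lemma \ref{lemma: fully faithful morphism gerbes} cannot be invoked. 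These examples also show that essential surjectivity must use, somewhere, that objects of the form $\mathcal{G}[n]$ are killed by $n$: your argument never singles out the exponent-$n$ extensions, whereas the paper's pushout inverse does exactly this (for $\mathcal{H}$ of exponent dividing $n$ the natural map $\mathcal{H}\to(\G_m\times\mathcal{H})/\mu_n$ identifies $\mathcal{H}$ with the $n$-torsion of the pushout, and it does not for the order-$16$ group above). In short, the part you describe as ``essentially formal'' is the actual content of this half of the lemma, and the gerbe machinery as you set it up cannot deliver it; the explicit pushout construction (restricted, in effect, to extensions killed by $n$) is what is needed.
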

\begin{proof}
    For every $k\geq 1$, $\mathcal{G}[k]$ is a closed subscheme of $\mathcal{G}$.
    The identity $(xy)^k = x^k y^k [x,y]^{k(k+1)/2} = e(x,y)^{k(k+1)/2}x^ky^k$ for $x,y\in \mathcal{G}$ and the fact that $M$ is killed by $d_g$ shows that $\mathcal{G}[n]$ is closed under multiplication and inversion, so is indeed a closed subgroup scheme of $\mathcal{G}$.
    Since $d_g$ is invertible on $S$, $n$ is also invertible on $S$ and so $\mathcal{G}[n]$ is finite \'etale.
    To prove the equivalence of groupoids, we describe an inverse and leave the remaining verifications to the reader. 
    If $\mathcal{H}$ is a central $\mu_n$-extension of $M$ whose commutator pairing equals $e$, define $\mathcal{G}$ as the (sheaf) quotient of $\G_m \times \mathcal{H}$ by $\{(\lambda, \lambda^{-1})\colon \lambda \in \mu_n\}$.
    Then $\mathcal{G}$ is representable by a group scheme which is a theta group for $(M,e)$, and $\mathcal{H}\mapsto \mathcal{G}$ is the desired inverse.
\end{proof}

\subsection{Mumford theta groups}\label{subsec: Mumford theta groups}

Let $A\rightarrow S$ be a $g$-dimensional abelian scheme and let $\lambda\colon A\rightarrow A^{\vee}$ be a polarization whose degree is invertible on $S$.
Then the kernel $A[\lambda]$ is a finite \'etale $S$-group scheme.
Since $\lambda$ is self-dual, $A[\lambda]$ is Cartier dual to itself.
This self-duality is witnessed by a nondegenerate alternating pairing $e_{\lambda} \colon A[\lambda ] \times A[\lambda] \rightarrow \G_m$, called the \emph{Weil pairing}.
Therefore the pair $(A[\lambda],e_{\lambda})$ is a symplectic module over $S$, in the sense of Section \ref{subsec: symplectic modules}.
We say $(A, \lambda)$ is of type $D$ if $D$ has length $g$ and $(A[\lambda],e_{\lambda})$ is of type $D$.

Let $X\rightarrow S$ be an $A$-torsor and let $L$ be a line bundle on $X$ such that $\phi_{L} = \lambda$, see \S\ref{subsec: polarizations on abelian varieties} for the notation.
If $a\in A(S)$, let $t_a\colon X\rightarrow X$ be the translation-by-$a$ morphism.
Following Mumford \cite[p.\, 289]{Mumford-equationsdefiningabelianvarieties}, let $\mathcal{G}(L)\rightarrow S$ be the group scheme such that for every $S$-scheme $T$,
\[
\mathcal{G}(L)(T) = \{(a, \varphi) \mid  a\in A[\lambda](T)
,\, \varphi \text{ is an isomorphism of line bundles } L_T \xrightarrow{\sim} t_a^*L_T \}.
\]
The group operation is given by $(a, \varphi)\cdot (b, \psi) = (a+b,t_b^*\varphi \circ \psi)$, where $t_b^*\varphi\circ \psi$ is the composite $L\xrightarrow{\psi} t_b^* L \xrightarrow{t_b^*\varphi} t_b^*(t_a^*L) = t_{a+b}^* L$.
By \cite[\S6, Proposition 1]{Mumford-equationsdefiningAVs-part2} (who assumes $X=A$ but whose proof continues to hold for general $X$), $\mathcal{G}(L)$ is representable by a group scheme and the forgetful map $(a, \varphi)\mapsto a$ defines a central extension
\[
1\rightarrow \G_m\rightarrow \mathcal{G}(L)\rightarrow A[\lambda]\rightarrow 1.
\]
\begin{proposition}
    $\mathcal{G}(L)$ is a theta group for $(A[\lambda],e_{\lambda})$.
\end{proposition}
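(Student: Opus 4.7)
The plan is to verify that the commutator pairing of the central extension
\[
1\to \G_m \to \mathcal{G}(L) \to A[\lambda]\to 1
\]
coincides with the Weil pairing $e_{\lambda}$. The strategy is to reduce, via base change and trivialization of the torsor, to Mumford's original calculation on an abelian variety over a separably closed field.

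First, since $A[\lambda]\to S$ is finite \'etale, any homomorphism $A[\lambda]\times A[\lambda]\to \G_m$ is determined by its values on geometric points of $S$, so it suffices to verify equality of the two pairings after base change to every geometric point $\bar{s}=\Spec(k)\to S$. Over $\bar{s}$ the torsor $X_{\bar s}$ acquires a section, so a choice of such a section identifies $X_{\bar{s}}\simeq A_{\bar{s}}$; under this trivialization the line bundle $L$ pulls back to a line bundle $L'$ on $A_{\bar{s}}$ with $\phi_{L'} = \lambda_{\bar{s}}$, and (by its functorial definition in $L$) the group scheme $\mathcal{G}(L)_{\bar{s}}$ is carried isomorphically to Mumford's theta group $\mathcal{G}(L')$ as a central extension of $A[\lambda]_{\bar{s}}$ by $\G_m$. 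Thus it suffices to treat the classical case.

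In that case, given lifts $(a,\varphi),(b,\psi)\in \mathcal{G}(L')$ of points $a,b\in A[\lambda](k)$, the explicit formula for the group law gives
\[
(a,\varphi)\cdot(b,\psi)=(a+b,\, t_b^{*}\varphi\circ \psi), \qquad
(b,\psi)\cdot(a,\varphi)=(a+b,\, t_a^{*}\psi\circ \varphi),
\]
so the commutator $[(a,\varphi),(b,\psi)]\in \G_m(k)$ is the unique scalar comparing the two isomorphisms $L'\xrightarrow{\sim} t_{a+b}^{*}L'$. That this scalar equals $e_{\lambda}(a,b)$ is Mumford's theorem; see \cite[\S6, Proposition 1 and Theorem 1]{Mumford-equationsdefiningAVs-part2} and \cite[\S 23, Theorem 1]{Mumford-equationsdefiningabelianvarieties}. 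The geometric input is that the Mumford line bundle $\Lambda(L') = m^{*}L'\otimes p_1^{*}L'^{-1}\otimes p_2^{*}L'^{-1}$ on $A\times A$, restricted to $A[\lambda]\times A[\lambda]$, encodes precisely the pairing of $A[\lambda]$ with itself induced by $\lambda\colon A\to A^{\vee}$, which is the definition of $e_{\lambda}$.

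The main obstacle would be the last identification, but it is entirely classical; the novelty here is only in the reductions, and these are automatic once one notes that $\mathcal{G}(L)$ behaves well under base change and under the trivialization of $X$, which follows from its functorial definition.
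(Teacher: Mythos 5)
Your proposal is correct and follows essentially the same route as the paper: reduce (via the finite \'etale nature of $A[\lambda]$ and a trivialization of the torsor over a geometric point) to the classical case of an abelian variety over an algebraically closed field with $X=A$, and then invoke Mumford's computation that the commutator pairing of $\mathcal{G}(L')$ is $e_{\lambda}$. The only quibble is the citation at the end (the identification of the commutator with the Weil pairing is in Mumford's \emph{Abelian Varieties} or Tata Lectures III / Polishchuk, not in the cited sections of the Inventiones papers), which does not affect the argument.
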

\begin{proof}
    It suffices to prove that the commutator pairing of $\mathcal{G}(L)$ is $e_{\lambda}$. 
    To prove this, we may assume $S$ is the spectrum of an algebraically closed field and $X=A$, in which case it is well known, see for example \cite[p. 44-46]{Mumford-tataIII} or \cite[Section 12.2]{Polishchuk-abelianvarietiesthetafunctions}. 
\end{proof}

Let $M$ be another line bundle on $X$ with $\phi_M = \lambda$.
Let $\gamma\colon L\rightarrow M$ be an isomorphism.
Then the assignment $(a, \varphi)\mapsto (a, t_a^* (\gamma) \varphi \gamma^{-1})$ induces an isomorphism $F_{\gamma}\colon\mathcal{G}(L) \rightarrow \mathcal{G}(M)$ of group schemes that restricts to the identity on $\G_m$ and induces the identity on $A[\lambda]$.
In the notation of Section \ref{subsec: abstract theta groups}, $F_{\gamma}$ is a framed isomorphism.
Any other isomorphism $\gamma'\colon L\rightarrow  M$ differs from $\gamma$ by a nonzero scalar, hence $F_{\gamma'} = F_{\gamma}$.
We conclude that if $L$ and $M$ are isomorphic line bundles on $X$ with $\phi_L = \phi_M = \lambda$, then there is a canonical framed isomorphism between $\mathcal{G}(L)$ and $\mathcal{G}(M)$.
Similarly, if $M$ is a line bundle on $S$ and $p\colon X\rightarrow S$ is the structure map, the assignment $(a, \varphi)\mapsto (a, \varphi \otimes p^*\Id_M)$ induces an isomorphism $\mathcal{G}(L) \xrightarrow{\sim} \mathcal{G}(L\otimes p^*M)$.
We use these observations to extend the scope of the construction of $\mathcal{G}(L)$, as follows.

Let $\ell$ be an element of $\PicS_{X/S}(S)$ with $\phi_{\ell} = \lambda$ (see \S\ref{subsec: polarizations on abelian varieties}).
Let $f\colon S'\rightarrow S$ be an \'etale surjective morphism such that $f^*\ell = [L]$ for some line bundle $L$ on $X_{S'}$, using the sequence \eqref{eq: exact sequence usual pic scheme pic}.
Let $p_1, p_2\colon S' \times_S S'\rightarrow S'$ denote the two projections. 
Since $p_1^*f^*\ell = p_2^* f^*\ell$, there exists a line bundle $M$ on $S'\times_S S'$ and an isomorphism $\gamma\colon p_1^*L \xrightarrow{\sim} p_2^*L \otimes p^*M$.
By the previous paragraph, this induces a framed isomorphism $F\colon p_1^*\mathcal{G}(L) \xrightarrow{\sim}  p_2^*\mathcal{G}(L)$ of theta groups that is independent of the choice of $\gamma$ and $M$.
Therefore $F$ defines a descent datum.
Since theta groups are affine over $S$, every such descent datum is effective.
Therefore, there exists a theta group $\mathcal{G}$ for $(A[\lambda],e_{\lambda})$ over $S$, unique up to unique framed isomorphism, whose base change along $f$ corresponds to $\mathcal{G}(L)$ equipped with the descent datum $F$.
We denote such a group by $\mathcal{G}(\ell)$.
Up to unique isomorphism, $\mathcal{G}(\ell)$ is independent of the choice of cover $f$.

In conclusion, we have just shown that if there exists an $A$-torsor $X\rightarrow S$ and an element $\ell\in \PicS_X^{\lambda}(S)$, then there exists a theta group $\mathcal{G}(\ell)$ for $(A[\lambda],e_{\lambda})$. 
In the next section, we show that the converse holds (Theorem \ref{theorem: main iso gerbes}).

%We can view this as following from the fact that the map of the Picard stack to the stack of theta groups factors through the G_m-rigidification.

\subsection{A theta group criterion for representing a polarization}\label{subsec: theta group criterion}

Let $A\rightarrow S$ be an abelian scheme, let $\lambda\colon A\rightarrow A^{\vee}$ be a polarization whose degree is invertible on $S$ and let $X$ be an $A$-torsor.
Given $a\in A(S)$, recall that we denote the translation-by-$a$ maps $A\rightarrow A$ and $X\rightarrow X$ by $t_a$.
Given $x\in X(S)$, write $t_x\colon A\rightarrow X$ for the unique $A$-equivariant map sending the zero section to $x$.

The scheme $\PicS_A^{\lambda}$ of line bundles representing $\lambda$ is a torsor under $A^{\vee} = \PicS^0_A$ by tensoring with degree-zero line bundles.
Pulling back the $A^{\vee}$-action on $\PicS_A^{\lambda}$ along $\lambda\colon A\rightarrow A^{\vee}$ defines an $A$-action on $\PicS^{\lambda}_A$.
This action is explicitly given by $a\cdot [L]  = \lambda(a)\otimes [L] = \phi_L(a)\otimes [L]= t_a^*[L]$.

Given an element $\ell \in \PicS_X^{\lambda}(S)$, the assignment $x\mapsto t_x^* \ell$ defines a morphism $\psi_{\ell}\colon X\rightarrow \PicS_A^{\lambda}$.
This morphism is equivariant with respect to the $A$-action on the source and target, since $\psi_{\ell}(t_a(x)) = t^*_{t_a(x)}\ell = t_a^*t_x^*\ell=t_a^*\psi_{\ell}(x)$.

%The next lemma can be interpreted as `$\lambda$-descent' for a general polarization.
\begin{lemma}\label{lemma: PicXlambda versus A-equivariant maps}
In the above notation, the assignment $\ell\mapsto \psi_{\ell}$ induces a bijection
\begin{align}\label{equation: PicXlambda versus A-equivariant maps}
    \PicS_X^{\lambda}(S) \xrightarrow{1\colon1}
    \{A\text{-equivariant morphisms }X\rightarrow \PicS_A^{\lambda}  \}.
\end{align}
If $f\colon Y\rightarrow X$ is a morphism of $A$-torsors and $\ell \in \PicS_X^{\lambda}(S)$, then $\psi_{f^*\ell} = \psi_{\ell}\circ f$.
\end{lemma}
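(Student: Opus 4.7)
The plan is to verify the claimed bijection \'etale-locally, after choosing a section of $X$ and a line bundle representing $\lambda$, then descend. Both sides define sheaves on $S$ in the \'etale topology: the LHS since $\PicS_X$ is a sheaf, and the RHS since $A$-equivariant morphisms of algebraic spaces satisfy descent. The map $\ell \mapsto \psi_\ell$ is natural in $S$, so it suffices to exhibit it as an isomorphism of \'etale sheaves.

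First I would verify well-definedness: for $x \in X(T)$, the morphism $t_x \colon A_T \to X_T$ is an isomorphism of $A_T$-torsors, so pullback along $t_x$ identifies $\PicS_{X_T}^\lambda$ with $\PicS_{A_T}^\lambda$ compatibly with the canonical isomorphism $\PicS^0_X \simeq \PicS^0_A$. In particular $t_x^*\ell \in \PicS_A^\lambda(T)$, and the $A$-equivariance computation is the one already given in the excerpt.

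For the bijection, choose an \'etale cover $S' \to S$ over which $X$ admits a section $x_0 \in X(S')$ and $\lambda$ is represented by a line bundle $L_0$ on $A_{S'}$ (such an $S'$ exists by smoothness of $X \to S$ together with the definition of polarization). Then $x_0$ induces an isomorphism $t_{x_0} \colon A_{S'} \xrightarrow{\sim} X_{S'}$ of $A$-torsors, and together with $L_0$ it gives bijections $\PicS_X^\lambda(S') \xrightarrow{\sim} \PicS_A^\lambda(S') \xrightarrow{\sim} A^\vee(S')$, the first sending $\ell$ to $t_{x_0}^*\ell$ and the second being $[L]\mapsto [L\otimes L_0^{-1}]$; their composite sends $\ell$ to $\xi \coloneqq [t_{x_0}^*\ell \otimes L_0^{-1}]$. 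Using the identity $t_{t_a(x_0)} = t_a \circ t_{x_0}$, the morphism $\psi_\ell$ becomes, under the same trivializations, the map $A_{S'} \to A^\vee_{S'}$ given by $a \mapsto \lambda(a) + \xi$; this is equivariant for the action of $A$ on $A^\vee$ via $\lambda$, and every such equivariant morphism has this form for a unique $\xi$ (namely its value at the zero section). Hence the map of the lemma is a bijection on $S'$-sections, and the same argument after any further base change shows it is an isomorphism of \'etale sheaves.

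The final compatibility statement is immediate: if $f \colon Y \to X$ is an $A$-equivariant morphism and $y \in Y(T)$, then $t_{f(y)} = f \circ t_y$, so $\psi_{f^*\ell}(y) = t_y^* f^* \ell = t_{f(y)}^* \ell = \psi_\ell(f(y))$. The argument is essentially formal; the only mild obstacle is keeping track of the local identifications so that they assemble into an isomorphism of sheaves rather than merely a bijection of sets, and this is automatic from naturality of $\ell \mapsto \psi_\ell$ in $T$.
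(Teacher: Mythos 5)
Your proof is correct and follows essentially the same route as the paper: both reduce by \'etale descent to the case where $X\rightarrow S$ has a section $x_0$, and then use that an $A$-equivariant morphism $X\rightarrow \PicS_A^{\lambda}$ is determined by its value at $x_0$, with $\psi_{\ell}(x_0) = t_{x_0}^*\ell$ and $t_{x_0}^*\colon \PicS_X^{\lambda}\rightarrow \PicS_A^{\lambda}$ an isomorphism. Your extra trivialization of $\PicS_A^{\lambda}$ by a line bundle $L_0$ representing $\lambda$ (identifying it with $A^{\vee}$ and writing the map as $a\mapsto \lambda(a)+\xi$) is a harmless but unnecessary additional layer of bookkeeping.
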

\begin{proof}
Since both sides of \eqref{equation: PicXlambda versus A-equivariant maps} satisfy \'etale descent, to prove bijectivity we may assume (after applying an \'etale base change) that $X\rightarrow S$ has a section.
Let $x$ be such a section.
Then giving an $A$-equivariant morphism $X\rightarrow \PicS_A^{\lambda}$ is the same as giving its value at $x\in X(S)$.
But $\psi_{\ell}(x) = t_x^*\ell$ and $t_x^*\colon \PicS_X^{\lambda}\rightarrow \PicS_A^{\lambda}$ is an isomorphism.
So \eqref{equation: PicXlambda versus A-equivariant maps} is a bijection when $X\rightarrow S$ has a section, hence a bijection in general.
The final sentence is a computation: $\psi_{f^*\ell}(y) = t_y^* f^* \ell = (f\circ t_y)^* \ell = t_{f(y)} ^*\ell = \psi_{\ell}\circ f$.
\end{proof}

\begin{remark}
    We can construct an explicit inverse to \eqref{equation: PicXlambda versus A-equivariant maps}, as follows. 
    Let $\psi\colon X\rightarrow \PicS_A^{\lambda}$ be $A$-equivariant.
    Let $S'\rightarrow S$ be an \'etale surjective morphism such that there exists an element $x\in X(S')$ .
    Let $\ell= (t^{-1}_{x})^*\psi(x)$.
    Then $\ell$ is independent of the choice of $x$, descends to an element of $\PicS_X^{\lambda}(S)$ and satisfies $\psi_{\ell} = \psi$.
\end{remark}

\begin{corollary}\label{corollary:PiclambdaX nonempty iff lifts under H^1(lambda)}
    In the above notation, $\PicS_X^{\lambda}(S)\neq \varnothing$ if and only if $[X]$ maps to $[\PicS^{\lambda}_A]$ under the map $\HH^1(\lambda)\colon \HH^1(S,A)\rightarrow \HH^1(S,A^{\vee})$.
\end{corollary}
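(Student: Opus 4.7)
The plan is to combine Lemma \ref{lemma: PicXlambda versus A-equivariant maps} with the concrete realization of $\HH^1(\lambda)$ at the level of torsors. By that lemma, $\PicS_X^{\lambda}(S)$ is nonempty if and only if there exists an $A$-equivariant morphism $X \to \PicS_A^{\lambda}$, where $A$ acts on $\PicS_A^{\lambda}$ by pullback along $\lambda$ of the natural $A^{\vee}$-action. I will show that this existence is equivalent to an isomorphism of $A^{\vee}$-torsors between $\PicS_A^{\lambda}$ and the pushforward $X \times^A A^{\vee}$, and the latter isomorphism class is by definition $\HH^1(\lambda)([X])$.

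The key observation is that the subgroup $A[\lambda] \subset A$ acts trivially on $\PicS_A^{\lambda}$, since the $A$-action factors through $\lambda$. From the short exact sequence of fppf sheaves
\[
0 \to A[\lambda] \to A \xrightarrow{\lambda} A^{\vee} \to 0
\]
together with the fact that $X$ is an $A$-torsor, the sheaf quotient $X/A[\lambda]$ is canonically an $A^{\vee}$-torsor, and the standard identification $X/A[\lambda] \simeq X \times^A A^{\vee}$ shows that its class in $\HH^1(S, A^{\vee})$ is exactly $\HH^1(\lambda)([X])$.

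For the forward direction, if $f\colon X \to \PicS_A^{\lambda}$ is $A$-equivariant, then since $A[\lambda]$ acts trivially on the target, $f$ descends through the quotient to a morphism $\bar f\colon X \times^A A^{\vee} \to \PicS_A^{\lambda}$, which is $A^{\vee}$-equivariant and hence (being a map of $A^{\vee}$-torsors) an isomorphism. For the reverse direction, precomposing any $A^{\vee}$-equivariant isomorphism $X \times^A A^{\vee} \xrightarrow{\sim} \PicS_A^{\lambda}$ with the natural $A$-equivariant quotient map $X \to X \times^A A^{\vee}$ produces the required $A$-equivariant morphism.

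I do not expect any real obstacle; the statement is a formal consequence of Lemma \ref{lemma: PicXlambda versus A-equivariant maps} once one unwinds that a pushforward of torsors along $\lambda$ computes $\HH^1(\lambda)$. The only minor point to be careful with is that the quotient $X/A[\lambda]$ is well-defined as an fppf sheaf and really is the pushforward $X \times^A A^{\vee}$, which is immediate from the surjectivity of $\lambda$.
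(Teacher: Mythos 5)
Your proof is correct and follows essentially the same route as the paper: reduce via Lemma \ref{lemma: PicXlambda versus A-equivariant maps} to the existence of an $A$-equivariant morphism $X\rightarrow \PicS_A^{\lambda}$, and identify this with the condition that the pushout of $[X]$ along $\lambda$ is $[\PicS_A^{\lambda}]$. The only difference is that you spell out the pushout identification (via the quotient $X/A[\lambda]\simeq X\times^A A^{\vee}$ and descent of the equivariant map), which the paper simply invokes as the definition of pushout.
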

\begin{proof}
    By definition of pushout, $[X]$ maps to $[\PicS_A^{\lambda}]$ under $\HH^1(\lambda)$ if and only if there exists an $A$-equivariant morphism $X\rightarrow \PicS^{\lambda}_A$.
    Conclude by Lemma \ref{lemma: PicXlambda versus A-equivariant maps}.
\end{proof}

Let $\PolTor_{(A,\lambda)}$ be the category fibered in groupoids such that for every $S$-scheme $T$, the groupoid $\PolTor_{(A,\lambda)}(T)$ is defined as follows: objects are pairs $(X, \ell)$ where $X\rightarrow T$ is a torsor under $A_T\rightarrow T$ and $\ell \in \PicS^{\lambda}_X(T)$; morphisms $(X, \ell) \rightarrow (X', \ell')$ are isomorphisms of torsors $f\colon X\rightarrow X'$ such that $f^*(\ell') = \ell$.
This is a stack in the \'etale topology over $S$. 
Since every $A$-torsor is \'etale locally trivial and since the translation action of $A$ on $\PicS^{\lambda}_A$ is transitive, $\PolTor_{(A,\lambda)}$ is a gerbe.

On the other hand, consider the quotient stack $[A\backslash \PicS^{\lambda}_A]$. 
By definition, if $T$ is an $S$-scheme then the groupoid $[A\backslash \PicS^{\lambda}_A](T)$ is defined as follows: objects are pairs $(X, \psi)$, where $X\rightarrow T$ is a torsor under $A_T\rightarrow T$ and $\psi\colon X\rightarrow \PicS_A^{\lambda}$ an $A$-equivariant morphism; morphisms $(X, \psi) \rightarrow (X', \psi')$ are isomorphisms of $A$-torsors $f\colon X\rightarrow X'$ such that $\psi'\circ f  = \psi$.
Lemma \ref{lemma: PicXlambda versus A-equivariant maps} immediately implies:

\begin{lemma}\label{lemma: explicit description quotient stack}
The assignment $(X, \ell)\mapsto (X, \psi_{\ell})$ defines an isomorphism of stacks $\PolTor_{(A,\lambda)} \rightarrow [A\backslash \PicS^{\lambda}_A]$. 
\end{lemma}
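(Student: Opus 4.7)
The statement is essentially the packaging of the pointwise bijection of Lemma \ref{lemma: PicXlambda versus A-equivariant maps} as an isomorphism of stacks, so the plan is to check that the assignment $(X, \ell) \mapsto (X, \psi_\ell)$ extends to a functor on each fiber $T$, that this functor is fully faithful and essentially surjective, and then invoke the general principle that a morphism of stacks on the same site which is an equivalence of groupoids on every $T$-point is an isomorphism of stacks.

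First I would verify functoriality. On objects, $\psi_\ell\colon X \to \PicS^{\lambda}_A$ is $A$-equivariant by the construction given just before Lemma \ref{lemma: PicXlambda versus A-equivariant maps}, so $(X,\psi_\ell)$ lies in $[A\backslash \PicS^{\lambda}_A](T)$. On morphisms, suppose $f\colon(X,\ell)\to(X',\ell')$ is a morphism in $\PolTor_{(A,\lambda)}(T)$, meaning $f\colon X\to X'$ is an isomorphism of $A$-torsors with $f^*\ell' = \ell$. Then the naturality identity $\psi_{f^*\ell'} = \psi_{\ell'}\circ f$ from Lemma \ref{lemma: PicXlambda versus A-equivariant maps} gives $\psi_\ell = \psi_{\ell'}\circ f$, so $f$ defines a morphism $(X,\psi_\ell)\to(X',\psi_{\ell'})$ in $[A\backslash \PicS_A^{\lambda}](T)$.

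Next I would check that this functor is an equivalence on each $T$. Essential surjectivity is exactly the surjectivity half of the bijection \eqref{equation: PicXlambda versus A-equivariant maps}: any $A$-equivariant $\psi\colon X \to \PicS^{\lambda}_A$ equals $\psi_\ell$ for a unique $\ell \in \PicS^{\lambda}_X(T)$. For full faithfulness, given objects $(X,\ell)$, $(X',\ell')$ of $\PolTor_{(A,\lambda)}(T)$ and an isomorphism of $A$-torsors $f\colon X\to X'$, the naturality identity together with injectivity of $\ell\mapsto \psi_\ell$ yields $f^*\ell' = \ell \iff \psi_{f^*\ell'} = \psi_\ell \iff \psi_{\ell'}\circ f = \psi_\ell$, so the Hom-sets of the two groupoids coincide. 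Combining these steps gives an equivalence of groupoids $\PolTor_{(A,\lambda)}(T) \to [A\backslash \PicS_A^{\lambda}](T)$ natural in $T$, i.e.\ an isomorphism of the associated stacks.

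There is no real obstacle; everything substantive has already been done in Lemma \ref{lemma: PicXlambda versus A-equivariant maps}. If one prefers a cleaner argument one can observe that both stacks are gerbes (for $\PolTor_{(A,\lambda)}$ this is stated in the paragraph defining it; for $[A\backslash \PicS^{\lambda}_A]$ it follows because $\PicS_A^{\lambda}$ is an $A$-torsor via $\lambda$, so the action is \'etale-locally transitive with sections) and apply Lemma \ref{lemma: fully faithful morphism gerbes}, reducing the verification to checking that automorphism groups of objects match, which again is a direct consequence of the bijection of Lemma \ref{lemma: PicXlambda versus A-equivariant maps}.
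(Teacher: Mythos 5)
Your proposal is correct and follows exactly the route the paper intends: the paper states the lemma as an immediate consequence of Lemma \ref{lemma: PicXlambda versus A-equivariant maps}, and your write-up simply spells out the functoriality, full faithfulness and essential surjectivity on each $T$ that the bijection and the identity $\psi_{f^*\ell} = \psi_{\ell}\circ f$ provide. Nothing is missing.
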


We will now compare these gerbes to the gerbe $\ThetaCat_{(A[\lambda], e_{\lambda})}$ introduced in \S\ref{subsec: abstract theta groups}.
Recall that for an $S$-scheme $T$, $\ThetaCat_{(A[\lambda], e_{\lambda})}(T)$ is the groupoid of theta groups for $(A[\lambda],e_{\lambda})_T$ with morphisms given by framed isomorphisms.
Note that the assignment $(X, \ell)\mapsto \mathcal{G}(\ell)$ of Section \ref{subsec: Mumford theta groups} can be upgraded to a functor $\Theta\colon \PolTor_{(A,\lambda)}\rightarrow \ThetaCat_{(A[\lambda],e_{\lambda})}$, by sending a morphism $f\colon (X, \ell)\rightarrow (X', \ell')$ to $(f^*)^{-1}\colon \mathcal{G}(\ell) \rightarrow \mathcal{G}(\ell')$, where $f^*\colon \mathcal{G}(\ell') \rightarrow \mathcal{G}(f^*\ell')=  \mathcal{G}(\ell)$ is the unique framed isomorphism of theta groups with the property that, after some \'etale surjective base change $T\rightarrow S$ over which $\ell'_T = [M]$ for line bundle $M$ on $X'$, $(f^*)_T\colon \mathcal{G}(M)\rightarrow \mathcal{G}((f_T)^*M)$ is given by $(x, \varphi)\mapsto (x,(f_T)^*\varphi)$.

The next theorem is one of the main technical results of this paper, so we restate our assumptions.

\begin{theorem}\label{theorem: main iso gerbes}
Let $A\rightarrow S$ be an abelian scheme and let $\lambda\colon A\rightarrow A^{\vee}$ be a polarization whose degree is invertible on $S$.
Then the functor $\Theta\colon \PolTor_{(A,\lambda)}\rightarrow \ThetaCat_{(A[\lambda], e_{\lambda})}$ sending $(X, \ell)$ to $\mathcal{G}(\ell)$ is an isomorphism of gerbes.
Consequently, there are isomorphisms
\begin{align}\label{eq: main theorem iso gerbes}
[A\backslash \PicS^{\lambda}_A] \simeq \PolTor_{(A,\lambda)} \simeq \ThetaCat_{(A[\lambda], e_{\lambda})}
\end{align}
and $(X, \ell)\mapsto \mathcal{G}(\ell)$ induces an equivalence between the following groupoids:
\begin{enumerate}
\item Pairs $(X, \ell)$, where $X$ is an $A$-torsor and $\ell \in \PicS_{X/S}(S)$ is an element with $\phi_{\ell} = \lambda$, with morphisms $(X, \ell)\rightarrow (X', \ell')$ given by isomorphisms of torsors $f\colon X\rightarrow X'$ such that $f^*\ell' = \ell$;
\item Theta groups for $(A[\lambda], e_{\lambda})$ over $S$, with morphisms given by framed isomorphisms.
\end{enumerate}
\end{theorem}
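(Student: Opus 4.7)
The plan is to apply Lemma~\ref{lemma: fully faithful morphism gerbes}. The discussion preceding the statement has already shown that $\PolTor_{(A,\lambda)}$ and $\ThetaCat_{(A[\lambda],e_{\lambda})}$ are gerbes over $S$, and the functor $\Theta$ has been constructed on objects in Section~\ref{subsec: Mumford theta groups} and on morphisms in the paragraph just before the theorem, in a way that is compatible with base change. Hence $\Theta$ is a morphism of stacks, and to apply the lemma it suffices to check that for every $S$-scheme $T$ and every $(X,\ell)$ in $\PolTor_{(A,\lambda)}(T)$, the induced map of sheaves of groups on $T$,
\[
\AutS_{\PolTor_{(A,\lambda)}(T)}(X,\ell)\;\longrightarrow\;\AutS_{\ThetaCat_{(A[\lambda],e_{\lambda})}(T)}(\mathcal{G}(\ell)),
\]
is an isomorphism.

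The first step is to identify both automorphism sheaves with $A[\lambda]_T$. An automorphism of $(X,\ell)$ in $\PolTor_{(A,\lambda)}(T)$ is an $A_T$-torsor automorphism $f\colon X\to X$ with $f^*\ell=\ell$. The sheaf of $A_T$-torsor automorphisms of $X$ is $A_T$ acting by translations $t_a$, and by the description of the $A$-action on $\PicS_X^{\lambda}$ recalled at the start of \S\ref{subsec: theta group criterion}, $t_a^*\ell=\lambda(a)\otimes\ell$; thus $f=t_a$ preserves $\ell$ iff $a\in A[\lambda](T)$. On the theta-group side, part~(2) of Lemma~\ref{lemma: automorphism groups theta groups} identifies $\AutS(\mathcal{G}(\ell);\Id)$ with $A[\lambda]_T$ via $a\mapsto\alpha_a$, where $\alpha_a(\tilde x)=e_{\lambda}(a,x)\tilde x$.

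The main computational step is to verify that $\Theta$ intertwines these two identifications. This can be checked \'etale locally, so I may assume $X=A$ and $\ell=[L]$ for some line bundle $L$ on $A$ with $\phi_L=\lambda$. Given $a\in A[\lambda](T)$, pick an isomorphism $\sigma\colon L\xrightarrow{\sim}t_a^*L$ (which exists since $\phi_L(a)=\lambda(a)=0$); then $(a,\sigma)\in\mathcal{G}(L)(T)$ is a lift of $a$. Unwinding the definition of $\Theta$ on morphisms, the induced framed automorphism of $\mathcal{G}(L)$ is conjugation by $(a,\sigma)$, which by the very definition of the commutator pairing of $\mathcal{G}(L)$ acts on a lift $\tilde x$ of $x\in A[\lambda]$ as multiplication by $e_{\lambda}(a,x)$; this is exactly $\alpha_a$. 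The main thing to keep track of here is the bookkeeping in the descent that defines $\mathcal{G}(\ell)$ from $\mathcal{G}(L)$ when $\ell$ is only represented by a line bundle after an \'etale cover; the key computational input (that the commutator of $\mathcal{G}(L)$ is $e_{\lambda}$) has already been established.

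Once this comparison is in place, Lemma~\ref{lemma: fully faithful morphism gerbes} implies that $\Theta$ is an isomorphism of gerbes. The first isomorphism in \eqref{eq: main theorem iso gerbes} is then Lemma~\ref{lemma: explicit description quotient stack}, and the equivalence of groupoids~(1) and~(2) follows by taking $T=S$ in $\Theta$, since an isomorphism of stacks restricts to an equivalence on each groupoid of sections.
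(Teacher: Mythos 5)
Your proposal is correct and follows essentially the same route as the paper: reduce via Lemma \ref{lemma: fully faithful morphism gerbes} to comparing automorphism sheaves, identify both with $A[\lambda]$ (translations $t_a$ on one side, the framed automorphisms $\alpha_a$ of Lemma \ref{lemma: automorphism groups theta groups}(2) on the other), and check \'etale locally on $(A,[L])$ that $\Theta(t_a)$ is conjugation by a lift of $a$, hence multiplication by $e_{\lambda}(a,\cdot)$. The paper carries out the same computation explicitly (writing $\Theta(t_a)=\Phi_a^{-1}\circ F_a$), so no substantive difference remains.
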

\begin{proof}
Since the source and target of $\Theta$ are gerbes, it suffices to show by Lemma \ref{lemma: fully faithful morphism gerbes} that 
$\Theta_x\colon \AutS(x)\rightarrow \AutS(\Theta(x))$ is an isomorphism for every $S$-scheme $T$ and object $x$ of $\PolTor_{(A,\lambda)}(T)$.
Since this can be checked \'etale locally, we may assume that $T=S$, that $x =(A, [L])$ for some line bundle $L$ on $A$ with $\phi_L = \lambda$, and that $A[\lambda]$ is a constant group scheme.
We now explicitly describe the map $\Theta_{(A,[L])}\colon \AutS((A,[L]))\rightarrow \AutS(\mathcal{G}(L);\Id)$ and show that it is an isomorphism.

First note that $a\mapsto t_a$ induces an isomorphism $A[\lambda] \simeq \AutS((A, [L]))$, so it suffices to prove that the map $a\mapsto \Theta_{(A,[L])}(t_a)$ is an isomorphism $A[\lambda]\rightarrow \AutS(\mathcal{G}(L);\Id)$.
If $a\in A(S)$, then $(x, \varphi)\mapsto (x, t_a^*\varphi)$ defines an isomorphsm $\Phi_a\colon \mathcal{G}(L) \rightarrow \mathcal{G}(t_a^*L)$.
On the other hand, if $a \in A[\lambda](S)$ then there exists an isomorphism $\gamma\colon L\rightarrow t_a^*L$.
We have seen in Section \ref{subsec: Mumford theta groups} that the assignment $(x, \varphi)\mapsto (x, t_x^* (\gamma) \varphi \gamma^{-1})$ induces an isomorphism $F_a\colon\mathcal{G}(L) \rightarrow \mathcal{G}(t_a^*L)$ which does not depend on the choice of $\gamma$.
By definition, $\Theta_{(A,[L])}(t_a)$ equals $\Phi^{-1}_a \circ F_a\in \Aut(\mathcal{G}(L); \Id)$. 
We compute
  \[
  (\Phi^{-1}_a \circ F_a)((x, \varphi)) 
  =(x, t_a^*((t_x^* \gamma) \varphi \gamma^{-1}))
  =(a,\eta)\cdot (x,\varphi)\cdot (a,\eta)^{-1}
  = e_{\lambda}(a,x) (x, \varphi).
  \]
Therefore the map $a\mapsto \Theta_{(A,[L])}(t_a)$ is exactly the isomorphism $\alpha\colon A[\lambda]\rightarrow \AutS(\mathcal{G}(L);\Id)$ of Lemma \ref{lemma: automorphism groups theta groups}(2). 
We conclude that $\Theta$ is an equivalence and $\PolTor_{(A, \lambda)}\simeq \ThetaCat_{(A[\lambda],e_{\lambda})}$.
The first isomorphism of \eqref{eq: main theorem iso gerbes} follows from Lemma \ref{lemma: explicit description quotient stack}.
The equivalence of groupoids follows from taking $S$-points of $\Theta$. 
\end{proof}

\begin{remark}
Given a theta group $\mathcal{G}$ for $(A[\lambda], e_{\lambda})$ it is possible to explicitly construct a pair $(X,\ell)$ in $\PolTor_{(A,\lambda)}(S)$ with $\mathcal{G}(\ell)\simeq \mathcal{G}$; we sketch the details.
%We sketch the details when $S$ is the spectrum of a field $k$.
For an $S$-scheme $T$, let $X(T)$ be the set of pairs $(M, \alpha)$, where $M \in \PicS_A^{\lambda}(T)$ and $\alpha\in \IsomS(\mathcal{G}(M),\mathcal{G};\Id)(T)$.
This is represented by a scheme $X\rightarrow S$, and the forgetful map $X\rightarrow \PicS_A^{\lambda}$ is a torsor under $\AutS(\mathcal{G};\Id)\simeq A[\lambda]$.
We can extend this $A[\lambda]$-action on $X$ to an $A$-action, via the formula $a\cdot (M, \alpha) = (t_a^*M, \alpha\circ \Phi_a^{-1})$.
(A similar computation to the proof of Theorem \ref{theorem: main iso gerbes} shows that this action indeed restricts to the given $A[\lambda]$-action.)
This defines an $A$-equivariant map $X\rightarrow \PicS^{\lambda}_A$, hence a pair $(X, \ell)$ representing $\lambda$ by Lemma \ref{lemma: PicXlambda versus A-equivariant maps}.
\end{remark}

\begin{corollary}\label{corollary: theorem theta first part body of text}
In the notation of Theorem \ref{theorem: main iso gerbes}, the following statements are equivalent: 
\begin{itemize}
\item There exists an $A$-torsor $X\rightarrow S$ and $\ell \in \PicS_{X/S}(S)$ with $\phi_{\ell} = \lambda$;
\item There exists a theta group for $(A[\lambda], e_{\lambda})$ over $S$.
\item The class $[\PicS^{\lambda}_A] \in \HH^1(S, A^{\vee})$ lies in the image of the map $\HH^1(S, A)\rightarrow \HH^1(S, A^{\vee})$ induced by $\lambda$.
\end{itemize}
\end{corollary}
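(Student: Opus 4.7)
The plan is to derive all three equivalences as essentially formal consequences of what has already been proved, with Theorem \ref{theorem: main iso gerbes} doing the main work for $(1)\Leftrightarrow(2)$ and Corollary \ref{corollary:PiclambdaX nonempty iff lifts under H^1(lambda)} handling $(1)\Leftrightarrow(3)$.

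For $(1)\Leftrightarrow(2)$, I would unpack the equivalence of groupoids in Theorem \ref{theorem: main iso gerbes}: its part $(2)$ states that the functor $\Theta\colon \PolTor_{(A,\lambda)}(S)\to \ThetaCat_{(A[\lambda],e_{\lambda})}(S)$ sending $(X,\ell)\mapsto \mathcal{G}(\ell)$ is an equivalence. In particular one side has an object if and only if the other does. Statement $(1)$ is exactly the assertion that $\PolTor_{(A,\lambda)}(S)$ is non-empty, and statement $(2)$ is exactly the assertion that $\ThetaCat_{(A[\lambda],e_{\lambda})}(S)$ is non-empty, so these are equivalent.

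For $(1)\Leftrightarrow(3)$, I would argue as follows. If $(X,\ell)$ is as in $(1)$, then $\ell\in \PicS_X^{\lambda}(S)$ is non-empty, so by Corollary \ref{corollary:PiclambdaX nonempty iff lifts under H^1(lambda)} the class $[X]\in \HH^1(S,A)$ maps to $[\PicS_A^{\lambda}]$ under $\HH^1(\lambda)\colon \HH^1(S,A)\to \HH^1(S,A^{\vee})$, giving $(3)$. Conversely, if $(3)$ holds, pick an $A$-torsor $X$ whose class maps to $[\PicS_A^{\lambda}]$; then the same corollary produces an element $\ell\in \PicS_X^{\lambda}(S)$, giving $(1)$.

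Chaining these, all three conditions are equivalent. The only subtle point is really bookkeeping: the equivalence between $\PolTor_{(A,\lambda)}$ and $\ThetaCat_{(A[\lambda],e_{\lambda})}$ is an isomorphism of stacks, so global objects match up; and Corollary \ref{corollary:PiclambdaX nonempty iff lifts under H^1(lambda)} was stated for a fixed $A$-torsor but quantifying over $X$ translates ``some $A$-torsor admits a representative of $\lambda$'' into ``$[\PicS_A^{\lambda}]$ lies in the image of $\HH^1(\lambda)$.'' There is no hard step here once the preceding theorem and corollary are in place; the proof is essentially a restatement.
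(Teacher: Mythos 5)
Your proposal is correct and follows exactly the paper's route: the paper's proof is the one-line "combine Theorem \ref{theorem: main iso gerbes} and Corollary \ref{corollary:PiclambdaX nonempty iff lifts under H^1(lambda)}", which is precisely your decomposition of $(1)\Leftrightarrow(2)$ via the equivalence of groupoids and $(1)\Leftrightarrow(3)$ via the torsor pushout criterion. Your write-up just makes the bookkeeping explicit.
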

\begin{proof}
Combine Theorem \ref{theorem: main iso gerbes} and Corollary \ref{corollary:PiclambdaX nonempty iff lifts under H^1(lambda)}.
\end{proof}

\subsection{Linear theta groups}\label{subsection: linear theta groups}

Under the equivalence of groupoids of Theorem \ref{theorem: main iso gerbes} between theta groups and pairs $(X, \ell)$ with $\ell \in \PicS_X(S)$ representing $\lambda$, it is natural to ask: when we can choose $\ell$ to be of the form $[L]$ for some line bundle $L$ on $X$?
In other words, when is $\lambda$ represented by an actual line bundle on $X$, not just an element of $\PicS_X(S)$?
The answer is given by Theorem \ref{theorem:shrodinger gerbe and obstruction gerbe are isomorphic} and uses the concept of linear theta groups.

Let $S$ be a scheme and let $D = (d_1, \dots, d_g)$ be a type with $\#D$ invertible on $S$.
If $\sh{F}$ is a quasi-coherent sheaf on $S$, let $\AutS(\sh{F})$ be the sheaf of groups on the \'etale site of $S$ with $\AutS(\sh{F})(T) = \Aut_{\O_T}(f^*\sh{F})$ for every morphism $f\colon T\rightarrow S$.
For example, if $\sh{F} = \O_S^{\oplus n}$ then $\AutS(\sh{F})  = \GL_{n,S}$. 

\begin{definition}
    Let $\mathcal{G}$ be a theta group for a symplectic module $(M,e)$ of type $D$ over $S$.
    A \emph{representation} for $\mathcal{G}$ is a quasi-coherent sheaf on $\sh{F}$ endowed with a homomorphism $\rho\colon \mathcal{G}\rightarrow \AutS(\sh{F})$. 
    We say that $\rho$ (or by abuse of notation $\sh{F}$) has \emph{weight }$1$ if the subgroup $\G_{m,S}$ of $\mathcal{G}$ acts on $\sh{F}$ via scalar multiplication, i.e., via restriction of the standard $\O_S$-action on $\sh{F}$ to $\G_{m,S}\leq \O_S$.
    We say $\sh{F}$ is a \emph{Schr\"odinger representation} for $\mathcal{G}$ if it is of weight $1$ and if $\sh{F}$ is a locally free of rank $\#D= d_1\cdots d_g$.    
    If such a representation exists, we say that $\mathcal{G}$ is \emph{linear}.
\end{definition}

\begin{example}[The motivating example]\label{example:globalsectionsisShrodinger}
    Let $(A,\lambda)$ be a polarized abelian scheme of type $D$ over $S$ and $L$ a line bundle on an $A$-torsor $X$ with $\phi_L = \lambda$.
    If $S$ is the spectrum of a field $k$, then the space of global sections $\HH^0(X,L)$, endowed with the action $(a, \varphi)\cdot s = t_{-a}^*(\varphi(s))$, is a weight-$1$ representation of $\mathcal{G}(L)$.
    By Riemann--Roch, $\dim \HH^0(X,L) = \#D$, so $\HH^0(X,L)$ is a Schr\"odinger representation for $\mathcal{G}(L)$.
    If $S$ is a general base scheme and $\pi\colon A\rightarrow S$ the structure morphism, then $\pi_*L$ again has the structure of a weight-$1$ representation.
    By cohomology and base change and the result for fields, it is locally free of rank $\#D$, so $\pi_*L$ is a Schr\"odinger representation.
\end{example}

Let $\mathcal{G}_D$ be the standard theta group of type $D$ for $(M_D,e_D)$ constructed in \S\ref{subsec: abstract theta groups}.
Then we can construct a Schr\"odinger representation $\sh{V}_D$ for $\mathcal{G}_D$, following Mumford \cite[p.\, 297]{Mumford-equationsdefiningabelianvarieties}.
Recall that $M_D = K_D \times K_D^{\vee}$, where $K_D$ is a contant group scheme. 
Let $\sh{V}_D$ be the $\O_S$-module with $\sh{V}_D(U) = \{\text{functions } K_D\rightarrow \O_S(U)\}$ for every open $U\subset S$.
Then $\sh{V}_D$ is free of rank $\#K_D = \#D$.
Let $\mathcal{G}_D$ act on $\sh{V}_D$ via the formula
\[
({(\lambda, x,\chi)}\cdot f)(y) = \lambda \chi(y)f(y+x).
\]
A calculation shows that this action is well defined and that $\sh{V}_D$ is a Schr\"odinger representation for $\mathcal{G}_D$.

If $S$ is the spectrum of an algebraically closed field, then every theta group has a unique Schr\"odinger representation; this is an algebraic version of the Stone--von Neumann theorem \cite[page 295, Proposition 3]{Mumford-equationsdefiningabelianvarieties}.
Over an arbitrary base, we have:
\begin{proposition}\label{proposition:categoryofweight1representations}
	Let $\mathcal{G}$ be a theta group for a symplectic module $(M,e)$ over $S$. 
	\begin{enumerate}
	\item There exists an \'etale surjective morphism $T\rightarrow S$ such that there exists a Schr\"odinger representation for $\mathcal{G}_T$.
	\item If $\mathcal{G}$ is linear and $\sh{V}$ is a Schr\"odinger representation for $\mathcal{G}$, then the assignment $\sh{F}\mapsto \sh{V} \otimes \sh{F}$ defines an equivalence between the category of quasi-coherent sheaves on $S$ and the category of weight-$1$ representations of $\mathcal{G}$.
	(The $\mathcal{G}$-action on $\sh{V}\otimes \sh{F}$ is induced by the given $\mathcal{G}$-action on $\sh{V}$ and the trivial one on $\sh{F}$.)
	\item If $\sh{V}, \sh{V}'$ are Schr\"odinger representations for $\mathcal{G}$, then there exists an invertible sheaf $\sh{L}$ on $S$ and an isomorphism of $\mathcal{G}$-representations $\sh{V}' \simeq \sh{V} \otimes \sh{L}$. 
	\end{enumerate}
\end{proposition}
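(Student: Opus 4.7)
For part (1), Lemma 3.13 provides, \'etale-locally on $S$, a framed isomorphism between $\mathcal{G}$ and the standard theta group $\mathcal{G}_D$ of the appropriate type $D$. The explicit Schr\"odinger representation $\sh{V}_D$ constructed just above the proposition then pulls back along this isomorphism to a Schr\"odinger representation of $\mathcal{G}_T$ over the \'etale cover $T\to S$.

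For part (2), the strategy is Morita equivalence for a twisted group algebra. Let $n$ be as in Lemma 3.16 and consider the central extension $1\to \mu_n\to\mathcal{G}[n]\to M\to 1$ of finite \'etale $S$-group schemes. Define a sheaf of $\O_S$-algebras $\sh{A}$ as the quotient of the group algebra $\O_S[\mathcal{G}[n]]$ by the two-sided ideal forcing $\mu_n$ to act by scalars; this is well defined since $\mu_n$ is central. A weight-$1$ representation of $\mathcal{G}$ is then precisely a quasi-coherent $\sh{A}$-module. I would then show that $\sh{A}$ is an Azumaya $\O_S$-algebra of degree $\#D$. This can be checked \'etale-locally, and by part (1) it suffices to treat the standard pair $(\mathcal{G}_D,\sh{V}_D)$: both $\sh{A}$ and the sheaf of $\O_S$-endomorphisms of $\sh{V}_D$ are locally free of rank $(\#D)^2$, and a direct computation using the formula \eqref{equation:standardthetagroupformula} shows that the action map $\sh{A}\to \End_{\O_S}(\sh{V}_D)$ is surjective, hence an isomorphism. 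For any Schr\"odinger representation $\sh{V}$ of a linear $\mathcal{G}$, the resulting action map $\sh{A}\to \End_{\O_S}(\sh{V})$ is a nonzero morphism between Azumaya $\O_S$-algebras of equal degree, and is therefore an isomorphism. Classical Morita equivalence between $\End_{\O_S}(\sh{V})$-modules and $\O_S$-modules, realized by $\sh{F}\mapsto \sh{V}\otimes_{\O_S}\sh{F}$ with quasi-inverse $\sh{W}\mapsto \Hom_\mathcal{G}(\sh{V},\sh{W})$, then yields the asserted equivalence of categories.

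For part (3), apply the equivalence of (2) to $\sh{V}'$: there is a unique quasi-coherent sheaf $\sh{L}$ on $S$ with an isomorphism of $\mathcal{G}$-representations $\sh{V}'\simeq \sh{V}\otimes_{\O_S}\sh{L}$, namely $\sh{L}=\Hom_\mathcal{G}(\sh{V},\sh{V}')$. Since $\sh{V}$ and $\sh{V}'$ are both locally free of rank $\#D$, a fibrewise rank computation at each geometric point of $S$ forces $\sh{L}$ to be locally free of rank $1$.

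The main obstacle is the identification $\sh{A}\simeq \End_{\O_S}(\sh{V}_D)$ in the standard case, i.e.\ the algebraic Stone--von Neumann theorem over a general base $\O_S$; the invertibility of $\#D$ is what ensures $\sh{A}$ is separable and the dimension count goes through. Once the Azumaya property of $\sh{A}$ is secured, both (2) and (3) are formal consequences via Morita theory and rank bookkeeping.
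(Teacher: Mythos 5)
Your proposal is correct in outline, and parts of it deviate from the paper in an interesting way. Part (1) is the same argument as the paper's: reduce by Lemmas \ref{lemma: symplectic modules locally constant} and \ref{lemma: theta groups locally isomorphic} to the standard pair $(\mathcal{G}_D,\sh{V}_D)$. For part (2), however, the paper simply cites Moret-Bailly (Corollaire 2.4.3, p.\,113 of \cite{moretbailly-pinceaux}), whereas you reprove the statement: pass to $\mathcal{G}[n]$ via Lemma \ref{lemma: equivalence Gm-theta groups and mun-theta groups}, form the weight-one quotient $\sh{A}$ of the group algebra of the finite \'etale group scheme $\mathcal{G}[n]$, identify weight-$1$ representations with quasi-coherent $\sh{A}$-modules, show \'etale-locally (i.e.\ for $(\mathcal{G}_D,\sh{V}_D)$, an algebraic Stone--von Neumann computation) that $\sh{A}$ is Azumaya of degree $\#D$, conclude that the action map $\sh{A}\rightarrow \End_{\O_S}(\sh{V})$ is a unital homomorphism of Azumaya algebras of equal degree and hence an isomorphism, and finish by sheafy Morita equivalence. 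This is essentially the content of the result the paper quotes, so your route buys self-containedness at the cost of some foundational care (the group algebra of a finite \'etale group scheme and the ideal forcing $\mu_n$ to act by scalars when $\mu_n$ is not constant, and the dictionary between group-sheaf actions and $\sh{A}$-module structures); the paper's citation sidesteps all of this. For part (3) your skeleton matches the paper's (deduce $\sh{V}'\simeq \sh{V}\otimes\sh{L}$ from part (2)), but where the paper reduces to $\mathcal{G}_D$ and cites Mumford for invertibility of $\sh{L}$, you argue directly. One caveat there: a fibrewise rank count at geometric points does not by itself show that a quasi-coherent sheaf is invertible (fiber rank $1$ everywhere does not imply local freeness, e.g.\ over a non-reduced base). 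The correct quick argument is: locally where $\sh{V}\simeq \O_S^{\#D}$, the isomorphism gives $\sh{L}^{\oplus \#D}\simeq \sh{V}'$, so $\sh{L}$ is a direct summand of a finite locally free sheaf and hence itself finite locally free; only then does the rank computation force it to have rank $1$. With that small repair your argument for (3) is complete.
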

\begin{proof}
	\begin{enumerate}
	\item We may assume that $S$ is connected and (by Lemmas \ref{lemma: symplectic modules locally constant} and \ref{lemma: theta groups locally isomorphic}) that $\mathcal{G} = \mathcal{G}_D$, in which case we may take $\sh{V}_D$ as Schr\"odinger representation.
	\item This is \cite[p.113, Corollaire 2.4.3]{moretbailly-pinceaux}.
	\item Part 2 implies that $\sh{V}' = \sh{V}\otimes \sh{L}$ for some quasi-coherent $\sh{L}$ on $S$.
    To show $\sh{L}$ is invertible, we may take an \'etale base change of $S$, hence assume $\mathcal{G} = \mathcal{G}_D$ for some type $D$.
    Now apply \cite[\S6, Proposition 2]{Mumford-equationsdefiningAVs-part2}.
    \end{enumerate}
\end{proof}
Let $\mathcal{G}$ be a theta group for a symplectic module $(M,e)$ over $S$.
Let $\mathrm{Sch}_{\mathcal{G}}$ be the fibered category over $S$-schemes such that for an $S$-scheme $T$, the groupoid $\mathrm{Sch}_{\mathcal{G}}(T)$ has objects Schr\"odinger representations for $\mathcal{G}_T$ and morphisms given by $\mathcal{G}_T$-equivariant isomorphisms.
Descent for quasi-coherent sheaves \cite[Section 6.1, Theorem 4]{BLR-NeronModels} shows that $\mathrm{Sch}_{\mathcal{G}}$ is a stack in the \'etale topology, and Proposition \ref{proposition:categoryofweight1representations} shows that $\mathrm{Sch}_{\mathcal{G}}$ is a gerbe.
Moreover, that same proposition shows that if $\sh{V}$ is a Schr\"odinger representation for $\mathcal{G}$, then $\AutS_{\mathrm{Sch}_{\mathcal{G}}(S)}(\sh{V}) \simeq \G_{m,S}$.
Hence $\mathrm{Sch}_{\mathcal{G}}$ is a gerbe banded by $\G_m$ and by Lemma \ref{lemma:classificationgerbesH2} defines a class $[\mathrm{Sch}_{\mathcal{G}}]\in \HH^2(S,\G_m)$ which vanishes if and only if $\mathcal{G}$ is linear.

On the other hand, let $\pi\colon A\rightarrow S$ be an abelian scheme, let $X$ be an $A$-torsor and let $\ell \in \PicS_{X}(S)$. 
Let $\mathrm{Ob}(\ell)$ be the category fibered in groupoids such that for an $S$-scheme $T$, the groupoid $\mathrm{Ob}(\ell)(T)$ has objects line bundles $L$ on $X_T$ such that $[L] = \ell_T$, and morphisms $L\rightarrow L'$ are given by isomorphisms of line bundles.
The exact sequence \eqref{eq: exact sequence usual pic scheme pic} shows that $\mathrm{Ob}(\ell)$ is a gerbe, which is again banded by $\G_m$.
Hence $\mathrm{Ob}(\ell)$ defines a class $[\mathrm{Ob}(\ell)] \in \HH^2(S, \G_m)$, which equals the class $\mathrm{ob}(\ell)$ of $\ell$ under the connecting homomorphism $\PicS_{X}(S)\rightarrow \HH^2(S, \G_m)$ of \eqref{eq: exact sequence usual pic scheme pic}.

\begin{theorem}\label{theorem:shrodinger gerbe and obstruction gerbe are isomorphic}
	Let $\pi\colon A\rightarrow S$ be an abelian scheme, $\lambda\colon A\rightarrow A^{\vee}$ a polarization whose degree is invertible on $S$, $X$ an $A$-torsor and $\ell \in \PicS_X(S)$ an element with $\phi_{\ell} = \lambda$ and with theta group $\mathcal{G}(\ell)$.
	Then the assignment $L\mapsto \pi_*L$ induces an isomorphism $\mathrm{Ob}(\ell)\xrightarrow{\sim}\mathrm{Sch}_{\mathcal{G}(\ell)}$.
	Consequently, there exists a line bundle $L$ on $X$ with $[L] =\ell$  if and only if the theta group $\mathcal{G}(\ell)$ is linear. 
\end{theorem}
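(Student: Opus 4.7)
The plan is to construct the functor $F\colon L\mapsto \pi_*L$, show it is a morphism of $\G_m$-banded gerbes, and then apply Lemma \ref{lemma: fully faithful morphism gerbes} to conclude it is an isomorphism. The final equivalence then follows from Lemma \ref{lemma:classificationgerbesH2}.

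\emph{Step 1: $F$ is well-defined on objects.} Given an $S$-scheme $T$ and a line bundle $L$ on $X_T$ with $[L]=\ell_T$, I would invoke the reasoning of Example \ref{example:globalsectionsisShrodinger}: using cohomology and base change together with Riemann--Roch on each geometric fiber, $\pi_*L$ is locally free of rank $\#D$, and the formula $(a,\varphi)\cdot s = t_{-a}^*(\varphi(s))$ defines a weight-$1$ action of the Mumford theta group $\mathcal{G}(L)$ on $\pi_*L$. The canonical framed isomorphism $\mathcal{G}(L)\simeq \mathcal{G}(\ell)_T$ from \S\ref{subsec: Mumford theta groups} then upgrades $\pi_*L$ to a Schr\"odinger representation of $\mathcal{G}(\ell)_T$.

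\emph{Step 2: $F$ is a functor and compatible with base change.} A morphism $\varphi\colon L\to L'$ in $\mathrm{Ob}(\ell)(T)$ induces $\pi_*\varphi\colon \pi_*L\to \pi_*L'$, which is $\mathcal{G}(\ell)_T$-equivariant by a direct verification using the definition of the framed isomorphism $\mathcal{G}(L)\simeq \mathcal{G}(L')$ associated to $\varphi$ in \S\ref{subsec: Mumford theta groups}. Flat base change of $\pi_*$ gives the required compatibility of $F$ with pullback along $T'\to T$.

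\emph{Step 3: $F$ is an isomorphism of gerbes.} Both $\mathrm{Ob}(\ell)$ and $\mathrm{Sch}_{\mathcal{G}(\ell)}$ are $\G_m$-banded gerbes, as noted before the theorem. For an object $L$ in $\mathrm{Ob}(\ell)(T)$, the automorphisms of $L$ as a line bundle are scalar multiplications by units, and $F$ sends such a scalar to scalar multiplication on $\pi_*L$; by Proposition \ref{proposition:categoryofweight1representations}(3) this exhausts $\AutS_{\mathrm{Sch}_{\mathcal{G}(\ell)}(T)}(\pi_*L)$. Hence the induced map on automorphism sheaves is the identity of $\G_{m,T}$, and Lemma \ref{lemma: fully faithful morphism gerbes} shows $F$ is an isomorphism.

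\emph{Step 4: The final equivalence.} Since $F$ respects the $\G_m$-banding, Lemma \ref{lemma:classificationgerbesH2} yields $[\mathrm{Ob}(\ell)]=[\mathrm{Sch}_{\mathcal{G}(\ell)}]$ in $\HH^2(S,\G_m)$. By the discussion preceding the theorem, $[\mathrm{Ob}(\ell)]=\mathrm{ob}(\ell)$ vanishes iff $\ell$ is the class of an honest line bundle on $X$, while $[\mathrm{Sch}_{\mathcal{G}(\ell)}]$ vanishes iff $\mathcal{G}(\ell)$ is linear.

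The main obstacle I anticipate is \emph{Step 1}, specifically ensuring that the $\mathcal{G}(L)$-action on $\pi_*L$ descends compatibly when $\ell$ is only locally representable by a line bundle: one must check that the descent datum on $\mathcal{G}(L)$ defining $\mathcal{G}(\ell)$ (as constructed in \S\ref{subsec: Mumford theta groups}) is intertwined by the natural descent datum on $\pi_*L$. This is a direct unwinding of definitions once one writes out the isomorphism $p_1^*L\simeq p_2^*L\otimes p^*M$ used to construct $\mathcal{G}(\ell)$, but it is the one place where cohomology and base change, the Mumford construction, and fpqc descent for quasi-coherent sheaves must be combined coherently.
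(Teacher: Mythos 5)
Your proposal is correct and follows essentially the same route as the paper: well-definedness via Example \ref{example:globalsectionsisShrodinger} (with the canonical framed isomorphism $\mathcal{G}(L)\simeq\mathcal{G}(\ell)_T$), then Lemma \ref{lemma: fully faithful morphism gerbes} applied by checking that scalar automorphisms of $L$ map isomorphically onto the $\G_m$ of automorphisms of $\pi_*L$ (the paper's proof is exactly this, with Proposition \ref{proposition:categoryofweight1representations} supplying the identification of the automorphism sheaf in the discussion preceding the theorem). The descent subtlety you flag in Step 1 is real but resolves exactly as you indicate, and your Step 4 via $\HH^2$ classes is an equivalent, if slightly heavier, way of reading off the final equivalence from the gerbe isomorphism.
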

\begin{proof}
    The morphism is well defined by Example \ref{example:globalsectionsisShrodinger}.
	By Lemma \ref{lemma: fully faithful morphism gerbes}, we just need to verify that if $L$ is an object of $\mathrm{Ob}(\ell)(T)$ for some $T\rightarrow S$, then $\Aut(L)\rightarrow \Aut((\pi_T)_*L)$ is an isomorphism.
	This follows from that fact that every automorphism of $L$ is given by multiplication by an element of $\lambda\in \G_m(T)$, and that the induced automorphism of the locally free sheaf $(\pi_T)_*L$ is again multiplication by $\lambda$.
\end{proof}

\begin{corollary}\label{corollary: theorem theta second part body of text}
	Let $A\rightarrow S$ be an abelian scheme and $\lambda$ a polarization on $A$ whose degree is invertible on $S$.
	Then the following statements are equivalent:
	\begin{itemize}
	\item There exists an $A$-torsor $X$ and line bundle $L$ on $X$ with $\phi_L = \lambda$;
	\item There exists a linear theta group for $(A[\lambda], e_{\lambda})$. 
	\end{itemize}
\end{corollary}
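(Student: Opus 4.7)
The plan is to derive this corollary by straightforwardly combining the two main technical results of the section, namely Theorem \ref{theorem: main iso gerbes} and Theorem \ref{theorem:shrodinger gerbe and obstruction gerbe are isomorphic}. No new geometric input is required; the corollary is essentially a bookkeeping statement that packages these two equivalences together.

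For the forward direction, suppose there exists an $A$-torsor $X$ and a line bundle $L$ on $X$ with $\phi_L = \lambda$. Setting $\ell = [L] \in \PicS_X(S)$, the pair $(X,\ell)$ is an object of $\PolTor_{(A,\lambda)}(S)$, so Theorem \ref{theorem: main iso gerbes} produces a theta group $\mathcal{G}(\ell)$ for $(A[\lambda], e_\lambda)$. Because $\ell$ is represented by the honest line bundle $L$, the gerbe $\mathrm{Ob}(\ell)$ is neutral, so by Theorem \ref{theorem:shrodinger gerbe and obstruction gerbe are isomorphic} the gerbe $\mathrm{Sch}_{\mathcal{G}(\ell)}$ is also neutral, i.e., $\mathcal{G}(\ell)$ admits a Schr\"odinger representation and is therefore linear.

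For the reverse direction, suppose there exists a linear theta group $\mathcal{G}$ for $(A[\lambda], e_\lambda)$. By Theorem \ref{theorem: main iso gerbes} the functor $\Theta$ is an equivalence, so we may choose an object $(X,\ell)$ of $\PolTor_{(A,\lambda)}(S)$ together with a framed isomorphism $\mathcal{G}(\ell) \simeq \mathcal{G}$. Linearity is preserved under framed isomorphism (a Schr\"odinger representation of $\mathcal{G}$ pulls back to one of $\mathcal{G}(\ell)$), so $\mathcal{G}(\ell)$ is linear. By Theorem \ref{theorem:shrodinger gerbe and obstruction gerbe are isomorphic}, the class $\mathrm{ob}(\ell) \in \HH^2(S,\G_m)$ therefore vanishes, which means $\ell$ lifts to an actual line bundle $L$ on $X$ via \eqref{eq: exact sequence usual pic scheme pic}. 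Such an $L$ automatically satisfies $\phi_L = \phi_\ell = \lambda$.

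The main (and essentially only) subtlety is to make sure the two gerbe-theoretic equivalences are cited in the right order and that one records the elementary observation that linearity of a theta group is invariant under framed isomorphism; once this is noted, the argument is purely formal and very short.
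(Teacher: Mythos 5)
Your proof is correct and follows exactly the paper's route: the paper's own proof of this corollary is simply to combine Theorem \ref{theorem: main iso gerbes} with Theorem \ref{theorem:shrodinger gerbe and obstruction gerbe are isomorphic}, which is what you do, with the details (including the invariance of linearity under framed isomorphism) spelled out correctly.
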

\begin{proof}
	Combine Theorems \ref{theorem: main iso gerbes} and \ref{theorem:shrodinger gerbe and obstruction gerbe are isomorphic}.
\end{proof}

\begin{proof}[Proof of Theorem \ref{theorem: intro theta group condition}]
    Combine Corollaires \ref{corollary: theorem theta first part body of text} and \ref{corollary: theorem theta second part body of text}.
\end{proof}

The following bound on the order of the class $[\mathrm{Sch}_{\mathcal{G}}] \in \HH^2(S, \G_m)$ will be useful in Section \ref{section: symplectic modules admitting a theta group}.
\begin{proposition}\label{proposition: bounds order line bundle obstruction class}
   Let $\mathcal{G}$ be a theta group for a symplectic module $(M,e)$ of type $D = (d_1, \dots, d_g)$ over $S$.
    Then $(\#D)[\mathrm{Sch}_{\mathcal{G}}]=0$ in $\HH^2(S, \G_m)$, where we recall that $\#D = d_1\cdots d_g$.
\end{proposition}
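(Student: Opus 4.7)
The plan is to reduce the problem to the Kummer sequence. Set $n = \#D$; since $n$ is invertible on $S$, the Kummer sequence yields the exact sequence
\[
\HH^2(S, \mu_n) \longrightarrow \HH^2(S, \G_m) \xrightarrow{\,\cdot n\,} \HH^2(S, \G_m),
\]
so it suffices to exhibit a \v{C}ech $2$-cocycle representative of $[\mathrm{Sch}_{\mathcal{G}}]$ taking values in $\mu_n \subset \G_m$.

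To construct such a cocycle I would proceed as follows. By Proposition \ref{proposition:categoryofweight1representations}(1), on a sufficiently fine \'etale cover $\{U_i \to S\}$ one can choose Schr\"odinger representations $\sh{V}_i$ for $\mathcal{G}|_{U_i}$. After refining further, one can assume each $\det(\sh{V}_i)$ is trivial with a chosen trivialization $\sigma_i\colon \det(\sh{V}_i) \xrightarrow{\sim} \O_{U_i}$, and that (by Proposition \ref{proposition:categoryofweight1representations}(3)) $\mathcal{G}$-equivariant isomorphisms $\phi_{ij}\colon \sh{V}_j|_{U_{ij}} \xrightarrow{\sim} \sh{V}_i|_{U_{ij}}$ exist on all double overlaps. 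Finally, since $n$-th roots of units exist \'etale-locally on $S$ (using that $n$ is invertible), after multiplying each $\phi_{ij}$ by a suitable unit we may arrange that $\sigma_i \circ \det(\phi_{ij}) = \sigma_j$.

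With these choices in hand the conclusion is a short computation. Proposition \ref{proposition:categoryofweight1representations}(2) gives $\Aut_{\mathcal{G}}(\sh{V}_i) = \G_m$, so on each triple overlap the composite $\phi_{ij}\phi_{jk}\phi_{ki}$ acts on $\sh{V}_i$ as a scalar $c_{ijk} \in \G_m(U_{ijk})$, and the resulting \v{C}ech $2$-cocycle $(c_{ijk})$ represents the class $[\mathrm{Sch}_{\mathcal{G}}]$. Taking determinants, and using that $\det(c\cdot \mathrm{id}) = c^n$ on a sheaf of rank $n$, gives $c_{ijk}^n = \det(\phi_{ij})\det(\phi_{jk})\det(\phi_{ki})$; but the compatibility with the trivializations $\sigma_i$ forces the right-hand side to be the identity of $\det(\sh{V}_i)$, i.e., equal to $1$. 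Hence $c_{ijk}^n = 1$, so the cocycle takes values in $\mu_n$ and $n[\mathrm{Sch}_{\mathcal{G}}] = 0$.

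The only step with any genuine content is the rescaling of the $\phi_{ij}$ to make their determinants compatible with the $\sigma_i$; everything else is routine \v{C}ech bookkeeping. This rescaling is exactly where the hypothesis that $\#D$ is invertible on $S$ enters, via the \'etale-local existence of $n$-th roots of units. A more conceptual reformulation of the same argument is to introduce the stack of pairs $(\sh{V}, \sigma)$ with $\sigma$ a trivialization of $\det(\sh{V})$: it is a gerbe banded by $\mu_n$, and the forgetful morphism to $\mathrm{Sch}_{\mathcal{G}}$ (compatible with $\mu_n \hookrightarrow \G_m$ on bands) realizes $[\mathrm{Sch}_{\mathcal{G}}]$ as coming from $\HH^2(S, \mu_n)$; but I prefer the \v{C}ech version because it avoids the formalism of change of band.
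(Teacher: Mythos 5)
Your argument is correct in substance, but it takes a different route from the paper: the paper disposes of this proposition by citing Polishchuk \cite[Proposition 2.1]{Polishchuk-analogueWeilrepresentation}, whereas you give a self-contained proof via the standard mechanism that a $\G_m$-gerbe carrying a twisted vector bundle of rank $n$ has class in the image of $\HH^2(S,\mu_n)$, hence killed by $n$ by the Kummer sequence. That is a genuine gain in self-containedness, and the determinant/Kummer idea is exactly right. One caveat on the execution: in the \v{C}ech version, the step ``after refining further, $\mathcal{G}$-equivariant isomorphisms $\phi_{ij}$ exist on all double overlaps'' is not justified by refining the cover alone. By Proposition \ref{proposition:categoryofweight1representations}(3) the two restrictions differ by a line bundle $\sh{L}_{ij}$ on $U_{ij}$, and a refinement of $\{U_i\to S\}$ does not trivialize $\sh{L}_{ij}$ on the new double overlaps (one would need a hypercover, or an appeal to the comparison of \v{C}ech and derived cohomology); likewise the rescaling by $n$-th roots only exists after further localization on $U_{ij}$. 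Your parenthetical ``conceptual reformulation'' is precisely the repair: the stack of pairs $(\sh{V},\sigma)$ with $\sigma$ a trivialization of $\det\sh{V}$, and morphisms the $\mathcal{G}$-equivariant isomorphisms compatible with the trivializations, is a gerbe banded by $\mu_n$ (local nonemptiness and local isomorphism use exactly the local triviality of line bundles and the \'etale-local existence of $n$-th roots, now legitimately applied locally), and the forgetful morphism to $\mathrm{Sch}_{\mathcal{G}}$ is compatible with $\mu_n\hookrightarrow\G_m$ on bands, so by the functoriality in Lemma \ref{lemma:classificationgerbesH2} (Giraud) the class $[\mathrm{Sch}_{\mathcal{G}}]$ lies in the image of $\HH^2(S,\mu_n)$ and is killed by $n=\#D$. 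I would present that version as the main argument and treat the \v{C}ech computation as motivation; with that ordering the proof is complete.
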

\begin{proof}
This is a result of Polishchuk \cite[Proposition 2.1]{Polishchuk-analogueWeilrepresentation}.
\end{proof}

We record a case where the existence problem of theta groups is trivial:

\begin{proposition}\label{proposition: theta groups exist over fields of cohomological dimension 1}
    Let $k$ be a field of cohomological dimension $\leq 1$ in the sense of \cite[Definition 1.4.3]{Poonen-rationalpointsonvarieties}. (For example, $k$ is finite or the function field of a curve over an algebraically closed field.)
    Then for every symplectic module $(M,e)$ over $k$, there exists a linear theta group for $(M,e)$.
\end{proposition}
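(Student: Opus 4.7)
The plan is to reduce the statement to two cohomological vanishings using the gerbe-theoretic obstruction theory developed in Sections \ref{subsec: abstract theta groups} and \ref{subsection: linear theta groups}. By the characterization in Theorem \ref{theorem:shrodinger gerbe and obstruction gerbe are isomorphic} (or rather the gerbes feeding into it), producing a linear theta group for $(M,e)$ is equivalent to killing (i) the obstruction $[\ThetaCat_{(M,e)}]\in \HH^2(k,M)$ to the existence of a theta group, and, given such a theta group $\mathcal{G}$, (ii) the obstruction $[\mathrm{Sch}_{\mathcal{G}}]\in \HH^2(k,\G_m)=\mathrm{Br}(k)$ to the existence of a Schr\"odinger representation. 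Both obstructions will vanish under the cd $\leq 1$ hypothesis.

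For the first obstruction, I would note that as recorded at the end of Section \ref{subsec: abstract theta groups}, the stack $\ThetaCat_{(M,e)}$ is a gerbe banded by $M$, so by Lemma \ref{lemma:classificationgerbesH2} it is classified by a class in $\HH^2(k,M)$. Because $M$ is a finite \'etale commutative $k$-group scheme, the associated Galois module $M(k^{\sep})$ is finite and hence torsion, so the cohomological dimension hypothesis gives $\HH^2(k,M)=0$. Thus $[\ThetaCat_{(M,e)}]=0$, the gerbe is neutral, and a theta group $\mathcal{G}$ for $(M,e)$ exists.

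For the second obstruction, let $n=\#D$ be the order of $M$, which is invertible in $k$ since $M$ is finite \'etale. Proposition \ref{proposition: bounds order line bundle obstruction class} states that $n\cdot[\mathrm{Sch}_{\mathcal{G}}]=0$ in $\mathrm{Br}(k)$, so it suffices to show $\mathrm{Br}(k)[n]=0$. This is where I would use Kummer theory: the short exact sequence $1\to\mu_n\to \G_m\xrightarrow{n}\G_m\to 1$ produces a surjection $\HH^2(k,\mu_n)\twoheadrightarrow \mathrm{Br}(k)[n]$, and the cd $\leq 1$ hypothesis applied to the finite \'etale (torsion) $k$-group scheme $\mu_n$ gives $\HH^2(k,\mu_n)=0$. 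Hence $[\mathrm{Sch}_{\mathcal{G}}]=0$ and $\mathcal{G}$ is linear.

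There is no real obstacle: once the gerbe formalism and the $\#D$-torsion bound of Proposition \ref{proposition: bounds order line bundle obstruction class} are granted, the proof is purely a matter of invoking standard cohomological vanishing under cohomological dimension $\leq 1$, with the Kummer sequence handling the $\G_m$-coefficient step.
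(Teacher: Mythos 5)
Your proof is correct, and its first half is exactly the paper's argument: the gerbe $\ThetaCat_{(M,e)}$ is banded by $M$, its class lives in $\HH^2(k,M)$, and this group vanishes because $M$ is a finite (torsion) Galois module and $k$ has cohomological dimension $\leq 1$, so a theta group $\mathcal{G}$ exists. Where you diverge is the linearity step. The paper simply observes that $\HH^2(k,\G_m)=0$ under the cited definition of cohomological dimension $\leq 1$, so $[\mathrm{Sch}_{\mathcal{G}}]$ vanishes outright; you instead invoke Polishchuk's torsion bound (Proposition \ref{proposition: bounds order line bundle obstruction class}), the Kummer sequence, and the vanishing of $\HH^2(k,\mu_n)$ to kill the relevant $n$-torsion of the Brauer group. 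Both routes work. Yours costs the extra input of the $(\#D)$-torsion bound but only ever uses vanishing of $\HH^2$ with finite \'etale coefficients, so it is insensitive to the characteristic-$p$ subtleties surrounding the $p$-primary Brauer group and would go through under the bare Galois-cohomological notion of $\mathrm{cd}\leq 1$; the paper's route is shorter because the definition it cites already guarantees $\HH^2(k,\G_m)=0$. One small slip in your write-up: $n=\#D$ is not the order of $M$ (that order is $(\#D)^2$), though this is harmless --- $\#D$ is still invertible in $k$ and the bound you apply is indeed $(\#D)\cdot[\mathrm{Sch}_{\mathcal{G}}]=0$.
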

\begin{proof}
    Below Lemma \ref{lemma: theta group exists iff class lifts}, we have defined a class $[\ThetaCat_{(M,e)}]\in \HH^2(k, M)$ which vanishes if and only if there exists a theta group for $(M,e)$. 
    By definition of cohomological dimension, $\HH^2(k, M) = 0$ and so there exists a theta group $\mathcal{G}$ for $(M,e)$.
    Moreover, since $\HH^2(k , \G_m)=0$, the class $[\mathrm{Sch}_{\mathcal{G}}]$ vanishes and so $\mathcal{G}$ is linear.
\end{proof}

\subsection{Symmetric line bundles representing $\lambda$}\label{subsec: symmetric line bundles representing lambda}

In this section, we prove an analogue of Theorem \ref{theorem: main iso gerbes} for symmetric line bundles on symmetrized torsors (Theorem \ref{theorem: main iso symmetric version}).
We start by recalling the well-known situation for symmetric line bundles on $A$.

Let $A\rightarrow S$ be an abelian scheme. Denote the inversion map by $[-1]\colon A\rightarrow A$.
Recall that a line bundle $L$ on $A$ is \emph{symmetric} if $L\simeq [-1]^*L$.
Since $[-1]^*$ acts as inversion on $\PicS_A^0$ and as the identity on $\NSS_A$, taking $[-1]^*$-fixed points of the exact sequence \eqref{equation: exact sequence Pic_X of torsor} for $X=A$ results in the exact sequence
\begin{align}\label{equation:symmetricPicNS_A}
1\rightarrow A^{\vee}[2]\rightarrow \PicS_A^{\sym} \rightarrow \NSS_A\rightarrow 1,
\end{align}
see \cite[Section 3.2]{PoonenRains-thetacharacteristics}.
A line bundle $L$ on $A$ is symmetric if and only if its class $[L] \in \PicS_A(S)$ lies in $\PicS_A^{\sym}(S)$. 
Write $\PicS_A^{\sym, \lambda}$ for the fiber of an element $\lambda \in \NSS_A(S)$ under the projection map.
This sequence shows that $\PicS_A^{\sym,\lambda}$ is a torsor under $A^{\vee}[2]$, which has an $S$-point if and only if there exists a symmetric line bundle $L$ on $A$ representing $\lambda$.
The class of $\PicS_A^{\sym, \lambda}$ maps to the class of $\PicS_A^{\lambda}$ under the map $\HH^1(S, A^{\vee}[2])\rightarrow \HH^1(S, A^{\vee})$.

In contrast to $\PicS_A^{\lambda}$, the class of $\PicS_A^{\sym, \lambda}$ is known to have a very concrete alternative description.
Consider the nondegenerate pairing $e_2\colon A[2] \times A^{\vee}[2] \rightarrow \mu_2$ induced by the identification $A^{\vee}[2] = A[2]^{\vee}$, and let $e_2^{\lambda} \colon A[2]\times A[2]\rightarrow \mu_2$ be the alternating pairing defined by $e_2^{\lambda}(x,y) = e_2(x, \lambda(y))$. 
We say a map of $S$-schemes $q\colon A[2]\rightarrow \mu_2$ is a \emph{quadratic refinement} of $e^{\lambda}_{2}$ if $q(x+y)q(x)q(y) = e^{\lambda}_{2}(x,y)$ for all $x,y\in A[2]$.
The scheme of quadratic refinments for $e_2^{\lambda}$ is a torsor under $A^{\vee}[2]$.
It is well known (see \cite[Section 13.1]{Polishchuk-abelianvarietiesthetafunctions} and \cite[Proposition 3.6]{PoonenRains-thetacharacteristics}) that this torsor is isomorphic to $\PicS_A^{\sym, \lambda}$. 
Consequently, there exists a symmetric line bundle on $A$ representing $\lambda$ if and only if there exists a quadratic refinement $q\colon A[2]\rightarrow \mu_2$ of $e_{2}^{\lambda}$.
This condition is often satisfied in practice (for example, when $k$ is finite); see \cite[Proposition 3.12]{PoonenRains-thetacharacteristics} for a list of sufficient conditions. 

% \begin{remark}
%     Suppose that $e_2^{\lambda}$ has a quadratic refinement $q\colon A[2]\rightarrow \mu_2$.
%     Then there exists a symmetric line bundle $L$ on $A$ representing $\lambda$ corresponding to $q$ under the isomorphism of Proposition \ref{proposition: torsor isomorphism quadratic refinements symmetric line bundles}.
%     Hence, we obtain a theta group $\mathcal{G}(L)$ for $(A[\lambda], e_{\lambda})$.
%     The group scheme $\mathcal{G}(L)$ can be explicitly constructed as a quotient of $A[2\lambda]\times \G_m$ with twisted group multiplication, see \cite[Definition 5.19 and Proposition 5.24]{MorganSmith-CasselsTateFiniteGalois}.
% \end{remark}

We now consider the weaker question whether there exists a symmetric line bundle on a symmetric $A$-torsor representing $\lambda$. 
\begin{definition}
Let $X$ be an $A$-torsor.
An \emph{inversion on }$X$ is a morphism of $S$-schemes $\tau\colon X\rightarrow X$ such that $\tau^2=  \Id_X$ and such that $\tau(a+x) = (-a) + x$ for all $a\in A(T)$, $x\in X(T)$ and $S$-schemes $T$.
If such a $\tau$ exists, we say $X$ is \emph{symmetric}.
A \emph{symmetrized $A$-torsor} is a pair $(X, \tau)$, where $X$ is an $A$-torsor and $\tau$ is an inversion.
A morphism between symmetrized $A$-torsors $(X, \tau), (X', \tau')$ is an isomorphism of $A$-torsors $X\rightarrow X'$ intertwining $\tau$ and $\tau'$.
\end{definition}

If $(X,\tau)$ is a symmetrized torsor, then $X^{\tau}$ is a torsor under $A[2]$, and the assignment $(X, \tau)\mapsto X^{\tau}$ induces an equivalence of categories between the category of symmetrized torsors for $A$ and the category of $A[2]$-torsors.
An $A$-torsor $X$ is symmetric if and only if its class $[X]\in \HH^1(S, A)$ lies in the image of the map $\HH^1(S, A[2])\rightarrow \HH^1(S, A)$, if and only if $2[X]=0$.

Given a symmetrized torsor $(X, \tau)$ for $A$, let $\PicS_X^{\tau}\subset \PicS_X$ denote the fixed points of $\tau^*\colon \PicS_X\rightarrow \PicS_X$.
We say a line bundle $L$ on $X$ is \emph{$\tau$-symmetric} if it defines an element of $\PicS_X^{\tau}(S)$.
If $(X,\tau) = (A, [-1])$, then $\PicS_X^{\tau} = \PicS_A^{\mathrm{sym}}$ and $\tau$-symmetric line bundles are the same as symmetric line bundles.
Similarly to \eqref{equation:symmetricPicNS_A}, taking $\tau^*$-fixed points of the exact sequence \eqref{equation: exact sequence Pic_X of torsor} results in the exact sequence
\begin{align*}
    1\rightarrow A^{\vee}[2]\rightarrow \PicS_X^{\tau}\rightarrow \NSS_A\rightarrow 1.
\end{align*}
Let $\lambda\colon A\rightarrow A^{\vee}$ be a polarization whose degree is invertible on $S$.
Write $\PicS^{\tau, \lambda}_X$ for the fiber of $\PicS^{\tau}_X\rightarrow \NSS_X\simeq \NSS_A$ above $\lambda \in \NSS_A(S)$.
We will prove a criterion for the existence of an element $\ell\in \PicS^{\tau,\lambda}_X(S)$ for some symmetrized $A$-torsor $(X, \tau)$ in terms of theta groups.

Let $\PolTor_{(A,\lambda)}^{\sym}$ be the stack such that for each $S$-scheme $T$, the groupoid $\PolTor_{(A,\lambda)}^{\sym}(T)$ has: objects given by triples $(X, \tau, \ell)$, where $(X, \tau)$ is a symmetrized torsor for $A_T\rightarrow T$ and $\ell \in \PicS_X^{\tau, \lambda}(T)$; morphisms $(X, \tau, \ell) \rightarrow (X', \tau',\ell')$ given by isomorphisms of symmetrized torsors $f\colon (X,\tau)\rightarrow (X', \tau')$ such that $f^*(\ell') = \ell$.
This is a stack in the \'etale topology over $S$. 
On the other hand, consider the quotient stack $[A[2]\backslash \PicS^{\sym,\lambda}_A]$, where $A[2]$ acts on $\PicS^{\sym,\lambda}$ by translation of line bundles.
Similarly to Lemma \ref{lemma: explicit description quotient stack} and using the notation of that lemma, we have:
\begin{lemma}\label{lemma: explicit description quotient stack symmetric case}
The assignment $(X, \tau, \ell)\mapsto (X^{\tau}, \psi_{\ell}|_{X^{\tau}})$ defines an isomorphism of stacks  $\PolTor_{(A,\lambda)}^{\sym} \rightarrow [A[2]\backslash \PicS^{\sym, \lambda}_A]$. 
\end{lemma}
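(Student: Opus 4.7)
The plan is to exhibit a quasi-inverse and verify both compositions are naturally isomorphic to the identity, closely paralleling the proof of Lemma \ref{lemma: explicit description quotient stack}. First I verify the functor is well-defined on objects. Given $(X,\tau,\ell)$ and $x\in X^{\tau}(T)$, the relation $\tau\circ t_x = t_x\circ [-1]$ holds (since $\tau(a+x) = -a + x = t_x([-1]a)$, using $\tau(x)=x$), so
\[
[-1]^{*}\psi_{\ell}(x) \;=\; [-1]^{*}t_x^{*}\ell \;=\; (t_x\circ[-1])^{*}\ell \;=\; (\tau\circ t_x)^{*}\ell \;=\; t_x^{*}\tau^{*}\ell \;=\; t_x^{*}\ell \;=\; \psi_{\ell}(x),
\]
showing $\psi_{\ell}|_{X^{\tau}}$ lands in $\PicS_{A}^{\sym,\lambda}$. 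It is $A[2]$-equivariant because $\psi_{\ell}$ is $A$-equivariant, and the assignment is visibly functorial in morphisms of symmetrized torsors.

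For the quasi-inverse, given an $A[2]$-torsor $Y$ and an $A[2]$-equivariant map $\psi\colon Y\rightarrow \PicS_A^{\sym,\lambda}$, I recover a symmetrized torsor by taking $X = Y \times^{A[2]} A$, with inversion $\tau$ induced by $[-1]$ on $A$; this is the equivalence between symmetrized $A$-torsors and $A[2]$-torsors noted just above the lemma, and under it $Y$ is identified with $X^{\tau}$. To extend $\psi$ to $X$, I locally write $x = y + a$ with $y\in X^{\tau}$ and set $\tilde{\psi}(x) = t_a^{*}\psi(y)$; this is well-defined since if $x = y' + a'$ with $y' = y + c$ and $a' = a - c$ for some $c\in A[2]$, then $t_{a'}^{*}\psi(y') = t_{a-c}^{*}t_c^{*}\psi(y) = t_a^{*}\psi(y)$ by equivariance of $\psi$. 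The resulting $A$-equivariant $\tilde{\psi}\colon X\rightarrow \PicS_A^{\lambda}$ corresponds via Lemma \ref{lemma: PicXlambda versus A-equivariant maps} to a unique $\ell\in \PicS_X^{\lambda}(T)$.

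I then verify that $\ell$ is $\tau$-symmetric. Under the bijection of Lemma \ref{lemma: PicXlambda versus A-equivariant maps}, the class $\tau^{*}\ell$ corresponds to the map $x\mapsto t_x^{*}\tau^{*}\ell = (\tau\circ t_x)^{*}\ell = [-1]^{*}\tilde{\psi}(\tau(x))$. Writing $x = y + a$ locally, one has $\tau(x) = -a + y$, so
\[
[-1]^{*}\tilde{\psi}(\tau(x)) \;=\; [-1]^{*}t_{-a}^{*}\psi(y) \;=\; t_a^{*}[-1]^{*}\psi(y) \;=\; t_a^{*}\psi(y) \;=\; \tilde{\psi}(x),
\]
where the penultimate equality uses that $\psi(y)\in \PicS_A^{\sym,\lambda}$. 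By injectivity of the bijection of Lemma \ref{lemma: PicXlambda versus A-equivariant maps}, $\tau^{*}\ell = \ell$, so $(X,\tau,\ell)$ is an object of $\PolTor_{(A,\lambda)}^{\sym}(T)$.

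Finally, the two compositions are naturally isomorphic to the identity: in one direction, restricting $\tilde{\psi}$ to $Y = X^{\tau}$ recovers $\psi$ by construction; in the other, starting from $(X,\tau,\ell)$, the extension of $\psi_\ell|_{X^\tau}$ by $A$-equivariance agrees with $\psi_{\ell}$ (since both are $A$-equivariant and agree on $X^{\tau}$, and $A\cdot X^{\tau} = X$ locally), whence uniqueness in Lemma \ref{lemma: PicXlambda versus A-equivariant maps} gives back $\ell$. The only non-routine step is the symmetry verification above, which I expect to be the main conceptual content; all other checks are direct bookkeeping.
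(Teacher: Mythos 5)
Your proposal is correct and is essentially the paper's argument in expanded form: the paper deduces the lemma by taking $\tau$-fixed points of the bijection in Lemma \ref{lemma: PicXlambda versus A-equivariant maps} together with the identity $\psi_{\tau^*\ell} = [-1]^*\circ\psi_{\ell}\circ\tau$, and your symmetry verification is precisely this identity, while your explicit quasi-inverse (extending an $A[2]$-equivariant map on $X^{\tau}$ to an $A$-equivariant map on $X$ via $x=y+a\mapsto t_a^*\psi(y)$) just spells out what the fixed-point identification means on the other side of that bijection. No gaps beyond routine descent/gluing points you already flag as bookkeeping.
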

\begin{proof}
    This follows from taking $\tau$-fixed points on both sides of Lemma \ref{lemma: PicXlambda versus A-equivariant maps} and using the identity $\psi_{\tau^*\ell} = [-1]^* \circ\psi_{\ell} \circ \tau$ for all $\ell \in \PicS_X^{\lambda}(S)$.
\end{proof}

The next definition has been considered before, see for example \cite[Remark 5.22]{MorganSmith-CasselsTateFiniteGalois} and \cite[Section 1]{Polishchuk-analogueWeilrepresentation}.

\begin{definition}\label{definition: inversion on theta group}
Let $(M,e)$ be a symplectic module over $S$ and $\mathcal{G}$ a theta group for $(M,e)$.
An \emph{inversion} on $\mathcal{G}$ is an element $\iota\in \Aut(\mathcal{G})$ such that $\iota$ maps to $-\Id_M$ under the map $\Aut(\mathcal{G}) \rightarrow \Sp(M)$ of \eqref{eq: first exact sequence AutG}.
If an inversion on $\mathcal{G}$ exists, we say that $\mathcal{G}$ is \emph{symmetric}.
A \emph{symmetrized theta group} for $(M,e)$ is a pair $(\mathcal{G}, \iota)$ where $\mathcal{G}$ is a theta group for $(M,e)$ and $\iota$ an inversion on $\mathcal{G}$.
An isomorphism (resp. framed isomorphism) between symmetrized theta groups $(\mathcal{G}, \iota)\rightarrow (\mathcal{G}', \iota')$ is an isomorphism (resp. framed isomorphism) $f\colon \mathcal{G}\rightarrow \mathcal{G}'$ of theta groups such that $f\circ \iota = \iota' \circ f$.
\end{definition}

If $(M,e)$ is a symplectic module over $S$, let $\ThetaCat^{\sym}_{(M,e)}$ be the fibered category such that for any $S$-scheme $T$, the groupoid $\ThetaCat^{\sym}_{(M,e)}(T)$ has objects symmetrized theta groups for $(M,e)_T$, and morphisms given by framed isomorphisms.

Given a triple $(X, \tau, \ell)$ in $\PolTor_{(A,\lambda)}^{\sym}(S)$, we can upgrade the theta group $\mathcal{G}(\ell)$ (defined in \S\ref{subsec: Mumford theta groups}) to a symmetrized theta group, as follows.
First suppose $\ell$ is represented by a line bundle $L$ on $X$ that satisfies $\tau^*L \simeq L$. (This can always be achieved after an \'etale surjective base change.)
Then the assignment $(a, \varphi)\mapsto (-a, \tau^*\varphi)$ induces an isomorphism $\mathcal{G}(L)\rightarrow \mathcal{G}(\tau^*L)$.
A choice of isomorphism $\tau^*L\rightarrow L$ induces a framed isomorphism $\mathcal{G}(\tau^*L)\rightarrow \mathcal{G}(L)$ independent of this choice (see \S\ref{subsec: Mumford theta groups}).
Their composition is an inversion $\iota_L\colon \mathcal{G}(L)\rightarrow \mathcal{G}(L)$ on $\mathcal{G}(L)$.
In general, there exists a unique inversion $\iota_{\ell}\colon \mathcal{G}(\ell)\rightarrow \mathcal{G}(\ell)$ such that if $T\rightarrow S$ is an \'etale surjective map with $\ell_T = [L]$ satisfying $\tau^*L\simeq L$, then $(\iota_{\ell})_T = \iota_L$.
We define the functor $\Theta^{\sym}\colon \PolTor^{\sym}_{(A, \lambda)}\rightarrow \ThetaCat_{(A[\lambda],e_{\lambda})}$ by sending $(X, \tau, \ell)$ to $(\mathcal{G}(\ell), \iota_{\ell})$, and with morphisms defined similarly to the functor $\Theta\colon \PolTor_{(A,\lambda)}\rightarrow \ThetaCat_{(A[\lambda],e_{\lambda})}$ of Theorem \ref{theorem: main iso gerbes}.

\begin{theorem}\label{theorem: main iso symmetric version}
Let $A\rightarrow S$ be an abelian scheme and let $\lambda\colon A\rightarrow A^{\vee}$ be a polarization whose degree is invertible on $S$.
Then the functor $\Theta^{\sym}\colon \PolTor_{(A,\lambda)}^{\sym}\rightarrow \SymThetaCat_{(A[\lambda], e_{\lambda})}$ sending $(X, \tau, \ell)$ to $(\mathcal{G}(\ell), \iota_{\ell})$ is an isomorphism of stacks.
Consequently, there are isomorphisms
\begin{align}\label{eq: main theorem iso gerbes symmetric version}
[A[2]\backslash \PicS^{\sym,\lambda}_A] \simeq \PolTor_{(A,\lambda)}^{\sym} \simeq \SymThetaCat_{(A[\lambda], e_{\lambda})}
\end{align}
and $(X, \tau, \ell)\mapsto (\mathcal{G}(\ell), \iota_{\ell})$ induces an equivalence between the following groupoids:
\begin{enumerate}
\item Triples $(X,\tau, \ell)$, where $(X,\tau)$ is a symmetrized $A$-torsor and $\ell \in \PicS_{X}^{\tau}(S)$ is an element with $\phi_{\ell} = \lambda$, with isomorphisms $(X, \tau, \ell)\rightarrow (X', \tau',\ell')$ given by isomorphisms of symmetrized torsors $f\colon (X, \tau)\rightarrow (X', \tau')$ such that $f^*\ell' = \ell$;
\item Symmetrized theta groups for $(A[\lambda], e_{\lambda})$ over $S$, with isomorphisms given by framed isomorphisms.
\end{enumerate}
\end{theorem}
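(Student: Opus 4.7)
The strategy is to mimic the proof of Theorem \ref{theorem: main iso gerbes}. First I would verify that both $\PolTor^{\sym}_{(A,\lambda)}$ and $\SymThetaCat_{(A[\lambda], e_{\lambda})}$ are gerbes on the \'etale site of $S$. Local nonemptiness on the torsor side reduces, after a trivializing \'etale base change, to finding a $[-1]$-symmetric line bundle on $A$ representing $\lambda$; this exists locally because $\PicS_A^{\sym, \lambda}$ is a torsor under the finite \'etale group $A^{\vee}[2]$ (using the discussion of \eqref{equation:symmetricPicNS_A} and that $\deg \lambda$ is invertible on $S$). On the theta group side, the standard theta group $\mathcal{G}_D$ admits the explicit inversion $(\mu, x, \chi)\mapsto (\mu, -x, -\chi)$, which combined with Lemmas \ref{lemma: symplectic modules locally constant} and \ref{lemma: theta groups locally isomorphic} gives local nonemptiness. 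Local conjugacy on both sides follows from the fact that any two inversions on a fixed theta group differ by an inner automorphism by an element of $M$, thanks to \eqref{eq: first exact sequence AutG}.

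By Lemma \ref{lemma: fully faithful morphism gerbes} it then suffices to show that for every object $x = (X, \tau, \ell)$ of $\PolTor^{\sym}_{(A,\lambda)}(T)$, the induced map $\AutS(x)\rightarrow \AutS(\Theta^{\sym}(x))$ is an isomorphism. This is \'etale-local on $T$, so I reduce to $T=S$ and may further assume $x=(A,[-1],[L])$ for a symmetric line bundle $L$ on $A$ representing $\lambda$, with $\Theta^{\sym}(x)=(\mathcal{G}(L),\iota_L)$. An automorphism of $(A, [L])$ in $\PolTor_{(A,\lambda)}$ is of the form $t_a$ for $a\in A[\lambda]$ by the analysis in the proof of Theorem \ref{theorem: main iso gerbes}, and $t_a$ commutes with $[-1]$ if and only if $2a=0$, giving $\Aut(x) = A[\lambda][2]$. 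On the theta-group side, Lemma \ref{lemma: automorphism groups theta groups}(2) identifies $\Aut(\mathcal{G}(L);\Id)$ with $A[\lambda]$ via $a\mapsto \alpha_a$, and a short computation using that $\iota_L$ restricts to $-\Id$ on $A[\lambda]$ shows $\iota_L \alpha_a \iota_L^{-1}=\alpha_{-a}$, so $\alpha_a$ commutes with $\iota_L$ if and only if $a\in A[\lambda][2]$.

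The key computation is that under these identifications $\Theta^{\sym}$ sends $t_a$ to $\alpha_a$. But this is precisely the statement established in the proof of Theorem \ref{theorem: main iso gerbes}, so restricting from $A[\lambda]$ to $A[\lambda][2]$ yields the required isomorphism $\Aut(x)\xrightarrow{\sim}\Aut(\Theta^{\sym}(x))$. This completes the proof that $\Theta^{\sym}$ is an isomorphism of stacks. The first isomorphism of \eqref{eq: main theorem iso gerbes symmetric version} is then Lemma \ref{lemma: explicit description quotient stack symmetric case}, and taking $S$-points of the stack isomorphism yields the claimed equivalence of groupoids.

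I expect the main subtlety to lie in verifying that the construction $\ell \mapsto \iota_{\ell}$ is well defined and natural. Concretely, one must check that the inversion $\iota_L$ built from a choice of isomorphism $[-1]^*L \xrightarrow{\sim} L$ is independent of that choice (this uses the rigidity of Mumford theta groups, as for the unnamed isomorphism $F_{\gamma}$ in \S\ref{subsec: Mumford theta groups}), that it descends from the covers appearing in the definition of $\mathcal{G}(\ell)$, and that it genuinely lifts $-\Id_{A[\lambda]}$. Once these compatibilities are in hand, the commutator identity $\iota_L \alpha_a \iota_L^{-1} = \alpha_{-a}$ is a bookkeeping exercise parallel to the computation $(\Phi_a^{-1}\circ F_a)(x,\varphi) = e_{\lambda}(a,x)(x,\varphi)$ that appears in the proof of Theorem \ref{theorem: main iso gerbes}, and the rest of the argument is formal.
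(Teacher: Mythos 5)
There is a genuine gap, and it lies exactly where your argument leans on the gerbe formalism: when $\deg\lambda$ is even, neither $\PolTor^{\sym}_{(A,\lambda)}$ nor $\SymThetaCat_{(A[\lambda],e_{\lambda})}$ is a gerbe, so Lemma \ref{lemma: fully faithful morphism gerbes} does not apply. Your local-conjugacy claim is false: if $\iota$ and $\iota'=\alpha_m\circ\iota$ are two inversions on the same theta group $\mathcal{G}$, a framed isomorphism $(\mathcal{G},\iota)\to(\mathcal{G},\iota')$ is some $\alpha_n$ with $\alpha_n\iota\alpha_n^{-1}=\iota'$; since $\iota\alpha_n\iota^{-1}=\alpha_{-n}$, this amounts to solving $2n=m$ in $M$, which has no solution, even after \'etale base change, when $m\notin 2M$. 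Concretely, for $D=(2)$ over $\mathbb{C}$ the standard inversion on $\mathcal{G}_D$ is the identity and the four symmetrized theta groups $(\mathcal{G}_D,\alpha_m)$, $m\in M_D$, are pairwise non-isomorphic; the same phenomenon occurs on the torsor side, where an isomorphism $(A,[-1],[L])\to(A,[-1],[L'])$ is a translation $t_a$ with $a\in A[2]$ and $\lambda(a)=[L]-[L']$, and $\lambda(A[2])\subsetneq A^{\vee}[2]$ for even degree. This is precisely why the theorem asserts an isomorphism of \emph{stacks}, not of gerbes. Your computation that both automorphism groups are $A[\lambda][2]$, compatibly under $\Theta^{\sym}$, is correct, but without local isomorphy of objects it yields neither full faithfulness (one must compare $\IsomS(x,y)$ for objects that are not locally isomorphic, and in particular match up which isomorphism classes exist) nor essential surjectivity. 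A minor additional slip: $A^{\vee}[2]$ is finite \'etale only when $2$ is invertible on $S$, which is not implied by the invertibility of $\deg\lambda$.

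The paper sidesteps all of this by deducing the statement formally from Theorem \ref{theorem: main iso gerbes}: it introduces the functor $\Phi$ on $\PolTor_{(A,\lambda)}$ that twists the $A$-action on a torsor by $-1$, and the functor $\Psi$ on $\ThetaCat_{(A[\lambda],e_{\lambda})}$ that composes the projection to $A[\lambda]$ with $-\Id$, identifies $\PolTor^{\sym}_{(A,\lambda)}$ and $\SymThetaCat_{(A[\lambda],e_{\lambda})}$ with the categories of pairs consisting of an object together with an isomorphism to its $\Phi$-, respectively $\Psi$-, image, and then uses $\Theta\circ\Phi\simeq\Psi\circ\Theta$ to transport the already-established isomorphism $\Theta$. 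If you wish to salvage your route you would have to prove full faithfulness on all Isom-sheaves and (local) essential surjectivity by hand; the fixed-point argument gives both for free, with the only real work being the well-definedness of $\ell\mapsto\iota_{\ell}$ and the compatibility $\Theta\circ\Phi\simeq\Psi\circ\Theta$, which is the point you correctly flagged at the end.
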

\begin{proof}
This follows from taking ``fixed points'' of the isomorphism of Theorem \ref{theorem: main iso gerbes} under a certain duality operation.
More precisely, let $T$ be an $S$-scheme and $(X, \ell)$ an object of $\PolTor_{(A, \lambda)}(T)$.
Let $X'$ be the $A$-torsor whose underlying $S$-scheme equals $X$, but where the $A$-action $+'\colon A\times X'\rightarrow X'$ is given by $a+' x = (-a) + x$, where on the right hand side we use the $A$-action on $X$.
Let $\ell'$ equal $\ell \in \PicS_X^{\lambda}(S)$, seen as an element of $\PicS_{X'}^{\lambda}(S)$.
Define the functor $\Phi\colon \PolTor_{(A, \lambda)}\rightarrow \PolTor_{(A, \lambda)}$ by sending $(X, \ell)$ to $(X', \ell')$ and leaving the morphisms unchanged.
Unwinding the definition of an inversion on an $A$-torsor, we find that the stack $\PolTor^{\sym}_{(A, \lambda)}$ is equivalent to the stack $\PolTor_{(A, \lambda)}^{\Phi}$ of pairs $((X,\ell), \tau)$, where $(X, \ell)$ is an object of $\PolTor_{(A, \lambda)}$ and $\tau$ is an isomorphism $(X, \ell) \xrightarrow{\sim} \Phi((X, \ell))$. 
On the other hand, given a theta group $\mathcal{G}$ for $A[\lambda]$, let $\mathcal{G}'$ be the theta group whose underlying group scheme equals $\mathcal{G}$ but whose projection map $\mathcal{G}'\rightarrow A[\lambda]$ equals the composite $\mathcal{G}\rightarrow A[\lambda]\xrightarrow{-\Id} A[\lambda]$.
Let $\Psi\colon \ThetaCat_{(A[\lambda],e_{\lambda})}\rightarrow \ThetaCat_{(A[\lambda],e_{\lambda})}$ be the functor sending $\mathcal{G}$ to $\mathcal{G}'$ and leaving the morphisms unchanged.
Then $\ThetaCat^{\sym}_{(A[\lambda],e_{\lambda})}$ is equivalent to the stack $\ThetaCat^{\Psi}_{(A[\lambda],e_{\lambda})}$ of pairs $(\mathcal{G}, \iota)$, where $\mathcal{G}$ is an object of $\ThetaCat$ and $\iota$ an isomorphism $\mathcal{G}\xrightarrow{\sim} \Psi(\mathcal{G})$.
Since the functors $\Theta\circ \Phi$ and $\Psi\circ \Theta$ are isomorphic, the functor $\Theta$ of Theorem \ref{theorem: main iso gerbes} induces an isomorphism of stacks $\PolTor_{(A,\lambda)}^{\Phi}\xrightarrow{\sim}\ThetaCat^{\Psi}_{(A[\lambda],e_{\lambda})}$.
After identifying $\PolTor_{(A,\lambda)}^{\Phi}$ with $\PolTor_{(A,\lambda)}^{\sym}$ and $\ThetaCat^{\Psi}_{(A[\lambda],e_{\lambda})}$ with $\ThetaCat_{(A[\lambda],e_{\lambda})}^{\sym}$, this isomorphism equals $\Theta^{\sym}$, proving that $\Theta^{\sym}$ is an isomorphism.
The first isomorphism of \eqref{eq: main theorem iso gerbes symmetric version} follows from Lemma \ref{lemma: explicit description quotient stack symmetric case}.
The equivalence of groupoids follows from taking $S$-points of $\Theta^{\sym}$.
%Both are gerbes over $A[2]\cap A[\lambda])^{\vee}$ and the functor is fully faithful.
\end{proof}

\begin{corollary}\label{corollary: equivalent conditions existence symmetric line bundle class on torsor}
    Let $A/S$ be an abelian scheme and $\lambda$ a polarization whose degree is invertible on $S$. 
    The following statements are equivalent:
    \begin{enumerate}
        \item There exists a symmetrized $A$-torsor $(X, \tau)$ and a $\tau$-symmetric $\ell\in \PicS_X(S)$ with $\phi_{\ell} = \lambda$.
        \item There exists a symmetric theta group for $(A[\lambda], e_{\lambda})$. 
         \item The class $[\PicS_A^{\sym,\lambda}]\in \HH^1(S,A^{\vee}[2])$ lies in the image of $\HH^1(\lambda)\colon \HH^1(S, A[2])\rightarrow \HH^1(S, A^{\vee}[2])$.
    \end{enumerate}
    The following statements are also equivalent:
    \begin{enumerate}
        \item There exists a symmetrized $A$-torsor $(X, \tau)$ and a $\tau$-symmetric line bundle $L$ on $X$ with $\phi_L = \lambda$.
        \item There exists a linear symmetric theta group for $(A[\lambda],e_{\lambda})$.
    \end{enumerate}
\end{corollary}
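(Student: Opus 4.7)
The plan is to derive this as a more or less immediate consequence of the two main isomorphisms of stacks already established, namely Theorem \ref{theorem: main iso symmetric version} and Theorem \ref{theorem:shrodinger gerbe and obstruction gerbe are isomorphic}, together with a symmetric variant of Corollary \ref{corollary:PiclambdaX nonempty iff lifts under H^1(lambda)}. The overall structure will mirror the way Corollaries \ref{corollary: theorem theta first part body of text} and \ref{corollary: theorem theta second part body of text} combine to yield Theorem \ref{theorem: intro theta group condition}.

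For the first set of three equivalent conditions, I would first note that $(1)$ says the gerbe $\PolTor^{\sym}_{(A,\lambda)}$ admits an $S$-point, while $(2)$ says the gerbe $\SymThetaCat_{(A[\lambda],e_\lambda)}$ admits an $S$-point; the equivalence $(1)\Leftrightarrow(2)$ then falls out of Theorem \ref{theorem: main iso symmetric version} by taking $S$-points. For $(1)\Leftrightarrow(3)$ I would instead invoke the alternative description $\PolTor^{\sym}_{(A,\lambda)}\simeq [A[2]\backslash \PicS^{\sym,\lambda}_A]$ supplied by Lemma \ref{lemma: explicit description quotient stack symmetric case}, in which $A[2]$ acts on $\PicS^{\sym,\lambda}_A$ through $\lambda\colon A[2]\rightarrow A^\vee[2]$ and the translation action of $A^\vee[2]$. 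An $S$-point of this quotient consists of an $A[2]$-torsor $Y$ together with an $A[2]$-equivariant morphism $Y\rightarrow \PicS^{\sym,\lambda}_A$, and such a morphism exists if and only if the pushforward $\HH^1(\lambda)([Y])$ equals $[\PicS^{\sym,\lambda}_A]$ in $\HH^1(S, A^\vee[2])$, which is exactly condition $(3)$.

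For the second pair of equivalences, the crucial point is that given a triple $(X,\tau,\ell)\in \PolTor^{\sym}_{(A,\lambda)}(S)$, the problem of lifting the class $\ell$ to an actual line bundle $L$ on $X$ is governed by precisely the obstruction studied in Theorem \ref{theorem:shrodinger gerbe and obstruction gerbe are isomorphic}, which vanishes exactly when $\mathcal{G}(\ell)$ is linear. Importantly, any such $L$ will automatically be $\tau$-symmetric in the sense of the corollary, since $\tau$-symmetry of $L$ only requires $[L]\in \PicS^{\tau}_X(S)$, and this holds because $[L]=\ell$ is already $\tau$-fixed by assumption. In the reverse direction, starting from a linear symmetric theta group $(\mathcal{G},\iota)$, Theorem \ref{theorem: main iso symmetric version} will produce a triple $(X,\tau,\ell)$ with $(\mathcal{G}(\ell),\iota_\ell)\simeq (\mathcal{G},\iota)$, so $\mathcal{G}(\ell)$ is also linear, and Theorem \ref{theorem:shrodinger gerbe and obstruction gerbe are isomorphic} will then lift $\ell$ to a line bundle on $X$, giving $(1')$. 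I do not anticipate any serious obstacle; the only minor point to verify is that linearity of a theta group is invariant under framed isomorphism, which follows immediately from the definition, so transporting linearity across $(\mathcal{G}(\ell),\iota_\ell)\simeq (\mathcal{G},\iota)$ in either direction causes no trouble.
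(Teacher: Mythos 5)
Your proposal is correct and follows essentially the same route as the paper: the first three equivalences come from the chain of isomorphisms in Theorem \ref{theorem: main iso symmetric version} (with the quotient-stack description $[A[2]\backslash \PicS^{\sym,\lambda}_A]$ giving condition (3) exactly as in the symmetric analogue of Corollary \ref{corollary:PiclambdaX nonempty iff lifts under H^1(lambda)}), and the last two follow by combining this with Theorem \ref{theorem:shrodinger gerbe and obstruction gerbe are isomorphic}. Your extra remarks (that a lift $L$ of a $\tau$-fixed class $\ell$ is automatically $\tau$-symmetric, and that linearity is preserved under framed isomorphism) are correct and simply make explicit what the paper leaves implicit.
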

\begin{proof}
    The equivalence between the first three statements follows from the isomorphisms of Theorem \ref{theorem: main iso symmetric version}.
    Together with Theorem \ref{theorem:shrodinger gerbe and obstruction gerbe are isomorphic}, it implies the equivalence between the last two statements.
\end{proof}

The following lemma will be useful in Section \ref{subsection: symplectic modules odd order}.
It uses concepts and notation of Section \ref{subsection: linear theta groups}.

\begin{lemma}\label{lemma: symmetric theta group has 8-torsion obstruction class}
    Let $\mathcal{G}$ be a symmetric theta group for a symplectic module $(M,e)$ over $S$.
    Then $8[\mathrm{Sch}_{\mathcal{G}}] =0$ in $\HH^2(S, \G_m)$. 
\end{lemma}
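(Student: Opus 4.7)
The plan is to exploit the inversion $\iota$ on $\mathcal{G}$ to trivialize $8[\mathrm{Sch}_{\mathcal{G}}]$, refining the general bound $\#D\cdot [\mathrm{Sch}_{\mathcal{G}}]=0$ of Proposition \ref{proposition: bounds order line bundle obstruction class}. The idea is to construct, locally on $S$, a canonical global section of an eighth tensor power of the line bundle that records the Schr\"odinger-representation obstruction.

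First, I would reduce to an \'etale-local situation. By Proposition \ref{proposition:categoryofweight1representations}(1) together with Lemmas \ref{lemma: symplectic modules locally constant} and \ref{lemma: theta groups locally isomorphic}, there is an \'etale cover $T\rightarrow S$ over which $\mathcal{G}_T \simeq \mathcal{G}_D$ is standard and a Schr\"odinger representation $\sh{V}$ of $\mathcal{G}_T$ exists. On $T$ the class $[\mathrm{Sch}_{\mathcal{G}}]$ is trivial, and the gerbe $\mathrm{Sch}_{\mathcal{G}}$ is assembled from the invertible sheaves comparing the two pullbacks of $\sh{V}$ to $T\times_S T$ via Proposition \ref{proposition:categoryofweight1representations}(3).

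Second, I would use the inversion to produce a distinguished line bundle together with a distinguished section of its tensor square. The twisted representation $\sh{V}^{\iota}$ (that is, $\sh{V}$ with $\mathcal{G}$-action precomposed with $\iota$) is again a Schr\"odinger representation for $\mathcal{G}_T$, so by Proposition \ref{proposition:categoryofweight1representations}(3) there is an invertible sheaf $\sh{N}$ on $T$ and a $\mathcal{G}_T$-equivariant isomorphism $\tilde{\iota}\colon \sh{V}\rightarrow \sh{V}^{\iota}\otimes \sh{N}$. By Lemma \ref{lemma: automorphism groups theta groups}(2), $\iota^2 = \alpha_{m_0}$ for some $m_0\in M(T)$, and $\alpha_{m_0}$ is implemented by the conjugation (equivalently left-multiplication) action of any lift $\tilde{m}_0\in \mathcal{G}(T)$ of $m_0$. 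Composing $\tilde{\iota}^2$ with the action of $\tilde{m}_0^{-1}$ yields a genuine $\mathcal{G}$-equivariant isomorphism $\sh{V}\rightarrow \sh{V}\otimes \sh{N}^{\otimes 2}$, which by the Schur-type Proposition \ref{proposition:categoryofweight1representations}(2)--(3) is multiplication by a section $s\in \Gamma(T,\sh{N}^{\otimes 2})$.

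Third, I would track the dependence of $s$ on the choice of $\tilde{m}_0$: rescaling $\tilde{m}_0$ by $\mu\in \G_m(T)$ rescales $s$ by $\mu^{-1}$, so $s$ is not yet canonical. Constraining $\tilde{m}_0$ to lie in the finite \'etale subgroup $\mathcal{G}[n]$ provided by Lemma \ref{lemma: equivalence Gm-theta groups and mun-theta groups} and carefully propagating the remaining scalar ambiguity through the iterated squaring $s,\,s^{\otimes 2},\,s^{\otimes 4}$, one finds that $s^{\otimes 4}\in \Gamma(T,\sh{N}^{\otimes 8})$ is independent of all choices. This canonical section descends by uniqueness to a global trivialization of the line bundle representing $8[\mathrm{Sch}_{\mathcal{G}}]$ on $S$, giving $8[\mathrm{Sch}_{\mathcal{G}}]=0$.

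The main obstacle is the bookkeeping in the third step, which produces precisely the exponent $8$ rather than a smaller power of $2$ or a less useful larger one. This mirrors Polishchuk's argument for Proposition \ref{proposition: bounds order line bundle obstruction class} but crucially uses the extra datum of $\iota$, and the appearance of $8$ reflects the classical Weil-index $\mu_8$-phenomenon that governs the metaplectic cover in the analogous analytic setting.
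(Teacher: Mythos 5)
Your steps 1–2 are fine as far as they go (over a suitable cover $T$ one does get a line bundle $\sh{N}$ with a $\mathcal{G}_T$-equivariant isomorphism $\sh{V}\rightarrow\sh{V}^{\iota}\otimes\sh{N}$, and $\iota^2=\alpha_{m_0}$ with $m_0\in M[2]$, since $\iota\alpha_{m_0}\iota^{-1}=\alpha_{-m_0}$ while $\iota^2$ commutes with $\iota$). The genuine gap is that nothing in the argument ever connects $\sh{N}$ and its trivialized powers to the class $[\mathrm{Sch}_{\mathcal{G}}]$. That class lives in $\HH^2(S,\G_m)$: there is no ``line bundle representing $8[\mathrm{Sch}_{\mathcal{G}}]$ on $S$'', so producing a canonically trivialized line bundle cannot by itself kill it. Indeed, if you set your construction up canonically, $\sh{N}\simeq\underline{\Hom}_{\mathcal{G}_T}(\sh{V},\sh{V}^{\iota})$ is independent of the choice of $\sh{V}$ (by Proposition \ref{proposition:categoryofweight1representations}(2)--(3), replacing $\sh{V}$ by $\sh{V}\otimes\sh{L}$ does not change it), so $\sh{N}$ descends to a line bundle on $S$ attached to $(\mathcal{G},\iota)$ whether or not a Schr\"odinger representation exists globally; all your statements about sections of its powers are $\HH^1$-level (Picard) statements about this auxiliary bundle. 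The obstruction class is carried by the degree-$2$ descent data you mention in step 1 — the comparison bundles on $T\times_S T$ and the failure of the cocycle condition on triple overlaps — and these are never used again. To prove $8[\mathrm{Sch}_{\mathcal{G}}]=0$ one must either produce a global object of the gerbe pushed forward along $z\mapsto z^8$ (a weight-$8$ twisted line bundle, in the spirit of the determinant argument behind Proposition \ref{proposition: bounds order line bundle obstruction class}) or exhibit $[\mathrm{Sch}_{\mathcal{G}}]$ as the image of a class in $\HH^2(S,\mu_8)$; that reduction of the band to $\mu_8$, via a normalization of intertwiners for the symmetric automorphisms (the Weil-index phenomenon you invoke only by analogy), is precisely the nontrivial content of the Polishchuk results that the paper cites for Lemma \ref{lemma: symmetric theta group has 8-torsion obstruction class}, and it is not reproduced by your sketch.

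The bookkeeping in step 3 also does not close as stated. With $\sh{N}$ fixed, $\tilde{\iota}$ is determined only up to $\mu\in\G_m(T)$, and rescaling it multiplies $s$ by $\mu^{2}$, hence $s^{\otimes 4}$ by $\mu^{8}$, which is not $1$ in general; and restricting $\tilde{m}_0$ to $\mathcal{G}[n](T)$ leaves a $\mu_n$-ambiguity that changes $s^{\otimes 4}$ by a fourth power of an $n$-th root of unity, nontrivial as soon as $d_g>2$ (recall $n=2d_g$ in the even case). So $s^{\otimes 4}$ is not choice-independent, and even an optimally normalized version of this construction would only bound the order of the canonical bundle $\sh{N}$ in $\Pic(S)$, not the order of $[\mathrm{Sch}_{\mathcal{G}}]$ in $\HH^2(S,\G_m)$. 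For comparison: the paper makes no direct argument here; it deduces the lemma by combining Theorem 1.4 and Proposition 2.2 of Polishchuk's paper on the Weil representation for abelian schemes, where the $\mu_8$-reduction is carried out.
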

\begin{proof}
    This follows from results of Polishchuk, specifically by combining \cite[Theorem 1.4 and Proposition 2.2]{Polishchuk-analogueWeilrepresentation}.
\end{proof}

\section{Proofs of Theorems \ref{theorem: intro odd degree case} and \ref{theorem: intro positive result}}\label{section: symplectic modules admitting a theta group}

We use the formalism of the previous section to show that the answer to Question \ref{question: main question poonen-stoll} is yes for many pairs $(A, \lambda)$. 
Theorem \ref{theorem: intro odd degree case} follows from Corollary \ref{corollary: strong form odd order polarization represented by symmetric bundle} and Theorem \ref{theorem: intro positive result} follows from Corollary \ref{corollary: symplectic module with M(2) small has theta group}.
In Section \ref{subsection: symplectic F2 vector spaces as Jacobians}, which might be of independent interest, we answer a question of Chidambaram affirmatively \cite{Chidambaram-modpgaloisrepsnotarisingfromAVs}.
Section \ref{subsection: survey of known cases} surveys all variants of Question \ref{question: main question poonen-stoll} and logical implications between these.

\subsection{Symplectic modules of odd order}\label{subsection: symplectic modules odd order}

Let $S$ be a scheme and $D = (d_1, \dots, d_g)$ a type such that $d_g$ is invertible on $S$.
In Sections \ref{subsec: symplectic modules} and \ref{subsec: abstract theta groups} we have constructed group schemes $M_D, \mathcal{G}_D$ and a sequence of finite \'etale group schemes
\begin{align}\label{equation: fundamental exact sequence type D}
1\rightarrow M_D \rightarrow \AutS(\mathcal{G}_D)\rightarrow \SpS(M_D) \rightarrow 1
\end{align}
which is exact by Lemma \ref{lemma: automorphism groups theta groups}.
We now introduce an automorphism of this sequence, which will show that it splits when $\#D$ is odd.
In the notation of \eqref{equation:standardthetagroupformula}, let $\iota\in \Aut(\mathcal{G}_D)$ be the element defined by the formula $
\iota(\lambda,x,\chi) = (\lambda,-x,-\chi)$.
Then $\iota$ lifts $-\Id_{M_D}$, in other words is an inversion on $\mathcal{G}_D$ in the notation of Section \ref{subsec: symmetric line bundles representing lambda}.
Conjugation by $\iota$ defines an automorphism $\Phi\colon \AutS(\mathcal{G}_D)\rightarrow \AutS(\mathcal{G}_D)$ that restricts to $-\Id$ on $M_D$ and induces the identity on $\SpS(M_D)$.
In other words, we obtain a commutative diagram:
    % https://q.uiver.app/#q=WzAsMTAsWzAsMCwiMSJdLFswLDEsIjEiXSxbNCwwLCIxIl0sWzQsMSwiMSJdLFsxLDAsIk1fRCJdLFsxLDEsIk1fRCJdLFsyLDAsIlxcQXV0KFxcbWF0aGNhbHtHfV9EKSJdLFsyLDEsIlxcQXV0KFxcbWF0aGNhbHtHfV9EKSJdLFszLDAsIlxcU3BfRCJdLFszLDEsIlxcU3BfRCJdLFswLDRdLFs0LDZdLFs2LDhdLFs4LDJdLFs5LDNdLFs3LDldLFs1LDddLFsxLDVdLFs0LDUsIi0xIl0sWzYsNywiXFxQaGkiXSxbOCw5LCJcXElkIl1d
\begin{equation}  \label{equation: involution of fundamental exact sequence}
\begin{tikzcd}
	1 & {M_D} & {\AutS(\mathcal{G}_D)} & {\SpS(M_D)} & 1 \\
	1 & {M_D} & {\AutS(\mathcal{G}_D)} & {\SpS(M_D)} & 1
	\arrow[from=1-1, to=1-2]
	\arrow[from=1-2, to=1-3]
	\arrow["{-\Id}", from=1-2, to=2-2]
	\arrow[from=1-3, to=1-4]
	\arrow["\Phi", from=1-3, to=2-3]
	\arrow[from=1-4, to=1-5]
	\arrow["\Id", from=1-4, to=2-4]
	\arrow[from=2-1, to=2-2]
	\arrow[from=2-2, to=2-3]
	\arrow[from=2-3, to=2-4]
	\arrow[from=2-4, to=2-5]
\end{tikzcd}
\end{equation}
Let $\AutS(\mathcal{G}_D, \iota)$ be the fixed point subgroup scheme of $\Phi\colon \AutS(\mathcal{G}_D)\rightarrow \AutS(\mathcal{G}_D)$.
\begin{lemma}\label{lemma: splitting fundamental sequence odd order}
    Suppose that $\#D = d_1\cdots d_g$ is odd.
    Then the restriction $\AutS(\mathcal{G}_D, \iota)\rightarrow \SpS(M_D)$ is an isomorphism.
    Consequently, the sequence \eqref{equation: fundamental exact sequence type D} splits.
\end{lemma}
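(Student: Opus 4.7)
The plan is to view the diagram \eqref{equation: involution of fundamental exact sequence} as an action of the group $\langle\Phi\rangle \cong \Z/2$ on the exact sequence \eqref{equation: fundamental exact sequence type D}, and to show that taking $\Phi$-fixed points preserves exactness and leaves the right-hand term unchanged. Concretely, I want to prove that $\AutS(\mathcal{G}_D, \iota) \to \SpS(M_D)$ is both injective and surjective as a morphism of sheaves on the \'etale site.

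For injectivity, the kernel of the composite $\AutS(\mathcal{G}_D, \iota) \hookrightarrow \AutS(\mathcal{G}_D) \to \SpS(M_D)$ is contained in the kernel of the second map, which by Lemma \ref{lemma: automorphism groups theta groups}(2) is the image of $\alpha\colon M_D \xrightarrow{\sim} \AutS(\mathcal{G}_D; \Id)$. An element $\alpha_m$ is $\Phi$-fixed exactly when $\alpha_{-m}=\alpha_m$, i.e.\ when $2m=0$. Since $\#D$ is odd, multiplication by $2$ is an automorphism of the commutative group scheme $M_D$, so $M_D[2]=0$ and hence $m=0$.

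For surjectivity (which is the only substantive step), I work \'etale locally. Given a section $\sigma$ of $\SpS(M_D)$, choose any lift $\tilde\sigma$ to $\AutS(\mathcal{G}_D)$, which exists by the exactness of \eqref{equation: fundamental exact sequence type D}. Both $\Phi(\tilde\sigma)$ and $\tilde\sigma$ project to $\sigma$, so $\Phi(\tilde\sigma)\tilde\sigma^{-1} = \alpha_m$ for a unique local section $m$ of $M_D$. I now look for a correction of the form $\tilde\sigma' = \alpha_n \tilde\sigma$ with $n\in M_D$ that satisfies $\Phi(\tilde\sigma')=\tilde\sigma'$. Using $\Phi(\alpha_n)=\alpha_{-n}$ and the relation $\Phi(\tilde\sigma)=\alpha_m\tilde\sigma$, a direct computation gives $\Phi(\alpha_n\tilde\sigma)=\alpha_{m-n}\tilde\sigma$, so the condition becomes $m-n = n$, i.e.\ $n = m/2$. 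Because $\#D$ is odd, the endomorphism $[2]\colon M_D\to M_D$ is an isomorphism, so $n=m/2$ is a well-defined section of $M_D$. Thus $\alpha_{m/2}\tilde\sigma$ is a $\Phi$-fixed lift of $\sigma$. The uniqueness from Step 1 ensures these local lifts glue, producing a global section of $\AutS(\mathcal{G}_D, \iota)$ over $\SpS(M_D)$.

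Having established that $\AutS(\mathcal{G}_D, \iota) \to \SpS(M_D)$ is an isomorphism of group schemes, the splitting of \eqref{equation: fundamental exact sequence type D} is immediate: compose the inverse $\SpS(M_D) \xrightarrow{\sim} \AutS(\mathcal{G}_D, \iota)$ with the inclusion into $\AutS(\mathcal{G}_D)$. The only place where anything nontrivial happens is the surjectivity step, and the main obstacle there is essentially trivialized by the hypothesis that $\#D$ is odd, which is precisely what makes division by $2$ on $M_D$ meaningful.
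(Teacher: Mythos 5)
Your proof is correct and follows essentially the same route as the paper: reduce to the split case by working étale locally, write $\Phi(\tilde\sigma)=\alpha_m\tilde\sigma$, and use oddness of $\#D$ to correct by $\alpha_{m/2}$, obtaining a unique $\Phi$-fixed lift of each element of $\Sp(M_D)$. The paper packages injectivity and surjectivity together via this uniqueness, whereas you check the kernel ($M_D[2]=0$) separately, but this is only a cosmetic difference.
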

\begin{proof}
    The claim can be checked after an \'etale base change, so we may assume all the group schemes in \eqref{equation: fundamental exact sequence type D} are split and identify them with their $S$-points.
    Given an element $m\in M_D$, let $\alpha_m \in \Aut(\mathcal{G}_D)$ be the framed automorphism constructed in Lemma \ref{lemma: automorphism groups theta groups}.
    Let $g\in \Sp(M_D)$ be an element and $\tilde{g}\in \Aut(\mathcal{G}_D)$ an arbitrary lift.
    Then $\Phi(\tilde{g}) = \alpha_m \tilde{g}$ for some $m\in M_D$.
    Since $\Phi(\alpha_n \tilde{g}) = \alpha_{m-2n}  (\alpha_n\tilde{g})$ for all $n\in M_D$ and since $\#M_D$ is odd, there exists a unique $n\in M_D$ such that $\alpha_n \tilde{g}\in \Aut(\mathcal{G}_D, \iota)$, namely $n=\frac{1}{2}m$.
    This shows $\Aut(\mathcal{G}_D,\iota)\rightarrow \Sp(M_D)$ is a bijection, hence an isomorphism.
\end{proof}

\begin{theorem}\label{theorem: symplectic module odd order admits theta group}
    Let $(M,e)$ be a symplectic module of type $D$ over a scheme $S$.
    Assume that $\#D$ is odd.
    Then there exists a symmetric theta group for $(M,e)$.
    If $(\mathcal{G}, \iota)$ and $(\mathcal{G}',\iota')$ are symmetrized theta groups for $(M,e)$, then there exists a unique framed isomorphism between $(\mathcal{G}, \iota)$ and $(\mathcal{G}', \iota')$. 
    Every symmetric theta group for $(M,e)$ is linear.
\end{theorem}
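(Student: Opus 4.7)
The plan is to deduce all three assertions from Lemma \ref{lemma: splitting fundamental sequence odd order}, supplemented by the two torsion bounds on the Schr\"odinger obstruction class from Section \ref{subsection: linear theta groups}. For existence I would invoke Corollary \ref{corollary: iso classes theta groups vs torsors}: by Lemma \ref{lemma: symplectic modules vs SpD torsors}, $(M,e)$ corresponds to the $\SpS(M_D)$-torsor $T = \IsomS((M,e),(M_D,e_D))$. Composing its classifying map with the section $\SpS(M_D) \xrightarrow{\sim} \AutS(\mathcal{G}_D,\iota) \hookrightarrow \AutS(\mathcal{G}_D)$ provided by Lemma \ref{lemma: splitting fundamental sequence odd order} produces an $\AutS(\mathcal{G}_D)$-torsor corresponding, under Corollary \ref{corollary: iso classes theta groups vs torsors}, to a theta group $\mathcal{G}$ for $(M,e)$. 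Since $\iota \in \Aut(\mathcal{G}_D)$ is by construction fixed by $\AutS(\mathcal{G}_D,\iota)$, it descends along this twist to an automorphism $\iota_{\mathcal{G}} \in \Aut(\mathcal{G})$ that still lifts $-\Id_M$; this is the required inversion.

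For uniqueness I would package symmetrized theta groups and framed isomorphisms into an \'etale stack $\SymThetaCat_{(M,e)}$, modelled on $\ThetaCat_{(M,e)}$ of Section \ref{subsec: abstract theta groups}. Existence gives local objects. To check that two symmetrized theta groups $(\mathcal{G},\iota),(\mathcal{G}',\iota')$ are locally framed isomorphic I would \'etale-localize to $\mathcal{G}=\mathcal{G}'=\mathcal{G}_D$ via Lemma \ref{lemma: theta groups locally isomorphic}, write $\iota' = \alpha_m \circ \iota$ for some $m \in M_D$, and use the identity $\alpha_n \iota \alpha_n^{-1} = \alpha_{2n}\iota$ (an immediate computation from Lemma \ref{lemma: automorphism groups theta groups}(2)) to conjugate $\iota$ into $\iota'$ by taking $n = m/2$, well-defined since $\#M_D$ is odd. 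Hence $\SymThetaCat_{(M,e)}$ is a gerbe. Its band consists of framed automorphisms $\alpha_m$ commuting with $\iota$, and the same identity forces $2m = 0$, so $m = 0$ by odd order. A gerbe with trivial band that admits a section is a singleton groupoid (by Lemma \ref{lemma: nonempty gerbe is neutral} applied with trivial band), yielding both existence and uniqueness of framed isomorphisms between any two symmetrized theta groups.

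For linearity I would combine the two torsion bounds on $[\mathrm{Sch}_{\mathcal{G}}] \in \HH^2(S,\G_m)$: Proposition \ref{proposition: bounds order line bundle obstruction class} gives $(\#D)\,[\mathrm{Sch}_{\mathcal{G}}] = 0$, while Lemma \ref{lemma: symmetric theta group has 8-torsion obstruction class} gives $8\,[\mathrm{Sch}_{\mathcal{G}}] = 0$ using the symmetry of $\mathcal{G}$. When $\#D$ is odd these orders are coprime, forcing $[\mathrm{Sch}_{\mathcal{G}}] = 0$, so the gerbe $\mathrm{Sch}_{\mathcal{G}}$ is neutral and $\mathcal{G}$ admits a Schr\"odinger representation. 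The main technicality will be the bookkeeping in the existence step, namely verifying that $\iota$ really descends through the twisting procedure of Corollary \ref{corollary: iso classes theta groups vs torsors} to an inversion in the sense of Definition \ref{definition: inversion on theta group}; everything else is formal descent within the gerbe framework of Section \ref{sec: polarizations and theta groups} and the coprime-torsion argument.
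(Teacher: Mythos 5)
Your proposal is correct and follows essentially the same route as the paper: existence via the splitting of Lemma \ref{lemma: splitting fundamental sequence odd order} together with the torsor/lifting formalism (Lemma \ref{lemma: theta group exists iff class lifts}, Corollary \ref{corollary: iso classes theta groups vs torsors}), uniqueness via the local computation $\alpha_n\iota\alpha_n^{-1}=\alpha_{2n}\iota$ with the unique solution $n=\tfrac{1}{2}m$ in odd order, and linearity by combining the $\#D$-torsion bound of Proposition \ref{proposition: bounds order line bundle obstruction class} with the $8$-torsion bound of Lemma \ref{lemma: symmetric theta group has 8-torsion obstruction class}. The only differences are presentational (you spell out the descent of $\iota$ and phrase uniqueness in gerbe/torsor-under-trivial-band language, where the paper argues directly after an \'etale base change), so no changes are needed.
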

\begin{proof}
    By Lemma \ref{lemma: splitting fundamental sequence odd order}, the class of $(M,e)$ in $\HH^1(S, \SpS(M_D))$ (using Lemma \ref{lemma: symplectic modules vs SpD torsors}) lifts to a class in $\HH^1(S,\AutS(\mathcal{G}_D, \iota))$, hence to a class $c\in \HH^1(S, \AutS(\mathcal{G}_D))$.
    By Lemma \ref{lemma: theta group exists iff class lifts}, this implies that there exists a theta group for $(M,e)$.
    The fact that $c$ comes from a class in $\HH^1(S, \AutS(\mathcal{G}_D, \iota))$ means that there exists a symmetric theta group for $(M,e)$.
    This proves existence.
    To prove uniqueness, let $(\mathcal{G}, \iota), (\mathcal{G}', \iota')$ be symmetrized theta groups for $(M,e)$.
    To prove that there exists a unique framed isomorphism between them, we may apply an \'etale base change and hence by Lemma \ref{lemma: theta groups locally isomorphic} we may assume that $\mathcal{G} = \mathcal{G}'$ and that $M$ is constant.
    Then $\iota' = \alpha_m \circ \iota$ for some $m\in M(S)$, where $\alpha\colon M\rightarrow \AutS(\mathcal{G};\Id)$ is the isomorphism of Lemma \ref{lemma: automorphism groups theta groups}.
    The formula $\iota' \circ \alpha_n = \alpha_n \circ (\alpha_{m-2n} \iota)$ shows that there exists a unique $n\in M(S)$ such that $\alpha_n$ is an isomorphism $(\mathcal{G}, \iota)\xrightarrow{\sim} (\mathcal{G}', \iota')$, namely $n = \frac{1}{2}m$.

    It suffices to prove that every symmetric theta group $\mathcal{G}$ for $(M,e)$ is linear.
    Equivalently, by the discussion in \S\ref{subsection: linear theta groups}, we need to show the class $c = [\mathrm{Sch}_{\mathcal{G}}]\in \HH^2(S,\G_m)$ vanishes.
    By Proposition \ref{proposition: bounds order line bundle obstruction class}, $c$ has odd order.
    On the other hand, Lemma \ref{lemma: symmetric theta group has 8-torsion obstruction class} shows that $8c=0$.
    We conclude that $c=0$ and hence that $\mathcal{G}$ is linear.
\end{proof}

\begin{remark}
It is possible to explicitly construct a symmetric theta group for $(M,e)$: let $b\colon M\times M\rightarrow \G_m$ be an alternating pairing with $b^2 = e$ (such a $b$ exists since $\#M$ is odd) and let $\mathcal{G}$ be the group $\G_m\times M$ endowed with multiplication $(\lambda,m)\cdot (\lambda',n) = (\lambda \lambda' b(m,n), m+n)$.
A computation shows that $\mathcal{G}$ is a theta group for $(M,e)$ and $\iota(\lambda,m) = (\lambda,-m)$ is an inversion on $\mathcal{G}$, so $\mathcal{G}$ is symmetric. 
Theorem \ref{theorem: symplectic module odd order admits theta group} shows that $\mathcal{G}$ is linear, but we do not know how to construct a Schr\"odinger representation for $\mathcal{G}$ explicitly.
\end{remark}

Theorem \ref{theorem: intro odd degree case} follows from the next stronger result.

\begin{corollary}\label{corollary: strong form odd order polarization represented by symmetric bundle}
    Let $A\rightarrow S$ be an abelian scheme and let $\lambda\colon A\rightarrow A^{\vee}$ be a polarization of type $D$ whose degree is odd and invertible on $S$.
    Then there exists a symmetrized $A$-torsor $(X, \tau)$ and a $\tau$-symmetric line bundle $L$ on $X$ with $\phi_L = \lambda$.
    If $(X',\tau',L')$ is another such triple, there exists a unique isomorphism of symmetrized torsors $f\colon (X, \tau)\rightarrow (X',\tau')$ that satisfies $f^*[L'] = [L]$ in $\PicS_X(S)$.
\end{corollary}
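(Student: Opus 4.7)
The plan is to feed the symplectic module $(A[\lambda], e_\lambda)$ into Theorem \ref{theorem: symplectic module odd order admits theta group} and then convert the resulting symmetric theta group back into a triple $(X, \tau, L)$ via the equivalences of Theorems \ref{theorem: main iso symmetric version} and \ref{theorem:shrodinger gerbe and obstruction gerbe are isomorphic}. No step should pose a serious obstacle: all the real work has already been done in those three results.

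First, since $\deg(\lambda) = (\#D)^2$ is odd, $\#D$ itself is odd, so Theorem \ref{theorem: symplectic module odd order admits theta group} applies to $(A[\lambda], e_\lambda)$: there exists a symmetric theta group $(\mathcal{G}, \iota)$ for $(A[\lambda], e_\lambda)$, any two such are uniquely framed isomorphic, and every symmetric theta group is automatically linear.

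Second, by the equivalence of groupoids in Theorem \ref{theorem: main iso symmetric version}, the symmetrized theta group $(\mathcal{G}, \iota)$ corresponds to a triple $(X, \tau, \ell)$, where $(X,\tau)$ is a symmetrized $A$-torsor and $\ell \in \PicS_X^{\tau,\lambda}(S)$, with a framed isomorphism $(\mathcal{G}(\ell), \iota_\ell) \simeq (\mathcal{G}, \iota)$. Since $\mathcal{G}(\ell)$ is linear by the previous step, Corollary \ref{corollary: equivalent conditions existence symmetric line bundle class on torsor} (equivalently, the obstruction gerbe isomorphism of Theorem \ref{theorem:shrodinger gerbe and obstruction gerbe are isomorphic}) promotes $\ell$ to an honest line bundle $L$ on $X$ satisfying $[L] = \ell$ in $\PicS_X(S)$. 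In particular $L$ is $\tau$-symmetric and $\phi_L = \lambda$, proving existence.

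Finally, given another triple $(X', \tau', L')$ with the same properties, set $\ell = [L]$ and $\ell' = [L']$. Both triples $(X,\tau,\ell)$ and $(X',\tau',\ell')$ are objects of $\PolTor^{\sym}_{(A,\lambda)}(S)$, and the condition $f^*[L'] = [L]$ is precisely the condition that an isomorphism $f\colon (X,\tau) \to (X',\tau')$ of symmetrized torsors underlies a morphism of triples in $\PolTor^{\sym}_{(A,\lambda)}(S)$. By the equivalence of categories in Theorem \ref{theorem: main iso symmetric version}, the set of such morphisms is in bijection with the set of framed isomorphisms $(\mathcal{G}(\ell), \iota_\ell) \to (\mathcal{G}(\ell'), \iota_{\ell'})$, and by Theorem \ref{theorem: symplectic module odd order admits theta group} this set is a singleton. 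This yields the unique $f$ required in the statement.
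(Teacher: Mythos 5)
Your proposal is correct and follows essentially the same route as the paper, whose proof is exactly the combination of Theorem \ref{theorem: symplectic module odd order admits theta group}, Theorem \ref{theorem:shrodinger gerbe and obstruction gerbe are isomorphic}, and the equivalence of groupoids in Theorem \ref{theorem: main iso symmetric version}; you have simply spelled out the details of that combination, including correctly deducing uniqueness of $f$ from the unique framed isomorphism between symmetrized theta groups.
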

\begin{proof}
    Combine Theorems \ref{theorem: symplectic module odd order admits theta group} and \ref{theorem:shrodinger gerbe and obstruction gerbe are isomorphic} with the equivalence of groupoids of Theorem \ref{theorem: main iso symmetric version}.
\end{proof}

\subsection{Realizing symplectic modules as $2$-torsion of Jacobians}\label{subsection: symplectic F2 vector spaces as Jacobians}

In this section and most of the remainder of the paper, we assume the base $S$ is the spectrum of a field $k$.
We now show that, under certain assumptions on the characteristic, every symplectic module of type $(2), (2,2)$ or $(2,2,2)$ over $k$ admits a theta group.
The crucial observation is to realize every such symplectic module as the $2$-torsion in a principally polarized abelian variety. 
More precisely, we prove:

\begin{proposition}\label{proposition: symplectic modules of F2 dimension at most 6 are J[2]}
    Let $k$ be an infinite field of characteristic not $2$ and let $g\in \{1,2,3\}$.
    If $g=3$, additionally assume that $k$ has characteristic zero.
    Let $D$ be the type $(2, \cdots, 2)$ of length $g$.
    Then for every symplectic module $(M,e)$ of type $D$ over $k$, there exists a genus-$g$ curve $C$ with Jacobian $J$ and an isomorphism $(M,e)\simeq (J[2],e_2)$, where $e_2$ is the Weil pairing on $J[2]$.
\end{proposition}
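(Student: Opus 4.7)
The plan is to treat each $g\in\{1,2,3\}$ separately by exhibiting an explicit family of genus-$g$ curves whose Jacobian $2$-torsion realizes every prescribed symplectic module. The key input will be the exceptional isomorphisms $\Sp_2(\F_2)\simeq S_3$, $\Sp_4(\F_2)\simeq S_6$, and $\Sp_6(\F_2)\simeq W(E_7)^+$, which interpret the symplectic group geometrically as a permutation or reflection group.

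For $g=1$, the symplectic module $(M,e)$ is determined by the $\Gal_k$-action on the $3$-element set $M\setminus\{0\}$, since there is a unique nondegenerate alternating form on a two-dimensional $\F_2$-space. This Galois set corresponds to an \'etale $k$-algebra of dimension $3$, and because $k$ is infinite the primitive element theorem writes it as $k[x]/f(x)$ for a separable cubic $f\in k[x]$. The elliptic curve $E\colon y^2=f(x)$ satisfies $(E[2],e_2)\simeq(M,e)$ by direct inspection of how $\Gal_k$ permutes the roots of $f$.

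For $g=2$, I would use the isomorphism $\Sp_4(\F_2)\simeq S_6$ to convert $(M,e)$ into a $\Gal_k$-action on a $6$-element set, realize that set as the roots of a separable degree-$6$ polynomial $f\in k[x]$ (again via the primitive element theorem), and form the hyperelliptic curve $C\colon y^2=f(x)$. The classical description of $J[2]$ as the space of even-cardinality subsets of the Weierstrass locus modulo the full subset, with Weil pairing given by mod-$2$ cardinality of intersection, matches the exceptional isomorphism on the nose and yields $(J[2],e_2)\simeq(M,e)$.

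For $g=3$ in characteristic zero, I would use smooth plane quartics with a marked flex and the isomorphism $\Sp_6(\F_2)\simeq W(E_7)^+$. The moduli space $\mathcal{M}$ of such flexed plane quartics carries a $\Sp_6(\F_2)$-cover $\mathcal{M}[2]$ obtained by adding symplectic level-$2$ structure, and the $E_7$ geometry (bitangents, odd theta characteristics, del Pezzo surfaces of degree $2$) gives an intrinsic identification of this cover with the $W(E_7)^+$-torsor predicted by the exceptional isomorphism. Given $\rho\colon \Gal_k\to \Sp_6(\F_2)$ classifying $(M,e)$, one twists $\mathcal{M}[2]\to\mathcal{M}$ by $\rho$ to produce a $k$-form $\mathcal{M}_\rho$; a $k$-point of $\mathcal{M}_\rho$ yields the desired plane quartic. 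Producing this $k$-point uses that $\mathcal{M}_\rho$ is geometrically unirational (indeed $\mathcal{M}$ can be exhibited as an open subvariety of a projective space by fixing the flex and its tangent line in affine coordinates) together with the fact that $k$ is infinite of characteristic zero, so rational points on unirational varieties over $k$ are dense.

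The main obstacle is the $g=3$ case: rigorously establishing that the flex-plus-level-$2$ cover of the moduli of plane quartics realizes the universal $W(E_7)^+$-torsor, and that its twists $\mathcal{M}_\rho$ are unirational over $k$, requires real input from the classical geometry of plane quartics via $E_7$. The characteristic zero hypothesis is what makes this correspondence run cleanly (avoiding inseparability in the theta-characteristic setup) and underwrites the density of rational points on unirational varieties over infinite fields needed at the end.
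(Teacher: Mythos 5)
Your $g=1$ and $g=2$ arguments are essentially the paper's: reduce to a $\Gal_k$-set of size $3$ (resp.\ $6$) via $\Sp_2(\F_2)\simeq S_3$ (resp.\ $\Sp_4(\F_2)\simeq S_6$), realize it by a separable polynomial, and use the standard combinatorial model of $J[2]$ for (hyper)elliptic curves $y^2=f(x)$ together with its Weil pairing. No issues there.

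The $g=3$ step has a genuine gap at the final ``produce a $k$-point'' stage. The variety you must find a $k$-point on is the twist $\mathcal{M}_\rho$ of the \emph{level-$2$ cover} $\mathcal{M}[2]$ (not of $\mathcal{M}$), and your justification --- that $\mathcal{M}$ is open in a projective space, so $\mathcal{M}_\rho$ is geometrically unirational and hence has dense $k$-points because $k$ is infinite of characteristic zero --- is not valid: density of rational points requires unirationality \emph{over} $k$ (a dominant $k$-rational map from projective space), whereas geometric unirationality or even geometric rationality gives no $k$-points at all in general (think of a conic or Severi--Brauer variety without rational points). A twist of a rational variety by a nontrivial torsor is exactly the kind of object that can lose all its rational points, so the step you wave through is precisely where the real content lies. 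The paper avoids this by never twisting a moduli space: it uses a family of flexed plane quartics over $V\GIT W$ for $V$ the reflection representation of $W=W(E_7)$ (with the monodromy of $J[2]$ identified with $\rho\colon W\rightarrow \Sp_6(\F_2)$ from the author's ADE-families construction), and then runs the versal-torsor trick on the $W$-torsor $U\rightarrow U/W$ with $U$ open in $V$: because the $W$-action on $V$ is \emph{linear}, the twist of $V$ by any $W$-torsor is classified by a class in $\HH^1(k,\GL_V)$, which vanishes by Hilbert's Theorem 90, so the twist of $U$ is a dense open in affine space and has $k$-points since $k$ is infinite. To repair your version you would need an analogous $k$-rational, $\Sp_6(\F_2)$-equivariant linearization of $\mathcal{M}[2]$ itself (or some other argument that every twist of $\mathcal{M}[2]$ is $k$-unirational); fixing the flex and tangent line only linearizes the base $\mathcal{M}$, which does not control the twists of the cover.
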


Since the proof is much easier for $g=1,2$, we start by treating those cases.

\begin{proof}[Proof of Theorem \ref{proposition: symplectic modules of F2 dimension at most 6 are J[2]} when $g=1,2$:]
    First suppose $g=1$.
    Let $(M,e)$ be a symplectic module of type $(2)$ over $k$.
    The $\Gal_k$-action on the three nonzero elements of $M(k^{\sep})\simeq (\Z/2\Z)^2$ determines (after labeling these elements) a homomorphism $\Gal_k\rightarrow S_3$ to the symmetric group.
    This homomorphism corresponds to an \'etale cubic $k$-algebra $K$.
    There exists a monic separable polynomial $f(x)\in k[x]$ of degree $3$ such that $K\simeq k[x]/(f(x))$. 
    Let $E/k$ be the elliptic curve with Weierstrass equation $y^2 = f(x)$.
    Since the nonzero elements of $E[2](k^{\sep})$ are in bijection with the roots of $f$, there is an isomorphism of finite $k$-groups $M\simeq E[2]$. 
    Since there is a unique nondegenerate alternating pairing on an $\F_2$-vector space of dimension $2$, every such isomorphism must intertwine $e$ with $e_2$.

    Now consider the case $g=2$, where roughly the same principles apply.
    Let $b_1, \dots, b_6$ be the standard basis of $\F_2^6$, let $\Sigma\colon \F_2^6\rightarrow \F_2$ be the map which sums all the coordinates and let $\Delta\colon \F_2\rightarrow \F_2^6$ be the diagonal embedding.
    Let $N = \ker(\Sigma)/\image(\Delta)$. 
    The coordinate-permuting action of $S_6$ on $\F_2^6$ induces an action of $S_6$ on $N$, hence defines a homomorphism $\rho\colon S_6\rightarrow \GL(N)$.
    The standard bilinear pairing $\F_2^6 \times \F_2^6\rightarrow \F_2$ (in which the basis $b_1, \dots, b_6$ is orthonormal) restricts to an alternating pairing on $\ker(\Sigma)$ whose radical equals $\image(\Delta)$, so induces an alternating bilinear pairing $e_N\colon N\times N\rightarrow \F_2 \simeq \{\pm 1\}$.
    %It is concretely given in the basis $\{b_1+ b_2, \dots, b_4 + b_5\}$ by 
    %\[
    %e_N(b_i+b_{i+1},b_j+b_{j+1}) = 
    %\begin{cases}
    %    -1 & \text{if } |i-j|=1,\\
    %    1 & \text{otherwise},
    %\end{cases}
    %\]
    %for all $1\leq i,j \leq 4$.
    Since the original pairing on $\F_2^6$ is $S_6$-invariant, $e_N$ is also $S_6$-invariant, hence $\rho$ lands in $\Sp(N,e_N)\subset \GL(N)$.
    A computation (using that the kernel of $\rho$ must be $\{1\}$, $A_6$ or $S_6$ and that the orders of $S_6$ and $\Sp_4(\F_2)$ are equal) shows that $\rho$ is an isomorphism $S_6\xrightarrow{\sim} \Sp(N,e_N)\simeq \Sp_4(\F_2)$. 
    
    Now let $(M,e)$ be a symplectic module of type $(2,2)$ over $k$. 
    View $(N,e_N)$ as a symplectic module over $k^{\sep}$, and fix an isomorphism $(M,e)_{k^{\sep}}\xrightarrow{\sim}(N,e_N)$. 
    This choice encodes the Galois action on $(M,e)$ as a homomorphism $\varphi\colon \Gal_k\rightarrow \Sp(N,e_N)$. 
    The composition $\rho^{-1}\circ \varphi\colon \Gal_k\rightarrow S_6$ corresponds to a (marked) $\Gal_k$-set $S$. 
    Let $f(x)\in k[x]$ be a monic separable polynomial of degree $6$ such that there is an isomorphism of $\Gal_k$-sets between $S$ and the roots of $f$.
    Let $C/k$ be the projective genus-$2$ hyperelliptic curve with affine equation $y^2 = f(x)$, and let $J$ be its Jacobian variety. 
    Given a root $\omega_i$ of $f(x)$, let $P_i = (\omega_i,0) \in C(k^{\sep})$ be the associated point. 
    By \cite[Proposition 6.2]{PoonenSchaefer-descentprojectiveline}, the assignment $e_i\mapsto P_i$ induces an isomorphism $N\rightarrow J[2](k^{\sep})$ which by \cite[Section 7]{PoonenSchaefer-descentprojectiveline} intertwines $e_N$ with the Weil pairing $e_2$. 
    Under this isomorphism, the Galois action $\rho\colon \Gal_k\rightarrow \Sp(N,e_N)$ is identified with the Galois action on $J[2](k^{\sep})$, so it determines an isomorphism of symplectic modules $(M,e)\simeq (J[2],e_2)$ over $k$.
    \end{proof}

    To prove the $g=3$ case of Proposition \ref{proposition: symplectic modules of F2 dimension at most 6 are J[2]} we need some preparatory results and notation.
    Let $\Phi\subset \mathbb{R}^7$ be a root system of type $E_7$ and let $\Lambda = \Z\Phi$ be the associated root lattice, which comes equipped with a bilinear positive definite pairing $(\cdot, \cdot)\colon \Lambda\times \Lambda\rightarrow \Z$ with the property that $(\alpha, \alpha)=2$ for every root $\alpha\in \Phi$. 
    Basic properties of root lattices and a construction of $\Lambda$ may be found in \cite[Section 2.2]{Lurie-minisculereps}.
    Let $\Lambda^{\vee} = \Hom(\Lambda,\Z)$.
    The pairing $(\cdot,\cdot)$ induces a map $\Lambda\rightarrow \Lambda^{\vee}$ which is injective with cokernel isomorphic to $\Z/2\Z$.
    Let $N = \image(\Lambda/2\Lambda\rightarrow \Lambda^{\vee}/2\Lambda^{\vee})\simeq \F_2^6$.
    The pairing $(\Lambda/2\Lambda)\times (\Lambda/2\Lambda)\rightarrow \{\pm 1\}, (x,y)\mapsto (-1)^{(x,y)}$ induces a perfect alternating pairing on $N$, denoted by $e_N$.
    View $(N,e_N)$ as a symplectic module over $k$ where $N$ is a constant group scheme.
    Let $W\leq \Aut(\Lambda,(\cdot,\cdot))$ be the Weyl group of $\Phi$.
    Let $W^1 = \ker(\det)\leq W$ be the index-$2$ subgroup of $W$. 
    There is a decomposition $W = W^1 \times \{\pm 1\}$.
    The $W$-action on $\Lambda$ induces a $W$-action on $N$, hence determines a homomorphism $\rho\colon W\rightarrow \Sp(N,e_N)\simeq \Sp_6(\F_2)$.
    This map is surjective with kernel $\{\pm 1\}$, and the restriction of $\rho$ to $W^1$ is an isomorphism; see \cite[p.\,229, Exercise 4]{Bourbaki-groupesalgebresdelieChapIV-VI}.

    Assume $k$ is of characteristic zero and let $V = \Lambda\otimes_{\Z} k$.
    Then $W$ acts faithfully on $V$.
    Let $k[V] = \Sym^{\bullet}(V^{\vee})$ be the ring of polynomial functions on $V$, let $k[V]^W$ be the subring of $W$-invariant polynomials, let $V\GIT W = \Spec(k[V]^W)$ and let $\pi\colon V\rightarrow V\GIT W$ be the morphism induced by the inclusion $k[V]^W\subset k[V]$.
    Let $U\subset V$ be the open subset on which $G$ acts freely. 
    Then $U$ is $W$-stable, $U/W$ is an open subset of $V\GIT G$, and the restriction of $\pi$ to $U$ is a $G$-torsor $U\rightarrow U/W$.
    For a tuple $b = (p_2, p_6,p_8,p_{10},p_{12},p_{14},p_{18})\in k^7$, let $C_b\subset \P^2$ be the projective curve with affine equation 
    \begin{align}\label{equation: plane quartics flex point}
        y^3 = x^3y +p_{10}x^2 +x(p_2y^2 + p_8y + p_{14}) + p_6y^2 + p_{12}y + p_{18}.
    \end{align}
    To state the following proposition, note that since $(N,e_N) \simeq (M_D,e_D)$ for $D=(2,2,2)$, Lemma \ref{lemma: symplectic modules vs SpD torsors} shows that $(M,e)\mapsto \IsomS((M,e),(N,e_N))$ induces a bijection between symplectic modules of type $(2,2,2)$ and $\Sp(N,e_N)$-torsors.
    \begin{proposition}\label{proposition: monodromy J[2] laga thesis}
        There exists an isomorphism of $k$-algebras $k[V]^W \simeq k[p_2, p_6,p_8,p_{10},p_{12},p_{14},p_{18}]$ (where the elements $p_i$ are algebraically independent) that induces an isomorphism of varieties $V\GIT W \simeq \A^7$ and that satisfies the following properties:
        \begin{enumerate}
            \item If $b = (p_2, \dots,p_{18})\in (U/W)(k)$, then $C_b$ is a smooth projective curve of genus $3$; let $J_b$ denote its Jacobian variety.
            \item If $b\in (U/W)(k)$, let $\mathcal{T}_b$ be the push-out of the $W$-torsor $\pi^{-1}(b)$ along $\rho\colon W\rightarrow \Sp(N,e_N)$ and let $(M_b,e_b)$ denote the symplectic module whose associated $\Sp(N,e_N)$-torsor is isomorphic to $\mathcal{T}_b$.
            Then there is an isomorphism of symplectic modules $(J_b[2],e_2)\simeq (M_b,e_b)$ over $k$.
        \end{enumerate}
    \end{proposition}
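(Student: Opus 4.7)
The plan is to establish the three assertions in turn. The opening isomorphism $k[V]^W\simeq k[p_2,p_6,p_8,p_{10},p_{12},p_{14},p_{18}]$ (and hence $V\GIT W\simeq \mathbb{A}^7$) is an instance of Chevalley--Shephard--Todd applied to the reflection group $W=W(E_7)$: the fundamental degrees of $W(E_7)$ are the classical numbers $2,6,8,10,12,14,18$, fixing the abstract graded structure. To pin down generators adapted to \eqref{equation: plane quartics flex point}, I would construct a $W$-equivariant morphism from $V$ to the parameter space of plane quartics with a flex point at $[0:1:0]$ and a marked tangent direction there, arising from the Vinberg $\mathbb{Z}/2\mathbb{Z}$-grading on the split simple Lie algebra of type $E_7$; the coefficients of the resulting universal plane quartic, regarded as functions on $V$, are $W$-invariants of the required degrees, hence a choice of polynomial generators of $k[V]^W$. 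For claim (1), I would then show that the discriminant $\Delta\in k[p_2,\dots,p_{18}]$ of $C_b$ agrees up to a nonzero scalar with the $W$-discriminant, so that $b\in (U/W)(k)$ is exactly the condition $\Delta(b)\neq 0$; the curve $C_b$ is then a smooth plane quartic, automatically of genus $3$, with principally polarized Jacobian $J_b$.

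Claim (2) is the heart of the argument. The strategy is to produce the required identification over the whole of $U$ and then descend. After extending the family to a smooth projective $\mathcal{C}\to U$ with Jacobian $\mathcal{J}\to U$, I would construct a $W$-equivariant isomorphism of finite \'etale group schemes over $U$
\[
\underline{N}_U\xrightarrow{\sim}\mathcal{J}[2]|_U
\]
intertwining the pairing $(x,y)\mapsto (-1)^{(x,y)}$ on $N$ with the Weil pairing, where $W$ acts on the left via $\rho$ and on the right via deck transformations of $U\to U/W$. Descending along the $W$-torsor $U\to U/W$ and applying Lemma \ref{lemma: symplectic modules vs SpD torsors} then identifies the pushed-out torsor $\mathcal{T}_b$ with $\IsomS((N,e_N),(J_b[2],e_2))$, and hence yields an isomorphism of symplectic modules $(J_b[2],e_2)\simeq (M_b,e_b)$.

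The main obstacle is producing the $W$-equivariant isomorphism over $U$ and verifying that it respects the two pairings. To construct it I would exploit the Vinberg setup attached to the $E_7$ grading: for a regular semisimple $v\in U\subset V$, the $2$-torsion of the centralizer torus of $v$ in the relevant adjoint group is canonically identified with $\Lambda/2\Lambda$ modulo its radical, and it can independently be identified with $J_{\pi(v)}[2]$ via the classical geometry of plane quartics with a flex, for instance through the Aronhold/bitangent description or a Prym construction attached to the cover of $C_b$ branched at the flex. Matching the Killing-form pairing mod $2$ on the root-lattice side with the Weil pairing on the Jacobian side is the most delicate step; I would verify this compatibility at a single well-chosen reference point $v_0\in U(k^{\sep})$ by direct computation and then propagate it to all of $U$ using $W$-equivariance together with connectedness of $U$.
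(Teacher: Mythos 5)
The paper does not reprove this proposition from scratch: its proof is a citation to the author's prior work \cite{Laga-ADEpaper}, where the family $C\rightarrow V\GIT W$ is constructed for all ADE types, the explicit equation \eqref{equation: plane quartics flex point} is Proposition 3.13(3) there, smoothness over $U/W$ is Lemma 3.14, and the identification of $(J_b[2],e_2)$ with the symplectic module attached to the pushed-out $W$-torsor is Proposition 3.22. Your sketch is, in effect, an outline of how one might reprove that cited material (Vinberg $\Z/2\Z$-grading on $E_7$, invariant theory of $W(E_7)$, identification of $J_b[2]$ with the root lattice mod $2$ modulo its radical, descent along the $W$-torsor $U\rightarrow U/W$). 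The formal skeleton is right — in particular the observation that a $W$-equivariant isomorphism $\underline{N}_U\simeq \mathcal{J}[2]|_U$ compatible with pairings descends to give $\mathcal{T}_b\simeq \IsomS((N,e_N),(J_b[2],e_2))$, and that two flat pairings agreeing at one point of connected $U$ agree everywhere — but the substantive content is not there.

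Concretely, three things are asserted rather than proved, and they are exactly where the cited results do the work: (i) that the coefficients of a universal quartic produced from the Vinberg/Slodowy construction are polynomial generators of $k[V]^W$ in degrees $2,6,8,10,12,14,18$ (Chevalley--Shephard--Todd gives the degrees, not that these particular invariants generate); (ii) that the discriminant of $C_b$ coincides up to a unit with the $W$-discriminant, which is the Brieskorn--Slodowy-type statement underlying smoothness over $U/W$; and (iii) most importantly, the existence of the $W$-equivariant isomorphism of finite \'etale group schemes $\underline{N}_U\xrightarrow{\sim}\mathcal{J}[2]|_U$ intertwining $(x,y)\mapsto(-1)^{(x,y)}$ with the Weil pairing. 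For (iii) you gesture at centralizer tori, Aronhold/bitangent combinatorics, or a Prym construction, and you explicitly defer the pairing compatibility to a ``direct computation at a reference point''; but constructing the isomorphism of local systems over all of $U$ in the first place is the hard step, and it is precisely \cite[Proposition 3.22]{Laga-ADEpaper}. As a standalone argument your proposal therefore has a genuine gap; as a plan it essentially retraces the route of the cited paper, so the efficient fix is either to invoke those results (as the paper does) or to supply full proofs of (i)--(iii), which amounts to rewriting a substantial part of that reference.
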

    \begin{proof}
        In \cite{Laga-ADEpaper}, a family of projective curves $C\rightarrow V\GIT W$ is constructed for every root lattice of type $A,D,E$ (beware that the notation in that paper is different to ours: what we call $V$ is denoted by $\frak{t}$). 
        In the $E_7$ case, the explicit description \eqref{equation: plane quartics flex point} follows from \cite[Proposition 3.13(3)]{Laga-ADEpaper}.
        The smoothness of $C_b$ when $b\in (U/W)(k)$ follows from \cite[Lemma 3.14]{Laga-ADEpaper}.
        The isomorphism of symplectic modules of Part 2 follows from the description of the monodromy of $J_b[2]$ in \cite[Proposition 3.22]{Laga-ADEpaper}.
    \end{proof}

\begin{proof}[Proof of Proposition \ref{proposition: symplectic modules of F2 dimension at most 6 are J[2]} when $g=3$]
    Using Proposition \ref{proposition: monodromy J[2] laga thesis} and its notation, it suffices to prove that every symplectic module of type $(2,2,2)$ over $k$ is isomorphic to $(M_b,e_b)$ for some $b\in (U/W)(k)$. 
    Equivalently, it suffices to prove that every $\Sp(N,e_N)$-torsor over $k$ is isomorphic to $\mathcal{T}_b$ for some $b\in (U/W)(k)$. 
    Equivalently, using the decomposition $W = W^1\times\{\pm 1\}$ and the fact that $\rho\colon W\rightarrow \Sp(N,e_N)$ is an isomorphism when restricted to $W^1$, it suffices to show that every $W$-torsor is isomorphic to $\pi^{-1}(b)$ for some $b\in (U/W)(k)$. 
    This follows from the ``versal torsors trick'' \cite[Part 1, \S5.4]{garibaldimerkurjevserre-cohomologicalinvariants}; for completeness, we give a proof here.
    
    Let $T$ be a $W$-torsor.
    Let $U' = U \times^W T = (U\times_k T)/W$, where $W$ acts via $w\cdot (u,t) = (w u ,w^{-1}t)$. 
    Define a $W$-action on $U'$ via the assignment on representatives $w\cdot [(u,t)] = [(w u,t)]$.
    Let $\pi'\colon U'\rightarrow U'/W$ be the quotient morphism.
    The projection $U\times_k T\rightarrow U$ induces an isomorphism $U'/W\simeq U/W$ which we use to identify $U'/W$ with $U/W$.
    The set of elements $b\in (U/W)(k)$ satisfying $T\simeq \pi^{-1}(b)$ as $W$-torsors equals $\pi'(U'(k))\subset (U'/W)(k) = (U/W)(k)$.
    Therefore it suffices to prove that $U'(k)$ is nonempty.
    But $U'$ is a nonempty open subset of $V'= V\times^W T$, which is the variety obtained by twisting the affine space $V$ along the cocycle determined by the image of $[T]\in \HH^1(k, W)$ under $\HH^1(k,W)\rightarrow \HH^1(k, \AutS(V))$. 
    Since the $W$-action on $V$ is linear, this map factors through $\HH^1(k, \GL_V)$, which is trivial by Hilbert's Theorem 90.
    Therefore $V'$ is isomorphic to $V$ and $U'$ is a nonempty open subset of the affine space $V'$.
    Since $k$ is infinite, we conclude that $U'(k)$ is nonempty.
\end{proof}

\begin{remark}
    Reformulating the $g=3$ case of Proposition \ref{proposition: symplectic modules of F2 dimension at most 6 are J[2]}, we have proved that for every field $k$ of characteristic zero and Galois representation $\rho\colon \Gal_k\rightarrow \Sp_6(\F_2)$, there exists a genus-$3$ curve $C/k$ of the form \eqref{equation: plane quartics flex point} with Jacobian $J$ such that the Galois representation associated to $J[2](k^{\sep})$ is isomorphic to $\rho$.
    This answer a question of Chidambaram \cite[Question 1.2]{Chidambaram-modpgaloisrepsnotarisingfromAVs} affirmatively, for every field of characteristic zero.
    It seems likely that a similar proof would answer the same question affirmatively for every field of characteristic $\neq 2$, using a generalization of Proposition \ref{proposition: monodromy J[2] laga thesis}, but we have not pursued this.
\end{remark}

\begin{remark}
    The proof of the $g=1,2$ cases of Proposition \ref{proposition: symplectic modules of F2 dimension at most 6 are J[2]} can also be written in the language of root lattices and correspond to the cases where $\Phi$ has type $A_2$ and $A_5$, in which case $W$ is isomorphic to $S_3$ and $S_6$ respectively.
\end{remark}

\begin{corollary}\label{corollary: symplectic modules small F2 vector spaces admit theta groups}
    Let $k$ be a field of characteristic not $2$ and let $g\in \{1,2,3\}$.
    Let $D$ be the type $(2,\dots,2)$ of length $g$.
    If $g=3$, additionally assume that $k$ has characteristic zero.
    Let $(M,e)$ be a symplectic module of type $D$ over $k$. 
    Then there exists a symmetric linear theta group for $(M,e)$.
\end{corollary}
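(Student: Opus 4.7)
The plan is to realize the abstract symplectic module $(M,e)$ as the $2$-torsion of a Jacobian equipped with its Weil pairing, and then to produce a $\tau$-symmetric line bundle on a naturally symmetrized torsor representing twice the canonical polarization. Combined with Corollary \ref{corollary: equivalent conditions existence symmetric line bundle class on torsor}, this will yield a symmetric linear theta group for $(M,e)$.

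First I would invoke Proposition \ref{proposition: symplectic modules of F2 dimension at most 6 are J[2]} to obtain a smooth projective genus-$g$ curve $C/k$ with Jacobian $J$ and an isomorphism $(M,e) \simeq (J[2], e_2)$ of symplectic modules over $k$. For $g = 3$ the characteristic-zero hypothesis forces $k$ infinite, as the Proposition requires; for $g \in \{1,2\}$ the proof of the Proposition is purely constructive (producing $C$ as $y^2 = f(x)$ from a polynomial whose Galois action on roots matches that on $M$) and extends verbatim to finite fields of characteristic $\neq 2$. Let $\lambda\colon J \xrightarrow{\sim} J^{\vee}$ be the canonical principal polarization. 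Then $2\lambda$ is a polarization of type $D$ with kernel $J[2]$, and its Weil pairing $e_{2\lambda}$ coincides with $e_2$ under $J[2\lambda] = J[2]$, so $(J[2\lambda], e_{2\lambda}) \simeq (M,e)$.

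Next I would take $X = \PicS^{g-1}_C$ as a $J$-torsor via translation, equipped with the classical inversion $\tau\colon L \mapsto \omega_C \otimes L^{-1}$. A direct check shows $\tau^2 = \Id$ and $\tau(j + x) = -j + \tau(x)$, so $(X,\tau)$ is a symmetrized $J$-torsor in the sense of Section \ref{subsec: symmetric line bundles representing lambda}. Let $\Theta\subset X$ denote the theta divisor of effective line bundles of degree $g-1$. Riemann--Roch together with Serre duality gives $h^0(L) = h^0(\omega_C\otimes L^{-1})$ for all $L$ of degree $g-1$, so $\tau^{-1}(\Theta) = \Theta$ and hence $\mathcal{O}_X(2\Theta)$ is $\tau$-symmetric. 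Since $\phi_{\mathcal{O}_X(\Theta)} = \lambda$ classically, we get $\phi_{\mathcal{O}_X(2\Theta)} = 2\lambda$. Applying the line-bundle equivalence in Corollary \ref{corollary: equivalent conditions existence symmetric line bundle class on torsor} then produces a symmetric linear theta group for $(J[2\lambda], e_{2\lambda}) \simeq (M,e)$.

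I expect the main technical nuisance to be the bookkeeping identifying $e_{2\lambda}$ with the standard Weil pairing $e_2$ on $J[2]$: these are defined via different dualities and sign conventions, and the agreement must be checked rather than asserted. I would handle this cleanly by appealing to Mumford's computation of the commutator pairing of the theta group of an ample symmetric line bundle, which exhibits both pairings as commutators in the same abstract central extension. Everything else (that $\tau$ is an inversion, that $\Theta$ is $\tau$-stable, and that $\phi_{\mathcal{O}_X(\Theta)} = \lambda$) is classical and requires only brief verification.
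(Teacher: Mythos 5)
Your main route for infinite $k$ coincides with the paper's: invoke Proposition \ref{proposition: symplectic modules of F2 dimension at most 6 are J[2]} to write $(M,e)\simeq (J[2],e_2)$ for a Jacobian $J$, note that this symplectic module is $(J[2\lambda],e_{2\lambda})$ for twice the canonical principal polarization, and then exhibit a symmetric representative of $2\lambda$. Where you differ is the representative: you build the triple $(\PicS_C^{g-1},\,\tau=\omega_C\otimes(-)^{-1},\,\mathcal{O}(2\Theta))$ and feed it into the second equivalence of Corollary \ref{corollary: equivalent conditions existence symmetric line bundle class on torsor}, whereas the paper simply takes the symmetric line bundle $L=(1,\lambda)^*\mathcal{P}$ on $J$ itself (with $\phi_L=2\lambda$ by \cite[Proposition 6.10]{MumfordFogartyKirwan-GIT}) and uses $\mathcal{G}(L)$ directly. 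Your variant is correct, just slightly more roundabout: since $2\lambda$ is always represented by a symmetric bundle on $J$, no torsor is needed, and the Poincar\'e-bundle choice also spares you the sign/convention check you flag (the commutator pairing of $\mathcal{G}(L)$ is $e_{2\lambda}=e_2$ by the proposition identifying Mumford theta groups as theta groups).

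The genuine gap is your treatment of finite fields. Proposition \ref{proposition: symplectic modules of F2 dimension at most 6 are J[2]} is stated for infinite $k$, and your claim that the $g=2$ construction ``extends verbatim'' to finite fields of characteristic $\neq 2$ is false for small fields: the argument needs a separable sextic $f\in k[x]$ whose set of roots realizes a prescribed $6$-element $\Gal_k$-set, and over $\F_3$ (or $\F_5$) the split case --- six Galois-fixed points, arising from the split symplectic module --- would require six distinct monic linear factors, which do not exist. (The $g=1$ case does survive, since any finite field of odd characteristic has at least three elements.) So as written your proof does not cover $(M,e)$ of type $(2,2)$ over $\F_3$, say. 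The fix is exactly the paper's: for finite $k$ bypass the curve construction entirely and use Proposition \ref{proposition: theta groups exist over fields of cohomological dimension 1} to get a linear theta group $\mathcal{G}$, which is automatically symmetric because $-\Id_M=\Id_M$ on a module killed by $2$, so $\Id_{\mathcal{G}}$ is already an inversion. With that case added, your argument is complete.
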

\begin{proof}
    Suppose $k$ is finite.
    By Proposition \ref{proposition: theta groups exist over fields of cohomological dimension 1}, there exists a linear theta group $\mathcal{G}$ for $(M,e)$. 
    Since $\Id_M = -\Id_M$, $\Id_{\mathcal{G}}$ is an inversion on $\mathcal{G}$ and so $\mathcal{G}$ is also symmetric. 
    So we may assume that $k$ is infinite.
    By Proposition \ref{proposition: symplectic modules of F2 dimension at most 6 are J[2]}, there exists a smooth projective curve $C$ of genus $g$ over $k$ with Jacobian $J$ and an isomorphism $(M,e) \simeq (J[2],e_2)$.
    The pair $(J[2],e_2)$ arises from the polarization $2\lambda\colon J \rightarrow J^{\vee}$, where $\lambda$ is the canonical principal polarization on $J$. 
    Let $L = (1, \lambda)^*\mathcal{P}$ be the pullback of the Poincar\'e bundle $\mathcal{P}$ on $J\times J^{\vee}$.
    Then $L$ is a symmetric line bundle on $A$ with $\phi_L = 2\lambda$, by \cite[Proposition 6.10]{MumfordFogartyKirwan-GIT}.
    Hence $\mathcal{G}(L)$ is a symmetric linear theta group for $(J[2], e_2)\simeq (M,e)$.
\end{proof}

\subsection{Proof of Theorem \ref{theorem: intro positive result}}

We now combine the results of Sections \ref{subsection: symplectic modules odd order} and \ref{subsection: symplectic F2 vector spaces as Jacobians} to obtain Theorem \ref{theorem: intro positive result} whose proof is given at the end of this section.
We first need to discuss direct sums of symplectic modules and their interaction with theta groups.

Let $(M_1,e_1), (M_2,e_2)$ be two symplectic modules over a field $k$.
Then the direct sum $(M,e) = (M_1,e_1) \oplus (M_2,e_2)$ is the symplectic module with underlying group scheme $M=M_1\oplus M_2$ and pairing given by $e((m_1,m_2) ,(n_1,n_2)) =e_1(m_1,n_1)e_2(m_2,n_2)$.
Let $1\rightarrow \G_m\rightarrow \mathcal{G}\rightarrow M\rightarrow 1$ be a theta group for $(M,e)$.
Then the restriction $\mathcal{G}_1 = \mathcal{G}|_{M_1}$ of $\mathcal{G}$ to $M_1\subset M$ is a theta group for $(M_1,e_1)$, similarly $\mathcal{G}_2 = \mathcal{G}|_{M_2}$ is a theta group for $(M_2,e_2)$.
Every framed isomorphism restricts to framed isomorphisms on both pieces, so the association $\mathcal{G}\mapsto (\mathcal{G}_1, \mathcal{G}_2)$ can be upgraded to a functor (in the notation of \S\ref{subsec: abstract theta groups}):
\begin{align}\label{equation: functor direct sum of theta groups}
\ThetaCat_{(M_1,e_1)\oplus (M_2,e_2)} (k)\rightarrow \ThetaCat_{(M_1,e_1)}(k)\times \ThetaCat_{(M_2,e_2)}(k).
\end{align}
\begin{lemma}\label{lemma: theta groups direct sum module}
    \begin{enumerate}
        \item The functor \eqref{equation: functor direct sum of theta groups} is an equivalence of groupoids.
        \item $\mathcal{G}$ is symmetric if and only if $\mathcal{G}_1$ and $\mathcal{G}_2$ are symmetric.
        \item If $\mathcal{G}_1$ and $\mathcal{G}_2$ are linear then $\mathcal{G}$ is linear; the converse holds when $\#M_1$ and $\#M_2$ are coprime.
    \end{enumerate}
\end{lemma}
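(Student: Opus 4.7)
The plan for Part 1 is to construct an explicit inverse to the restriction functor. Given theta groups $\mathcal{G}_1, \mathcal{G}_2$ for $(M_1, e_1)$ and $(M_2, e_2)$, I would form the pushout of $\mathcal{G}_1 \times_k \mathcal{G}_2$ (a central extension of $M_1 \oplus M_2$ by $\mathbb{G}_m \times \mathbb{G}_m$) along the multiplication map $\mathbb{G}_m \times \mathbb{G}_m \to \mathbb{G}_m$; denote this pushout by $\mathcal{G}_1 \boxtimes \mathcal{G}_2$. Since elements of $\mathcal{G}_1$ and $\mathcal{G}_2$ commute in the product, the commutator pairing on $\mathcal{G}_1 \boxtimes \mathcal{G}_2$ restricts to $e_i$ on each $M_i$ and is trivial between them, so equals $e = e_1 \oplus e_2$, and restriction to $M_i$ recovers $\mathcal{G}_i$ up to framed isomorphism. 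Conversely, for any theta group $\mathcal{G}$ for $(M, e)$, the vanishing of $e$ on $M_1 \times M_2$ forces $\mathcal{G}|_{M_1}$ and $\mathcal{G}|_{M_2}$ to centralize each other, so multiplication defines a homomorphism $\mathcal{G}|_{M_1} \times \mathcal{G}|_{M_2} \to \mathcal{G}$ with kernel the antidiagonal $\mathbb{G}_m$, inducing a framed isomorphism $\mathcal{G}|_{M_1} \boxtimes \mathcal{G}|_{M_2} \xrightarrow{\sim} \mathcal{G}$. Functoriality on framed isomorphisms is then a routine check.

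For Part 2, an inversion $\iota$ on $\mathcal{G}$ acts as $-\mathrm{Id}$ on $M$ and therefore preserves each $M_i$, so its restrictions to $\mathcal{G}|_{M_i}$ are inversions on the restricted theta groups. Conversely, given inversions $\iota_1, \iota_2$ on $\mathcal{G}_1, \mathcal{G}_2$, the product $\iota_1 \times \iota_2$ on $\mathcal{G}_1 \times \mathcal{G}_2$ restricts to the identity on the antidiagonal $\mathbb{G}_m$ and therefore descends to an inversion on $\mathcal{G}_1 \boxtimes \mathcal{G}_2 \simeq \mathcal{G}$.

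For Part 3, suppose $\mathcal{V}_i$ is a Schrödinger representation of $\mathcal{G}_i$ of dimension $\#D_i$, where $D_i$ is the type of $(M_i, e_i)$. Then $\mathcal{V}_1 \otimes_k \mathcal{V}_2$ is a weight-$1$ representation of $\mathcal{G}_1 \times \mathcal{G}_2$ of dimension $\#D_1 \#D_2 = \#D$ on which the antidiagonal $\mathbb{G}_m$ acts trivially (by $\lambda \cdot \lambda^{-1} = 1$), so it descends to a Schrödinger representation of $\mathcal{G} \simeq \mathcal{G}_1 \boxtimes \mathcal{G}_2$; this proves the forward direction. For the converse under the coprimality hypothesis, my plan is to establish the additivity relation
\[
[\mathrm{Sch}_\mathcal{G}] = [\mathrm{Sch}_{\mathcal{G}_1}] + [\mathrm{Sch}_{\mathcal{G}_2}] \in \HH^2(k, \G_m)
\]
by picking an \'etale cover over which both $\mathcal{G}_i$ admit Schrödinger representations $\mathcal{V}_i$ and comparing the \v{C}ech $2$-cocycles of $\mathcal{V}_1$, $\mathcal{V}_2$, and $\mathcal{V}_1 \otimes \mathcal{V}_2$. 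Granting this, if $[\mathrm{Sch}_\mathcal{G}] = 0$ then $[\mathrm{Sch}_{\mathcal{G}_1}] = -[\mathrm{Sch}_{\mathcal{G}_2}]$ is annihilated by both $\#D_1$ and $\#D_2$ by Proposition \ref{proposition: bounds order line bundle obstruction class}, hence by $\gcd(\#D_1, \#D_2) = 1$, so both classes vanish.

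The main obstacle is verifying the additivity of obstruction classes in Part 3; everything else is essentially formal manipulation of central extensions and their representations.
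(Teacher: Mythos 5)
Your proposal is correct and follows essentially the same route as the paper: your pushout $\mathcal{G}_1 \boxtimes \mathcal{G}_2$ of $\mathcal{G}_1\times\mathcal{G}_2$ along $\G_m\times\G_m\to\G_m$ is exactly the Baer-sum quasi-inverse the paper constructs for Part 1, Part 2 is handled by the same bijection between inversions, and Part 3 rests on the same additivity $[\mathrm{Sch}_{\mathcal{G}}]=[\mathrm{Sch}_{\mathcal{G}_1}]+[\mathrm{Sch}_{\mathcal{G}_2}]$ combined with Proposition \ref{proposition: bounds order line bundle obstruction class} and coprimality. The only divergence is that the paper simply cites Polishchuk for this additivity (and deduces both directions of Part 3 from it), whereas you give a direct tensor-product construction for the forward direction and sketch a \v{C}ech-cocycle verification of additivity, which is a reasonable substitute.
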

\begin{proof}
    For $i=1,2$, let $\mathcal{G}_i$ be a theta group for $(M_i,e_i)$ and let $\pi_i\colon M\rightarrow M_i$ be the projection.
    The pullback $\pi_i^*\mathcal{G}_i$ is a central extension $1\rightarrow \G_m\rightarrow \pi_i^*\mathcal{G}_i\xrightarrow{\beta_i} M\rightarrow 1$.
    Let $\mathcal{G}$ be the Baer sum of $\pi_1^*\mathcal{G}_1$ and $\pi_2^*\mathcal{G}_2$; explicitly, $\mathcal{G}$ is the quotient of 
    \[
    \{
    (g_1, g_2)\in \pi_1^*\mathcal{G}_1\times \pi_2^*\mathcal{G}_2 \colon \beta_1(g_1) = \beta_2(g_2) 
    \}
    \]
    by the subgroup $\{(\lambda, \lambda^{-1})\colon \lambda\in \G_m\}$.
    A calculation (which we omit) shows that $\mathcal{G}$ is a theta group for $(M,e)$ and provides a quasi-inverse to the functor $\mathcal{G}\mapsto (\mathcal{G}|_{M_1}, \mathcal{G}|_{M_2})$, proving Part 1.
    Part 2 follows from the fact that $\iota\mapsto (\iota_1, \iota_2)$ induces a bijection between the set of inversions on $\mathcal{G}$ and the set of pairs on inversions on $\mathcal{G}_1$ and $\mathcal{G}_2$.
    Polishchuk has shown (see \cite[Proposition 2.2]{Polishchuk-abelianvarietiesthetafunctions}) that $[\mathrm{Sh}_{\mathcal{G}}] =[\mathrm{Sh}_{\mathcal{G}_1}]+[\mathrm{Sh}_{\mathcal{G}_2}]$ in $\HH^2(k, \G_m)$. 
    Since $[\mathrm{Sh}_{\mathcal{G}_i}]$ is killed by $\#M_i$ (Proposition \ref{proposition: bounds order line bundle obstruction class}), Part 3 follows.
\end{proof}

Given a symplectic module $(M,e)$ and a prime $p$, let $M(p)$ be the submodule of elements of $p$-power order, and let $e(p)$ be the restriction of $e$ to $M(p)\times M(p)$. 
Then $(M(p),e(p))$ is a symplectic module and $(M,e) \simeq \oplus_p (M(p), e(p))$ and so combining Theorem \ref{theorem: symplectic module odd order admits theta group} and Lemma \ref{lemma: theta groups direct sum module} shows:
\begin{corollary}\label{corollary: existence theta group only depends on 2-primary part}
    In the above notation, $(M,e)$ admits a theta group if and only if $(M(2),e(2))$ does.
    The same statement holds with ``theta group'' replaced by ``linear theta group'', or by ``symmetric theta group'', or by ``linear symmetric theta group''. 
\end{corollary}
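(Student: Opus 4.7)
The plan is to reduce the existence of each of the four flavors of theta group on $(M,e)$ to the $2$-primary component using the direct sum decomposition $(M,e)\simeq (M(2),e(2))\oplus (M(\mathrm{odd}),e(\mathrm{odd}))$, where $M(\mathrm{odd}) = \oplus_{p\text{ odd}} M(p)$. By Lemma \ref{lemma: theta groups direct sum module}(1), the groupoid of theta groups for $(M,e)$ is equivalent to the product of the groupoids of theta groups for $(M(2),e(2))$ and $(M(\mathrm{odd}),e(\mathrm{odd}))$ via restriction.

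The key input is that the odd-order piece is essentially free: since $\#M(\mathrm{odd})$ is odd, Theorem \ref{theorem: symplectic module odd order admits theta group} guarantees the existence of a symmetric theta group $\mathcal{G}_{\mathrm{odd}}$ for $(M(\mathrm{odd}),e(\mathrm{odd}))$, and moreover asserts that every symmetric theta group for this symplectic module is automatically linear. In particular, $(M(\mathrm{odd}),e(\mathrm{odd}))$ admits a theta group of every one of the four flavors under consideration.

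Now I would argue each of the four equivalences in turn. For the bare existence of a theta group: given a theta group $\mathcal{G}(2)$ for $(M(2),e(2))$, pair it with $\mathcal{G}_{\mathrm{odd}}$ and apply the equivalence of Lemma \ref{lemma: theta groups direct sum module}(1) to produce a theta group $\mathcal{G}$ for $(M,e)$; conversely, restrict any $\mathcal{G}$ to $M(2)$. For the symmetric case: by Lemma \ref{lemma: theta groups direct sum module}(2), $\mathcal{G}$ is symmetric if and only if $\mathcal{G}|_{M(2)}$ and $\mathcal{G}|_{M(\mathrm{odd})}$ both are, and the second is automatic. For linearity: since $\#M(2)$ and $\#M(\mathrm{odd})$ are coprime, Lemma \ref{lemma: theta groups direct sum module}(3) says $\mathcal{G}$ is linear if and only if both restrictions are, and again the odd restriction is automatically linear by Theorem \ref{theorem: symplectic module odd order admits theta group}. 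Combining these two observations handles the linear symmetric case.

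There is no real obstacle here: the corollary is essentially a bookkeeping consequence of the two results already cited, once one observes that on the odd-order side all four existence problems are trivially solvable. The only small verification is to ensure that the direct sum construction preserves symmetry and linearity in the form claimed, which is precisely the content of parts (2) and (3) of Lemma \ref{lemma: theta groups direct sum module}.
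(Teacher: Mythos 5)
Your proposal is correct and follows essentially the same route as the paper: decompose $(M,e)$ into its $2$-primary and odd parts, apply Lemma \ref{lemma: theta groups direct sum module} to transfer existence (in each flavor) between the summands, and use Theorem \ref{theorem: symplectic module odd order admits theta group} to dispose of the odd part. The paper's proof is exactly this combination (stated with the full decomposition $\oplus_p(M(p),e(p))$, which makes no difference).
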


Combining Corollaries \ref{corollary: existence theta group only depends on 2-primary part} and \ref{corollary: symplectic modules small F2 vector spaces admit theta groups} immediately shows:

\begin{corollary}\label{corollary: symplectic module with M(2) small has theta group}
    Let $(M,e)$ be a symplectic module over $k$.
    Suppose that $(M(2), e(2))$ has type $(1, \dots, 1)$, $(1,\dots, 1,2)$, $(1,\dots, 1,2,2)$, or $(1, \dots, 2,2,2)$.
    In the last case, additionally assume that $k$ has characteristic zero.
    Then there exists a symmetric linear theta group for $(M,e)$.
\end{corollary}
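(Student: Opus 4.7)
The plan is to reduce the existence of a symmetric linear theta group for $(M,e)$ to the existence of such a theta group for its $2$-primary part $(M(2), e(2))$, and then handle each of the four allowed types for $(M(2),e(2))$ using the results already proved.

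First I would invoke Corollary \ref{corollary: existence theta group only depends on 2-primary part}, which reduces the problem to showing that $(M(2),e(2))$ admits a symmetric linear theta group. This reduction works because the symplectic module $(M,e)$ decomposes as the orthogonal direct sum $(M(2),e(2)) \oplus (M',e')$, where $(M',e')$ is the prime-to-$2$ part; by Lemma \ref{lemma: theta groups direct sum module}, a symmetric linear theta group for $(M,e)$ exists if and only if one exists for each summand, and Theorem \ref{theorem: symplectic module odd order admits theta group} provides such a group automatically for $(M',e')$ since its order is odd.

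Next I would dispose of the four cases for the type of $(M(2),e(2))$. In the type $(1,\dots,1)$ case, $(M(2),e(2))$ is trivial, so the theta group $\G_m$ itself works and there is nothing further to prove. In the remaining three cases, the type of $(M(2),e(2))$ is, after dropping the trivial factors, one of $(2)$, $(2,2)$, or $(2,2,2)$, so $(M(2),e(2))$ is isomorphic to a symplectic module of type $(2)$, $(2,2)$, or $(2,2,2)$ over $k$ (since a trivial orthogonal direct summand does not affect the existence of a symmetric linear theta group, again by Lemma \ref{lemma: theta groups direct sum module}). Corollary \ref{corollary: symplectic modules small F2 vector spaces admit theta groups} then supplies the desired symmetric linear theta group; note that the characteristic-zero hypothesis in the $(2,2,2)$ case is exactly the hypothesis we have imposed.

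There is no real obstacle here, since every nontrivial ingredient has been assembled earlier in the section: the $2$-primary reduction is Corollary \ref{corollary: existence theta group only depends on 2-primary part}, and the existence results for the small $\F_2$-symplectic modules are Corollary \ref{corollary: symplectic modules small F2 vector spaces admit theta groups}, which in turn rest on the realization of $(M(2),e(2))$ as $(J[2],e_2)$ of a Jacobian (Proposition \ref{proposition: symplectic modules of F2 dimension at most 6 are J[2]}) together with the observation that $2\lambda$ is represented by the pullback of the Poincar\'e bundle. The only thing to check carefully is that a trivial orthogonal direct summand of a symplectic module can be ignored for the purposes of finding a symmetric linear theta group, which is immediate from Parts 1--3 of Lemma \ref{lemma: theta groups direct sum module} applied to the trivial theta group $\G_m$.
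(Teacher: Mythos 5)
Your proposal is correct and follows essentially the same route as the paper: the paper's proof is exactly the combination of Corollary \ref{corollary: existence theta group only depends on 2-primary part} (reduction to the $2$-primary part, itself built from Lemma \ref{lemma: theta groups direct sum module} and Theorem \ref{theorem: symplectic module odd order admits theta group}) with Corollary \ref{corollary: symplectic modules small F2 vector spaces admit theta groups}. Your extra remarks (trivial $2$-part handled by $\G_m$, trivial factors in the type being harmless) are correct but just unpack what these corollaries already contain.
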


\begin{proof}[Proof of Theorem \ref{theorem: intro positive result}]
   In the notation of the theorem, $(A[\lambda], e_{\lambda})$ satisfies the assumptions of Corollary \ref{corollary: symplectic module with M(2) small has theta group}, so there exists a linear theta group for $(A[\lambda],e_{\lambda})$.
    We conclude by Theorem \ref{theorem: intro theta group condition}.
\end{proof}

\subsection{A survey of known cases}
\label{subsection: survey of known cases}

For convenience of the reader and for future reference, we collect known positive results towards Question \ref{question: main question poonen-stoll} and its variants, either proven in this paper or already present in the literature.
Let $A$ be a $g$-dimensional abelian variety over a field $k$.
Let $\lambda\colon A\rightarrow A^{\vee}$ be a polarization of type $D = (d_1, \dots, d_g)$, and assume $\#D$ is invertible in $k$.
Consider the following properties that $(A, \lambda)$ could satisfy:
\begin{enumerate}
    \item[(1)] There exists a symmetric line bundle $L$ on $A$ with $\phi_L = \lambda$.
    \item[(2)] There exists a line bundle $L$ on $A$ with $\phi_L = \lambda$.
    \item[(3)] There exists a symmetrized $A$-torsor $(X, \tau)$ and a $\tau$-symmetric line bundle $L$ on $X$ with $\phi_L = \lambda$ (see \S\ref{subsec: symmetric line bundles representing lambda}).
    \item[(4)] There exists an $A$-torsor $X$ and a line bundle $L$ on $X$ with $\phi_L = \lambda$.
    \item[(5)] There exists a symmetrized $A$-torsor $(X, \tau)$ and element $\ell \in \PicS_X^{\tau,\lambda}(k)$.
    \item[(6)] There exists an $A$-torsor $X$ and element $\ell \in \PicS_X^{\lambda}(k)$.
\end{enumerate}
These properties are connected by the following implications:
% https://q.uiver.app/#q=WzAsNixbMCwwLCIoMSkiXSxbMSwwLCIoMykiXSxbMiwwLCIoNSkiXSxbMCwxLCIoMikiXSxbMSwxLCIoNCkiXSxbMiwxLCIoNikiXSxbMCwzLCIiLDAseyJsZXZlbCI6Mn1dLFswLDEsIiIsMix7ImxldmVsIjoyfV0sWzMsNCwiIiwwLHsibGV2ZWwiOjJ9XSxbMSw0LCIiLDIseyJsZXZlbCI6Mn1dLFsxLDIsIiIsMix7ImxldmVsIjoyfV0sWzIsNSwiIiwyLHsibGV2ZWwiOjJ9XSxbNCw1LCIiLDIseyJsZXZlbCI6Mn1dXQ==
\[\begin{tikzcd}
	{(1)} & {(3)} & {(5)} \\
	{(2)} & {(4)} & {(6)}
	\arrow[Rightarrow, from=1-1, to=1-2]
	\arrow[Rightarrow, from=1-1, to=2-1]
	\arrow[Rightarrow, from=1-2, to=1-3]
	\arrow[Rightarrow, from=1-2, to=2-2]
	\arrow[Rightarrow, from=1-3, to=2-3]
	\arrow[Rightarrow, from=2-1, to=2-2]
	\arrow[Rightarrow, from=2-2, to=2-3]
\end{tikzcd}\]
Theorems \ref{theorem: intro theta group condition} and Corollary \ref{corollary: equivalent conditions existence symmetric line bundle class on torsor} show that the validity of Properties $(3)$, $(4)$, $(5)$ and $(6)$ only depends on the isomorphism class of the symplectic module $(A[\lambda],e_{\lambda})$. 
This is not true for Property $(2)$, since there exist principally polarized abelian varieties for which $(2)$ does not hold (see the example of Jacobians in the introduction).
This is also not true for Property $(1)$ for similar reasons, see \cite[p.\, 1317, Example 3.20]{PoonenRains-thetacharacteristics}.

Complementing the criteria we have given in Section \ref{sec: polarizations and theta groups}, we state some conditions which guarantee that one of the above properties holds.
Let $2^{n_i}$ be the largest power of $2$ dividing $d_i$ and let $D_2 = (2^{n_1}, \dots, 2^{n_g})$.
\begin{itemize}
    \item Property $(1)$ holds if and only if $(A[2], e_2^{\lambda})$ admits a quadratic refinement (see the beginning of \S\ref{subsec: symmetric line bundles representing lambda}).
    This is the case whenever one of the following conditions is satisfied \cite[Proposition 3.12]{PoonenRains-thetacharacteristics}: $k$ is $\mathbb{R}$ or a finite field, or $n_2\geq 1$ (in other words $d_2$ is even). 
    \item Property $(2)$ holds if $k$ is a local field \cite[\S4, Lemma 1]{PoonenStoll-CasselsTate}.
    \item Property $(3)$ holds if $D_2 = (1, \dots, 1)$, $(1, \dots, 1,2)$, $(1, \dots, 1,2,2)$.
    It also holds if $k$ has characteristic zero and $D_2 = (1, \dots, 1,2,2,2)$. (Theorem \ref{theorem: intro positive result})
    \item Property $(3)$ holds if $k$ has cohomological dimension $\leq 1$ (Proposition \ref{proposition: theta groups exist over fields of cohomological dimension 1}). 
\end{itemize}

The first bullet point shows that if $(A, \lambda)$ does not satisfy $(1)$, then $D_2 = (1,1,2^{n_3}, \dots, 2^{n_g})$.

\begin{proposition}\label{proposition: answer to question yes when dim A is 2}
    Suppose $\dim A\leq 2$. 
    Then Property $(3)$ holds for $(A, \lambda)$.
\end{proposition}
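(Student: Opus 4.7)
The plan is to treat $\dim A = 1$ directly and to split the $\dim A = 2$ case according to the parity of $\deg\lambda = d_1\cdots d_g$.

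For $g = 1$ I would observe that $A$ is an elliptic curve with its $k$-rational origin $e \in A(k)$, and that every polarization on an elliptic curve is a nonnegative integer multiple of the canonical principal polarization $\lambda_0 = \phi_{\mathcal{O}_A(e)}$ (since the Rosati-fixed subring of $\End(A_{\bar{k}})$ is $\Z$, regardless of whether $A$ has complex multiplication). Writing $\lambda = d_1 \lambda_0$, the line bundle $L = \mathcal{O}_A(d_1 e)$ then satisfies $\phi_L = d_1 \lambda_0 = \lambda$ and is symmetric because $[-1]$ fixes $e$. This gives Property $(1)$, and hence Property $(3)$.

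For $g = 2$ I would split on the parity of $d_2$. If $d_2$ is odd then $d_1 \mid d_2$ is odd as well, so $\deg\lambda$ is odd, and Corollary \ref{corollary: strong form odd order polarization represented by symmetric bundle} directly produces a symmetrized torsor $(X,\tau)$ and a $\tau$-symmetric line bundle $L$ on $X$ with $\phi_L = \lambda$, which is Property $(3)$. If instead $d_2$ is even then $n_2 \geq 1$ in the notation $D_2 = (2^{n_1}, 2^{n_2})$, and by the first bullet of the survey above (which cites \cite[Proposition 3.12]{PoonenRains-thetacharacteristics}), the alternating pairing $e_2^\lambda$ on $A[2]$ admits a quadratic refinement, so Property $(1)$ holds, and hence so does $(3)$.

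There is no serious obstacle here: the two subcases of $g = 2$ are immediate consequences of already-established results (Corollary \ref{corollary: strong form odd order polarization represented by symmetric bundle} and the Poonen--Rains criterion), and the $g = 1$ case reduces to writing down an explicit symmetric line bundle. The only minor verification is the classification of polarizations on an elliptic curve as integer multiples of $\lambda_0$, which is standard.
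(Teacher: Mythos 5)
Your proof is correct and takes essentially the same route as the paper: Property $(1)$ for elliptic curves, and for surfaces a split on the parity of $d_2$, with the even case handled by the Poonen--Rains criterion and the odd case by the odd-degree result (you invoke Corollary \ref{corollary: strong form odd order polarization represented by symmetric bundle}, the paper cites Theorem \ref{theorem: intro positive result}, whose $D_2=(1,1)$ case rests on the same odd-order theta-group theorem). Your explicit check that every polarization on an elliptic curve is a positive multiple of $\lambda_0$, represented by the symmetric bundle $\mathcal{O}_A(d_1 e)$, merely fills in a step the paper leaves unstated.
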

\begin{proof}
    When $\dim A=1$, Property $(1)$ holds.
    If $\dim A=2$, then $(1)$ holds except if $D_2 = (1,1)$, in which case $(3)$ holds by Theorem \ref{theorem: intro positive result}.
\end{proof}

\begin{remark}
    We do not know whether Property $(3)$ always holds when $\dim A=3$.
This would involve analyzing pairs $(A, \lambda)$ of type $D$ with $D_2 = (1,1,2^n)$ for $n\geq 2$.
\end{remark}

By considering Jacobians of curves, one can show that $(2)$ does not necessarily imply $(1)$, that $(3)$ does not imply $(1)$ and that $(4)$ does not imply $(2)$. 
We do not know of a pair $(A, \lambda)$ for which $(6)$ holds but $(3)$ does not. 
%Property $(2)$ does not always imply Property $(1)$: the Jacobian of the curve $C\colon y^2 = x^6 - x^5 + x^3 - x^2 + 1$ with its principal polarization is a pair $(A, \lambda)$ over $\Q$ for which $(2)$ holds (since the theta divisor is defined on $\PicS^1_C$, which is isomorphic to $J$ since $C$ has a point $(0,1)$), but $(1)$ does not hold (since the Galois group of the defining polynomial is $S_6$, which implies $(A[2], e_2)$ does not have a quadratic refinement). 

\section{Proof of Theorem \ref{theorem: intro negative result}}\label{section: negative result}

In this section, we construct pairs $(A, \lambda)$ of type $(1, \dots, 1, 2, \dots, 2)$ for which the answer to Question \ref{question: main question poonen-stoll} is no.
In \S\ref{subsec: reformulation galois cohomology} we reformulate the existence of theta groups in terms of Galois cohomology and lifting problems.
In \S\ref{subsec: symplectic modules not admitting theta groups}, we show that for every type $(2,\dots, 2)$ of length $\geq 4$, there exists a symplectic module $(M,e)$ over some field $k$ that does not admit a theta group.
To find such an example of the form $(A[\lambda],e_{\lambda})$, we recall Siegel modular varieties in \S\ref{subsec: paramodular groups}-\ref{subsec: moduli of polarised AVs}. 
In \S\ref{subsec: negligible etale cohomology} we use a calculation of Totaro to reduce our problem to a group theory computation. 
We carry out this computation in \S\ref{subsec: polarized abelian varieties not admitting theta groups}-\ref{subsec: group theory computations}.
Explicitly, Theorem \ref{theorem: intro negative result} follows from Propositions \ref{proposition: reduce etale negligible to group cohomology calculation} and \ref{proposition: c_Gamma not in subgroup generated by phiGamma}.

\subsection{Reformulation in terms of Galois cohomology}\label{subsec: reformulation galois cohomology}

Let $k$ be a field.
Let $D = (d_1, \dots, d_g)$ be a type in the sense of Definition \ref{definition:type} and suppose $d_g$ is invertible in $k$.
In Section \ref{sec: polarizations and theta groups} we have constructed group schemes $M_D, \mathcal{G}_D$ and a sequence of finite $k$-groups
\begin{align}\label{equation: fundamental exact sequence type D field case}
1\rightarrow M_D \rightarrow \AutS(\mathcal{G}_D)\rightarrow \SpS(M_D) \rightarrow 1
\end{align}
which is exact by Lemma \ref{lemma: automorphism groups theta groups}.
Since $M_D$ is an abelian normal subgroup of $\AutS(\mathcal{G}_D)$, the conjugation action $g\cdot m = gmg^{-1}$ of $\AutS(\mathcal{G}_D)$ on $M_D$ factors through an action of $\SpS(M_D)$.
A computation shows that this agrees with the standard action of $\SpS(M_D)$ on $M_D$ via the defining embedding $\SpS(M_D)\hookrightarrow \AutS(M_D)$.
We have seen (Lemma \ref{lemma: theta group exists iff class lifts}) that a symplectic module $(M,e)$ of type $D$ admits a theta group if and only if its class $[(M,e)]\in \HH^1(k, \SpS(M_D))$ lifts to $\HH^1(k, \AutS(\mathcal{G}_D))$. 
We now show that when $k$ contains sufficiently many roots of unity, we may formulate this condition in terms of lifting Galois representations.

Let $n = d_g$ if $n$ is odd and $n=2d_g$ if $n$ is even.
Let $\mathcal{H}_D = \mathcal{G}_D[n] = \{x\in \mathcal{G}_D \colon x^n = 1\}$ be the subscheme of elements of order $n$.
Lemma \ref{lemma: equivalence Gm-theta groups and mun-theta groups} shows that $\mathcal{H}_D$ is a subgroup scheme fitting in a central extension $1\rightarrow \mu_n \rightarrow \mathcal{H}_D\rightarrow M_D\rightarrow 1$.

\begin{lemma}\label{lemma: root of unity implies constant group schemes}
    Suppose that $k$ contains a primitive $n$th root of unity.
    Then $M_D$, $\SpS(M_D)$ and $\AutS(\mathcal{G}_D)$ are constant group schemes.
 \end{lemma}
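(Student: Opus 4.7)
The plan is to reduce constancy of $\AutS(\mathcal{G}_D)$ to constancy of the finite \'etale group scheme $\mathcal{H}_D = \mathcal{G}_D[n]$ via Lemma \ref{lemma: equivalence Gm-theta groups and mun-theta groups}. I would first dispatch $M_D$ and $\SpS(M_D)$ directly: by construction $M_D = K_D \times K_D^{\vee}$ with $K_D$ constant, and $K_D^{\vee} = \mu_{d_1} \times \cdots \times \mu_{d_g}$ is constant because each $d_i \mid d_g \mid n$ and $k$ contains a primitive $n$-th root of unity. Thus $M_D$ is constant and the pairing $e_D$ (which lands in $\mu_{d_g}$) is defined at the level of the abstract abelian group $M_D(k)$. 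Then $\SpS(M_D)$, being a finite \'etale $k$-group whose geometric points are automorphisms of a constant group scheme preserving a pairing defined over $k$, is itself constant.

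Next I would compute $\mathcal{H}_D$ explicitly using the group law \eqref{equation:standardthetagroupformula}. A straightforward induction yields
\[
(\lambda, x, \chi)^n = \bigl(\lambda^n \chi(x)^{\binom{n}{2}},\, nx,\, n\chi\bigr).
\]
The conditions $nx = 0$ and $n\chi = 0$ are automatic since $d_g$ divides $n$, so what remains is the equation $\lambda^n \chi(x)^{\binom{n}{2}} = 1$. Because $\chi(x)$ lies in $\mu_{d_g}$, this reduces to $\lambda^n = 1$ once one verifies the elementary divisibility $d_g \mid \binom{n}{2}$, which is clear in both cases (for $n=d_g$ odd, $\binom{n}{2} = n \cdot (n-1)/2$; for $n = 2d_g$, $\binom{n}{2} = d_g(n-1)$). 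Thus $\mathcal{H}_D \simeq \mu_n \times M_D$ as a $k$-scheme, and since both factors are constant the scheme $\mathcal{H}_D$ is constant over $k$. Because its group law is defined over $k$, $\mathcal{H}_D$ is a constant group scheme.

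Finally, Lemma \ref{lemma: equivalence Gm-theta groups and mun-theta groups} provides a $\Gal_k$-equivariant identification of $\AutS(\mathcal{G}_D)(k^{\sep})$ with the group of group-scheme automorphisms of $\mathcal{H}_{D,k^{\sep}}$ restricting to the identity on $\mu_n$. Since $\mathcal{H}_D$ and $\mu_n$ are both constant over $k$, this latter group carries trivial $\Gal_k$-action, and hence $\AutS(\mathcal{G}_D)$ is constant. The only real content of the argument is the elementary divisibility $d_g \mid \binom{n}{2}$; the rest is bookkeeping with the equivalence of Lemma \ref{lemma: equivalence Gm-theta groups and mun-theta groups}.
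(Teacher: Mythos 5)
Your proof is correct and follows essentially the same route as the paper: reduce to the finite level via $\mathcal{H}_D=\mathcal{G}_D[n]$ and Lemma \ref{lemma: equivalence Gm-theta groups and mun-theta groups}, observe $\mathcal{H}_D\simeq \mu_n\times M_D$ as schemes, and conclude constancy of $\AutS(\mathcal{G}_D)$. The only difference is that you write out the calculation $(\lambda,x,\chi)^n=(\lambda^n\chi(x)^{\binom{n}{2}},nx,n\chi)$ and the divisibility $d_g\mid\binom{n}{2}$, which the paper leaves as ``a calculation shows''; this is exactly the intended verification.
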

\begin{proof}
    The assumption implies $M_D$ is constant, hence $\SpS(M_D)$ is constant too.
    A calculation shows that in the notation of \eqref{equation:standardthetagroupformula} we have $\mathcal{H}_D \simeq \mu_n \times M_D$ (as schemes, not as groups).
    Therefore $\mathcal{H}_D$ is constant, hence $\AutS(\mathcal{H}_D)$ is constant too.
    By Lemma \ref{lemma: equivalence Gm-theta groups and mun-theta groups}, $\AutS(\mathcal{G}_D) \simeq \AutS(\mathcal{H}_D)$, so $\AutS(\mathcal{G}_D)$ is constant.
 \end{proof}

Suppose that $k$ contains a primitive $n$th root of unity.
Then \eqref{equation: fundamental exact sequence type D field case} is an extension of constant group schemes, so can be interpreted as an extension of (abstract) groups
 \begin{align}\label{equation: fundamental exact sequence type D abstract groups}
 1\rightarrow M_D\rightarrow \Aut(\mathcal{G}_D)\rightarrow \Sp(M_D) \rightarrow 1.
 \end{align}
Since $M_D$ is an abelian normal subgroup, this extension defines a class $c_D\in \HH^2(\Sp(M_D), M_D)$ in group cohomology, where $\Sp(M_D)$ acts on $M_D$ via the defining representation $\Sp(M_D) \hookrightarrow \Aut(M_D)$.
%The diagram \eqref{equation: involution of fundamental exact sequence} shows that $2c_D = 0$. 
If $(M,e)$ is a symplectic module of type $D$ over $k$, its class $[(M,e)] \in \HH^1(k ,\SpS(M_D))$ (under the bijection of Lemma \ref{lemma: symplectic modules vs SpD torsors}) can be interpreted as a conjugacy class of continuous homomorphisms $\rho_M\colon \Gal_k\rightarrow \Sp(M_D)$.
\begin{lemma}\label{lemma: theta group exists iff embedding problem has solution}
    Assume $k$ contains a primite $n$th root of unity, where $n = d_g$ if $d_g$ is odd and $n=2d_g$ if $d_g$ is even.
    Let $\rho_M\colon \Gal_k\rightarrow \Sp(M_D)$ be a homomorphism corresponding to a symplectic module $(M,e)$ of type $D$ over $k$.
    Then there exists a theta group for $(M,e)$ if and only if $\rho_M$ lifts to a homomorphism $\Gal_k\rightarrow \Aut(\mathcal{G}_D)$ if and only if $\rho_M^*c_D = 0$ in $\HH^2(k, M_D)$. 
\end{lemma}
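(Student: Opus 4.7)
The plan is to reduce to Lemma \ref{lemma: theta group exists iff class lifts} and then translate the resulting non-abelian $\HH^1$ lifting problem into a statement about continuous homomorphisms out of $\Gal_k$, using that all the relevant group schemes are constant.

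First, the hypothesis that $k$ contains a primitive $n$th root of unity, combined with Lemma \ref{lemma: root of unity implies constant group schemes}, lets me identify $\SpS(M_D)$ and $\AutS(\mathcal{G}_D)$ with constant group schemes. Consequently $\HH^1(k,-)$ of either is naturally the pointed set of $\Gal_k$-conjugacy classes of continuous homomorphisms from $\Gal_k$ into the underlying finite group, and the map $\HH^1(k,\AutS(\mathcal{G}_D))\to \HH^1(k,\SpS(M_D))$ is post-composition with the projection $\Aut(\mathcal{G}_D)\to \Sp(M_D)$. Under the bijection of Lemma \ref{lemma: symplectic modules vs SpD torsors}, the class of $(M,e)$ in $\HH^1(k,\SpS(M_D))$ is precisely the conjugacy class of $\rho_M$. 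By Lemma \ref{lemma: theta group exists iff class lifts}, a theta group for $(M,e)$ exists iff this class lifts, i.e., iff there is a continuous $\tilde{\rho}\colon \Gal_k\to \Aut(\mathcal{G}_D)$ whose projection to $\Sp(M_D)$ is conjugate to $\rho_M$. Because $\Aut(\mathcal{G}_D)\to \Sp(M_D)$ is surjective (Lemma \ref{lemma: automorphism groups theta groups}(1)), any conjugating element downstairs can be lifted upstairs, so by replacing $\tilde{\rho}$ by a suitable conjugate we may arrange the projection to equal $\rho_M$ on the nose. This gives the equivalence of (1) and the first half of (2).

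For the equivalence with $\rho_M^* c_D = 0$, I would invoke the classical obstruction theory for lifting a group homomorphism along an extension with abelian kernel applied to \eqref{equation: fundamental exact sequence type D abstract groups}. Since the conjugation action of $\Sp(M_D)$ on $M_D$ agrees with the standard action (as observed at the start of \S\ref{subsec: reformulation galois cohomology}), the extension is classified by $c_D \in \HH^2(\Sp(M_D), M_D)$. Pulling back along $\rho_M$ yields an extension of $\Gal_k$ by $M_D$, with $\Gal_k$-action on $M_D$ given by $\rho_M$, whose class is $\rho_M^* c_D$; this Galois module is exactly $(M,e)$, so the class lives in $\HH^2(k, M_D)$ in the sense meant by the statement. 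A homomorphic lift of $\rho_M$ to $\Aut(\mathcal{G}_D)$ is the same datum as a continuous splitting of the pulled-back extension, which exists iff that extension class vanishes.

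There is no serious obstacle: the only delicate point is the dictionary between fppf/étale cohomology of group schemes and continuous group cohomology of $\Gal_k$ valued in abstract finite groups, which is precisely what the roots-of-unity hypothesis secures via Lemma \ref{lemma: root of unity implies constant group schemes}. The remaining ingredients---surjectivity of $\Aut(\mathcal{G}_D)\to \Sp(M_D)$ and the classical bijection between abelian-kernel extensions and degree-two cohomology together with its naturality under pullback---are already in hand.
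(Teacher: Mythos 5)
Your proposal is correct and follows essentially the same route as the paper: the first equivalence via Lemma \ref{lemma: theta group exists iff class lifts} (with the constancy of the relevant group schemes from Lemma \ref{lemma: root of unity implies constant group schemes} supplying the translation to continuous homomorphisms), and the second via the classical fact that a lift of $\rho_M$ along \eqref{equation: fundamental exact sequence type D abstract groups} exists iff the pulled-back extension splits iff $\rho_M^*c_D=0$. Your extra care in using surjectivity of $\Aut(\mathcal{G}_D)\to\Sp(M_D)$ to replace a lift whose projection is merely conjugate to $\rho_M$ by one projecting to $\rho_M$ exactly is a detail the paper leaves implicit, and it is handled correctly.
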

\begin{proof}
    The first equivalence follows from Lemma \ref{lemma: theta group exists iff class lifts}.
    The second equivalence follows from the fact that a lifting for $\rho_M$ exists if and only if the pullback of \eqref{equation: fundamental exact sequence type D abstract groups} along $\rho_M$ splits if and only if $\rho_M^*(c_D)=0$. 
\end{proof}
The sequence \eqref{equation: fundamental exact sequence type D abstract groups} is not always split: 
\begin{proposition}\label{proposition: extension class nonzero type (2,..,2)}
    Let $D$ be the type $(2, \dots, 2)$ of length $g\geq 3$.
    Then the class $c_D \in \HH^2(\Sp(M_D), M_D)$ of the extension \eqref{equation: fundamental exact sequence type D abstract groups} is nonzero. 
\end{proposition}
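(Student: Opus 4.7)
The plan is to reduce to the case $D = (2,2,2)$ via the direct-sum decomposition of theta groups, and then handle that base case directly.

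For the reduction, I would use Lemma \ref{lemma: theta groups direct sum module}. If $D = (2, \ldots, 2)$ has length $g \geq 3$, write $(M_D, e_D) = (M_{D_1}, e_{D_1}) \oplus (M_{D_2}, e_{D_2})$ as an orthogonal symplectic decomposition with $D_1 = (2,2,2)$ and $D_2 = (2, \ldots, 2)$ of length $g - 3$. By Lemma \ref{lemma: theta groups direct sum module}(1), framed automorphisms of $\mathcal{G}_D$ stabilizing this decomposition correspond bijectively to pairs of framed automorphisms of $\mathcal{G}_{D_1}$ and $\mathcal{G}_{D_2}$. Passing to all automorphisms, the preimage of the subgroup $\Sp(M_{D_1}) \times \Sp(M_{D_2}) \leq \Sp(M_D)$ in $\Aut(\mathcal{G}_D)$ is naturally identified with $\Aut(\mathcal{G}_{D_1}) \times \Aut(\mathcal{G}_{D_2})$, acting on $M_D = M_{D_1} \oplus M_{D_2}$ factor by factor. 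Consequently, the restriction of $c_D$ along the composite inclusion $\Sp(M_{D_1}) \hookrightarrow \Sp(M_{D_1}) \times \Sp(M_{D_2}) \hookrightarrow \Sp(M_D)$ equals $c_{D_1}$. In particular, $c_{D_1} \neq 0$ implies $c_D \neq 0$, so it suffices to treat the base case $D = (2,2,2)$.

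For the base case, I would argue by contradiction, supposing that a splitting $s\colon \Sp_6(\mathbb{F}_2) \to \Aut(\mathcal{G}_D)$ exists. Working over $\mathbb{C}$, Proposition \ref{proposition:categoryofweight1representations} produces a Schr\"odinger representation $V$ of $\mathcal{G}_D$ of dimension $2^3 = 8$, yielding a natural embedding $\mathcal{G}_D \hookrightarrow \GL(V)$ with center $\mathbb{G}_m$ acting by scalars. By Stone--von Neumann, any $\alpha \in \Aut(\mathcal{G}_D)$ admits intertwiners $T_\alpha \in \GL(V)$, unique up to scalar, making $\Aut(\mathcal{G}_D) \to \PGL(V)$ a well-defined projective representation. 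Composing with $s$ produces a projective representation $\Sp_6(\mathbb{F}_2) \to \PGL(V)$ that one identifies with the classical mod-$2$ Weil projective representation. Classical results (going back to Weil, with algebraic treatments by Mumford and Polishchuk) assert that this Weil projective representation of $\Sp_{2g}(\mathbb{F}_2)$ realizes the unique non-trivial class in the Schur multiplier $H^2(\Sp_{2g}(\mathbb{F}_2), \mathbb{C}^\times) \cong \mathbb{Z}/2$ for $g \geq 2$, i.e.\ it only lifts to the metaplectic double cover $2.\Sp_{2g}(\mathbb{F}_2)$. The contradiction comes from showing that the splitting $s$ forces a genuine linear lift of this projective representation.

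The hard part will be making the final implication precise. A priori, $s$ only lifts the projective action to $\GL(V)$ up to scalars, and resolving this ambiguity is itself a cohomological problem in $H^2(\Sp_6(\mathbb{F}_2), \mathbb{C}^\times)$. To close the gap, I would either (i) use the normalizer $N_{\GL(V)}(\rho(\mathcal{G}_D))$ and its extension structure $1 \to \mathbb{G}_m \to N \to \Aut(\mathcal{G}_D) \to 1$ to transfer the lifting problem precisely, or (ii) bypass the Weil-representation route entirely by a direct group-theoretic argument: e.g.\ identify a product of commuting transvections in $\Sp_6(\mathbb{F}_2)$ whose lifts to $\Aut(\mathcal{G}_D)$ necessarily have order exceeding that of the original element, computed using the explicit Heisenberg formula \eqref{equation:standardthetagroupformula}. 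Alternatively, a direct cocycle computation in $H^2(\Sp_6(\mathbb{F}_2), \mathbb{F}_2^6)$ could be performed, which is finite and in principle verifiable, though tedious given that $|\Sp_6(\mathbb{F}_2)| = 1{,}451{,}520$.
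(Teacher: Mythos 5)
Your reduction to the base case $D=(2,2,2)$ is essentially sound: the orthogonal decomposition $(M_D,e_D)=(M_{D_1},e_{D_1})\oplus(M_{D_2},e_{D_2})$ does identify the preimage of $\Sp(M_{D_1})\times\Sp(M_{D_2})$ in $\Aut(\mathcal{G}_D)$ with $\Aut(\mathcal{G}_{D_1})\times\Aut(\mathcal{G}_{D_2})$ (the two restricted theta groups commute and generate $\mathcal{G}_D$ as a central product), and since $M_{D_1}$ is a direct summand of $M_D$ as an $\Sp(M_{D_1})$-module, the restriction of $c_D$ to $\Sp(M_{D_1})$ is the image of $c_{D_1}$ under a split injection on coefficients, so $c_{D_1}\neq 0$ forces $c_D\neq 0$. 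This is a different organization from the paper, which does not reduce at all: it identifies $\Aut(\mathcal{G}_D)$ with the group of automorphisms of the almost extraspecial $2$-group $\mathcal{H}_D=\mathcal{G}_D[4]$ of order $2^{2+2g}$ that fix the center, and then quotes Griess's nonvanishing result, which covers every $g\geq 3$ in one stroke.

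The base case, however, is where all the content lies, and your argument for it has a genuine gap that you yourself flag. A splitting $s\colon\Sp_6(\F_2)\to\Aut(\mathcal{G}_D)$ only produces a homomorphism into $\PGL(V)$; the obstruction to lifting that projective representation to $\GL(V)$ lives in $\HH^2(\Sp_6(\F_2),\mathbb{C}^{\times})$, whereas $c_D$ lives in $\HH^2(\Sp(M_D),M_D)$ with the nontrivial standard action on the kernel. These are extensions with different kernels, and no map between the two obstruction problems is exhibited; in particular the existence of $s$ does not give a linear lift of the Weil-type projective representation, so no contradiction with a metaplectic non-splitting statement follows as written. Moreover the ``classical fact'' is misstated: the Schur multiplier of $\Sp_{2g}(\F_2)$ is trivial for $g\geq 4$, so the claim that the Weil projective representation realizes a nontrivial class in $\HH^2(\Sp_{2g}(\F_2),\mathbb{C}^{\times})$ for all $g\geq 2$ is false, which is a warning sign that the intended bridge between the $\mathbb{C}^{\times}$-obstruction and the $M_D$-obstruction cannot be routine. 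Your fallback suggestions (exhibiting an element of $\Sp_6(\F_2)$ none of whose lifts to $\Aut(\mathcal{G}_D)$ have the same order, or a direct cocycle computation) are legitimate strategies and closer in spirit to what a self-contained proof would require, but neither is carried out: no element or subgroup is identified and no computation is performed. As it stands the proposal establishes the reduction but not the proposition; to complete it you would either need to execute one of those explicit computations for $\mathcal{G}_{(2,2,2)}$ or, as the paper does, invoke Griess's theorem on automorphisms of (almost) extraspecial $2$-groups.
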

\begin{proof}
    The group $\mathcal{H}_D$ is an almost extraspecial $2$-group of order $2^{2+ 2g}$ (see \cite[Section 3]{Bensonetal-cohomologysymplecticgroups}).
    By Lemma \ref{lemma: equivalence Gm-theta groups and mun-theta groups}, $\Aut(\mathcal{G}_D)$ can be identified with the subgroup of group automorphisms of $\mathcal{H}_D$ that induce the identity on the center $\mu_4$.
    The nonvanishing of $c_D$ is therefore the content of \cite[p.\, 407, Corollary 2]{Griess-automorphismsofextraspecialgroups}.
\end{proof}
\begin{remark}
    In the appendix of \cite{Schmid-automorphismgroupextraspecial}, it is claimed that $c_D\neq 0$ when $D=(2, 2)$, but no proof is given.
\end{remark}

\subsection{Symplectic modules not admitting a theta group}\label{subsec: symplectic modules not admitting theta groups}

In preparation of the proof of Theorem \ref{theorem: intro negative result}, we show the following (a priori) weaker statement, using recent work of Merkurjev--Scavia \cite{MerkurjevScavia-negligiblecohomology}.
\begin{proposition}\label{proposition: symplectic modules admitting no theta group}
    Suppose that $D= (2,\dots, 2)$ has length $g\geq 4$.
    Then there exists a field $k$ of characteristic zero and a symplectic module $(M,e)$ of type $D$ over $k$ such that there exists no theta group for $(M,e)$.
\end{proposition}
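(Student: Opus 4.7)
The first step is to translate the statement into Galois cohomology using Lemma \ref{lemma: theta group exists iff embedding problem has solution}. Since $d_g = 2$, it suffices to find a field $k$ of characteristic zero containing $\mu_4$ together with a continuous homomorphism $\rho \colon \Gal_k \to \Sp(M_D) = \Sp_{2g}(\F_2)$ whose pullback $\rho^* c_D \in \HH^2(k, M_D)$ is nonzero, where $c_D$ is the class of the extension \eqref{equation: fundamental exact sequence type D abstract groups}. Equivalently, we must show that $c_D$ is \emph{not negligible} (in Serre's sense) over some such $k$. Note that Proposition \ref{proposition: extension class nonzero type (2,..,2)} already gives $c_D \neq 0$ for $g \geq 3$, but mere nonvanishing does not rule out negligibility.

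The next step is to apply the Merkurjev--Scavia theorem on negligible classes, taking $k = \Q(\mu_\infty)$. For $\HH^2(G, M)$ over a field containing all roots of unity, their characterization expresses the negligible subgroup as the kernel of a restriction-and-projection map to $\HH^2$ of abelian $2$-subgroups modulo ``decomposable'' (cup-product) classes. So it suffices to exhibit a single elementary abelian $2$-subgroup $E \subset \Sp(M_D)$ for which $c_D|_E \in \HH^2(E, M_D)$ does not lie in the subgroup generated by cup products of elements from $\HH^1(E, M_D)$ and $\HH^1(E, \F_2)$.

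A natural candidate for $E$ comes from the Siegel parabolic stabilizing a Lagrangian decomposition $M_D = L \oplus L'$: its unipotent radical $U$ is elementary abelian, isomorphic to $\mathrm{Sym}^2(L) \simeq \F_2^{g(g+1)/2}$, because we are in characteristic two. The restriction of $c_D$ to $U$ can be read off from the concrete extension
\[
1 \to M_D \to \pi^{-1}(U) \to U \to 1,
\]
where $\pi \colon \Aut(\mathcal{G}_D) \to \Sp(M_D)$ is the quotient map; using the description of $\Aut(\mathcal{G}_D)$ in terms of the almost extraspecial $2$-group $\mathcal{H}_D$, this pullback extension is tractable. The hope is that for $g \geq 4$ the dimension $g(g+1)/2$ is large enough to produce a component of $c_D|_U$ which is intrinsically quadratic and not a sum of symbols.

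The main obstacle is the explicit group-cohomological check at the last step: verifying that $c_D|_E$ has a genuine nondecomposable component in $\HH^2(E, M_D)$. Two strategies are available. One is direct: use the Künneth decomposition of $\HH^*(E, \F_2)$ together with the $\Sp(M_D)$-module structure on $M_D$ to write down the subgroup of decomposable classes explicitly, and compare with the computed $c_D|_U$. The other is by reduction: exploit naturality with respect to the inclusions $\Sp(M_{D'}) \hookrightarrow \Sp(M_D)$ for $D' = (2,2,2,2)$, reducing everything to the minimal case $g = 4$, where a direct machine-assisted calculation in the group of order $2^{10}|\Sp_8(\F_2)|$ becomes feasible. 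Once non-negligibility is established, the existence of the desired $k$ and $\rho$ follows by definition, and Lemma \ref{lemma: theta group exists iff embedding problem has solution} produces the required symplectic module $(M,e)$ admitting no theta group.
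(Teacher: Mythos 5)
Your opening reduction (Lemma \ref{lemma: theta group exists iff embedding problem has solution} plus Proposition \ref{proposition: extension class nonzero type (2,..,2)} plus Merkurjev--Scavia) agrees with the paper, but from there on there is a genuine gap. First, you misquote Theorem \ref{theorem: merkurjev-scavia}: it does not describe $\HH^2(G,M)_{\mathrm{neg},k}$ as a kernel of restriction to abelian $2$-subgroups modulo decomposable cup-product classes; it says the negligible subgroup is generated by the images of the maps $\varphi_m$, i.e.\ corestrictions from the point stabilizers $G_m$ of classes $m\cup\chi$ with $\chi\in\HH^2(G_m,\Z)$. It is true that non-negligibility of a restriction $c_D|_E$ would imply non-negligibility of $c_D$, but to certify non-negligibility of $c_D|_E$ you would again need the $\varphi_m$-description applied to $(E,M_D|_E)$ (a nontrivial $E$-module), not the unproven ``not a sum of symbols'' criterion you state. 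Second, and more seriously, the decisive computation is only expressed as a hope, and for your chosen subgroup it appears to be false. Take $E=U$ the Siegel unipotent radical, acting on $M_D=K_D\times K_D^{\vee}$ by $(x,\chi)\mapsto(x,\chi+Bx)$ with $B$ symmetric. Lifting $u_B$ to the automorphism $(\lambda,x,\chi)\mapsto(\lambda q_B(x),x,\chi+Bx)$ of $\mathcal{G}_D$ with $q_B(x)=i^{x^tBx}$, one computes $q_Bq_{B'}=q_{B+B'}\cdot\chi_{B,B'}$ with $\chi_{B,B'}(x)=(-1)^{\sum_i b_{ii}b'_{ii}x_i}$, so the cocycle of the pulled-back extension is $(B,B')\mapsto\sum_i b_{ii}b'_{ii}\,f_i$ with $f_i\in K_D^{\vee}$ fixed by $U$; in other words $c_D|_U=\sum_i f_i\otimes(\ell_i\cup\ell_i)$ with $\ell_i\in\HH^1(U,\F_2)$ the $i$-th diagonal coordinate. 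Under any Galois specialization $\rho\colon\Gal_L\rightarrow U$ each $\rho^*(\ell_i\cup\ell_i)$ is a symbol $(a_i,a_i)=(a_i,-1)$, which vanishes as soon as $-1$ is a square in $L$ --- and Lemma \ref{lemma: theta group exists iff embedding problem has solution} forces you to work over fields containing $\mu_4$. So $c_D|_U$ is negligible over the relevant fields, and no Künneth or machine computation at $g=4$ can rescue this particular test subgroup.

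The paper's proof avoids any restriction computation: it shows the \emph{entire} negligible subgroup of $\HH^2(\Sp(M_D),M_D)$ vanishes, by proving (Lemma \ref{lemma: abelianization of stabilizer Sp2g(F2) trivial}) that for $g\geq 4$ the stabilizer $G_m$ of any nonzero $m\in M_D$ is perfect, whence $\HH^2(G_m,\Z)\simeq\Hom(G_m^{\mathrm{ab}},\Q/\Z)=0$ and every $\varphi_m$ is zero; the nonvanishing $c_D\neq 0$ then immediately gives non-negligibility. To repair your argument you would need either this stabilizer computation or a different subgroup together with a correct criterion for negligibility of the restricted class, neither of which is supplied.
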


This statement is weaker, since the symplectic module $(M,e)$ not admitting a theta group might not be of the form $(A[\lambda],e_{\lambda})$ for some polarized abelian variety $(A, \lambda)$.
Nevertheless, the intermediate results used to prove Proposition \ref{proposition: symplectic modules admitting no theta group} will also be used to prove Theorem \ref{theorem: intro negative result}.

Let $G$ be a finite group, $M$ a finite $G$-module and $k$ a field.
We say an element $c\in \HH^i(G,M)$ is \emph{negligible} over $k$ if for every field extension $K/k$ and every continuous homomorphism $f\colon \Gal_K\rightarrow G$, the pullback $f^*(c) \in \HH^i(\Gal_K, M) =  \HH^i(K,M)$ is trivial.
The subset of such elements forms a subgroup denoted by $\HH^i(G,M)_{\mathrm{neg},k}\subset \HH^i(G,M)$.
Under minor assumptions on $k$, Merkurjev--Scavia give an explicit description of $\HH^2(G,M)_{\mathrm{neg},k}$. 
Given $m\in M$, let $G_m = \Stab_G(m)$ and define the composite map 
\begin{align}\label{equation: varphi maps finite groups}
\varphi_m\colon \HH^2(G_m,\Z)\rightarrow \HH^2(G_m,M)\rightarrow \HH^2(G,M),
\end{align}
where the first map is induced by the $G_m$-module homomorphism $\Z\rightarrow M$ sending $1$ to $m$, and the second map is corestriction along $G_m\subset G$.
Given a finite group $H$, let $e(H)$ be its exponent, namely the least common multiple of the orders of its elements.
\begin{theorem}[Merkurjev--Scavia]\label{theorem: merkurjev-scavia}
    Suppose that $k$ contains a primitive root of unity of order $e(G)e(M)$.
    Then $\HH^2(G,M)_{\mathrm{neg},k}$ is generated by the images of $\varphi_m$ where $m$ ranges over all elements of $M$.
\end{theorem}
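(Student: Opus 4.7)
The plan is to prove the two inclusions separately. For $\mathrm{Im}(\varphi_m) \subseteq \HH^2(G,M)_{\mathrm{neg}, k}$, given a continuous $f\colon \Gal_K \to G$, let $L/K$ be the finite separable extension corresponding to the open subgroup $f^{-1}(G_m) \subset \Gal_K$. By the standard compatibility of pullback with corestriction one obtains
\[
f^{*}\varphi_m(\alpha) = \mathrm{Cor}^{K}_{L}\bigl((\iota_m)_{*}(f|_{f^{-1}(G_m)})^{*}\alpha\bigr),
\]
so it suffices to show that $(\iota_m)_{*}\colon \HH^2(L, \Z)\to \HH^2(L, M)$ vanishes. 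Factoring $\iota_m$ as $\Z \twoheadrightarrow \Z/|m|\Z \hookrightarrow M$, the first factor induces the reduction map, which is zero because $\HH^2(L, \Z) \simeq \HH^1(L, \Q/\Z)$ is divisible.

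For the reverse inclusion, the plan is to detect non-negligible classes using a universal Galois representation. Fix a faithful linear $G$-representation $V/k$ and set $K_{\mathrm{vers}} = k(V)^G$; the Galois cover $k(V)/K_{\mathrm{vers}}$ then defines a continuous surjection $\rho\colon \Gal_{K_{\mathrm{vers}}} \twoheadrightarrow G$. A specialization argument in the style of the no-name lemma shows that a class $c \in \HH^2(G,M)$ is negligible over $k$ if and only if $\rho^{*}(c) = 0$ in $\HH^2(K_{\mathrm{vers}}, M)$, so the task reduces to computing $\ker(\rho^{*})$. One then identifies $\HH^{*}(K_{\mathrm{vers}}, M)$ with the colimit of \'etale cohomology over $G$-stable open subsets of the free locus $V^{\mathrm{free}} \subset V$ and uses the excision/Gysin sequence along the codimension-one strata where the action fails to be free; the claim becomes that the resulting boundary map has image exactly $\langle \mathrm{Im}(\varphi_m)\rangle_{m \in M}$.

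The main obstacle is to identify these boundary contributions explicitly with the classes $\varphi_m(\alpha)$. At the generic point of a codimension-one stratum the stabilizer is a cyclic subgroup generated by a pseudoreflection; its action on the normal line is a character $\chi$, and the boundary contribution takes the form of a corestriction from (a supergroup of) that stabilizer applied to a cup product of $\chi$ with a fixed vector $m \in M$, where the stabilizer sits inside $G_m$. The hypothesis that $k$ contains a primitive $e(G)e(M)$-th root of unity trivializes all relevant Tate twists and makes the Kummer identification $\mu_n \simeq \Z/n\Z$ canonical, so $\chi$ corresponds to a class in $\HH^2(G_m, \Z)$ and the boundary term becomes $\varphi_m(\alpha)$ for some $\alpha$. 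Checking that every pair $(m, \alpha)$ is realised in this way as $V$ varies over faithful representations, and that the relations among different boundary contributions are exactly those already encoded by the definition of $\langle \mathrm{Im}(\varphi_m)\rangle$, is the technical heart where the work of Merkurjev--Scavia is essential.
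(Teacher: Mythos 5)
There is a genuine gap, and it sits in the part you actually prove. Your argument for $\image(\varphi_m)\subseteq \HH^2(G,M)_{\mathrm{neg},k}$ rests on the claim that $\HH^2(L,\Z)\simeq \HH^1(L,\Q/\Z)=\Hom_{\mathrm{cont}}(\Gal_L,\Q/\Z)$ is divisible, so that the reduction $\HH^2(L,\Z)\rightarrow \HH^2(L,\Z/|m|\Z)$ vanishes. That is false in general: for $L=\mathbb{R}$ one has $\Hom(\Gal_L,\Q/\Z)=\Z/2\Z$, and more generally a character of order $d$ need not be divisible by $n$. In fact your argument never uses the hypothesis $\mu_{e(G)e(M)}\subset k$, and without it the conclusion fails: take $G=M=\Z/2\Z$ (trivial action), $m\neq 0$, $\chi\neq 0$ in $\HH^2(\Z/2\Z,\Z)$; then $\varphi_m(\chi)$ is the class of the extension $\Z/4\Z$, and its pullback along $\Gal_{\mathbb{R}}\xrightarrow{\sim}\Z/2\Z$ is the nonzero element of $\HH^2(\mathbb{R},\Z/2\Z)$, since complex conjugation does not lift to a surjection onto $\Z/4\Z$. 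The correct argument must use that the restricted class comes from a character of order $d$ dividing $e(G)$: with $\mu_{dn}\subseteq L$ (this is where the hypothesis enters) the relevant map $\HH^1(L,\Z/d\Z)\rightarrow \HH^2(L,\Z/n\Z)$ is identified with the connecting map for $1\rightarrow\mu_n\rightarrow\mu_{dn}\rightarrow\mu_d\rightarrow 1$, which vanishes by Hilbert 90 (surjectivity of $L^{\times}/(L^{\times})^{dn}\rightarrow L^{\times}/(L^{\times})^{d}$) — the exact analogue of the $\Pic(\Spec L)=0$ argument used in the \'etale setting later in the paper. Also, your formula for $f^*\varphi_m(\alpha)$ ignores the double coset formula for pullback of a corestriction; each term involves a conjugate $g\cdot m$, which is harmless but should be said.

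For the reverse (hard) inclusion your outline — detect negligibility on a versal $G$-torsor, stratify $V$ by stabilizers, and identify the boundary contributions along codimension-one strata with the classes $\varphi_m(\alpha)$ — is indeed the spirit of the Merkurjev--Scavia/Totaro computation, but as written it is a plan, not a proof: the detection statement ``$c$ is negligible iff $\rho^*(c)=0$ on the versal torsor'' needs justification, and the identification and completeness of the boundary terms is exactly the content of the cited theorem, which you explicitly defer. Note that the paper itself does not reprove this statement at all: its proof is a citation of \cite[Corollary 4.2]{MerkurjevScavia-negligiblecohomology} together with the one-line observation that the cup product $m\cup\chi$ agrees with the image of $\chi$ under the map induced by $\Z\rightarrow M$, $1\mapsto m$. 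So either you cite Merkurjev--Scavia for the whole statement (as the paper does), or you must supply the missing arguments above, in particular repairing the negligibility direction where the roots-of-unity hypothesis is essential.
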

\begin{proof}
    This follows from \cite[Corollary 4.2]{MerkurjevScavia-negligiblecohomology}, after observing that if $m\in M$ and $\chi \in \HH^2(G_m, \Z)$, the cup product class $m\cup \chi\in \HH^2(G_m,M)$ equals the image of $\chi$ under the homomorphism $\HH^2(G_m,\Z)\rightarrow \HH^2(G_m,M)$ induced by $\Z\rightarrow M, 1\mapsto m$.
\end{proof}
Let $D$ be the type $(2, \dots, 2)$ of length $g$.
We will apply Theorem \ref{theorem: merkurjev-scavia} to the group $\Sp(M_D) \simeq \Sp_{2g}(\F_2)$ acting on $M_D \simeq  \F_2^{2g}$.

\begin{lemma}\label{lemma: abelianization of stabilizer Sp2g(F2) trivial}
    Let $m\in M_D$ be nonzero, let $G_m = \Stab_{\Sp(M_D)}(m)$ and suppose that $g\geq 4$.
    Then the abelianization of $G_m$ is trivial.
\end{lemma}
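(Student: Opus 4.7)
The plan is to exploit the explicit structure of the stabilizer of a nonzero vector in $\Sp_{2g}(\F_2)$ and reduce the abelianization computation to two easy vanishing statements via the five-term exact sequence in group homology.

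First I would analyze $G_m$ via its action on the symplectic quotient $W = m^{\perp}/\langle m\rangle$, which is a symplectic $\F_2$-space of dimension $2g-2$. By Witt's extension theorem over $\F_2$, the natural map $G_m \to \Sp(W) \simeq \Sp_{2g-2}(\F_2)$ is surjective, fitting into an extension
\[
1 \longrightarrow U \longrightarrow G_m \longrightarrow \Sp(W) \longrightarrow 1,
\]
where $U$ is the subgroup of elements fixing $m$ and acting trivially on $W$. Choosing a symplectic basis $e_1,f_1,\dots,e_g,f_g$ with $m = e_1$ and computing directly, one finds that $U$ is a Heisenberg-type $2$-group of order $2^{2g-1}$: its center $Z(U)\simeq \F_2$ is generated by the transvection $\tau_m\colon v\mapsto v + \omega(v,m)m$, and $U^{\mathrm{ab}} = U/Z(U) \simeq W$ as an $\Sp(W)$-module, with the natural symplectic action.

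Next I would apply the five-term exact sequence in group homology to this extension, yielding
\[
H_2(\Sp(W)) \longrightarrow (U^{\mathrm{ab}})_{\Sp(W)} \longrightarrow G_m^{\mathrm{ab}} \longrightarrow \Sp(W)^{\mathrm{ab}} \longrightarrow 0.
\]
It then suffices to check that both the coinvariant term $(U^{\mathrm{ab}})_{\Sp(W)} = W_{\Sp(W)}$ and the abelianization $\Sp(W)^{\mathrm{ab}}$ vanish. For the first, I would observe that $\Sp(W)$ acts transitively on the nonzero vectors of $W$ (Witt extension again), and more concretely that for any $v\in W$ and $w\in W$ with $\omega(w,v)=1$, the transvection $\tau_v$ gives $\tau_v(w)-w = v$; varying $v$ shows $W = \langle gw-w : g\in\Sp(W), w\in W\rangle$, so $W_{\Sp(W)} = 0$. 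For the second, I would invoke the classical fact that $\Sp_{2n}(\F_2)$ is a finite simple group for $n\geq 3$; the assumption $g\geq 4$ gives exactly $n = g-1 \geq 3$, so $\Sp(W)$ is perfect and $\Sp(W)^{\mathrm{ab}} = 0$.

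The main point where the hypothesis $g\geq 4$ is essential is the perfectness of $\Sp_{2g-2}(\F_2)$: for $g=3$ one has $\Sp_4(\F_2) \simeq S_6$, whose abelianization is $\Z/2\Z$, and indeed one can check that the conclusion of the lemma fails in that range. The coinvariant computation, by contrast, works as soon as $g\geq 2$. The only step requiring genuine care is the identification of $U$ as a Heisenberg group and the verification that the induced $\Sp(W)$-module structure on $U^{\mathrm{ab}}$ is the standard symplectic representation; everything else is formal.
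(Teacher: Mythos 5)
Your proposal is correct and follows essentially the same route as the paper: the paper decomposes $G_m = L\ltimes U$ with $L\simeq \Sp_{2g-2}(\F_2)$ and $U$ the Heisenberg-type unipotent group with $[U,U]$ of order $2$, then uses perfectness of $\Sp_{2g-2}(\F_2)$ for $g\geq 4$, the vanishing of coinvariants of the standard representation, and the low-degree terms of the Hochschild--Serre spectral sequence, exactly as in your five-term argument. The only cosmetic difference is that you phrase the quotient invariantly as $W=m^{\perp}/\langle m\rangle$ with Witt's theorem, while the paper works with explicit block matrices relative to the flag $\F_2 m\subset m^{\perp}\subset M_D$.
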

\begin{proof}
    For ease of notation, denote the form $e_D$ on $M_D$ by $\langle \cdot , \cdot \rangle$.
    Let $e_1, \dots, e_g, f_g, \dots, f_1$ be an $\F_2$-basis for $M_D$.
    Since all nondegenerate alternating forms on $M_D$ are conjugate, we may assume that $\langle e_i, f_j\rangle  = \delta_{ij}$ and $\langle e_i, e_j\rangle  = \langle f_i, f_j\rangle =0$ for all $1\leq i,j\leq g$.
    Since $\Sp_{2g}(\F_2)$ acts transitively on the nonzero elements of $\F_2^{2g}$, we may also assume $m = e_1$.
    Let $F_1 = \F_2e_1$ and $F_2 = F_1^{\perp} = \F_2\{e_1, \dots, e_g, f_g, \dots, f_2\}$.
    Then $G_m$ preserves the flag $F_1\subset F_2\subset M_D$.
    Let $U$ be the subgroup of $G_m$ of those elements that induce the identity map on $F_2/F_1$. 
    Let $L$ be the subgroup of $G_m$ that preserves the subspaces $\F_2e_1$, $\F_2\{e_2, \dots, e_g, f_g, \dots, f_2\}$ and $\F_2f_1$.
    Then $G_m = L\ltimes U$ and $L\simeq \Sp_{2g-2}(\F_2)$.
    As explicit matrix subgroups of $\Sp_{2g}(\F_2)$, we have 
    \[
    G_m = \left\{
    \left(
    \begin{array}{c| c c c | c }
    1 & * & \cdots & * & * \\\hline
    0 & * & \cdots & * & * \\
    \vdots & \vdots & \ddots & \vdots & \vdots\\
    0 & * & \cdots & * & * \\ \hline
    0 & 0 & \cdots & 0 &1
    \end{array}   
    \right)
    \right\},\,
    L = \left\{
    \left(
    \begin{array}{c| c c c | c }
    1 & 0 & \cdots & 0 & 0 \\\hline
    0 & * & \cdots & * & 0 \\
    \vdots & \vdots & \ddots & \vdots & \vdots\\
    0 & * & \cdots & * & 0 \\ \hline
    0 & 0 & \cdots & 0 &1
    \end{array}   
    \right)
    \right\},\,
    U = \left\{
    \left(
    \begin{array}{c| c c c | c }
    1 & * & \cdots & * & * \\\hline
    0 & 1 & \cdots & 0 & * \\
    \vdots & \vdots & \ddots & \vdots & \vdots\\
    0 & 0 & \cdots & 1 & * \\ \hline
    0 & 0 & \cdots & 0 &1
    \end{array}   
    \right)
    \right\}.
    \]
    It is well known (see for example \cite[Appendix]{Bensonetal-cohomologysymplecticgroups}) that $\Sp_{2n}(\F_2)$ is perfect for all $n\geq 3$.
    Since $g\geq 4$, $\Sp_{2g-2}(\F_2)$ is perfect and hence $[L,L] = L$.
    We calculate that $[U,U]$ is of order $2$, generated by the element $I + E$, where $E$ is the matrix with $1$ in the top right corner and zeroes everywhere else.
    %In more painful detail, U is given by triples of vectors (v,w,x) where v,w have length 2g, x has length x, and w equals v^t or something extremely similar. The center is given by those with v=w=0.
    The conjugation action of $L$ on $U$ induces an action on $U/[U,U]\simeq \F_2^{2g-2}$ and can be identified with the standard action of $\Sp_{2g-2}(\F_2)$ in its defining representation.
    When $g\geq 2$, there are no covariants for this action. 
    In conclusion, we have shown that the homology groups $\HH_1(L, \Z)$ and $\HH_0(L, \HH_1(U,\Z))$ both vanish.
    Using the low terms of the Hochschild--Serre spectral sequence $\HH_p(L,\HH_q(U,\Z))\Rightarrow \HH_{p+q}(G_m\Z)$, it follows that $G_m^{\mathrm{ab}} = \HH^1(G_m,\Z)$ vanishes too.
\end{proof}

\begin{proof}[Proof of Proposition \ref{proposition: symplectic modules admitting no theta group}] 
    We analyze the maps $\varphi_m$ of \eqref{equation: varphi maps finite groups} for the pair $(G,M) = (\Sp(M_D),M_D)$. 
    If $m=0$, then $\varphi_m = 0$, since the first map $\HH^2(G_m, \Z)\rightarrow \HH^2(G_m,M_D)$ is induced by the zero map $\Z\rightarrow M_D$.
    If $m\neq 0$, then the abelianization $G_m^{\mathrm{ab}}$ is trivial by Lemma \ref{lemma: abelianization of stabilizer Sp2g(F2) trivial}, so
    \[
    \HH^2(G_m,\Z)\simeq \HH^1(G_m,\Q/\Z) = \Hom(G_m, \Q/\Z) = \Hom(G_m^{\mathrm{ab}}, \Q/\Z) = 0.
    \]
    Let $k$ be a field of characteristic zero containing a primitive root of unity of order $e(\Sp(M_D)) e(M_D)$.
    Using Theorem \ref{theorem: merkurjev-scavia}, we conclude that $\HH^2(\Sp(M_D),M_D)_{\mathrm{neg},k}=\{0\}$.
    On the other hand, Proposition \ref{proposition: extension class nonzero type (2,..,2)} shows that $c_D\in \HH^2(\Sp(M_D), M_D)$ is nonzero.
    We conclude that $c_D$ is not negligible.
    In other words, after possibly enlarging $k$, there exists a homomorphism $\rho\colon \Gal_k\rightarrow \Sp(M_D)$ that does not lift to a homomorphism $\Gal_k\rightarrow \Aut(\mathcal{G}_D)$.
    Using Lemma \ref{lemma: symplectic modules vs SpD torsors}, $\rho$ corresponds to a symplectic module $(M,e)$.
    Since $k$ contains a primitive $4$th root of unity, Lemma \ref{lemma: theta group exists iff embedding problem has solution} shows that there does not exist a theta group for $(M,e)$.
\end{proof}

\begin{remark}
    The assumption $g\geq 4$ in Proposition \ref{proposition: symplectic modules admitting no theta group} is optimal. 
    Indeed, if $g=3$, then $c_D\neq 0$ by Proposition \ref{proposition: extension class nonzero type (2,..,2)}.
    However, Corollary \ref{corollary: symplectic modules small F2 vector spaces admit theta groups} shows that if $k$ is of characteristic zero, then every symplectic module of type $D$ over every field extension of $k$ admits a theta group.
    Therefore, by Lemma \ref{lemma: theta group exists iff embedding problem has solution}, if $k$ contains a primitive $4$th root of unity, then $c_D$ is a nonzero element of $\HH^2(\Sp(M_D), M_D)_{\mathrm{neg}, k}$.
\end{remark}

\subsection{Paramodular groups}\label{subsec: paramodular groups}

Let $D=(d_1, \dots, d_g)$ be a type of length $g$, viewed as a diagonal $g\times g$ matrix.
Define the block $2g\times 2g$-matrix
\[
J_D = 
\begin{pmatrix}
    0 & D \\
    -D & 0
\end{pmatrix}
\]
and let $(-,-)_D$ be the alternating form on $\Z^{2g}$ with Gram matrix $J_D$. 
Let $\Lambda_D$ be $\Z^{2g}$, thought of as being equipped with the pairing $(-,-)_D$. 
Define the paramodular group
\[\pargroup{g}{D}(\Z) = \Sp(\Lambda_D) = \{g\in \GL(\Lambda_D) \colon g \text{ preserves }(-,-)_D\}.\]
More generally, we view $\pargroup{g}{D}$ as a group scheme over $\Z$, where for a ring $R$ we let $\pargroup{g}{D}(R)$ be the subgroup of $\GL(\Lambda_D\otimes R)$ preserving $(-,-)_D$.

Let $D_g$ be the type $(1, \dots, 1)$ of length $g$.
Then we simply write $\Sp_{2g}(R)$ for $\pargroup{g}{D_g}(R)$ and we recover the usual symplectic group.
Denote the standard basis of $\Z^{2g}$ by $e_1, \dots, e_g, f_1, \dots, f_g$.
Consider the linear map $\alpha\colon \Lambda_D\rightarrow \Lambda_{D_g}$ sending $e_i$ to $d_ie_i$ and $f_i$ to $f_i$. 
Then $\alpha$ intertwines $(-,-)_D$ with $(-,-)_{D_g}$, hence conjugation by $\alpha$ induces an isomorphism $\pargroup{g}{D}(R)\xrightarrow{\sim} \Sp_{2g}(R)$ for every ring $R$ in which $d_g$ is invertible.
In particular, we may view $\pargroup{g}{D}(\Z)$ as a subgroup of $\pargroup{g}{D}(\Q)\simeq \Sp_{2g}(\Q)$ commensurable with $\Sp_{2g}(\Z)$.

Define $\Lambda^{\vee}_D = \{x\in \Lambda_D \otimes \Q \colon (x, y)\in \Z\text{ for all }y\in \Lambda_D\}$.
Then $\Lambda_D \subset \Lambda^{\vee}_D$ has finite index and the quotient $\Lambda_D^{\vee}/\Lambda_D$ is a finite abelian group.
Using the standard $\Z$-basis $\{e_1, \dots, e_g, f_1, \dots, f_g\}$ of $\Lambda_D$, $\Lambda_D^{\vee}$ has $\Z$-basis $\frac{1}{d_1}e_1, \dots, \frac{1}{d_g} e_g, \frac{1}{d_1}f_1, \dots, \frac{1}{d_g} f_g$.
The $\Q$-valued bilinear pairing $(-,-)_D\colon \Lambda_D^{\vee}\times \Lambda_D^{\vee} \rightarrow \Q$ induces a bilinear pairing $b_D\colon \Lambda_D^{\vee}/\Lambda_D \times \Lambda_D^{\vee}/\Lambda_D \rightarrow \Q/\Z$.

For each $n\geq 1$, let $\zeta_n = e^{2\pi i/n} \in \mathbb{C}$.
Recall the standard symplectic module $M_D$ of type $D$ from Section \ref{subsec: symplectic modules}, which we consider here as a (constant) group scheme over $\mathbb{C}$. 
In the notation of that section, consider the homomorphism 
\[
\beta\colon M_D = (\Z/d_1\Z) \times\cdots \times (\Z/d_g\Z) \times \mu_{d_1} \times \cdots \times \mu_{d_g} \rightarrow \Lambda^{\vee}_D/\Lambda_D
\]
which maps $1\in \Z/d_i\Z$ to $\frac{1}{d_i}e_i$ and $\zeta_{d_i} \in \mu_{d_i}$ to $\frac{1}{d_i}f_i$. 
Then a direct computation shows that $\beta$ is an isomorphism that intertwines the pairing $e_D$ with the pairing $e^{2\pi i b_D}$. 

Since every element of $\pargroup{g}{D}(\Z)$ induces an automorphism of $\Lambda_D^{\vee}/\Lambda_D$ preserving $b_D$, we obtain a group homomorphism $\pargroup{g}{D}(\Z)\rightarrow \Aut((\Lambda_D^{\vee}/\Lambda_D, b_D))$. 
Conjugating by $\beta$, we obtain a homomorphism $\mathrm{red}_D\colon \pargroup{g}{D}(\Z)\rightarrow  \Sp(M_D)$. 
\begin{lemma}\label{lemma: reduction map to Sp(M_D) paramodular group surjective}
    The reduction map $\mathrm{red}_D\colon \pargroup{g}{D}(\Z)\rightarrow \Sp(M_D)$ is surjective.
\end{lemma}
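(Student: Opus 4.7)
The plan is to combine strong approximation for the symplectic group with a local lifting argument. The map $\mathrm{red}_D$ factors as
\[
\pargroup{g}{D}(\Z) \hookrightarrow \pargroup{g}{D}(\widehat{\Z}) \simeq \prod_p \pargroup{g}{D}(\Z_p) \longrightarrow \prod_{p \mid d_g} \Sp(M_D(p)) = \Sp(M_D),
\]
where the last arrow is the product over primes $p$ of the action of $\pargroup{g}{D}(\Z_p)$ on the $p$-primary part $(\Lambda_D \otimes \Z_p)^{\vee}/(\Lambda_D \otimes \Z_p)$ of $\Lambda_D^{\vee}/\Lambda_D$.

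First I would pass from $\pargroup{g}{D}(\Z)$ to $\pargroup{g}{D}(\widehat{\Z})$ using strong approximation. Via $\alpha$, the algebraic $\Q$-group $\pargroup{g}{D,\Q}$ is identified with the simply connected, split, semisimple group $\Sp_{2g,\Q}$, so Kneser's strong approximation theorem gives that $\pargroup{g}{D}(\Q)$ is dense in $\pargroup{g}{D}(\A_f)$. Since the kernel of the continuous homomorphism $\pargroup{g}{D}(\widehat{\Z}) \to \Sp(M_D)$ is an open subgroup of $\pargroup{g}{D}(\A_f)$, a standard approximation argument then shows that $\pargroup{g}{D}(\Z) = \pargroup{g}{D}(\Q) \cap \pargroup{g}{D}(\widehat{\Z})$ has the same image in $\Sp(M_D)$ as $\pargroup{g}{D}(\widehat{\Z})$. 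It therefore suffices to prove that each local map $\pargroup{g}{D}(\Z_p) \to \Sp(M_D(p))$ is surjective.

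For the local step, fix a prime $p \mid d_g$ and set $L = \Lambda_D \otimes \Z_p$, so that $\pargroup{g}{D}(\Z_p) = \Sp(L)$ and $M_D(p) \simeq L^{\vee}/L$ with the induced symplectic pairing. Using Smith normal form I would decompose $L$ as an orthogonal direct sum $\bigoplus_{i=1}^g (\Z_p e_i \oplus \Z_p f_i)$ with $\langle e_i, f_i\rangle = p^{a_i}$ and $a_i = v_p(d_i)$. Given $\phi \in \Sp(L^{\vee}/L)$, the plan is to realize $\phi$ as a product of symplectic transvections $T_{\bar v}\colon x \mapsto x + b(x,\bar v)\bar v$ on $L^{\vee}/L$ and to lift each transvection to $\Sp(L)$ by choosing an appropriate integral representative $v \in L^{\vee}$ of $\bar v$. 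The main obstacle will be carrying out this transvection lifting compatibly with the integrality constraints on $L$; this is the symplectic analogue of Nikulin's classical surjectivity theorem for orthogonal groups of lattices onto the automorphism groups of their discriminant forms, and should be more tractable here than in the orthogonal case because every vector in a symplectic module is automatically isotropic.
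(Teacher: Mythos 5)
The paper does not prove this lemma directly: it simply cites Brasch's theorem on lifting level-$D$ structures, whose main result is exactly the surjectivity of $\mathrm{red}_D$. So your attempt at a self-contained argument is a different route, and its first half is sound: the identification of $\pargroup{g}{D,\Q}$ with the simply connected group $\Sp_{2g,\Q}$, strong approximation, the openness of the kernel of $\pargroup{g}{D}(\widehat{\Z})\to\Sp(M_D)$, and the equality $\pargroup{g}{D}(\Z)=\pargroup{g}{D}(\Q)\cap\pargroup{g}{D}(\widehat{\Z})$ do correctly reduce the statement to surjectivity of each local map $\pargroup{g}{D}(\Z_p)\to\Sp(M_D(p))$, using also that $\Sp(M_D)\simeq\prod_p\Sp(M_D(p))$ because automorphisms preserve primary components and distinct primary components pair trivially.

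The problem is that the local step, which is the entire content of the lemma (and of Brasch's paper), is left as a plan rather than a proof, and you acknowledge the "main obstacle" yourself. Two things are missing. First, you need that $\Sp(L^{\vee}/L)$ is generated by the transvections $T_{\bar v}$; this is standard when all elementary divisors $p^{a_i}$ are equal (the group is then $\Sp_{2k}(\Z/p^a)$), but for mixed types $D$ the discriminant module is a product of cyclic groups of different orders, transvections must be defined with care (the value $b(x,\bar v)\in\tfrac{1}{n}\Z/\Z$ has to be converted into a scalar acting on $\bar v$), and generation by such transvections is not a statement you can quote off the shelf. Second, lifting a single transvection is not automatic either: for $v\in L^{\vee}$ the map $x\mapsto x+\lambda\langle x,v\rangle v$ preserves $L$ only when $\lambda v\cdot\langle L,v\rangle\subset L$, which forces divisibility conditions on $\lambda$, and one must check that with these constraints the induced map on $L^{\vee}/L$ still realizes the desired transvection — in particular for transvections based at vectors mixing components of different orders, where the bookkeeping is genuinely delicate. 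As it stands, the proposal is a plausible strategy whose reduction step is correct, but the essential lifting statement remains unproved; to match the paper you could simply invoke Brasch, and to make your route complete you would need to carry out the transvection generation and lifting argument in the mixed-type case.
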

\begin{proof}
    This is the main result of \cite{Brasch-liftinglevelD}.
\end{proof}
Denote the kernel of this reduction map by $\Gamma(D)$. 
We then obtain an exact sequence 
\begin{align}\label{equation: exact sequence paramodular group and reduction}
    1\rightarrow \Gamma(D) \rightarrow \pargroup{g}{D}(\Z) \rightarrow \Sp(M_D)\rightarrow 1.
\end{align}
If $n\geq 1$ is an integer, define the type $nD = (nd_1, \dots, nd_g)$. 
Note that $(\cdot, \cdot)_{nD} = n (\cdot, \cdot)_D$ and $\pargroup{g}{nD}(\Z) = \pargroup{g}{D}(\Z)$ for all $n$.
We call a subgroup $\Gamma\subset \pargroup{g}{D}(\Z)$ a congruence subgroup if $\Gamma(nD)\subset \Gamma$ for some $n\geq 1$.
For later use, we will summarize some group-theoretic properties of finite-index subgroups of $\pargroup{g}{D}(\Z)$.
%If $g\geq 2$, then every finite index subgroup is arithmetic.
\begin{proposition}\label{proposition: every finite index subgroup is congruence}
    If $g\geq 2$, then every finite-index subgroup of $\pargroup{g}{D}(\Z)$ is a congruence subgroup.
\end{proposition}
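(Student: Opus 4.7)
The plan is to derive this from the classical congruence subgroup property (CSP) for $\Sp_{2g}(\Z)$ with $g \geq 2$, which is due to Mennicke and Bass--Lazard--Serre (and more generally to Raghunathan for simply connected almost simple $\Q$-groups of $\Q$-rank $\geq 2$). The idea is that $\pargroup{g}{D}$ is the same $\Q$-group as $\Sp_{2g}$ but with a different integral structure, so CSP transfers automatically; the only thing to verify is that the paper's ``congruence subgroups'' (those containing some $\Gamma(nD)$) agree with the naive ones defined by reduction modulo $N$.

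First, I will use the identification $\pargroup{g}{D}(\Q) \xrightarrow{\sim} \Sp_{2g}(\Q)$ induced by conjugation by $\alpha$. This realises $\pargroup{g}{D}(\Z)$ as the stabiliser in $\Sp_{2g}(\Q)$ of the finite-index sublattice $\Lambda_0 := \alpha(\Lambda_D) \subset \Lambda_{D_g}$; in particular $\pargroup{g}{D}(\Z)$ is commensurable with $\Sp_{2g}(\Z)$. Since the congruence kernel is an invariant of the ambient $\Q$-group and is unchanged by passage to a commensurable arithmetic subgroup, the classical CSP for $\Sp_{2g}$ implies that every finite-index subgroup of $\pargroup{g}{D}(\Z)$ contains the principal congruence subgroup
\[
\Gamma^{\mathrm{cl}}(N) := \ker\bigl(\pargroup{g}{D}(\Z) \to \pargroup{g}{D}(\Z/N\Z)\bigr)
\]
for some $N \geq 1$. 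I will then observe the elementary inclusion $\Gamma(ND) \subseteq \Gamma^{\mathrm{cl}}(N)$: unwinding the definitions, $g \in \Gamma(ND)$ means $g$ acts trivially on $M_{ND} = \tfrac{1}{N}\Lambda_D^\vee/\Lambda_D$, equivalently $(g - \mathrm{id})(\Lambda_D^\vee) \subseteq N\Lambda_D$; since $\Lambda_D \subseteq \Lambda_D^\vee$, this forces $(g - \mathrm{id})(\Lambda_D) \subseteq N\Lambda_D$, i.e., $g \in \Gamma^{\mathrm{cl}}(N)$. Combining the two inclusions, every finite-index subgroup of $\pargroup{g}{D}(\Z)$ contains some $\Gamma(ND)$ and hence is a congruence subgroup in the sense of Section \ref{subsec: paramodular groups}.

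The only part that is not purely formal is justifying the transfer of CSP under commensurability, and concretely carrying out the comparison between the two integral structures. Explicitly: given finite-index $\Gamma \subset \pargroup{g}{D}(\Z)$, one sets $H := \pargroup{g}{D}(\Z) \cap \Sp_{2g}(\Z)$ (finite index in both), intersects to get a finite-index subgroup of $\Sp_{2g}(\Z)$, applies classical CSP to obtain $\Sp_{2g}(\Z, N) \subseteq \Gamma \cap H$, and then verifies $\Gamma^{\mathrm{cl}}(M) \subseteq \Sp_{2g}(\Z, N) \cap \pargroup{g}{D}(\Z)$ for a suitable $M$ (taking $M$ divisible by $d_g \cdot N$ suffices, using $\alpha^{-1}(\Lambda_{D_g}) \subseteq \tfrac{1}{d_g}\Lambda_D$, which in turn follows from $d_i \mid d_g$ for all $i$). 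This is a routine bookkeeping exercise with lattices and presents no real obstacle.
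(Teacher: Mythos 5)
Your proof is correct and takes essentially the same route as the paper, whose entire proof is a citation of the Bass--Milnor--Serre resolution of the congruence subgroup problem for $\Sp_{2g}$, $g\geq 2$; you merely make explicit the two points the paper leaves implicit, namely the transfer of CSP across the commensurability $\pargroup{g}{D}(\Z)\subset \Sp_{2g}(\Q)$ and the comparison of the paper's congruence subgroups with the naive ones via $\Gamma(ND)\subseteq \ker\bigl(\pargroup{g}{D}(\Z)\to\pargroup{g}{D}(\Z/N\Z)\bigr)$ and $\ker\bigl(\pargroup{g}{D}(\Z)\to\pargroup{g}{D}(\Z/d_gN\Z)\bigr)\subseteq \Sp_{2g}(\Z,N)$. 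The lattice bookkeeping you sketch (using $d_i\mid d_g$ and $\Lambda_D\subseteq\Lambda_D^{\vee}$) checks out, so the argument is complete.
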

\begin{proof}
    This follows from the resolution of the congruence subgroup problem for $\Sp_{2g}$ for $g\geq 2$ \cite{BassMilnorSerre}.
    % (The classical formulation is that every finite-index subgroup of $\Sp_{2g}(\Z)$ contains a principal congruence subgroup $\Sp_{2g}(\Z,n)$ for some $n\geq 1$.
    % However, the subgroup $\pargroup{g}{D}(\Z)\subset \pargroup{g}{D}(\Q) \simeq \Sp_{2g}(\Q)$ is commensurable with $\Sp_{2g}(\Z)$. 
    % So if $\Gamma \subset \pargroup{g}{D}(\Z)$ is of finite index, then $\Gamma \cap \Sp_{2g}(\Z)$ is of finite index.
    % Therefore $\Sp_{2g}(\Z, n)\subset \Gamma$ for some $n\geq 1$.
    % Using that $\Gamma(nd_gD)\subset \Sp_{2g}(\Z,n)$, we see that $\Gamma$ is a congruence subgroup in our sense.)
\end{proof}

\begin{proposition}\label{proposition: abelianization is finite}
    Let $g\geq 2$ and let $\Gamma \subset \pargroup{g}{D}(\Z)$ be a finite-index subgroup.
    Then the abelianization $\Gamma^{\mathrm{ab}}$ is finite and $\HH^2(\Gamma, \Z)$ is a finitely generated abelian group.
\end{proposition}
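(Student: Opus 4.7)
The plan is to reduce both claims to standard facts about arithmetic subgroups of higher-rank semisimple $\Q$-groups. By the discussion preceding Lemma \ref{lemma: reduction map to Sp(M_D) paramodular group surjective}, $\pargroup{g}{D}(\Z)$ is commensurable with $\Sp_{2g}(\Z)$, so any finite-index $\Gamma \subset \pargroup{g}{D}(\Z)$ is commensurable with a finite-index subgroup of $\Sp_{2g}(\Z)$. Both finiteness of the abelianization (equivalently, the vanishing of $\HH^1(\Gamma, \Q)$ when $\Gamma$ is finitely generated) and finite generation of $\HH^2(\Gamma, \Z)$ are preserved under commensurability: the first via restriction and transfer in rational cohomology, the second via the Lyndon--Hochschild--Serre spectral sequence for a finite-index inclusion together with the fact that cohomology of a finite group with finitely generated coefficients is finitely generated. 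By Selberg's lemma, we may further assume that the resulting $\Gamma \subset \Sp_{2g}(\Z)$ is torsion-free.

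For the finite generation of $\HH^2(\Gamma, \Z)$, I would invoke the Borel--Serre bordification. Since $\Gamma$ is a torsion-free arithmetic subgroup of $\Sp_{2g}(\Q)$, it acts freely and properly discontinuously on the Borel--Serre enlargement $\overline{X}^{\mathrm{BS}}$ of the associated symmetric space, and the quotient $\Gamma \backslash \overline{X}^{\mathrm{BS}}$ is a compact manifold with corners whose interior is $\Gamma \backslash X \simeq B\Gamma$. Hence $\Gamma \backslash \overline{X}^{\mathrm{BS}}$ is a compact classifying space for $\Gamma$, and so $\HH^i(\Gamma, \Z)$ is a finitely generated abelian group in every degree $i$.

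For the finiteness of $\Gamma^{\mathrm{ab}}$, since $\Gamma$ is finitely generated (as an arithmetic lattice), $\Gamma^{\mathrm{ab}} = \HH_1(\Gamma, \Z)$ is a finitely generated abelian group. Its finiteness is therefore equivalent to $\HH^1(\Gamma, \Real) = 0$. Here I would use that for $g \geq 2$ the simple Lie group $\Sp_{2g}(\Real)$ has real rank $g \geq 2$, so by Kazhdan's theorem it has property $(T)$, and this property is inherited by any lattice in it, in particular by $\Gamma$. Property $(T)$ forces $\HH^1(\Gamma, \pi) = 0$ for every unitary representation $\pi$ of $\Gamma$, and specializing to the trivial representation on $\Real$ gives the desired vanishing.

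The main technical inputs are thus the Borel--Serre bordification (for finite generation of integral cohomology) and Kazhdan's property $(T)$ for higher-rank symplectic lattices (for finiteness of the abelianization); everything else is a routine commensurability argument. The only mildly subtle point to check carefully is that finite generation of $\HH^2$ genuinely passes through commensurability, for which the transfer argument combined with the Hochschild--Serre sequence applied to a normal torsion-free finite-index subgroup suffices.
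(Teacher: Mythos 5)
Your argument is correct, but it diverges from the paper's proof in the second half. For the finiteness of $\Gamma^{\mathrm{ab}}$ you and the paper do essentially the same thing: invoke Kazhdan's property (T) for a higher-rank lattice to kill $\Hom(\Gamma,\Real)$, plus finite generation of $\Gamma$ as an arithmetic group (your detour through commensurability with $\Sp_{2g}(\Z)$, restriction/transfer and Selberg's lemma is harmless but unnecessary, since $\Gamma$ is itself an arithmetic lattice in $\Sp_{2g}(\Real)$ and property (T) applies to it directly; this is exactly what the paper does). For the finite generation of $\HH^2(\Gamma,\Z)$ the paper takes a lighter route: $\Gamma$ is finitely presented as an arithmetic group, Hopf's formula then gives finite generation of $\HH_2(\Gamma,\Z)$, and the universal coefficient theorem together with the already-established finiteness of $\Gamma^{\mathrm{ab}}$ yields finite generation of $\HH^2(\Gamma,\Z)$. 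You instead use the Borel--Serre bordification to produce a compact classifying space for a torsion-free normal finite-index subgroup $N\trianglelefteq\Gamma$, so that all $\HH^q(N,\Z)$ are finitely generated, and then climb back up via the Lyndon--Hochschild--Serre spectral sequence for the finite quotient $\Gamma/N$. This is a valid and genuinely different argument; it costs heavier machinery (Borel--Serre plus the spectral-sequence/normal-core bookkeeping) but buys finite generation of $\HH^i(\Gamma,\Z)$ in every degree, whereas the paper's Hopf-formula argument is elementary and tailored to degree $2$, which is all that is needed here.
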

\begin{proof}
    Since $\Gamma$ is an arithmetic group, it is finitely presented \cite[\S4.4, Theorem 4.8]{PlatonovRapinchuk-secondeditionvolume1}.
    Therefore $\Gamma^{\mathrm{ab}}$ is a finitely generated abelian group.
    Since $\Sp_{2g}$ has real rank $\geq 2$, $\Gamma$ satisfies Kazhdan's property (T) \cite[Theorem 7.1.4]{zimmer-ergodictheory}.
    Consequently \cite[Theorem 7.1.7]{zimmer-ergodictheory}, there are no nontrivial homomorphisms $\Gamma\rightarrow \mathbb{R}$.
    Therefore the finitely generated group $\Gamma^{\mathrm{ab}}$ must be finite.

    Since $\Gamma$ is finitely presented, Hopf's formula \cite[Chapter II, \S5, Theorem 5.3]{Brown-cohomologyofgroups} shows that $\HH_2(\Gamma, \Z)$ is finitely generated. 
    By the universal coefficient theorem and the finiteness of $\Gamma^{\mathrm{ab}}$, we conclude that $\HH^2(\Gamma, \Z)$ is also finitely generated.
\end{proof}

\begin{proposition}[Borel]\label{proposition: borel computation}
    Let $g\geq 3$ and let $\Gamma\subset \pargroup{g}{D}(\Z)$ be a finite-index subgroup.
    Then $\HH^2(\Gamma,\Q)\simeq \Q$ and the restriction map $\HH^2(\pargroup{g}{D}(\Z), \Q)\rightarrow \HH^2(\Gamma, \Q)$ is an isomorphism.
\end{proposition}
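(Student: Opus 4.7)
The plan is to combine Borel's computation of the stable real cohomology of arithmetic subgroups of $\Sp_{2g}$ with the finite-generation statement of Proposition \ref{proposition: abelianization is finite}, then descend from $\Real$ to $\Q$.

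First I would reduce to real coefficients. By Proposition \ref{proposition: abelianization is finite}, both $\HH^2(\Gamma, \Z)$ and $\HH^2(\pargroup{g}{D}(\Z), \Z)$ are finitely generated, so the universal coefficient theorem gives $\HH^2(-, \Real) \simeq \HH^2(-, \Q) \otimes_\Q \Real$ for each. It therefore suffices to prove that $\HH^2(\Gamma, \Real) \simeq \Real$ and that the restriction map is an $\Real$-linear isomorphism, since a $\Q$-linear map between one-dimensional $\Q$-vector spaces is an isomorphism iff nonzero, a property preserved and reflected under extension of scalars to $\Real$.

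Next I would invoke Borel's stable cohomology theorem \cite{Borel-stablerealcohomologyarithmeticgroupsI}. Via the isomorphism of Section \ref{subsec: paramodular groups}, $\pargroup{g}{D}(\Z)$ is commensurable with $\Sp_{2g}(\Z)$ inside $\Sp_{2g}(\Q)$, so both $\pargroup{g}{D}(\Z)$ and any finite-index $\Gamma$ are arithmetic subgroups of $\Sp_{2g,\Q}$. Borel's theorem asserts that for any such arithmetic subgroup and any degree $k$ in the stable range, the natural map from continuous cohomology $\HH^k_c(\Sp_{2g}(\Real), \Real)$ to $\HH^k(\Gamma, \Real)$ is an isomorphism. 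The continuous cohomology computes the cohomology of invariant differential forms on the compact dual symmetric space $\Sp(g)/U(g)$ (the Lagrangian Grassmannian), which is one-dimensional in degree $2$, generated (in the moduli interpretation) by the Hodge class $\lambda_1$. For $\Sp_{2g}$, Borel's explicit bound places degree $2$ in the stable range as soon as $g \geq 3$.

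Finally, since Borel's isomorphism is natural in $\Gamma$, applying it to the inclusion $\Gamma \subset \pargroup{g}{D}(\Z)$ yields a commutative triangle
\[
\HH^2_c(\Sp_{2g}(\Real), \Real) \;\to\; \HH^2(\pargroup{g}{D}(\Z), \Real) \;\to\; \HH^2(\Gamma, \Real)
\]
in which both maps out of $\HH^2_c(\Sp_{2g}(\Real),\Real)$ are isomorphisms, forcing the restriction on the right to be an isomorphism as well. Combined with the first step, the proposition follows. The principal subtlety is confirming that $k = 2$ lies in Borel's stable range for $\Sp_{2g}$ when $g \geq 3$; this is a numerical check against Borel's explicit bound, which for $\Sp_{2g}$ yields stability in degrees up to roughly $g - 1$.
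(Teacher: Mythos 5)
Your proposal follows essentially the same route as the paper: reduce to Borel's stability theorem for arithmetic subgroups of $\Sp_{2g}$ over $\Q$ (which covers $\pargroup{g}{D}(\Z)$ and its finite-index subgroups, since they are commensurable with $\Sp_{2g}(\Z)$), use that the stable/continuous cohomology is one-dimensional in degree $2$, and deduce that restriction is an isomorphism from the naturality of Borel's comparison map; the passage between $\Q$ and $\Real$ coefficients via finite generation is fine. The one caveat concerns your "numerical check against Borel's explicit bound'': the constants in Borel's original 1974 paper are substantially weaker than $g-1$ and do not put degree $2$ in the stable range for $g$ as small as $3$; the range you quote ("up to roughly $g-1$'') is the optimized one, which is precisely why the paper cites \cite[Theorem 2]{Tshishiku-borelsstablerange} in addition to Borel. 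Tshishiku states the result only for finite-index subgroups of $\Sp_{2g}(\Z)$; your framing via arbitrary arithmetic subgroups of $\Sp_{2g}$ over $\Q$ addresses the same point that the paper handles with its parenthetical remark about extending the argument to $\pargroup{g}{D}(\Z)$.
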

\begin{proof}
    This follows from results of Borel \cite{Borel-stablerealcohomologyarithmeticgroupsI} with optimized stable range, see \cite[Theorem 2]{Tshishiku-borelsstablerange}. 
    (The result in \cite{Tshishiku-borelsstablerange} is only stated for finite-index subgroups of $\Sp_{2g}(\Z)$, but the proof applies more generally to finite-index subgroups of $\pargroup{g}{D}(\Z)$.)
\end{proof}

Combining Propositions \ref{proposition: abelianization is finite} and \ref{proposition: borel computation} we obtain, for every $g\geq 3$ and finite-index subgroup $\Gamma\subset \Sp_{2g}(\Z)$, an exact sequence
\begin{align}\label{equation: structure H2 of Gamma}
    1\rightarrow \HH^1(\Gamma, \Q/\Z)\rightarrow \HH^2(\Gamma, \Z)\rightarrow \Z\rightarrow 1
\end{align}
induced by the sequence $1\rightarrow \Z\rightarrow \Q\rightarrow \Q/\Z\rightarrow 1$. 
Since $\HH^1(\Gamma, \Q/\Z)\simeq \Hom(\Gamma^{\mathrm{ab}}, \Q/\Z)$, computing $\HH^2(\Gamma, \Z)$ boils down to computing the abelianization of $\Gamma$.

To compute such abelianizations, it will be useful to consider $p$-adic and adelic variants of congruence subgroups.
Let $g\geq 2$ and let $\Gamma \subset \pargroup{g}{D}(\Z)$ be a finite-index subgroup.
For each $n\geq 1$, let $\Gamma_n$ denote the image of $\Gamma$ under the reduction map $\pargroup{g}{D}(\Z) = \pargroup{g}{nD}(\Z)\rightarrow \Sp(M_{nD})$. 
Denote the profinite completion of $\Gamma$ by $\hat{\Gamma}$.
By Proposition \ref{proposition: every finite index subgroup is congruence}, the natural map $\hat{\Gamma} \rightarrow  \varprojlim_{n} \Gamma_n$ (where $n$ ranges over positive integers) is an isomorphism. 
For example, if $\Gamma = \pargroup{g}{D}(\Z)$, then by Lemma \ref{lemma: reduction map to Sp(M_D) paramodular group surjective} we have $\hat{\Gamma}\simeq \pargroup{g}{D}(\hat{\Z})$, where $\hat{\Z} = \varprojlim_n \Z/n\Z$.
Given a topological group $G$ (such as a discrete or profinite group), denote by $G^{\mathrm{der}}$ the closure of the commutator subgroup of $G$ and let $G^{\mathrm{ab}} = G/G^{\mathrm{der}}$.

\begin{lemma}\label{lemma: completion of abelianization}
    Let $g\geq 2$ and let $\Gamma\subset \pargroup{g}{D}(\Z)$ be a finite-index sugroup.
    Then the natural map $\Gamma^{\mathrm{ab}}\rightarrow (\hat{\Gamma})^{\mathrm{ab}}$ is an isomorphism of finite groups.
\end{lemma}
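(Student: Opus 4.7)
The plan is to combine the two preceding propositions -- finiteness of $\Gamma^{\mathrm{ab}}$ and the congruence subgroup property -- to identify $(\hat{\Gamma})^{\mathrm{ab}}$ with $\Gamma^{\mathrm{ab}}$ via the natural map. By Proposition \ref{proposition: abelianization is finite}, $\Gamma^{\mathrm{ab}}$ is finite, so $[\Gamma,\Gamma]$ has finite index in $\Gamma$, hence in $\pargroup{g}{D}(\Z)$. By Proposition \ref{proposition: every finite index subgroup is congruence}, $[\Gamma,\Gamma]$ is therefore a congruence subgroup, which in particular means the homomorphism $\Gamma \twoheadrightarrow \Gamma^{\mathrm{ab}}$ (to a finite, hence profinite, group) is continuous for the congruence topology. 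By the universal property of profinite completion, it extends uniquely to a continuous surjection $q\colon \hat{\Gamma} \twoheadrightarrow \Gamma^{\mathrm{ab}}$.

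Setting $K = \ker(q)$, I would next observe that $K$ is an open (hence closed) normal subgroup of $\hat{\Gamma}$ satisfying $K \cap \Gamma = [\Gamma,\Gamma]$ and $\hat{\Gamma}/K \simeq \Gamma^{\mathrm{ab}}$. Since $\hat{\Gamma}/K$ is abelian, $(\hat{\Gamma})^{\mathrm{der}} \subseteq K$. For the reverse inclusion, I would argue that $K = \overline{[\Gamma,\Gamma]}$: indeed, $\Gamma$ is dense in $\hat{\Gamma}$, so since $K$ is open, $K \cap \Gamma = [\Gamma,\Gamma]$ is dense in $K$. Combined with the fact that $(\hat{\Gamma})^{\mathrm{der}}$ is closed and contains $[\Gamma,\Gamma]$, this yields $K = \overline{[\Gamma,\Gamma]} \subseteq (\hat{\Gamma})^{\mathrm{der}}$.

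Hence $K = (\hat{\Gamma})^{\mathrm{der}}$, and the natural map
\[
\Gamma^{\mathrm{ab}} = \Gamma/[\Gamma,\Gamma] \longrightarrow \hat{\Gamma}/(\hat{\Gamma})^{\mathrm{der}} = (\hat{\Gamma})^{\mathrm{ab}}
\]
is the isomorphism $\Gamma^{\mathrm{ab}} \xrightarrow{\sim} \hat{\Gamma}/K$ given by the inverse of $q$; in particular both sides are finite of the same order. I do not anticipate a genuine obstacle: the argument is a standard consequence of the congruence subgroup property combined with finiteness of $\Gamma^{\mathrm{ab}}$. The only step requiring minor care is verifying that $\Gamma \to \Gamma^{\mathrm{ab}}$ factors through $\hat{\Gamma}$, which is precisely where the congruence subgroup property (applied to $[\Gamma,\Gamma]$) enters.
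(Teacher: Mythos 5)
Your argument is correct. It is essentially the paper's proof written out by hand: the paper simply cites the general identity $\widehat{\Gamma^{\mathrm{ab}}}\simeq(\hat{\Gamma})^{\mathrm{ab}}$ together with the finiteness of $\Gamma^{\mathrm{ab}}$ (Proposition \ref{proposition: abelianization is finite}), whereas you verify this identity directly in the finite case by constructing the continuous surjection $q\colon\hat{\Gamma}\to\Gamma^{\mathrm{ab}}$ and showing $\ker(q)=\overline{[\Gamma,\Gamma]}=(\hat{\Gamma})^{\mathrm{der}}$; the density and openness arguments you give are sound. The one point worth noting is that your appeal to the congruence subgroup property is superfluous for the lemma as stated: in the paper $\hat{\Gamma}$ denotes the full profinite completion, so any homomorphism from $\Gamma$ to a finite group extends continuously by the universal property, with no continuity issue to check. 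Proposition \ref{proposition: every finite index subgroup is congruence} is used in the paper only to identify $\hat{\Gamma}$ with the congruence completion $\varprojlim_n\Gamma_n$, which is a separate step from this lemma; your version would be the right argument if one instead took $\hat{\Gamma}$ to mean that congruence completion from the outset.
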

\begin{proof}
    This follows from the identity $\widehat{\Gamma^{\mathrm{ab}}} \simeq (\hat{\Gamma})^{\mathrm{ab}}$ and the fact that $\Gamma^{\mathrm{ab}}$ is finite by Proposition \ref{proposition: abelianization is finite}.
\end{proof}

\begin{lemma}\label{lemma: abelianizations symplectic groups}
    Let $g\geq 2$ be an integer and $p$ a prime.
    \begin{enumerate}
        \item $\Sp_{2g}(\Z)^{\mathrm{ab}}$ is trivial when $g\geq 3$.
        \item If $g\geq 3$ or $p\geq 3$, then $\Sp_{2g}(\Z_p)^{\mathrm{ab}}$ is trivial.
        \item If $R = \Z$ or $\Z_p$, let $\Sp_{2g}(R,p) = \ker(\Sp_{2g}(R)\rightarrow \Sp_{2g}(R/pR))$.
        If $p$ is odd, then $\Sp_{2g}(\Z,p)^{\mathrm{ab}}\simeq \Sp_{2g}(\Z_p,p)^{\mathrm{ab}} \simeq (\Z/p\Z)^{2g^2+g}$.
    \end{enumerate}
\end{lemma}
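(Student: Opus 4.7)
My strategy is: prove (2) first by classical transvection arguments; deduce (1) from (2) via the congruence-completion machinery in Lemma \ref{lemma: completion of abelianization} and Proposition \ref{proposition: every finite index subgroup is congruence}; and finally prove (3) using the same completion trick to reduce to the $p$-adic factor, then compute $\Sp_{2g}(\Z_p,p)^{\mathrm{ab}}$ by $p$-adic Lie theory.

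For (2), the group $\Sp_{2g}(\Z_p)$ is topologically generated by the root-subgroup transvections $x_\alpha(t)$, one for each root $\alpha$ of the $C_g$ root system and each $t\in \Z_p$, so it suffices to express each $x_\alpha(t)$ as a product of commutators. When $g\geq 3$ the rank is large enough that every root $\alpha$ can be written $\alpha = \beta+\gamma$ with $\beta,\gamma$ roots, whence the Chevalley commutator relation $[x_\beta(1),x_\gamma(t)]=x_\alpha(\pm t)$ does the job. When instead $p\geq 3$, I would use the torus identity $[h_\alpha(u),x_\alpha(t)] = x_\alpha((u^2-1)t)$ with $u\in \Z_p^\times$ chosen so that $u^2-1\in \Z_p^\times$ (say $u=2$ when $p\neq 3$, $u=4$ when $p=3$), which handles all roots directly. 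For (1), Lemma \ref{lemma: completion of abelianization} together with the congruence subgroup property gives $\Sp_{2g}(\Z)^{\mathrm{ab}} \simeq \Sp_{2g}(\hat\Z)^{\mathrm{ab}} = \prod_\ell \Sp_{2g}(\Z_\ell)^{\mathrm{ab}}$, each factor trivial by (2) when $g\geq 3$.

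For (3), the same completion argument yields $\Sp_{2g}(\Z,p)^{\mathrm{ab}} \simeq \prod_\ell \Sp_{2g}(\Z_\ell,p)^{\mathrm{ab}}$. For $\ell\neq p$, the element $p$ is a unit in $\Z_\ell$, so $\Sp_{2g}(\Z_\ell,p)=\Sp_{2g}(\Z_\ell)$ is perfect by (2), leaving only the $p$-adic factor. The map $I+pX\mapsto X \bmod p$ is a surjective group homomorphism $\Sp_{2g}(\Z_p,p)\twoheadrightarrow \mathfrak{sp}_{2g}(\F_p)$ with kernel $\Sp_{2g}(\Z_p,p^2)$, so my task reduces to showing that $\Sp_{2g}(\Z_p,p^2)$ is contained in the closed commutator subgroup of $\Sp_{2g}(\Z_p,p)$. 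I would prove by induction on $n\geq 1$ the stronger statement
\[
[\Sp_{2g}(\Z_p,p),\, \Sp_{2g}(\Z_p,p^n)] = \Sp_{2g}(\Z_p,p^{n+1}),
\]
using the commutator expansion $[I+pX,I+p^n Y] \equiv I+p^{n+1}[X,Y] \pmod{p^{n+2}}$ and the perfectness of the Lie algebra $\mathfrak{sp}_{2g}(\F_p)$ (which is simple for $p$ odd, since $2$ is the only bad prime in type $C_g$), combined with $p$-adic successive approximation. This yields $\Sp_{2g}(\Z_p,p)^{\mathrm{ab}}\simeq \mathfrak{sp}_{2g}(\F_p)$, of $\F_p$-dimension $g(2g+1)=2g^2+g$.

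The main obstacle is the rigorous translation from Lie-algebraic perfectness to the closed-commutator-subgroup identification in the pro-$p$ group; the hypothesis $p$ odd is essential, because only then is $\Sp_{2g}(\Z_p,p)$ a torsion-free (in fact uniform) pro-$p$ group, and only then does the $p$-adic logarithm give a bijection with $p\mathfrak{sp}_{2g}(\Z_p)$ under which the lower central series of the group matches that of the Lie algebra. The natural reference framework is the theory of uniform pro-$p$ groups of Dixon--du Sautoy--Mann--Segal, which turns the inductive step above into a routine consequence of the Baker--Campbell--Hausdorff formula.
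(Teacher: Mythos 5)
Your route is genuinely different from the paper's: the paper disposes of (1) and (2) by citing \cite{LandesmanSwaminathanetal-liftingsubgroups} and proves (3) by combining Sato's computation \cite{sato-abelianizationlevelD} with the congruence-completion argument of Lemma \ref{lemma: completion of abelianization}, whereas you make (1) and (2) self-contained via root-subgroup generation and replace Sato by a uniform pro-$p$/Lie-algebra computation of $\Sp_{2g}(\Z_p,p)^{\mathrm{ab}}\simeq\mathfrak{sp}_{2g}(\F_p)$. The skeleton of (1) and of the local half of (3) is fine: deducing (1) from (2) through the congruence subgroup property, and the inductive identification $[\Sp_{2g}(\Z_p,p),\Sp_{2g}(\Z_p,p^n)]=\Sp_{2g}(\Z_p,p^{n+1})$ for $p$ odd, are both standard and correct.

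However, your proof of (2) has concrete gaps at exactly the delicate primes. First, $p=3$: every unit of $\Z_3$ is $\equiv\pm1\bmod 3$, so there is \emph{no} $u\in\Z_3^{\times}$ with $u^2-1\in\Z_3^{\times}$ (your choice $u=4$ gives $u^2-1=15\equiv 0\bmod 3$), so the identity $[h_\alpha(u),x_\alpha(t)]=x_\alpha((u^2-1)t)$ never produces a unit multiple at $p=3$; moreover, for a long root $\alpha=2e_i$ of $C_g$ every torus character value is the square of a unit, so $\alpha(h)-1\equiv 0\bmod 3$ for \emph{every} torus element $h$, and long-root elements cannot be reached by torus conjugation at all. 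Second, $p=2$, $g\geq 3$: a long root can only be written as a sum of two \emph{short} roots, and there the Chevalley structure constant is $\pm 2$, so $[x_\beta(1),x_\gamma(t)]=x_\alpha(\pm 2t)$; this only puts $x_\alpha(2\Z_2)$ into the closed derived subgroup, not the full long root subgroup. Both defects are repairable by invoking the mixed relation $[x_{\mathrm{long}}(u),x_{\mathrm{short}}(v)]=x_{\mathrm{short}}(\pm uv)\,x_{\mathrm{long}}(\pm uv^{2})$, whose constants are $\pm 1$: once short-root elements are known to be commutators (via a third index when $g\geq 3$, or at $(g,p)=(2,3)$ by conjugating against a long coroot, where the pairing is $\pm1$, with $u\equiv -1\bmod 3$), this relation delivers the long-root elements; but as written your argument covers neither $p=2$ (long roots) nor $p=3$. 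Finally, in (3) your disposal of the factors $\Sp_{2g}(\Z_\ell)$ for $\ell\neq p$ uses their perfectness, which at $\ell=2$ requires $g\geq 3$; note that for $g=2$ the factor $\Sp_4(\Z_2)^{\mathrm{ab}}$ surjects onto $\Sp_4(\F_2)^{\mathrm{ab}}\simeq \Z/2\Z$, so the global isomorphism in (3) genuinely needs $g\geq 3$ (which is all the paper ever uses); you should state this restriction explicitly rather than leave it implicit.
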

\begin{proof}
    Parts 1 and 2 are classical, see for example \cite[Lemma 1 and Proposition 1]{LandesmanSwaminathanetal-liftingsubgroups}.
    The third part follows from \cite[Proposition 10.1 and Corollary 10.2]{sato-abelianizationlevelD}, the fact that $\widehat{\Sp_{2g}(\Z,p)} \simeq \Sp_{2g}(\Z_p, p) \times \prod_{\ell\neq p} \Sp_{2g}(\Z_{\ell})$ and Lemma \ref{lemma: completion of abelianization}.
\end{proof}

\subsection{Moduli of polarized abelian varieties}\label{subsec: moduli of polarised AVs}

Let $D = (d_1, \dots, d_g)$ be a type of length $g$.
Let $\mathcal{A}_D\rightarrow \Spec(\mathbb{C})$ be the moduli stack of $g$-dimensional polarized abelian varieties of type $D$ \cite[Section 1]{deJong-modulispacepolarized}.
It is classical (see \cite[Theorem 2.1.11]{olsson-survey} or \cite[Chapter 7, Proposition 7.9 and Theorem 7.10]{MumfordFogartyKirwan-GIT}) that $\mathcal{A}_D$ is a smooth Deligne--Mumford stack whose coarse space is a quasi-projective variety.
%Also need that $\mathcal{A}_D\hookrightarrow \mathcal{A}_{g,\#D}$ is representable by open and closed immersions.
Let $\mathcal{A}_{\Gamma(D)}\rightarrow \Spec(\mathbb{C})$ be the moduli stack of triples $(A,\lambda, \alpha)$, where $(A, \lambda)$ is a polarized abelian variety of type $D$ and $\alpha$ is an isomorphism of symplectic modules $(A[\lambda], e_{\lambda})\xrightarrow{\sim} (M_D, e_D)$. 
Then $\mathcal{A}_{\Gamma(D)}\rightarrow \mathcal{A}_D$ is a torsor under the group $\Sp(M_D)$, so is a smooth Deligne--Mumford stack too. 
If $d_1\geq 3$, then objects of $\mathcal{A}_{\Gamma(D)}$ have no nontrivial automorphisms (see \cite[Corollary 5.1.10]{BirkenhakeLange-AVs}) and $\mathcal{A}_{\Gamma(D)}$ is representable by a smooth quasi-projective variety over $\mathbb{C}$.
%Since we are over C, the claim of no nontrivial automorphisms reduces to the fact that if L is a lattice and g is a polarized automorphism of C^g/L that acts trivially on L/3L, then g is the identity. Indeed, since g preserves a polarization, g is of finite order. But it is a standard fact that every element of finite order of Z^g that acts trivially on Z^g/nZ^g (n\geq 3) is in fact trivial.

Let $\Gamma \subset \pargroup{g}{D}(\Z)$ be subgroup containing $\Gamma(nD)$ for some $n$ with $nd_1\geq 3$. 
Then the finite group $\Gamma/\Gamma(nD)$ acts on the variety $\mathcal{A}_{\Gamma(nD)}$; we denote the corresponding quotient stack by $\mathcal{A}_{\Gamma}$. 
If $\Gamma$ is torsion-free (i.e., every element of finite order equals the identity) then this $\Gamma/\Gamma(nD)$-action is free, and $\mathcal{A}_{\Gamma}$ is again a smooth variety. 
It carries a universal abelian scheme $A^{\mathrm{univ}}_{\Gamma}\rightarrow \mathcal{A}_{\Gamma}$ equipped with a polarization of type $D$.

Let $\mathfrak{h}_g = \{Z\in \mathrm{Mat}_{g\times g}(\mathbb{C})\colon Z^t = Z\text{ and }\mathrm{Im}(Z) \text{ is positive definite}\}$ be the Siegel upper-half space of genus $g$.
The group $\pargroup{g}{D}(\mathbb{R})$ acts on $\mathfrak{h}_g$ via the association 
\[
g\cdot Z =  (\alpha Z + \beta)(\gamma Z + \delta)^{-1}  \quad \text{ if } 
g = \begin{pmatrix} \alpha & \beta \\ \gamma & \delta \end{pmatrix}\in \pargroup{g}{D}(\Q), \, Z\in \frak{h}_g
\]
Let $\mathcal{A}_{\Gamma}^{\mathrm{an}}$ be the analytification of $\mathcal{A}_{\Gamma}$. 
To every $Z\in \frak{h}_g$, we can associate a polarized abelian variety $(A_Z, \lambda_Z)$ with a symplectic basis (see \cite[Section 8.1]{BirkenhakeLange-AVs}), and this association defines an isomorphism of orbifolds
\[
\mathcal{A}_{\Gamma}^{\mathrm{an}} \simeq [\Gamma \backslash \frak{h}_g].
\]
If $\Gamma$ is torsion-free, then $\mathcal{A}_{\Gamma}^{\mathrm{an}}$ is a connected complex manifold whose universal cover $\frak{h}_g$ is contractible.
Therefore, $\mathcal{A}_{\Gamma}^{\mathrm{an}}$ is a classifying space for its fundamental group $\Gamma$ (equivalently, a $K(\Gamma,1)$ space).
Consequently, if $\sh{F}$ is a locally constant sheaf of finite commutative groups on the \'etale site of $\mathcal{A}_{\Gamma}$, then there are canonical isomorphisms
\begin{align}\label{equation: isos etale betti group cohomology}
    \HH^2(\mathcal{A}_{\Gamma},\sh{F}) \simeq \HH^2(\mathcal{A}_{\Gamma}^{\mathrm{an}}, \sh{F}^{\mathrm{an}}) \simeq \HH^2(\Gamma, M_{\sh{F}}),
\end{align}
where $\HH^2$ denotes \'etale, singular and group cohomology respectively, $\sh{F}^{\mathrm{an}}$ denotes the analytification of $\sh{F}$, and $M_{\sh{F}}$ denotes the $\Gamma$-representation corresponding to the local system $\sh{F}$ on $\mathcal{A}_{\Gamma}^{\mathrm{an}}$.
The first isomorphism is the comparison isomorphism between \'etale and singular cohomology \cite[Expos\'e IX, Th\'eor\`eme 4.4]{SGA4Tome3}; the second isomorphism is the comparison isomorphism between the singular cohomology of a classifying space and the group cohomology of its fundamental group \cite[Section III.1, p.\, 59]{Brown-cohomologyofgroups}.

%Alternatively in this section, we can invoke the result of Baily--Borel, namely that a Shimura variety has an algebraic structure such that for every representation of $\Sp_{2g}^D$, we obtain a polarized variation of Hodges structures. A result of Deligne (see his K3 paper) shows that every such variation of level 1 is uniquely the cohomology of an abelian scheme. So we obtain a complex variety with a polarized abelian scheme over it, which is all we need.

\subsection{Twisted Chow groups and negligible \'etale cohomology}\label{subsec: negligible etale cohomology}

Let $X$ be a smooth and geometrically integral separated scheme of finite type over a field $k$ with function field $k(X)$.
Let $\sh{F}$ be a locally constant \'etale sheaf of finite commutative groups which is killed by a positive integer $n$ that is invertible in $k$.
\begin{definition}\label{definition: negligible etale cohomology}
    Say a class $c\in \HH^i(X, \sh{F})$ is \emph{negligible} if for every field extension $K/k$ and $k$-morphism $f\colon \Spec(K) \rightarrow X$, $f^*(c) =0$ in $\HH^i(\Spec(K), f^*\sh{F})$.
\end{definition}
(Compare this definition with the one made in \S\ref{subsec: symplectic modules not admitting theta groups} in the context of group cohomology; morally this definition coincides with the group cohomology one when ``$X = BG$''.)
Write $\HH^i(X, \sh{F})_{\mathrm{neg}}$ for the subgroup of negligible classes.
Reinterpreting and generalizing the computation of Merkurjev--Scavia in group cohomology \cite{MerkurjevScavia-negligiblecohomology}, Totaro \cite{Totaro-Chowringtwistedcoefficients} has determined $\HH^2(X, \sh{F})_{\mathrm{neg}}$ when $\sh{F}$ has finite monodromy.
He states this (and much more) in the language of twisted Chow groups; we extract the concrete statement that we need for our purposes.

Let $G$ be a finite group and $\pi\colon Y\rightarrow X$ a $G$-torsor with $Y$ connected. 
Suppose that $\pi^*\sh{F}$ is constant and write $M$ for the corresponding finite abelian group with $G$-action.
For each $m\in M$, let $G_m = \Stab_G(m)$ and consider the quotient $Y_m = Y/G_m$ and the factorization $Y\rightarrow Y_m \xrightarrow{\pi_m} X$ of $\pi$. 
Define the composition 
\begin{align}\label{equation: definition phi_m etale case}
    \varphi_m\colon \Pic(Y_m)\otimes \Z/n\Z \rightarrow \HH^2(Y_m, \mu_n) \rightarrow
    \HH^2(Y_m, \pi_m^*\sh{F}(1)) \rightarrow \HH^2(X, \sh{F}(1)),
\end{align}
where:
\begin{itemize}
    \item $\Pic(Y_m)\otimes \Z/n\Z \rightarrow \HH^2(Y_m,\mu_n)$ is the connecting map coming from the Kummer sequence $1\rightarrow \mu_n\rightarrow \G_m\rightarrow \G_m\rightarrow 1$; 
    \item $\mu_n\rightarrow \pi_m^*\sh{F}(1) \coloneqq \pi_m^*\sh{F} \otimes_{\Z/n\Z} \mu_n$ is the twist of the map $\Z/n\Z\rightarrow \pi_m^*\sh{F}$ of etale sheaves on $Y_m$ corresponding to the $G_m$-module map $\Z/n\Z\rightarrow M$ sending $1$ to $m$; and
    \item $\HH^2(Y_m, \pi_m^*\sh{F}(1))\rightarrow \HH^2(Y,\sh{F}(1))$ is the transfer (or corestriction) map associated to the finite etale cover $\pi_m\colon Y_m\rightarrow X$.
\end{itemize}

\begin{theorem}[Totaro]\label{theorem: Totaro negligible version}
In the above notation, $\ker(\HH^2(X,\sh{F}(1))\rightarrow \HH^2(k(X), \sh{F}(1)))$ equals the subgroup generated by the images of $\varphi_m$, where $m$ ranges over the elements of $M$.
\end{theorem}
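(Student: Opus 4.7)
The plan is to derive this statement directly from Totaro's computation of twisted Chow groups in \cite{Totaro-Chowringtwistedcoefficients}, which is the equivariant / geometric generalization of the Merkurjev--Scavia theorem (Theorem \ref{theorem: merkurjev-scavia}) from the classifying space setting to an arbitrary smooth base. I will verify one inclusion by hand and invoke Totaro for the other.

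First I would check the straightforward direction, that each image of $\varphi_m$ is contained in $\ker(\HH^2(X,\sh{F}(1))\to \HH^2(k(X),\sh{F}(1)))$. For any $L\in\Pic(Y_m)$, its restriction to the generic point $\eta_{Y_m}=\Spec k(Y_m)$ vanishes because the Picard group of a field is trivial. Consequently the Kummer class of $L\bmod n$ vanishes in $\HH^2(\eta_{Y_m},\mu_n)$, and hence in $\HH^2(\eta_{Y_m},(\pi_m^*\sh{F})(1))$. Since corestriction is compatible with restriction to generic points (the pullback $Y_m\times_X\eta_X$ is the disjoint union of the spectra of the finite separable extensions $k(Y_m')/k(X)$ for the components $Y_m'$ of $Y_m$ lying above $\eta_X$), we conclude that $\varphi_m([L])$ maps to zero in $\HH^2(k(X),\sh{F}(1))$.

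For the reverse inclusion I would invoke Totaro's theorem. For smooth $X$, the coniveau / Bloch--Ogus spectral sequence for $\sh{F}(1)$ identifies $\ker(\HH^2(X,\sh{F}(1))\to\HH^2(k(X),\sh{F}(1)))$ with classes that are supported in codimension one, i.e.\ images of Gysin pushforwards from divisors. The content of Totaro's theorem is that, when $\sh{F}$ has trivial pullback along a $G$-torsor $\pi\colon Y\to X$, every such codimension-one contribution can be produced by the Kummer construction on a suitable intermediate cover $Y_m$: a prime divisor $D\subset X$ lifts to a $G$-stable divisor $\pi^{-1}(D)\subset Y$ whose $G$-orbits have stabilizers of the form $G_m$ (with $m\in M$ determined by the local monodromy of $\sh{F}$ along $D$), and the associated divisor on $Y_m=Y/G_m$ yields a class in $\Pic(Y_m)$ whose image under $\varphi_m$ is precisely the original Gysin contribution.

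The main obstacle is the careful translation between Totaro's framework of twisted equivariant Chow groups and their cycle class map and the étale-cohomological definition of $\varphi_m$ given here as a composition of a Kummer connecting map with a corestriction. Explicitly, one needs to match Totaro's generators for $\ker(\CH^1(X,\sh{F})/n\to \HH^2(k(X),\sh{F}(1)))$ with the classes $\varphi_m([L])$; this requires unpacking the orbit decomposition of $\pi^{-1}(D)\subset Y$ and identifying the residue of a Gysin class supported on an orbit with stabilizer $G_m$ with the Kummer class of the corresponding divisor on $Y_m$. Once this compatibility is verified, the equality of the two subgroups follows directly.
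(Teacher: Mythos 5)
Your proposal is correct and follows essentially the same route as the paper: both reduce the statement to Totaro's results (his Lemma 8.2 identifying the kernel of restriction to the generic point with the image of the cycle map on $\CH^1(X,\sh{F})$, and his Theorem 8.1 generating that Chow group by pushforwards from $\Pic(Y_m)$), with the identification of Totaro's generators with the classes $\varphi_m([L])$ left as an unwinding of definitions, exactly as in the paper. Your separate by-hand check of the easy inclusion (triviality of $\Pic$ of a field plus compatibility of corestriction with restriction to the generic point) is a harmless minor addition, since that inclusion already follows from the cited Lemma 8.2.
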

\begin{proof}
    Totaro defines the twisted Chow group $\CH^1(X, \sh{F})$ and cycle map $\mathrm{cl}_{X, \sh{F}}\colon\CH^1(X, \sh{F})\rightarrow \HH^2(X, \sh{F}(1))$ whose image equals the kernel of $\HH^2(X, \sh{F}(1)) \rightarrow \HH^2(k(X), \sh{F}(1))$, see \cite[Lemma 8.2]{Totaro-Chowringtwistedcoefficients}
    On the other hand, he shows \cite[Theorem 8.1]{Totaro-Chowringtwistedcoefficients} that $\CH^1(X, \sh{F})$ is generated by the images of maps $\psi_m\colon \Pic(Y_m) =\CH^1(Y_m) \rightarrow \CH^1(Y_m, \sh{F}) \rightarrow \CH^1(Y,\sh{F})$, as $m$ ranges over $M$. 
    Unwinding the definition of these maps shows that $\mathrm{cl}_{X, \sh{F}}\circ \psi_m = \varphi_m$.
\end{proof}

\begin{corollary}
    In the above notation, the following are equivalent for a class $c\in \HH^2(X, \sh{F}(1))$:
    \begin{enumerate}
        \item $c$ lies in the subgroup generated by the images of $\varphi_m$, where $m$ ranges over the elements of $M$;
        \item $c$ is negligible in the sense of Definition \ref{definition: negligible etale cohomology};
        \item if $\eta\colon \Spec( k(X)) \rightarrow X$ denotes the generic point, then $\eta^*c=0$.
    \end{enumerate}
\end{corollary}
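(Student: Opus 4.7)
The plan is to deduce all three equivalences from Theorem~\ref{theorem: Totaro negligible version} combined with the observation that every class in the image of each $\varphi_m$ is individually negligible. The implication $(2) \Rightarrow (3)$ is immediate: apply Definition~\ref{definition: negligible etale cohomology} to the generic point $\eta \colon \Spec k(X) \to X$, which is a $k$-morphism from the spectrum of the field extension $k(X)/k$. The implication $(3) \Rightarrow (1)$ is Theorem~\ref{theorem: Totaro negligible version} verbatim, since that theorem identifies the kernel of restriction to the generic point with the subgroup generated by the images of the $\varphi_m$.

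The substantive implication is $(1) \Rightarrow (2)$, which reduces to showing that for every $m \in M$ and every $\ell \in \Pic(Y_m)$, the class $\varphi_m(\ell)$ is negligible. Fix a $k$-morphism $f \colon \Spec K \to X$ and form the Cartesian square
\[
\begin{tikzcd}
Y_{m,K} \ar[r, "g"] \ar[d, "p"'] & Y_m \ar[d, "\pi_m"] \\
\Spec K \ar[r, "f"] & X
\end{tikzcd}
\]
Since $\pi_m$ is finite \'etale, so is $p$, and $Y_{m,K} \simeq \coprod_i \Spec K_i$ is a finite disjoint union of spectra of finite separable extensions $K_i/K$. Write $\widetilde{\varphi}_m(\ell) \in \HH^2(Y_m, \pi_m^*\sh{F}(1))$ for the class obtained from $\ell$ via the first two maps in \eqref{equation: definition phi_m etale case}, so that $\varphi_m(\ell)$ is its image under transfer along $\pi_m$. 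By the base change formula for transfer along a finite \'etale morphism,
\[
f^* \varphi_m(\ell) \;=\; \sum_i \mathrm{cor}_{K_i/K}\!\left(g_i^* \widetilde{\varphi}_m(\ell)\right),
\]
where $g_i \colon \Spec K_i \to Y_m$ is the restriction of $g$ to the $i$-th component. But $g_i^* \ell$ lies in $\Pic(\Spec K_i) = 0$, and functoriality of the Kummer connecting homomorphism forces $g_i^* \widetilde{\varphi}_m(\ell) = 0$ in $\HH^2(\Spec K_i, g_i^* \pi_m^*\sh{F}(1))$. Summing over $i$ yields $f^* \varphi_m(\ell) = 0$, establishing negligibility.

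The only nontrivial input beyond Theorem~\ref{theorem: Totaro negligible version} is the base change formula for the transfer along a finite \'etale morphism, which is a standard property of \'etale cohomology with torsion coefficients invertible on the base; this is the one place one must be slightly careful to check that the coefficient sheaf is correctly pulled back through the square. All other steps are formal consequences of functoriality, so the main obstacle is minimal --- the corollary is essentially a packaging of Theorem~\ref{theorem: Totaro negligible version} together with the triviality of $\Pic$ of a field.
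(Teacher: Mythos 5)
Your proposal is correct and follows essentially the same route as the paper: the equivalence of (1) and (3) is Theorem \ref{theorem: Totaro negligible version}, $(2)\Rightarrow(3)$ is immediate from the definition, and the real content is $(1)\Rightarrow(2)$, which both you and the paper establish by pulling back $\pi_m\colon Y_m\to X$ along $f\colon \Spec(K)\to X$, using compatibility of transfer with base change along a finite \'etale morphism, and noting that the Picard group of the resulting \'etale $K$-algebra vanishes. Your explicit decomposition into components $\Spec K_i$ is just a slightly more detailed phrasing of the paper's argument with the \'etale algebra $\Spec(L)$.
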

\begin{proof}
    The equivalence $(1)\Leftrightarrow (3)$ is Theorem \ref{theorem: Totaro negligible version}, and the implication $(2) \Rightarrow (3)$ follows from the definition.
    It remains to show $(1) \Rightarrow (2)$.
    To this end, it suffices to show for each $m\in M$ that the image of $\varphi_m$ lands in $\HH^2(X, \sh{F}(1))_{\mathrm{neg}}$. 
    Fix such an $m$, let $\ell \in \Pic(Y_m)$ and let $f\colon \Spec(K)\rightarrow X$ be a $k$-morphism for some field $K/k$.
    The pullback of $\pi_m\colon Y_m \rightarrow X$ along $f$ is an \'etale $K$-algebra $\Spec(L) \rightarrow \Spec(K)$ and comes equipped with a morphism $f_m\colon \Spec(L)\rightarrow Y_m$ lifting $f$.
    By compatibility between restriction and corestriction, $f^*\varphi_m(\ell)$ equals the image of $f_m^*\ell \in \Pic(\Spec(L))$ under a map $\Pic(\Spec(L))\otimes \Z/n\Z\rightarrow \HH^2(\Spec(K), \sh{F}(1))$. 
    Since $\Pic(\Spec(L)) = 0$, we conclude that $f^*\varphi_m(\ell)=0$, as desired.
\end{proof}

We now apply these considerations to our situation of interest.
Let $D = (d_1, \dots, d_g)$ be a type and let $\Gamma\leq \pargroup{g}{D}(\Z)$ be a torsion-free arithmetic subgroup.
Let $\mathcal{A}_{\Gamma}\rightarrow \Spec(\mathbb{C})$ be the corresponding Siegel modular variety constructed in Section \ref{subsec: moduli of polarised AVs}.
Assume that the restriction of the reduction map $\mathrm{red}_{D}\colon \pargroup{g}{D}(\Z)\rightarrow \Sp(M_D)$ to $\Gamma$ is surjective.
Restricting the sequence \eqref{equation: exact sequence paramodular group and reduction} to $\Gamma$, we obtain the exact sequence
\begin{align}\label{equation: exact sequence Gamma reduction}
1\rightarrow \Gamma\cap \Gamma(D) \rightarrow \Gamma \xrightarrow{\mathrm{red}_{\Gamma}}\Sp(M_D)\rightarrow 1.
\end{align}
Recall that $c_D\in \HH^2(\Sp(M_D), M_D)$ denotes the class of the extension \eqref{equation: fundamental exact sequence type D abstract groups}; let $c_{D,\Gamma}\in \HH^2(\Gamma,M_D)$ denote the pullback of $c_D$ along $\mathrm{red}_{\Gamma}\colon \Gamma\rightarrow \Sp(M_D)$. 
View $M_D$ as a locally constant \'etale sheaf on $\mathcal{A}_{\Gamma}$, trivialized along the $\Sp(M_D)$-cover $\mathcal{A}_{\Gamma\cap \Gamma(D)}\rightarrow \mathcal{A}_{\Gamma}$ and with monodromy given by the $\Sp(M_D)$-action on $M_D$.
Under the comparison isomorphisms \eqref{equation: isos etale betti group cohomology}, $c_{D,\Gamma}$ corresponds to a class $c_{D, \Gamma}^{\mathrm{et}}\in \HH^2(\mathcal{A}_{\Gamma}, M_D)$. 

Fix an element $m\in M_D$.
Let $\Gamma_m\subset \Gamma$ be the stabilizer of $m$; this is the preimage of $\Stab_{\Sp(M_D)}(m)$ under the reduction map $\Gamma\rightarrow \Sp(M_D)$. 
Consider the composition 
\begin{align}\label{equation: definition phi_m Gamma case}
    \varphi_{\Gamma,m}\colon \HH^2(\Gamma_m, \Z)\rightarrow \HH^2(\Gamma_m,M_D)\rightarrow \HH^2(\Gamma,M_D),
\end{align}
where the first map is induced by the $\Gamma_m$-module map $\Z\rightarrow M_D$ sending $1$ to $m$, and the second map is corestriction.
Denote the generic fiber of the universal abelian scheme $A_{\Gamma}^{\mathrm{univ}} \rightarrow \mathcal{A}_{\Gamma}$ by $A^{\mathrm{gen}}_{\Gamma}$; it is an abelian variety over the function field $\mathbb{C}(\mathcal{A}_{\Gamma})$ equipped with a polarization $\lambda$ of type $D$.

\begin{proposition}\label{proposition: reduce etale negligible to group cohomology calculation}
    Suppose that $c_{D,\Gamma}$ does not lie in the subgroup generated by the images of $\varphi_{\Gamma,m}$, where $m$ ranges over the elements of $M_D$.
    Then $c_{D, \Gamma}^{\mathrm{et}}\in \HH^2(\mathcal{A}_{\Gamma}, M_D)$ is not negligible and the answer to Question \ref{question: main question weaker variant} is no for the pair $(A_{\Gamma}^{\mathrm{gen}}, \lambda)$ over $\mathbb{C}(\mathcal{A}_{\Gamma})$. 
\end{proposition}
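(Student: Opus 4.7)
The plan is to deduce both conclusions from a single non-negligibility claim for $c_{D, \Gamma}^{\mathrm{et}}$, which I would establish by transferring the hypothesis from group cohomology to \'etale cohomology via Totaro's criterion.

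First I would show that $c_{D, \Gamma}^{\mathrm{et}}$ is not negligible, arguing by contradiction. Apply the corollary following Theorem \ref{theorem: Totaro negligible version} to $X = \mathcal{A}_{\Gamma}$ and the $\Sp(M_D)$-torsor $Y = \mathcal{A}_{\Gamma \cap \Gamma(D)} \to X$, whose existence uses the surjectivity of $\mathrm{red}_{\Gamma}$ and the fact that pulling back $M_D$ (as an \'etale sheaf with the specified monodromy) to $Y$ trivializes it. If $c_{D, \Gamma}^{\mathrm{et}}$ were negligible, then it would lie in the subgroup of $\HH^2(\mathcal{A}_{\Gamma}, M_D(1))$ generated by the images of the \'etale maps $\varphi_m$ of \eqref{equation: definition phi_m etale case}, as $m$ ranges over $M_D$.

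Next I would translate this to group cohomology using the comparison isomorphisms \eqref{equation: isos etale betti group cohomology} applied both to $\mathcal{A}_{\Gamma}$ and to each intermediate cover $Y_m = \mathcal{A}_{\Gamma_m}$ (note that $Y_m$ has fundamental group $\Gamma_m$ because $\Gamma_m \to \Stab_{\Sp(M_D)}(m)$ is surjective with kernel $\Gamma \cap \Gamma(D)$). Since $\mathbb{C}$ contains all roots of unity, we may fix an identification $\mu_n \simeq \Z/n\Z$, and then the Kummer boundary $\Pic(\mathcal{A}_{\Gamma_m}) \otimes \Z/n\Z \to \HH^2(\mathcal{A}_{\Gamma_m}, \mu_n) \simeq \HH^2(\Gamma_m, \Z/n\Z)$ is the reduction mod $n$ of the topological first Chern class $c_1 \colon \Pic(\mathcal{A}_{\Gamma_m}) \to \HH^2(\Gamma_m, \Z)$. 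Combined with the compatibility of \'etale transfer and group-cohomological corestriction, this shows that $\mathrm{Im}(\varphi_m) \subseteq \mathrm{Im}(\varphi_{\Gamma, m})$ under the comparison. Consequently $c_{D, \Gamma}$ would lie in the subgroup generated by the images of $\varphi_{\Gamma, m}$, contradicting the hypothesis.

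Finally, the consequence for Question \ref{question: main question weaker variant} follows quickly. By construction of $c_{D, \Gamma}^{\mathrm{et}}$, its pullback along the generic point $\eta \colon \Spec(\mathbb{C}(\mathcal{A}_{\Gamma})) \to \mathcal{A}_{\Gamma}$ is $\rho^* c_D$, where $\rho\colon \Gal_{\mathbb{C}(\mathcal{A}_{\Gamma})} \to \Sp(M_D)$ classifies the symplectic module $(A^{\mathrm{gen}}_{\Gamma}[\lambda], e_{\lambda})$; this is because $\rho$ factors through $\pi_1^{\mathrm{et}}(\mathcal{A}_{\Gamma}) = \hat{\Gamma}$ and the induced map $\hat{\Gamma} \to \Sp(M_D)$ is the profinite completion of $\mathrm{red}_{\Gamma}$. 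By Theorem \ref{theorem: intro theta group condition} and Lemma \ref{lemma: theta group exists iff embedding problem has solution}, a positive answer to Question \ref{question: main question weaker variant} for $(A^{\mathrm{gen}}_{\Gamma}, \lambda)$ is equivalent to $\rho^* c_D = 0$, hence to $\eta^* c_{D, \Gamma}^{\mathrm{et}} = 0$. By the corollary following Theorem \ref{theorem: Totaro negligible version}, this vanishing is in turn equivalent to negligibility of $c_{D, \Gamma}^{\mathrm{et}}$, contradicting the first step. The main obstacle is the compatibility diagram in the second step: one must check that the \'etale maps $\varphi_m$ and the group-theoretic maps $\varphi_{\Gamma, m}$ are related through the comparison isomorphisms, which reduces to integrality of the Chern class in \'etale cohomology and the agreement of \'etale and group-theoretic transfer, both of which are standard but must be tracked carefully alongside the Tate twist.
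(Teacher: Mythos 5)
Your proposal is correct and follows essentially the same route as the paper: you compare the \'etale maps $\varphi_m$ with the group-cohomological maps $\varphi_{\Gamma,m}$ through the comparison isomorphisms (Chern class/Kummer and transfer compatibilities), invoke Totaro's theorem to relate negligibility to vanishing at the generic point, and conclude via Lemma \ref{lemma: theta group exists iff embedding problem has solution} and Theorem \ref{theorem: intro theta group condition}. The only difference is that you phrase the argument by contradiction where the paper argues directly, which is purely presentational.
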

\begin{proof}
    The module $M_D$ is killed by $n\coloneq d_g$.
    Since we are working over $\mathbb{C}$, we may identify $\mu_n$ with $\Z/n\Z$, hence $M_D(1)$ with $M_D$.
    Consider the Chern class map $c_1\colon \Pic(\mathcal{A}_{\Gamma_m})\rightarrow \HH^2(\mathcal{A}_{\Gamma_m}^{\mathrm{an}}, \Z)\simeq \HH^2(\Gamma_m, \Z)$ arising from the exponential sequence and the comparison isomorphism \eqref{equation: isos etale betti group cohomology}.
    The maps defining $\varphi_m$ of \eqref{equation: definition phi_m etale case} and $\varphi_{\Gamma,m}$ of \eqref{equation: definition phi_m Gamma case} fits into a commutative diagram:
    % https://q.uiver.app/#q=WzAsOCxbMCwwLCJcXFBpYyhcXG1hdGhjYWx7QX1fe1xcR2FtbWFfbX0pIl0sWzEsMCwiXFxISF4yKFxcbWF0aGNhbHtBfV97XFxHYW1tYV9tfSwgXFxaL25cXFopIl0sWzIsMCwiXFxISF4yKFxcbWF0aGNhbHtBfV97XFxHYW1tYV9tfSwgTV9EKSJdLFszLDAsIlxcSEheMihcXG1hdGhjYWx7QX1fe1xcR2FtbWF9LCBNX0QpIl0sWzAsMSwiXFxISF4yKFxcR2FtbWFfbSwgXFxaKSJdLFsxLDEsIlxcSEheMihcXEdhbW1hX20sIFxcWi9uXFxaKSJdLFsyLDEsIlxcSEheMihcXEdhbW1hX20sTV9EKSJdLFszLDEsIlxcSEheMihcXEdhbW1hLE1fRCkiXSxbMCw0LCJjXzEiXSxbMCwxXSxbNCw1XSxbMSwyXSxbNSw2XSxbMiwzXSxbNiw3XSxbMyw3LCJcXHNpbWVxIl0sWzIsNiwiXFxzaW1lcSJdLFsxLDUsIlxcc2ltZXEiXV0=
\[\begin{tikzcd}
	{\Pic(\mathcal{A}_{\Gamma_m})} & {\HH^2(\mathcal{A}_{\Gamma_m}, \Z/n\Z)} & {\HH^2(\mathcal{A}_{\Gamma_m}, M_D)} & {\HH^2(\mathcal{A}_{\Gamma}, M_D)} \\
	{\HH^2(\Gamma_m, \Z)} & {\HH^2(\Gamma_m, \Z/n\Z)} & {\HH^2(\Gamma_m,M_D)} & {\HH^2(\Gamma,M_D)}
	\arrow[from=1-1, to=1-2]
	\arrow["{c_1}", from=1-1, to=2-1]
	\arrow[from=1-2, to=1-3]
	\arrow["\simeq", from=1-2, to=2-2]
	\arrow[from=1-3, to=1-4]
	\arrow["\simeq", from=1-3, to=2-3]
	\arrow["\simeq", from=1-4, to=2-4]
	\arrow[from=2-1, to=2-2]
	\arrow[from=2-2, to=2-3]
	\arrow[from=2-3, to=2-4]
\end{tikzcd}\]
    The commutativity of this diagram follows from the fact that the comparison isomorphisms \eqref{equation: isos etale betti group cohomology} identify $c_1$ with the \'etale cycle class map and corestriction in \'etale cohomology with corestriction in group cohomology.

    It follows that the image of $\varphi_m$ in $\HH^2(\mathcal{A}_{\Gamma},M_D)\simeq \HH^2(\Gamma,M_D)$ is contained in the image of $\varphi_{\Gamma,m}$ for every $m\in M_D$. 
    The assumptions and Theorem \ref{theorem: Totaro negligible version} therefore imply that $c_{D, \Gamma}^{\mathrm{et}}$ is not negligible and that its image $c$ in $\HH^2(k, M_D)$ is nonzero, where $k=\mathbb{C}(\mathcal{A}_{\Gamma})$.
    By definition, $c$ is the pullback of the class $c_D\in \HH^2(\Sp(M_D), M_D)$ along the homomorphism $\rho\colon \Gal_k \rightarrow \Sp(M_D)$ corresponding to the symplectic module $(A_{\Gamma}^{\mathrm{gen}}[\lambda],e_{\lambda})$ over $k$. 
    The fact that $c$ is nonzero means (by Lemma \ref{lemma: theta group exists iff embedding problem has solution}) that there does not exist a theta group for $(A_{\Gamma}^{\mathrm{gen}}[\lambda],e_{\lambda})$.
    By Theorem \ref{theorem: intro theta group condition}, this implies that the answer to Question \ref{question: main question weaker variant} is no for $(A_{\Gamma}^{\mathrm{gen}}, \lambda)$.
\end{proof}

%\begin{remark}
%    Using Theorem 5.3 of \url{https://arxiv.org/pdf/alg-geom/9403015} and the vanishing of $\HH^1(\Gamma_m,\Q)$, one could probably show that $c_1$ is an isomorphism, so $\HH^2(\mathcal{A}_{\Gamma},M_D)_{\mathrm{neg}}$ actually equals the subgroup generated by the images of the $\varphi_{\Gamma,m}$. 
%\end{remark}

%\begin{remark}
%    It might be possible to drop the assumption that $\Gamma$ is torsion-free if one generalizes Theorem \ref{theorem: Totaro negligible version} to Deligne--Mumford stacks.
%\end{remark}

In the remainder of the paper, we will perform group theory computations to verify the assumptions of Proposition \ref{proposition: reduce etale negligible to group cohomology calculation} for various $D$ and $\Gamma$.

\subsection{Polarized abelian varieties not admitting theta groups}\label{subsec: polarized abelian varieties not admitting theta groups}

Let $n,k\geq 0$ be integers and $g=n+k$. 
Let $D = (1, \dots, 1, 2, \dots, 2)$, where $1$ occurs $n$ times and $2$ occurs $k$ times.
Fix a prime $p\geq 3$ and let $\Gamma = \ker(\pargroup{g}{D}(\Z)\rightarrow \pargroup{g}{D}(\Z/p\Z))$.
The arithmetic subgroup $\Gamma$ is torsion-free.
The restriction of the reduction map $\pargroup{g}{D}(\Z)\rightarrow \Sp(M_D)$ to $\Gamma$ is surjective, and we have an exact sequence
\[
1\rightarrow \Gamma(pD)\rightarrow \Gamma \rightarrow \Sp(M_D)\rightarrow 1
\]
\begin{proposition}\label{proposition: c_Gamma not in subgroup generated by phiGamma}
    Suppose that $n\geq 3$ and $k\geq 4$.
    Then the class $c_{D, \Gamma}\in \HH^2(\Gamma, M_D)$ does not lie in the subgroup generated by the images of $\varphi_{\Gamma, m}$ of \eqref{equation: definition phi_m Gamma case} where $m$ ranges over the elements of $M_D$.
\end{proposition}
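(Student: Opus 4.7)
My plan is to analyze the Hochschild--Serre spectral sequence for the extension \eqref{equation: exact sequence Gamma reduction} and to exploit that $M_D$ is $2$-torsion while the abelianizations of the relevant arithmetic congruence subgroups are $p$-torsion with $p$ odd.

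I first set up the key comparison. Since $\Gamma(pD)$ acts trivially on $M_D$ and, by (a paramodular analogue of) Lemma \ref{lemma: abelianizations symplectic groups}(3), $\Gamma(pD)^{\mathrm{ab}}$ is $p$-torsion, we have $\HH^1(\Gamma(pD), M_D) = \Hom(\Gamma(pD)^{\mathrm{ab}}, M_D) = 0$. The five-term exact sequence then furnishes an injection $\mathrm{inf}\colon \HH^2(\Sp(M_D), M_D) \hookrightarrow \HH^2(\Gamma, M_D)$ under which $c_{D,\Gamma}$ corresponds to the nonzero class $c_D$ of Proposition \ref{proposition: extension class nonzero type (2,..,2)}, with cokernel embedded in $\HH^2(\Gamma(pD), M_D)^{\Sp(M_D)}$.

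Next I control each $\varphi_{\Gamma,m}$. The case $m=0$ is trivial, so fix $m\neq 0$. The stabilizer $\Gamma_m$ fits in $1 \to \Gamma(pD) \to \Gamma_m \to G_m \to 1$, and Lemma \ref{lemma: abelianization of stabilizer Sp2g(F2) trivial} (applicable since $g \geq n+k \geq 7$) gives $G_m^{\mathrm{ab}} = 0$. Combined with the $p$-torsion of $\Gamma(pD)^{\mathrm{ab}}$, this forces $\Gamma_m^{\mathrm{ab}}$ to be $p$-torsion. Proposition \ref{proposition: borel computation} together with the (split) sequence \eqref{equation: structure H2 of Gamma} then present $\HH^2(\Gamma_m, \Z)$ as $\Z \oplus T$ with $T$ a $p$-torsion group. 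Because the $\Gamma_m$-equivariant map $\Z \to M_D$, $1 \mapsto m$, factors through $\Z/2$ and $p$ is odd, the induced map $\HH^2(\Gamma_m, \Z) \to \HH^2(\Gamma_m, M_D)$ annihilates $T$. The image of $\varphi_{\Gamma,m}$ is therefore generated by a single class $\beta_m$ obtained by corestricting to $\Gamma$ the cup product of a Borel/Hodge generator $e_m \in \HH^2(\Gamma_m, \Z)$ with $m$.

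To conclude, I detect the $\beta_m$ by restriction to $\Gamma(pD)$. Using that $\Gamma(pD)$ is normal in $\Gamma$ and contained in each $\Gamma_m$, a Mackey double-coset computation yields $\beta_m|_{\Gamma(pD)} = e \otimes \sigma(m)$ in $\HH^2(\Gamma(pD), \F_2) \otimes_{\F_2} M_D$, where $e$ is the (nonzero, $\Sp(M_D)$-invariant) mod-$2$ reduction of the Borel generator of $\HH^2(\Gamma(pD), \Z)$ and $\sigma(m) = \sum_{n \in \Sp(M_D) \cdot m} n$. Since $c_{D,\Gamma}$ restricts to zero on $\Gamma(pD)$, any $\F_2$-relation $c_{D,\Gamma} = \sum_i c_i \beta_{m_i}$ forces $\sum_i c_i \sigma(m_i) = 0$ in $M_D$. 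The heart of the argument, and where I expect the main obstacle to lie, is the remaining degenerate case where this combinatorial identity holds: then $\sum_i c_i \beta_{m_i}$ lies in the inflated subgroup $\HH^2(\Sp(M_D), M_D)$, and I must show the resulting inflated class cannot equal $c_D$. I plan to achieve this by combining the commutative square comparing $\varphi_m$ with $\varphi_{\Gamma,m}$ under inflation, the vanishing $\varphi_m = 0$ established in the proof of Proposition \ref{proposition: symplectic modules admitting no theta group}, and an explicit analysis of how Borel generators transform under corestriction --- identifying the inflated contribution with a Merkurjev--Scavia image, hence forcing it to vanish. Combined with Proposition \ref{proposition: reduce etale negligible to group cohomology calculation}, this yields Theorem \ref{theorem: intro negative result}.
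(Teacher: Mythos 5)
The decisive step of your argument is missing. Your detection device --- restricting to $\Gamma(pD)$ --- is vacuous: since $\Sp(M_D)$ acts transitively on $M_D\setminus\{0\}$ and $M_D$ has no nonzero invariants, $\sigma(m)=\sum_{n\in \Sp(M_D)\cdot m}n=0$ for \emph{every} $m$, so every $\beta_m$ (as well as $c_{D,\Gamma}$ itself) restricts to zero on $\Gamma(pD)$, and because $\HH^1(\Gamma(pD),M_D)=0$ this only tells you that everything in sight lies in the inflated copy of $\HH^2(\Sp(M_D),M_D)$. Thus your ``degenerate case'' is the only case, and precisely there you have no argument: the plan to ``identify the inflated contribution with a Merkurjev--Scavia image'' does not get off the ground, because the finite-group maps $\varphi_m$ are zero (as $\HH^2(G_m,\Z)=0$ by Lemma \ref{lemma: abelianization of stabilizer Sp2g(F2) trivial}), and there is no a priori comparison expressing the corestricted Borel class $\beta_m\in\HH^2(\Gamma,M_D)$ in terms of anything at the finite-group level. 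What is needed, and what your proposal does not supply, is a proof that $\beta_m$ cannot contribute to $c_D$ --- in fact that it vanishes.

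The paper closes exactly this hole by a different use of the same ingredients: it proves $\varphi_{\Gamma,m}=0$ outright. Your no-invariants observation is applied not to a restriction of $\beta_m$ but to the composite $\varphi_{\Gamma,m}\circ\mathrm{res}_m$, which is $\HH^2(\Gamma,-)$ of the coefficient map $\Z\to\Z[\Gamma/\Gamma_m]\to M_D$ sending $1$ to $\sigma(m)=0$; hence $\varphi_{\Gamma,m}$ factors through $\coker\bigl(\mathrm{res}_m\colon\HH^2(\Gamma,\Z)\to\HH^2(\Gamma_m,\Z)\bigr)$. That cokernel has odd order by Borel (Proposition \ref{proposition: borel computation}), the sequence \eqref{equation: structure H2 of Gamma}, the oddness of $\Gamma_m^{\mathrm{ab}}$, and the relation $\mathrm{cores}\circ\mathrm{res}=[\Gamma:\Gamma_m]=2^{2k}-1$ (Proposition \ref{proposition: stabilizer m in Gamma has odd abelianization and cokernel on H2 odd order}); since $M_D$ is killed by $2$, this gives $\varphi_{\Gamma,m}=0$, and $c_{D,\Gamma}\neq 0$ (Proposition \ref{proposition: extension cDGamma not zero}) finishes. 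You assembled all of these inputs (Borel, the odd index $2^{2k}-1$, odd abelianizations) but aimed them at the wrong map. A smaller point: your appeal to ``a paramodular analogue of Lemma \ref{lemma: abelianizations symplectic groups}(3)'' for the $p$-torsionness of $\Gamma(pD)^{\mathrm{ab}}$ conceals a genuinely nontrivial input, namely the triviality of the abelianization of the $2$-adic factor $\ker\bigl(\pargroup{g}{D}(\Z_2)\to\Sp(M_D)\bigr)$, which in the paper requires a separate appendix computation (Theorem \ref{theorem: abelianization level D subgroup over 2-adics is trivial}) valid for $n\geq 3$, $k\geq 2$.
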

The proof of this proposition is given at the end of this subsection, after some preparations.

\begin{proposition}\label{proposition: abelianization torsion-free congruence subgroup has odd order}
    If $n\geq 3$ and $k\geq 2$, then $\Gamma(pD)^{\mathrm{ab}} \otimes \Z/2\Z =0$.
\end{proposition}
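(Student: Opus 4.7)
My plan is to reduce the claim to a local computation at the prime $2$ via the congruence subgroup property, and then analyze the resulting pro-$2$-adic arithmetic group using a 5-term exact sequence.

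First, since $\Gamma(pD)^{\mathrm{ab}}$ is a finite abelian group by Proposition \ref{proposition: abelianization is finite}, $\Gamma(pD)^{\mathrm{ab}} \otimes \Z/2\Z$ is its $2$-primary component. By Lemma \ref{lemma: completion of abelianization} combined with the congruence subgroup property (Proposition \ref{proposition: every finite index subgroup is congruence}) and strong approximation, the profinite completion factors as $\widehat{\Gamma(pD)} \simeq \prod_\ell \Gamma(pD)_\ell$, where $\Gamma(pD)_\ell$ is the closure of $\Gamma(pD)$ in $\pargroup{g}{D}(\Z_\ell)$. For $\ell \neq 2, p$, this local factor is all of $\pargroup{g}{D}(\Z_\ell) \simeq \Sp_{2g}(\Z_\ell)$, with trivial abelianization by Lemma \ref{lemma: abelianizations symplectic groups}(2). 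For $\ell = p$, the local factor is $\Sp_{2g}(\Z_p, p)$ with abelianization a $p$-group by Lemma \ref{lemma: abelianizations symplectic groups}(3), hence with no $2$-torsion since $p$ is odd. Thus it suffices to show $K^{\mathrm{ab}} \otimes \Z/2\Z = 0$ where $K := \Gamma(pD)_2$.

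Next I will describe $K$ concretely. Since $p$ is odd, $M_{pD}[2^\infty]$ is canonically isomorphic to $M_D$ as a $\pargroup{g}{D}(\Z_2)$-module, so $K = \ker(\pargroup{g}{D}(\Z_2) \to \Sp(M_D))$. A block-matrix calculation in the basis $e_1,\ldots,e_g,f_1,\ldots,f_g$ of $\Lambda_D \otimes \Z_2$ shows that $K$ contains the principal level-$2$ congruence subgroup $\Gamma_1 := \pargroup{g}{D}(\Z_2, 2)$, and that the finite quotient $K/\Gamma_1$ is isomorphic to $\Sp_{2n}(\F_2) \ltimes V$, where $V$ is a $K/\Gamma_1$-module built from $2k$ copies of the standard representation of $\Sp_{2n}(\F_2)$. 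Applying the 5-term exact sequence in $\F_2$-homology to $1 \to \Gamma_1 \to K \to K/\Gamma_1 \to 1$ yields
\[
H_1(\Gamma_1, \F_2)_{K/\Gamma_1} \to H_1(K, \F_2) \to H_1(K/\Gamma_1, \F_2) \to 0.
\]

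The right-hand term vanishes: the hypothesis $n \geq 3$ ensures $\Sp_{2n}(\F_2)$ is perfect (appendix of \cite{Bensonetal-cohomologysymplecticgroups}), and the coinvariants of the standard representation of $\Sp_{2n}(\F_2)$ vanish for $n \geq 2$, so $(K/\Gamma_1)^{\mathrm{ab}} = 0$. For the left-hand term, since $\Gamma_1$ is pro-$2$, $H_1(\Gamma_1, \F_2)$ equals the Frattini quotient $\Gamma_1/\Phi(\Gamma_1)$, which admits a surjection from $\Gamma_1/\Gamma_2$ (where $\Gamma_2 := \pargroup{g}{D}(\Z_2, 4)$). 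Viewed as a $K/\Gamma_1$-module by conjugation, $\Gamma_1/\Gamma_2$ decomposes into an adjoint-type piece for $\Sp_{2n}(\F_2)$ (with vanishing coinvariants for $n \geq 3$), copies of the standard representation (with vanishing coinvariants for $n \geq 2$), and a remaining piece on which $\Sp_{2n}(\F_2)$ acts trivially; this last piece is handled by the conjugation action of the unipotent radical $V$ inside $K/\Gamma_1$, where the hypothesis $k \geq 2$ ensures enough non-degeneracy to kill the coinvariants.

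The main obstacle is the last step. A complication is that $\pargroup{g}{D}$ is not smooth over $\Z_2$ (since $d_g = 2$ makes $J_D$ degenerate mod $2$), so $\Gamma_1/\Gamma_2$ is larger than the expected symplectic Lie algebra and one must track the extra ``defect'' directions carefully when decomposing it as a $K/\Gamma_1$-module. An alternative approach would be to exhibit explicit topological generators of $\Gamma(pD)$ (paramodular transvections adapted to $D$) and show directly via commutator identities in the symplectic group that each generator lies in the closure of $[\Gamma(pD), \Gamma(pD)] \cdot \Gamma(pD)^2$.
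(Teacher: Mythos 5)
Your opening reduction is exactly the paper's: you pass to the profinite completion via Proposition \ref{proposition: every finite index subgroup is congruence} and Lemma \ref{lemma: completion of abelianization}, kill the factors at $\ell\neq 2,p$ and at $p$ using Lemma \ref{lemma: abelianizations symplectic groups}, and reduce to showing that $\Gamma(pD)_2=\ker(\pargroup{g}{D}(\Z_2)\to\Sp(M_D))$ has no $2$-torsion in its abelianization. But that $2$-adic statement is precisely where all the work lies — the paper devotes its entire appendix (Theorem \ref{theorem: abelianization level D subgroup over 2-adics is trivial}, proved via Lemmas \ref{lemma: block diagonal elements are in the commutator subgroup}--\ref{lemma: level D subgroup generated by L, U and Sp's} by exhibiting the block-diagonal subgroup $L$ and the unipotent subgroups $U$, $U^{\mathrm{opp}}$ inside the closed derived subgroup and proving they topologically generate, with an induction on $n$) — and your proposal does not actually carry it out. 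The decisive step in your route, namely that the coinvariants of $H_1(\Gamma_1,\F_2)$ under $K/\Gamma_1$ vanish, is asserted but not proved: you do not pin down the $K/\Gamma_1$-module structure of $\Gamma_1/\Gamma_2$ (you yourself note that non-smoothness of $\pargroup{g}{D}$ at $2$ creates extra "defect" directions), and the claim that the piece on which $\Sp_{2n}(\F_2)$ acts trivially is killed by the conjugation action of the unipotent part $V$ for $k\geq 2$ is exactly the kind of delicate $2$-adic computation that cannot be waved through; the paper's analogous computations (e.g.\ the $8$-divisibility phenomena visible in Lemma \ref{lemma: abelianization level subgroups symplectic group} and Sato's result) show that naive expectations at the prime $2$ fail.

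There are two further unverified assertions in the sketch. First, the identification $K/\Gamma_1\simeq \Sp_{2n}(\F_2)\ltimes V$ with $V$ a sum of $2k$ standard representations requires checking both that the mod-$2$ symplectic relations impose no constraints among the off-diagonal blocks and that all such unipotent elements lift to $\pargroup{g}{D}(\Z_2)$; neither is immediate since $J_D$ is degenerate mod $2$. Second, replacing $H_1(\Gamma_1,\F_2)$ by (a quotient of) $\Gamma_1/\Gamma_2$ requires $\pargroup{g}{D}(\Z_2,4)\subseteq\Phi(\Gamma_1)=\overline{\Gamma_1^2[\Gamma_1,\Gamma_1]}$, i.e.\ that squares and commutators of level-$2$ elements fill out all of level $4$; this is not obvious (the closed commutator subgroup alone does not suffice, by Lemma \ref{lemma: abelianization level subgroups symplectic group}) and is nowhere justified. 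So while the filtration-by-congruence-subgroups strategy is a reasonable alternative to the paper's explicit-generators argument and might well be completable, as written the proposal establishes only the routine reduction and leaves the substantive $2$-adic computation open.
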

\begin{proof}
    Let $\Gamma(pD)_2 = \ker(\pargroup{g}{D}(\Z_2)\rightarrow \Sp(M_D))$ and $\Gamma(pD)_p = \ker(\Sp_{2g}(\Z_p)\rightarrow \Sp_{2g}(\Z/p\Z))$.
    By Lemma \ref{lemma: reduction map to Sp(M_D) paramodular group surjective}, we have an isomorphism of profinite groups
    \[
    \widehat{\Gamma(pD)} \simeq \Gamma(pD)_2\times \Gamma(pD)_p \times \prod_{\ell\neq 2,p} \Sp_{2g}(\Z_{\ell}).
    \]
    By Lemma \ref{lemma: completion of abelianization}, it suffices to compute $\widehat{\Gamma(pD)}^{\mathrm{ab}} $.
    By Lemma \ref{lemma: abelianizations symplectic groups}, $\Sp_{2g}(\Z_{\ell})^{\mathrm{ab}} = 1$ for all $\ell\neq 2,p$ and $\Gamma(pD)_p^{\mathrm{ab}}\simeq (\Z/p\Z)^{2g^2+g}$.
    Therefore it suffices to prove that $\Gamma(pD)_2^{\mathrm{ab}} = \{1\}$.
    This is a tedious exercise in group theory; we postpone the proof to the appendix (Theorem \ref{theorem: abelianization level D subgroup over 2-adics is trivial}).
\end{proof}

\begin{proposition}\label{proposition: extension cDGamma not zero}
    If $n\geq 3$ and $k\geq 3$, then $c_{D, \Gamma}\neq 0$.
\end{proposition}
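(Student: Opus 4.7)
The plan is to use the Hochschild--Serre spectral sequence for the extension \eqref{equation: exact sequence Gamma reduction} to reduce the claim $c_{D,\Gamma} \neq 0$ to the already-established non-vanishing of $c_D$, using Proposition \ref{proposition: abelianization torsion-free congruence subgroup has odd order} as the one nontrivial input.

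Concretely, I would consider
\[
E_2^{p,q} = \HH^p(\Sp(M_D), \HH^q(\Gamma(pD), M_D)) \Rightarrow \HH^{p+q}(\Gamma, M_D).
\]
By construction $\Gamma(pD)$ acts trivially on $M_D$, so $E_2^{2,0} = \HH^2(\Sp(M_D), M_D)$ and the edge map $E_2^{2,0} \to \HH^2(\Gamma, M_D)$ is exactly the pullback along $\mathrm{red}_{\Gamma}$ that sends $c_D$ to $c_{D,\Gamma}$. It therefore suffices to show that this edge map is injective, i.e., that no differential hits $E_r^{2,0}$ for $r \geq 2$. The only potentially nonzero such differential is $d_2 \colon E_2^{0,1} \to E_2^{2,0}$, and
\[
E_2^{0,1} = \Hom(\Gamma(pD), M_D)^{\Sp(M_D)} = \Hom(\Gamma(pD)^{\mathrm{ab}}, M_D)^{\Sp(M_D)}.
\]
Since every $d_i$ divides $2$, the module $M_D$ is killed by $2$, so this group equals $\Hom(\Gamma(pD)^{\mathrm{ab}} \otimes \Z/2\Z, M_D)^{\Sp(M_D)}$, which vanishes by Proposition \ref{proposition: abelianization torsion-free congruence subgroup has odd order} (applicable because $n \geq 3$ and $k \geq 3 \geq 2$). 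Hence the edge map is injective.

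It then remains to observe that $c_D \neq 0$ under our hypotheses. The factors of $D$ with $d_i = 1$ contribute trivially to both $M_D$ and $\mathcal{G}_D$, as is visible from the explicit formula \eqref{equation:standardthetagroupformula}, so the extension \eqref{equation: fundamental exact sequence type D abstract groups} for $D = (1,\dots,1,2,\dots,2)$ is canonically identified with the analogous extension for the truncated type $(2,\dots,2)$ of length $k$. Since $k \geq 3$, Proposition \ref{proposition: extension class nonzero type (2,..,2)} gives that the latter class is nonzero, so $c_D \neq 0$, and combined with the injectivity of the edge map this yields $c_{D,\Gamma} \neq 0$.

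The only substantive obstacle in this plan is Proposition \ref{proposition: abelianization torsion-free congruence subgroup has odd order}, whose proof (deferred to the appendix of the excerpt) carries the real technical weight via a delicate $2$-adic computation of $\Gamma(pD)_2^{\mathrm{ab}}$. Conditional on that input, the argument is essentially formal: one edge-map injectivity from the five-term exact sequence of Hochschild--Serre, combined with one previously established non-vanishing for the archetypal type $(2,\dots,2)$.
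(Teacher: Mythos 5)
Your proposal is correct and is essentially the paper's own argument: the paper also uses the Hochschild--Serre five-term exact sequence for \eqref{equation: exact sequence Gamma reduction}, kills $\HH^1(\Gamma(pD),M_D)=\Hom(\Gamma(pD)^{\mathrm{ab}},M_D)$ via Proposition \ref{proposition: abelianization torsion-free congruence subgroup has odd order} and the fact that $M_D$ is $2$-torsion, and concludes from $c_D\neq 0$ (Proposition \ref{proposition: extension class nonzero type (2,..,2)}, the trivial $d_i=1$ factors being implicitly discarded just as you note). Your phrasing in terms of the edge map and the $d_2$ differential is just a spelled-out version of the same five-term-sequence injectivity.
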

\begin{proof}
    The five-term exact sequence associated to the Hochschild–Serre spectral sequence $\HH^p(\Sp(M_D),\HH^q(\Gamma(pD),M_D)) \Rightarrow \HH^{p+q}(\Gamma,M)$
    has the form
    \begin{align*}
    \cdots \rightarrow \HH^1(\Gamma(pD), M_D)^{\Sp(M_D)}
    \rightarrow \HH^2(\Sp(M_D), M_D)\xrightarrow{f} \HH^2(\Gamma, M_D).
\end{align*}
    Since the $\Gamma(pD)$-action on $M_D$ is trivial, we have $\HH^1(\Gamma(pD), M_D) = \Hom(\Gamma(pD)^{\mathrm{ab}}, M_D)$, which is trivial by Proposition \ref{proposition: abelianization torsion-free congruence subgroup has odd order} and the fact that $M_D$ is killed by $2$.
    Therefore the final map $f$ is injective.
    By definition, $f$ sends $c_D$ to $c_{D, \Gamma}$.
    Since $k\geq 3$, $c_D\neq 0$ by Proposition \ref{proposition: extension class nonzero type (2,..,2)}, so $c_{D,\Gamma} = f(c_D)$ is nonzero.
\end{proof}

\begin{proposition}\label{proposition: stabilizer m in Gamma has odd abelianization and cokernel on H2 odd order}
    Let $m\in M_D$ be nonzero and let $\Gamma_m\subset \Gamma$ be the stabilizer of $m$.
    Assume $n\geq 3$ and $k\geq 4$.
    Then:
    \begin{enumerate}
        \item $\Gamma_m^{\mathrm{ab}} \otimes (\Z/2\Z) = 0$.
        \item The cokernel of the restriction map $\mathrm{res}\colon \HH^2(\Gamma,\Z)\rightarrow \HH^2(\Gamma_m,\Z)$ is finite of odd order.
    \end{enumerate}
\end{proposition}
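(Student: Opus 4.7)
The plan is to exploit the short exact sequence
\[
1 \to \Gamma(pD) \to \Gamma_m \to H_m \to 1,
\]
obtained from \eqref{equation: exact sequence Gamma reduction} by pulling back the stabilizer $H_m \coloneqq \Stab_{\Sp(M_D)}(m)$. Since $M_D \simeq (\Z/2\Z)^{2k}$, we have $\Sp(M_D) \simeq \Sp_{2k}(\F_2)$, which acts transitively on $M_D\setminus\{0\}$, so $[\Gamma:\Gamma_m] = [\Sp(M_D):H_m] = 2^{2k}-1$ is odd. Two key inputs will be $H_m^{\mathrm{ab}} = 0$ (Lemma \ref{lemma: abelianization of stabilizer Sp2g(F2) trivial}, applicable since $k\geq 4$) and $\Gamma(pD)^{\mathrm{ab}} \otimes \Z/2\Z = 0$ (Proposition \ref{proposition: abelianization torsion-free congruence subgroup has odd order}).

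For (1), I will apply the $5$-term cohomology exact sequence of the Hochschild--Serre spectral sequence for the above extension with coefficients in the trivial module $\Z/2\Z$:
\[
0 \to \Hom(H_m^{\mathrm{ab}}, \Z/2\Z) \to \Hom(\Gamma_m^{\mathrm{ab}}, \Z/2\Z) \to \Hom(\Gamma(pD)^{\mathrm{ab}}, \Z/2\Z)^{H_m}.
\]
Both outer terms vanish by the key inputs, so the middle term vanishes too, which is equivalent to $\Gamma_m^{\mathrm{ab}} \otimes \Z/2\Z = 0$.

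For (2), note that $\Gamma_m$ is a finite-index subgroup of $\pargroup{g}{D}(\Z)$, and $g = n+k \geq 7 \geq 3$, so the short exact sequence \eqref{equation: structure H2 of Gamma} applies to both $\Gamma$ and $\Gamma_m$. Restriction induces a map of these short exact sequences, and the snake lemma yields
\[
0 \to \coker(\alpha) \to \coker(\mathrm{res}) \to \Z/a\Z \to 0,
\]
where $\alpha\colon \HH^1(\Gamma, \Q/\Z) \to \HH^1(\Gamma_m, \Q/\Z)$ is restriction and $\cdot a\colon \Z\to \Z$ is the induced map on the right-hand $\Z$-quotients. Since $\mathrm{cor}\circ\mathrm{res}$ on $\HH^2(\Gamma, \Z)$ is multiplication by $[\Gamma:\Gamma_m] = 2^{2k}-1$, the integer $a$ divides $2^{2k}-1$ and is odd. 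By (1) together with the finiteness of $\Gamma_m^{\mathrm{ab}}$ (Proposition \ref{proposition: abelianization is finite}), $\Gamma_m^{\mathrm{ab}}$ has odd order, so $\HH^1(\Gamma_m, \Q/\Z) = \Hom(\Gamma_m^{\mathrm{ab}}, \Q/\Z)$ has odd order, and hence so does its quotient $\coker(\alpha)$. The main nontrivial step is (1), which relies crucially on the two group-theoretic inputs cited above; once that is in hand, (2) follows by this formal parity argument combined with Borel's computation of the rational cohomology.
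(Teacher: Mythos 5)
Your proof is correct and follows essentially the same route as the paper: part (1) is the inflation--restriction (five-term) argument for $1\to\Gamma(pD)\to\Gamma_m\to G_m\to 1$ using the vanishing of $G_m^{\mathrm{ab}}$ and of $\Gamma(pD)^{\mathrm{ab}}\otimes\Z/2\Z$ (you phrase it cohomologically with $\Hom(-,\Z/2\Z)$, the paper homologically with $\F_2$-coefficients, which is the same argument dualized), and part (2) is the same snake-lemma comparison of the sequences \eqref{equation: structure H2 of Gamma} for $\Gamma$ and $\Gamma_m$, with $\coker(\alpha)$ odd via part (1) and finiteness of $\Gamma_m^{\mathrm{ab}}$, and $\coker(\beta)$ odd via $\mathrm{cores}\circ\mathrm{res}=[\Gamma:\Gamma_m]=2^{2k}-1$.
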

\begin{proof}
    \begin{enumerate}
        \item Let $G_m\subset \Sp(M_D)$ be the stabilizer of $m$.
    Since $\Gamma\rightarrow \Sp(M_D)$ is surjective, $\Gamma_m/\Gamma(pD)\simeq G_m$.
    The five-term exact sequence in group homology associated to the spectral sequence $\HH_p(G_m , \HH_q(\Gamma(pD), \F_2))\Rightarrow \HH_{p+q}(\Gamma_m,\F_2)$ looks like
    \[
    \cdots \rightarrow \HH_0(\Sp(M_D), \HH_1(\Gamma(pD), \F_2))\rightarrow \HH_1(\Gamma_m, \F_2)\rightarrow \HH_1(G_m, \F_2)\rightarrow 1.
    \]
    By Proposition \ref{proposition: abelianization torsion-free congruence subgroup has odd order}, $\HH_1(\Gamma(pD), \F_2) =\Gamma(pD)^{\mathrm{ab}} \otimes \F_2 = 0$. 
    By Lemma \ref{lemma: abelianization of stabilizer Sp2g(F2) trivial} and the assumption $k\geq 4$, $\HH_1(G_m, \F_2)=0$.
    Therefore $\Gamma_m\otimes \F_2 = \HH_1(\Gamma_m, \F_2) = 0$.
    \item The exact sequences \eqref{equation: structure H2 of Gamma} for $\Gamma$ and $\Gamma_m$ are connected by a commutative diagram 
    % https://q.uiver.app/#q=WzAsMTAsWzAsMCwiMSJdLFswLDEsIjEiXSxbNCwwLCIxIl0sWzQsMSwiMSJdLFszLDAsIlxcWiJdLFszLDEsIlxcWiJdLFsyLDAsIlxcSEheMihcXEdhbW1hLCBcXFopIl0sWzIsMSwiXFxISF4yKFxcR2FtbWFfbSwgXFxaKSJdLFsxLDAsIlxcSEheMShcXEdhbW1hLCBcXFEvXFxaKSJdLFsxLDEsIlxcSEheMShcXEdhbW1hX20sXFxRL1xcWikiXSxbMCw4XSxbMSw5XSxbOSw3XSxbNyw1XSxbNSwzXSxbNCwyXSxbNiw0XSxbNiw3LCJcXG1hdGhybXtyZXN9Il0sWzQsNSwiXFxiZXRhIl0sWzgsOSwiXFxhbHBoYSJdLFs4LDZdXQ==
\[\begin{tikzcd}
	1 & {\HH^1(\Gamma, \Q/\Z)} & {\HH^2(\Gamma, \Z)} & \Z & 1 \\
	1 & {\HH^1(\Gamma_m,\Q/\Z)} & {\HH^2(\Gamma_m, \Z)} & \Z & 1
	\arrow[from=1-1, to=1-2]
	\arrow[from=1-2, to=1-3]
	\arrow["\alpha", from=1-2, to=2-2]
	\arrow[from=1-3, to=1-4]
	\arrow["{\mathrm{res}}", from=1-3, to=2-3]
	\arrow[from=1-4, to=1-5]
	\arrow["\beta", from=1-4, to=2-4]
	\arrow[from=2-1, to=2-2]
	\arrow[from=2-2, to=2-3]
	\arrow[from=2-3, to=2-4]
	\arrow[from=2-4, to=2-5]
\end{tikzcd}\]
    By the snake lemma, it suffices to prove that the cokernels of $\alpha$ and $\beta$ are finite of odd order.
    For the cokernel of $\alpha$, this follows from Part 1 and the fact that $\HH^1(\Gamma_m, \Q/\Z) = \Hom(\Gamma_m^{\mathrm{ab}}, \Q/\Z)$.
    To analyze $\beta$, let $\mathrm{cores}\colon \HH^2(\Gamma_m,\Z)\rightarrow \HH^2(\Gamma, \Z)$ the corestriction map.
    Then $\mathrm{cores}\circ \mathrm{res} = [\Gamma:\Gamma_m]$.
    Since the action of $\Sp(M_D)$ on the nonzero elements of $M_D$ is transitive, $[\Gamma:\Gamma_m] = [\Sp(M_D):G_m] = \#M_D-1 = 2^{2k}-1$.
    Therefore $\beta(1) \in \Z$ must be an integer $n$ dividing $2^{2k}-1$.
    In particular, $n$ must be odd.
    So $\coker(\beta)\simeq \Z/n\Z$ is finite of odd order.
    \end{enumerate}
\end{proof}

\begin{proof}[Proof of Proposition \ref{proposition: c_Gamma not in subgroup generated by phiGamma}]
    By Proposition \ref{proposition: extension cDGamma not zero}, $c_{D, \Gamma}\neq0$.
    Hence it suffices to show that $\varphi_{\Gamma,m}=0$ for each $m\in M_D$.
    Let $m\in M_D$ be an element.
    If $m=0$, then the first map in \eqref{equation: definition phi_m Gamma case} is induced by the zero map $\Z\rightarrow M_D$, so evidently $\varphi_{\Gamma,m}=0$ in this case.
    So we may assume $m\neq 0$.
    Let $\mathrm{res}_m\colon \HH^2(\Gamma, \Z) \rightarrow \HH^2(\Gamma_m, \Z)$ be the restriction map.
    We first claim that $\varphi_{\Gamma,m} \circ \mathrm{res}_m = 0$.
    To prove this, note that $\varphi_{\Gamma, m}\circ \mathrm{res}_m$ is $\HH^2(\Gamma,-)$ applied to the composition of $\Gamma$-module maps
\begin{align}\label{equation: proof phim resm is zero}
   \Z\rightarrow \Z[\Gamma/\Gamma_m]\rightarrow M_D, 
\end{align}
where $\Z[\Gamma/\Gamma_m]$ is the permutation module on the $\Gamma$-set $\Gamma/\Gamma_m$, $\Z\rightarrow \Z[\Gamma/\Gamma_m]$ sends $1$ to $\sum g\cdot \Gamma_m$ (where $g$ ranges over coset representatives in $\Gamma/\Gamma_m$), and $\Z[\Gamma/\Gamma_m]\rightarrow M_D$ sends $g\cdot \Gamma_m$ to $g\cdot m$.
So the composite \eqref{equation: proof phim resm is zero} sends $1$ to $m'' = \sum_{m'\in \Gamma \cdot m} m'$.
Since $m''$ is an $\Sp(M_D)$-invariant element of $M_D$ and since there are no such nonzero elements, we must have $m''=0$ and the composite \eqref{equation: proof phim resm is zero} must be zero. 
Therefore the composite $\varphi_m \circ \mathrm{res}_m$ is also zero, as claimed.

We conclude that $\varphi_{\Gamma,m}\colon \HH^2(\Gamma_m,\Z)\rightarrow \HH^2(\Gamma, M_D)$ factors through the cokernel of $\mathrm{res}_m\colon \HH^2(\Gamma, \Z)\rightarrow \HH^2(\Gamma_m,\Z)$.
Since $M_D$ is killed by $2$, the target of $\varphi_{\Gamma,m}$ is killed by $2$.
By Proposition \ref{proposition: stabilizer m in Gamma has odd abelianization and cokernel on H2 odd order}, the cokernel of $\mathrm{res}_m$ has odd order.
Combining the last two sentences shows that $\varphi_{\Gamma,m}=0$.
\end{proof}

\begin{proof}[Proof of Theorem \ref{theorem: intro negative result}]
Combine Propositions \ref{proposition: reduce etale negligible to group cohomology calculation} and \ref{proposition: c_Gamma not in subgroup generated by phiGamma}.    
\end{proof}

\appendix

\section{Group theory computations}\label{subsec: group theory computations}

In this appendix, we finish the proof of Proposition \ref{proposition: abelianization torsion-free congruence subgroup has odd order} by computing the abelianization of an explicit $2$-adic congruence subgroup.
Let $n, k\geq 0$ be integers and let $g = n+k$.
Consider the type $D = (1,\dots, 1, 2, \dots, 2)$, where $1$ occurs $n$ times and $2$ occurs $k$ times.
Consider the profinite groups $\pargroup{g}{D}(\Z_2)$ and $\Gamma = \ker(\pargroup{g}{D}(\Z_2)\rightarrow \Sp(M_D))$. 
(We deviate from our standard notation that $\Gamma$ is a congruence subgroup of $\pargroup{g}{D}(\Z)$.)
%To quote Mumford \cite[Appendix]{Mumford-tataI}, there is nothing very difficult in any of this.

\begin{theorem}\label{theorem: abelianization level D subgroup over 2-adics is trivial}
    If $n\geq 3$ and $k\geq 2$, then $\Gamma^{\mathrm{ab}}$ is trivial.
\end{theorem}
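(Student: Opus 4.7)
My plan is to identify $\Gamma$ explicitly as an open compact pro-$2$ subgroup of $\Sp_{2g}(\Q_2)$ and exhibit it as generated by root subgroups, then use the Chevalley commutator formula to kill the abelianization. First I would use conjugation by $\alpha = \mathrm{diag}(D, I)$ to embed $\pargroup{g}{D}(\Z_2)$ into $\Sp_{2g}(\Q_2)$ and describe the image as the matrices $\begin{pmatrix} A & B \\ C & E \end{pmatrix}$ in the symplectic group whose off-diagonal block entries satisfy certain parity conditions coming from the $d_i$. The further condition that an element of $\pargroup{g}{D}(\Z_2)$ acts trivially on $\Lambda_D^{\vee}/\Lambda_D \simeq (\Z/2\Z)^{2k}$ then translates into additional congruences mod $2$ on exactly the matrix entries involving the last $k$ coordinates.

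Next, for each root $\gamma$ of the symplectic root system $C_g$ I would determine the intersection $U_\gamma(\Q_2) \cap \Gamma$, which is a one-parameter subgroup $\{x_\gamma(t) : t \in 2^{e(\gamma)} \Z_2\}$ for an explicit non-negative integer $e(\gamma)$ depending only on $\gamma$ and on which indices are ``level-$1$'' ($i \leq n$) versus ``level-$2$'' ($i > n$). By a standard generation argument for congruence subgroups of Chevalley groups over local rings (in the style of Bass--Milnor--Serre and Matsumoto), $\Gamma$ is topologically generated by these elementary symplectic transvections $x_\gamma(t)$. It therefore suffices to show that every such $x_\gamma(t)$ lies in the closed commutator subgroup $[\Gamma, \Gamma]$. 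One applies the Chevalley commutator formula
\[
[x_\alpha(s), x_\beta(t)] = \prod_{i,j > 0} x_{i\alpha + j\beta}\bigl(c_{i,j,\alpha,\beta}\, s^i t^j\bigr),
\]
carefully checking that both factors on the left can be chosen to lie in $\Gamma$ while the resulting product on the right equals the desired target generator. Here $n \geq 3$ provides enough ``level-$1$'' indices so that the embedded $\Sp_{2n}(\Z_2)$-subsystem is perfect (using Lemma~\ref{lemma: abelianizations symplectic groups}(2) for the analogous $\Z_2$-statement, or a direct Chevalley argument) and produces all roots contained in the level-$1$ subsystem; $k \geq 2$ is used to pair two distinct level-$2$ indices, which is needed to realize the most restrictive long-root generators as commutators.

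The main obstacle will be the long-root subgroups $U_{\pm 2 e_i}$ for level-$2$ indices $i > n$: here the condition for $x_{2 e_i}(t)$ to lie in $\Gamma$ is $t \in 2\Z_2$, whereas a naive commutator of two ``mixed-level'' short-root elements typically produces elements with $t \in 4\Z_2$, falling short by one factor of $2$. To bridge this gap I would combine a commutator of the form $[x_{e_i - e_j}(1), x_{e_i + e_j}(1)]$ for two distinct level-$2$ indices $i \neq j$ with correcting commutators involving a level-$1$ index $\ell \leq n$; the hypotheses $k \geq 2$ (so that $j$ exists) and $n \geq 3$ (so that enough level-$1$ auxiliaries are available to move indices around and clear lower-order error terms from the Chevalley formula) are exactly what is needed. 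Once every topological generator of $\Gamma$ is shown to lie in $[\Gamma, \Gamma]$, it follows that $\Gamma$ is topologically perfect, hence $\Gamma^{\mathrm{ab}} = 1$.
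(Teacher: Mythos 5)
The central gap is your generation step. You assert that ``by a standard generation argument for congruence subgroups of Chevalley groups over local rings (in the style of Bass--Milnor--Serre and Matsumoto)'' the group $\Gamma$ is topologically generated by its intersections with the root subgroups. No such standard result applies here: $\Gamma$ is not a principal congruence subgroup of a Chevalley group over $\Z_2$ (nor is it pro-$2$ --- it contains a block-diagonal copy of $\Sp_{2n}(\Z_2)$); it is the level-$D$ kernel inside a paramodular (non-hyperspecial) lattice stabilizer at the prime $2$. Worse, the general principle you invoke fails in the closest model cases at $p=2$: the closure of the group generated by $x_{+}(2\Z_2)$ and $x_{-}(2\Z_2)$ in $\SL_2(\Z_2)$ does not contain $\mathrm{diag}(3,3^{-1})$ (reduce modulo $4$), so relative elementary subgroups are strictly smaller than the corresponding congruence subgroups, and this balanced pattern is exactly what your root data give at a pair of level-$2$ indices. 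It may still be true that $\Gamma$ is topologically generated by its root elements, because the mixed level-$1$/level-$2$ roots have asymmetric conditions, but that would be a substantive theorem requiring a proof of its own --- not a citation --- and the subgroup generated by root elements is not even obviously normal in $\Gamma$, so you cannot argue around it. This is precisely where the paper invests its effort: it proves (Lemma~\ref{lemma: level D subgroup generated by L, U and Sp's}, by induction on $n$ using transitivity of $\Sp_{2n}(\Z_2)$ on primitive vectors) that $\Gamma$ is generated by $U$, $U^{\mathrm{opp}}$ and the block-Levi subgroup $L$, and then separately shows $L\subset\Gamma^{\mathrm{der}}$ (Lemma~\ref{lemma: block diagonal elements are in the commutator subgroup}) using commutators of general block matrices, not root elements. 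Your plan needs a substitute for both of these, and as it stands it has simply assumed the hardest part.

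There is also a concrete error in your bridging step for the level-$2$ long roots. For two level-$2$ indices $i\neq j$, neither $x_{e_i-e_j}(1)$ nor $x_{e_i+e_j}(1)$ lies in $\Gamma$: the defining conditions $X_{22}\equiv I$ and $Y_{22}\equiv 0 \bmod 2$ force both parameters to be even, so your proposed commutator violates your own requirement that both factors be chosen in $\Gamma$. Moreover, even with admissible parameters this pairing cannot work, because the type-$C$ structure constant gives $[x_{e_i-e_j}(s),x_{e_i+e_j}(t)]=x_{2e_i}(\pm 2st)$, which lands two or more factors of $2$ deeper than the congruence you need to hit. The correct bridge pairs a level-$2$ index $i$ with a level-$1$ index $j\leq n$, e.g.\ $[x_{e_i-e_j}(s),x_{2e_j}(t)]=x_{e_i+e_j}(\pm st)\,x_{2e_i}(\pm s^2t)$ with both factors in $\Gamma$, which is in effect what the block computation in Lemma~\ref{lemma: unipotent elements are in the commutator subgroup} does. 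So the unipotent half of your plan is repairable along the paper's lines, but the key commutator you wrote down fails, and the generation claim remains an unproved and genuinely delicate step.
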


The proof of this theorem is given at the end of this section, after some preliminary explicit lemmas. In the words of Mumford \cite[p.\,202]{Mumford-tataI}, there is nothing very difficult in any of these. 
In the standard basis $e_1, \dots, e_g, f_1, \dots, f_g$ of $\Lambda_D\otimes \Z_2 = \Z_2^{2g}$, we have
\begin{align*}
\pargroup{g}{D}(\Z_2) &=\{\gamma\in \GL_{2g}(\Z_2) \colon \gamma^t J_D \gamma = J_D\} \\
&=
\left\{ \begin{pmatrix}
    X & Y \\ Z & W 
\end{pmatrix} \in \GL_{2g}(\Z_2)
\colon X^t DZ\text{ and } Y^tDW \text{ symmetric}, X^tDW -Z^tDY = D 
\right\}.
\end{align*}
We will break each $g\times g$-block $X,Y,Z,W$ into subblocks, and we will write $X = \begin{psmallmatrix}X_{11} & X_{12} \\ X_{21} & X_{22}  \end{psmallmatrix}$, where $X_{11}$ has size $n\times n$, $X_{12}$ has size $n\times k$, $X_{21}$ has size $k\times n$ and $X_{22}$ has size $n\times n$.
Similarly we have $Y_{ij}, Z_{ij}$ and $W_{ij}$ for $1\leq i,j\leq 2$.
The condition $J_DgJ_D^{-1} = g^{-t} \in \GL_{2g}(\Z_2)$ implies that $DXD^{-1}, DYD^{-1}, DZD^{-1}, DWD^{-1}$ have entries in $\Z_2$. 
This implies that all the entries of $X_{12}, Y_{12}, Z_{12},W_{12}$ are in $2\Z_2$.

We record two automorphisms of $\pargroup{g}{D}(\Z_2)$. 
The first one is $\gamma\mapsto (\gamma^*)^{-1} = (J_D^{-1} \gamma^t J_D)^{-1}$, and the second one is conjugation by $h = \begin{psmallmatrix}
    0 & I \\ -I & 0 
\end{psmallmatrix}
\in \pargroup{g}{D}(\Z_2)$. 
Explicitly, we have
\begin{align}\label{equation: involutions on paramodular group}
    \begin{pmatrix}
    X & Y \\ Z & W 
\end{pmatrix}^* = \begin{pmatrix}
    D^{-1}W^tD & -D^{-1}Y^tD \\ -D^{-1}Z^t D & D^{-1}X^tD 
\end{pmatrix}
,\quad
h\begin{pmatrix}
    X & Y \\ Z & W 
\end{pmatrix} h^{-1}
=\begin{pmatrix}
    W & -Z \\ -Y & X 
\end{pmatrix}.
\end{align}
Recall that $M_D = \Lambda_D^{\vee}/\Lambda_D$ and that the reduction map $\pargroup{g}{D}(\Z_2)\rightarrow \Sp(M_D) \simeq \Sp_{2k}(\F_2)$ is surjective. 
In the coordinates of $M_D$ fixed in Section \ref{subsec: paramodular groups}, this reduction map is given by 
\[
\begin{pmatrix}
    X & Y \\ Z & W 
\end{pmatrix} 
\mapsto 
\begin{pmatrix}
    X_{22} & Y_{22} \\ Z_{22} & W_{22} 
\end{pmatrix} 
\bmod 2.
\]
The restriction of the symplectic form to the submodule $\Lambda_1$ with basis $\{e_1, \dots, e_n, f_1, \dots, f_n\}$ has Gram matrix $J_{D_1}$, where $D_1 = (1, \dots, 1)$ has length $n$.
Similarly, the restriction of the form to the submodule $\Lambda_2$ spanned by $\{e_{n+1}, \dots, e_g, f_{n+1}, \dots, f_g\}$ has Gram matrix $J_{D_2}$, where $D_2 = (2, \dots, 2)$ has length $k$.
The subgroup of elements of $\pargroup{g}{D}(\Z_2)$ that preserve $\Lambda_1$ and $\Lambda_2$ is isomorphic to $\Sp(\Lambda_1)\times \Sp(\Lambda_2)\simeq \Sp_{2n}(\Z_2)\times \Sp_{2k}(\Z_2)$.
This defines an embedding $i\colon \Sp_{2n}(\Z_2) \times \Sp_{2k}(\Z_2)\hookrightarrow \pargroup{g}{D}(\Z_2)$ whose image equals those matrices for which $X_{12}, X_{21}, Y_{12},Y_{21}, Z_{12}, Z_{21}, W_{12},W_{21}$ are all zero.
Let $\Sp_{2k}(\Z_2,2) = \ker(\Sp_{2k}(\Z_2)\rightarrow \Sp_{2k}(\F_2))$. 
The restriction of $i$ to $\Gamma$ is an embedding $\Sp_{2n}(\Z_2)\times \Sp_{2k}(\Z_2, 2)\hookrightarrow \Gamma$. 
\begin{lemma}\label{lemma: abelianization level subgroups symplectic group}
    If $n\geq 3$, $\Sp_{2n}(\Z_2)^{\mathrm{ab}}$ is trivial.
    If $k\geq 2$, the commutator subgroup of $\Sp_{2k}(\Z_2,2)$ equals 
    \[
    \Sp_{2k}(\Z_2,4,8):=
    \{ A = (a_{ij}) \in \Sp_{2k}(\Z_2)\colon A\equiv I \bmod 4 \text{  and }a_{i,i+k}\equiv a_{i+k,i}\equiv 0 \bmod{8} \text{ for all }i=1,\dots, k\}.
    \]
\end{lemma}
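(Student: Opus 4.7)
Part 1 is precisely the $p = 2$, $g \geq 3$ case of Lemma \ref{lemma: abelianizations symplectic groups}(2), so no further argument is needed.

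For Part 2, I would prove both inclusions separately, using the $2$-adic filtration of $\Sp_{2k}(\Z_2)$ whose successive quotients $\Sp_{2k}(\Z_2, 2^m)/\Sp_{2k}(\Z_2, 2^{m+1})$ are isomorphic to $\mathfrak{sp}_{2k}(\F_2)$ via $I + 2^m A \leftrightarrow A$. For the inclusion $[\Sp_{2k}(\Z_2, 2), \Sp_{2k}(\Z_2, 2)] \subseteq \Sp_{2k}(\Z_2, 4, 8)$: take $g = I + 2A$ and $h = I + 2B$ in $\Sp_{2k}(\Z_2, 2)$ and expand
\[
[g, h] \equiv I + 4(AB - BA) \pmod 8.
\]
Writing $A, B$ in block form $\begin{psmallmatrix} a & b \\ c & -a^t \end{psmallmatrix}$ with $b, c$ symmetric, the top-right $k \times k$ block of $AB - BA$ equals $(ab' + b'a^t) + (a'b + ba'^t)$, which in characteristic $2$ is the symmetrization of two matrices and therefore has zero diagonal; the analogous statement holds for the bottom-left block. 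Consequently, the $(i, i{+}k)$ and $(i{+}k, i)$ entries of $[g, h]$ vanish modulo $8$, placing $[g, h]$ in $\Sp_{2k}(\Z_2, 4, 8)$.

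For the reverse inclusion I would first establish the Lie-algebraic identity $[\mathfrak{sp}_{2k}(\F_2), \mathfrak{sp}_{2k}(\F_2)] = \mathfrak{sp}_{2k}(\F_2)^{\circ}$, where $\mathfrak{sp}_{2k}(\F_2)^{\circ}$ denotes the codimension-$2k$ subspace of matrices with vanishing diagonal in the symmetric blocks. One containment is the calculation above; the reverse is an explicit check on a basis of root vectors, valid for $k \geq 2$. Combined with the mod-$8$ commutator formula, this identifies the image of $[\Sp_{2k}(\Z_2, 2), \Sp_{2k}(\Z_2, 2)]$ in $\Sp_{2k}(\Z_2, 4)/\Sp_{2k}(\Z_2, 8)$ with all of $\Sp_{2k}(\Z_2, 4, 8)/\Sp_{2k}(\Z_2, 8)$. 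The remaining task is to show by induction on $m \geq 3$ that $\Sp_{2k}(\Z_2, 2^m) \subseteq [\Sp_{2k}(\Z_2, 2), \Sp_{2k}(\Z_2, 2)] \cdot \Sp_{2k}(\Z_2, 2^{m+1})$; for this I would carry the commutator expansion one order further, using second-order corrections of the form $-8[A, B](A + B)$ appearing in $[I + 2A, I + 2B] \bmod 16$, together with the freedom to replace $A, B$ by ``twisted'' lifts $A + 2A', B + 2B'$ satisfying only the mod-$4$ symplectic condition (rather than sitting in $\mathfrak{sp}_{2k}(\Z_2)$).

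The main obstacle is precisely this last inductive step: the naive first-order bracket $[A, B]$ alone ranges only over the proper subspace $\mathfrak{sp}_{2k}(\F_2)^{\circ}$, so accessing the ``diagonal'' directions in each higher graded piece requires genuine second-order commutator contributions. The bookkeeping-heavy core of the proof consists in showing that these corrections, taken together with products of commutators and twisted lifts, exhaust the full quotient $\mathfrak{sp}_{2k}(\F_2)$ at every level $m \geq 3$.
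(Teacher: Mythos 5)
Your Part 1 is fine and coincides with the paper: it is exactly the $p=2$, $g\geq 3$ case of Lemma \ref{lemma: abelianizations symplectic groups}(2). Your forward inclusion in Part 2 is also essentially correct: the expansion $[I+2A,I+2B]\equiv I+4(AB-BA)\pmod 8$ only depends on $A,B$ modulo $2$, where they do lie in $\mathfrak{sp}_{2k}(\F_2)$, and the observation that the anti-diagonal blocks of $AB-BA$ are of the form $M+M^t$, hence have zero diagonal in characteristic $2$, shows every commutator lies in the subgroup $\Sp_{2k}(\Z_2,4,8)$.

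The problem is the reverse inclusion, which is where the entire content of the lemma sits (it is the reason the answer is the Igusa-type group of level $(4,8)$ rather than $\Sp_{2k}(\Z_2,4)$), and your proposal does not prove it. Two steps are left as assertions: the identity $[\mathfrak{sp}_{2k}(\F_2),\mathfrak{sp}_{2k}(\F_2)]=\mathfrak{sp}_{2k}(\F_2)^{\circ}$ is claimed but not checked, and, more seriously, the induction that $\Sp_{2k}(\Z_2,2^m)\subseteq \Sp_{2k}(\Z_2,2)^{\mathrm{der}}\cdot \Sp_{2k}(\Z_2,2^{m+1})$ for all $m\geq 3$ is explicitly flagged by you as the ``main obstacle'' and the ``bookkeeping-heavy core'' without being carried out. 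Since the first-order brackets only reach $\mathfrak{sp}_{2k}(\F_2)^{\circ}$ in each graded piece, you must actually exhibit elements of the derived group hitting the missing diagonal directions at every level $m\geq 3$ (via second-order terms, squares of congruence elements, or similar), and until that is done the equality is not established; moreover, to pass from the level-by-level statements to the full inclusion you also need a closure/compactness argument, since the paper's convention takes the closed commutator subgroup. Note that the paper sidesteps all of this: it deduces the $2$-adic statement from Sato's computation of the commutator subgroup of $\Sp_{2g}(\Z,2)$ (\cite[Proposition 10.1]{sato-abelianizationlevelD}) via the profinite completion isomorphism $\widehat{\Sp_{2g}(\Z,2)}\simeq \Sp_{2g}(\Z_2,2)\times\prod_{p\geq 3}\Sp_{2g}(\Z_p)$, Lemma \ref{lemma: completion of abelianization}, and the triviality of $\Sp_{2g}(\Z_p)^{\mathrm{ab}}$ for odd $p$. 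Either complete your filtration argument in detail (you would in effect be reproving Sato/Igusa) or cite that result as the paper does.
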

\begin{proof}
    The first claim is a special case of Lemma \ref{lemma: abelianizations symplectic groups}.
    The second part follows from the isomorphism $\widehat{\Sp_{2g}(\Z, 2)}\simeq \Sp_{2g}(\Z_2, 2)\times \prod_{p\geq 3} \Sp_{2g}(\Z_p)$, Lemmas \ref{lemma: completion of abelianization} and \ref{lemma: abelianizations symplectic groups}, and \cite[Proposition 10.1]{sato-abelianizationlevelD}. 
\end{proof}

Let $L \subset \Gamma$ be the subgroup of elements with $Y=Z=0$.
Let 
\begin{align}\label{equation: explicit description Gamma1prime}
L' = \{X\in \GL_g(\Z_2) \colon X_{12} \equiv 0 \bmod 2, X_{22} \equiv 1 \bmod 2\}.
\end{align}
Then the assignment $X\mapsto \begin{psmallmatrix} X & 0 \\ 0 &D^{-1}X^{-t}D\end{psmallmatrix}$ defines an isomorphism of groups $\alpha\colon L' \xrightarrow{\sim}L$.
For a ring $R$ and integers $a,b\geq 1$, let $\mathrm{Mat}_{a,b}(R)$ be the $R$-module of $a\times b$-matrices.
\begin{lemma}\label{lemma: block diagonal elements are in the commutator subgroup}
    If $n\geq 3$ and $k\geq 2$, then $L\subset \Gamma^{\mathrm{der}}$.
\end{lemma}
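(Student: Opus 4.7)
The plan is to decompose $L' = \alpha^{-1}(L)$ by a block LDU factorization and show that each factor maps under $\alpha$ into $\Gamma^{\mathrm{der}}$.

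Every $X \in L'$ admits the factorization $X = u_1 \cdot d \cdot u_2$ where
\[
u_1 = \begin{pmatrix} I_n & X_{12}X_{22}^{-1} \\ 0 & I_k \end{pmatrix}, \quad
d = \begin{pmatrix} X_{11}' & 0 \\ 0 & X_{22} \end{pmatrix},\quad
u_2 = \begin{pmatrix} I_n & 0 \\ X_{22}^{-1}X_{21} & I_k \end{pmatrix},
\]
with $X_{11}' = X_{11} - X_{12}X_{22}^{-1}X_{21} \in \GL_n(\Z_2)$, and each factor again lies in $L'$ (the conditions $X_{12} \equiv 0 \bmod 2$ and $X_{22} \equiv I \bmod 2$ are preserved by this decomposition). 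Splitting further $d = \mathrm{diag}(X_{11}', I_k) \cdot \mathrm{diag}(I_n, X_{22})$, it suffices to show that each of $\alpha(\mathrm{diag}(X_{11}', I_k))$, $\alpha(\mathrm{diag}(I_n, X_{22}))$, $\alpha(u_1)$, and $\alpha(u_2)$ lies in $\Gamma^{\mathrm{der}}$. The first is immediate: via $\alpha$ it coincides with $i$ of the standard Siegel-parabolic embedding $Y \mapsto \mathrm{diag}(Y, Y^{-t})$ of $\GL_n(\Z_2)$ into $\Sp_{2n}(\Z_2)$, and $\Sp_{2n}(\Z_2)$ is perfect when $n \geq 3$ by Lemma \ref{lemma: abelianization level subgroups symplectic group}.

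The remaining three pieces must be realized as (limits of) products of commutators built from elements of $\Gamma$ lying outside the image of $i$. The relevant ``extra'' elements are the generalized block unipotents $\nu_+(B) = \begin{psmallmatrix} I_g & B \\ 0 & I_g \end{psmallmatrix}$ with $DB$ symmetric and $B_{22} \equiv 0 \bmod 2$ (so $B_{11}^t = B_{11}$, $B_{12} = 2 B_{21}^t$, $B_{22}^t = B_{22}$), and their opposites $\nu_-(C)$; both are in $\Gamma$. Conjugating $\nu_+(B)$ by $\alpha(\mathrm{diag}(g, I_k)) \in i(\Sp_{2n}(\Z_2))$ for $g \in \GL_n(\Z_2)$ scales $B$ to $\mathrm{diag}(g, I_k) \cdot B \cdot \mathrm{diag}(g^t, I_k)$, so the resulting commutators realize all $\nu_+$'s supported in the $(1,2)$ and $(2,1)$ blocks; together with their $\nu_-$-analogues this expresses $\alpha(u_1)$ and $\alpha(u_2)$ as products of commutators in $\Gamma$. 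For $\alpha(\mathrm{diag}(I_n, X_{22}))$ with $X_{22} \in I_k + 2\Mat_k(\Z_2)$, we expand $[\nu_+(B), \nu_-(C)]$ for $B, C$ supported in the $(2,2)$ block: a direct computation gives a Levi projection of the form $\mathrm{diag}((I_k + B_{22}C_{22})^2, I_k - C_{22}B_{22})$ modulo unipotent corrections, and by varying $B_{22}, C_{22}$ (subject to $B_{22}, C_{22} \equiv 0 \bmod 2$ to ensure membership in $\Gamma$) and multiplying with the already-treated unipotent and $\GL_n$-factors, every $\alpha(\mathrm{diag}(I_n, X_{22}))$ is caught.

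The main obstacle is the bookkeeping: one must verify that all auxiliary $\nu_\pm$ and conjugating $\alpha$-images stay within $\Gamma$ (so that the congruence condition on the lower-right $2k \times 2k$ block is preserved throughout), and that the commutator identities above collectively span all of $\alpha(U)$, $\alpha(M)$, and $\alpha(U')$. The hypothesis $n \geq 3$ is what makes the $\GL_n$-factor perfect, while $k \geq 2$ provides enough room in the $(2,2)$-block for the mixed commutator construction producing $\alpha(\mathrm{diag}(I_n, X_{22}))$; and since $\Gamma^{\mathrm{der}}$ denotes the \emph{closure} of the abstract commutator subgroup, taking $2$-adic limits of finite commutator products suffices.
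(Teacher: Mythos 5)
Your factorization $X = u_1\, d\, u_2$ and the treatment of the factor $\alpha(\mathrm{diag}(X_{11}',I_k))$ via the Siegel embedding and perfectness of $\Sp_{2n}(\Z_2)$ are fine, and in outline this parallels the paper (which also reduces a general element of $L'$ by a Schur complement and handles the $\GL_n$-part through $i(\Sp_{2n}(\Z_2))$). The two remaining steps, however, have genuine gaps. First, you conflate unipotents inside the Levi with the symplectic unipotent radical: $\alpha(u_1)$ and $\alpha(u_2)$ are block-\emph{diagonal} elements of $L$ (their $\GL_g$-part is triangular, but their upper-right and lower-left $g\times g$ blocks vanish), whereas every commutator $[\alpha(\mathrm{diag}(g,I_k)),\nu_+(B)]$ is again of the form $\nu_+(B')$, hence lies in the abelian subgroup $U$; products of such elements never leave $U$ and cannot equal $\alpha(u_1)$, and no identity expressing a nontrivial element of $L$ as a product of the specific $\nu_\pm$'s you have produced is given. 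The repair is much simpler and stays inside $L$: compute $[\alpha(\begin{psmallmatrix} X & 0\\ 0 & I\end{psmallmatrix}),\alpha(\begin{psmallmatrix} I & Y\\ 0 & I\end{psmallmatrix})]=\alpha(\begin{psmallmatrix} I & XY-Y\\ 0 & I\end{psmallmatrix})$ and check that the closure of the span of $\{XY-Y: X\in\GL_n(\Z_2),\,Y\in 2\Mat_{n,k}(\Z_2)\}$ is all of $2\Mat_{n,k}(\Z_2)$ (this is the paper's argument).

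Second, your mechanism for $\alpha(\mathrm{diag}(I_n,X_{22}))$ cannot reach the elements that matter. With $B,C$ supported in the $(2,2)$-block and, as you impose to stay in $\Gamma$, $B_{22}\equiv C_{22}\equiv 0 \bmod 2$, one has $C_{22}B_{22}\equiv 0 \bmod 4$, so any Levi part you extract from $[\nu_+(B),\nu_-(C)]$ has lower-right block $\equiv I \bmod 4$; such elements are already in $\Gamma^{\mathrm{der}}$ via $[\Sp_{2k}(\Z_2,2),\Sp_{2k}(\Z_2,2)]\supseteq \Sp_{2k}(\Z_2,4,8)$, and your construction therefore never produces the crucial $X_{22}\equiv I\bmod 2$ with $X_{22}\not\equiv I\bmod 4$. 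The paper gets exactly these from a mixed $n$--$k$ commutator inside $L'$, namely $[\begin{psmallmatrix} I & 2X\\ 0 & I\end{psmallmatrix},\begin{psmallmatrix} I & 0\\ Y & I\end{psmallmatrix}]$ with $Y\in\Mat_{k,n}(\Z_2)$ \emph{unrestricted} mod $2$ (the $(2,1)$-block of $L'$ carries no parity condition), which yields $I-2YX$ arbitrary mod $4$ up to an error $\equiv I\bmod 8$. In addition, your "Levi projection modulo unipotent corrections" tacitly assumes those corrections lie in $\Gamma^{\mathrm{der}}$, which is the content of the next lemma of the paper ($U,U^{\mathrm{opp}}\subset\Gamma^{\mathrm{der}}$) and is not available at this point of your argument; and the stated formula is not what the commutator gives (its $(1,1)$-part is $I+BC+BCBC$, not $(I+BC)^2$). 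So as written the proof does not close, and the missing pieces are precisely the hard content of the lemma.
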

\begin{proof}
    By considering block diagonal elements of $\Sp_{2n}(\Z_2)$, we see that every element of $L$ of the form 
    \begin{align}\label{equation: first commutator lemma, first type element}
    \alpha\left(\begin{pmatrix}
        X_{11} & 0 \\ 0 & I 
    \end{pmatrix}\right), \quad X_{11}\in \GL_n(\Z_2)
    \end{align}
    lies in the image of $i\colon \Sp_{2n}(\Z_2)\rightarrow \Gamma$. 
    By Lemma \ref{lemma: abelianization level subgroups symplectic group} and the assumption $n\geq 3$, every element in this image lies in $\Gamma^{\mathrm{der}}$, so every element of the form \eqref{equation: first commutator lemma, first type element} lies in $\Gamma^{\mathrm{der}}$.
    Similarly, every element of the form 
    \begin{align}\label{equation: first commutator lemma, second type element}
        \alpha\left( 
        \begin{pmatrix}
            I & 0 \\ 0 & X_{22} 
        \end{pmatrix}
        \right),\quad 
        X_{22}\in \ker(\GL_k(\Z_2)\rightarrow \GL_k(\Z/4\Z))
    \end{align}
    lies in $\Gamma^{\mathrm{der}}$.
    Next, we consider strictly upper triangular elements.
    If $X\in \GL_n(\Z_2)$ and $Y\in 2\mathrm{Mat}_{n,k}(\Z_2)$, we calculate the commutator
    \[
    \left[
    \begin{pmatrix}X & 0 \\ 0 & I \end{pmatrix},
    \begin{pmatrix}I & Y \\ 0 & I \end{pmatrix}
    \right]
    =
    \begin{pmatrix}I & XY-Y \\ 0 & I \end{pmatrix}
    \]
    in $L'$.
    We claim that the $\Z$-span of the set $\{XY-Y\colon X\in \GL_n(\Z_2), Y\in 2\mathrm{Mat}_{n,k}(\Z_2) \}$ equals $2\mathrm{Mat}_{n,k}(\Z_2)$. 
    To prove this, we may assume $Y$ consists of a single column, in which case it can be explicitly checked by taking $X$ to be upper and lower triangular matrices (and using that $n\geq 2$).
    We conclude that every element of the form $\alpha\left( \begin{psmallmatrix}
        I & * \\ 0 & I 
    \end{psmallmatrix}\right) \in L$ lies in $L^{\mathrm{der}}$.
    By an identical argument, every element of the form $\alpha\left( \begin{psmallmatrix}
        I & 0 \\ * & I 
    \end{psmallmatrix}\right) \in L$ lies in $L^{\mathrm{der}}$.
    Now let $\alpha\left(\begin{psmallmatrix}X_{11} & X_{12} \\ X_{21} & X_{22}  \end{psmallmatrix}\right)\in L$ be a general element.
    Then the condition $X_{12} \equiv 0 \bmod 2$ implies that $X_{11}$ is invertible modulo $2$, so $X_{11}\in \GL_n(\Z_2)$. 
    Therefore, left multiplying by $\alpha\left(
    \begin{psmallmatrix}
        I & 0 \\ -X_{21} & I
    \end{psmallmatrix}
    \begin{psmallmatrix}
        X_{11}^{-1} & 0 \\ 0 & I
    \end{psmallmatrix}
    \right)\in \Gamma^{\mathrm{der}}$ and right multiplying by $\alpha\left(
    \begin{psmallmatrix}
        I & -X_{11}^{-1} X_{12} \\ 0 & I
    \end{psmallmatrix}
    \right)\in \Gamma^{\mathrm{der}}$ shows that
    \begin{align}\label{equation: first commutator lemma, reduction to lower block matrix}
    \alpha\left(\begin{pmatrix}X_{11} & X_{12} \\ X_{21} & X_{22}  \end{pmatrix} \right)\equiv
    \alpha\left(\begin{pmatrix}I & 0 \\ 0 & X_{22}-X_{21}X_{11}^{-1}X_{12}  \end{pmatrix}\right) 
    \mod \Gamma^{\mathrm{der}}.
    \end{align}
    To prove the lemma, it therefore suffices to show that every element of $L$ the form $\alpha\left( \begin{psmallmatrix}
        I & 0 \\ 0 & * 
    \end{psmallmatrix}\right)$ lies in $\Gamma^{\mathrm{der}}$.
    We calculate using \eqref{equation: first commutator lemma, reduction to lower block matrix} that for all $X\in \Mat_{n,k}(\Z_2)$ and $Y\in \Mat_{k,n}(\Z_2)$:
    \[
    \alpha\left(\left[ 
    \begin{pmatrix}I & 2X \\ 0 & I \end{pmatrix},
    \begin{pmatrix}I & 0 \\ Y & I \end{pmatrix}
    \right]\right)=
    \alpha\left(\begin{pmatrix}* & -4XYX \\ 2YXY & I-2YX \end{pmatrix}\right) \equiv 
    \alpha\left(\begin{pmatrix}I & 0 \\ 0 & I-2YX+ 8(\cdots) \end{pmatrix} \right)
    \mod \Gamma^{\mathrm{der}}
    ,
    \]
    where $(\cdots)$ denotes an element of $\Mat_{k,k}(\Z_2)$.
    By combining this identity with the elements of the form \eqref{equation: first commutator lemma, second type element}, it suffices to prove that the set $\{I-2YX\colon X\in \Mat_{n,k}(\Z_2), \,Y\in\Mat_{k,n}(\Z_2)\}$ generates a subgroup of $\GL_k(\Z_2)$ that surjects onto $\ker(\GL_k(\Z/4\Z)\rightarrow \GL_k(\Z/2\Z))$.  
    This is true, as can be seen by taking $X$ and $Y$ to be $\Z_2$-multiples of elementary matrices.
\end{proof}

Let $U\subset \Gamma$ be the subgroup of strictly upper triangular elements, i.e., those elements with $X=W=I$ and $Z=0$. 
Define the $\Z_2$-module
\begin{align*}
U' 
=\{ Y\in \Mat_{g,g}(\Z_2)\colon Y_{11}, Y_{22} \text{ symmetric, }Y_{12} =2Y_{21}^t \text{ and }Y_{22} \equiv 0 \bmod 2
\}
\end{align*}
Then the assignment $Y\mapsto \begin{psmallmatrix}I & Y \\ 0 & I \end{psmallmatrix}$ is an isomorphism of abelian groups $\beta\colon U' \rightarrow U$.
Let $U^{\mathrm{opp}}\subset \Gamma$ be the subgroup of strictly lower triangular matrices, i.e., those with $X=W=I$ and $Y=0$.

\begin{lemma}\label{lemma: unipotent elements are in the commutator subgroup}
    If $n\geq 3$ and $k\geq 2$, then $U\subset \Gamma^{\mathrm{der}}$ and $U^{\mathrm{opp}}\subset \Gamma^{\mathrm{der}}$.
\end{lemma}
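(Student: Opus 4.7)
The plan is to first prove $U\subset \Gamma^{\mathrm{der}}$; the second inclusion $U^{\mathrm{opp}}\subset \Gamma^{\mathrm{der}}$ then follows because $\Gamma$ is normal in $\pargroup{g}{D}(\Z_2)$ (being the kernel of the reduction map), so conjugation by $h$ is an automorphism of $\pargroup{g}{D}(\Z_2)$ that preserves $\Gamma^{\mathrm{der}}$, and $hUh^{-1}=U^{\mathrm{opp}}$ by the explicit formula \eqref{equation: involutions on paramodular group}. Transporting to $U'$ via the isomorphism $\beta$, it suffices to show that $T:=\{Y\in U'\colon \beta(Y)\in \Gamma^{\mathrm{der}}\}$ equals $U'$. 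Since $T$ is a closed subgroup of the compact group $U'\cong \Z_2^N$, it is automatically a closed $\Z_2$-submodule, so it is enough to exhibit a $\Z_2$-generating set for $U'$ inside $T$. I will split $U'=U'_{11}\oplus U'_{21}\oplus U'_{22}$ by block position, with $U'_{11}$ and $U'_{22}$ collecting matrices supported on the corresponding symmetric diagonal block and $U'_{21}$ those with $Y_{11}=Y_{22}=0$ and $Y_{12}=2Y_{21}^t$, then handle each summand separately.

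For $U'_{11}$ I use the embedding $i\colon \Sp_{2n}(\Z_2)\hookrightarrow \Gamma$ constructed just above \eqref{equation: explicit description Gamma1prime}: it sends the upper unipotent subgroup of $\Sp_{2n}(\Z_2)$ onto $\beta(U'_{11})$, and since $n\geq 3$ Lemma \ref{lemma: abelianization level subgroups symplectic group} gives $\Sp_{2n}(\Z_2)=\Sp_{2n}(\Z_2)^{\mathrm{der}}\subset \Gamma^{\mathrm{der}}$. For the remaining two blocks I will use the commutator identity
\[
[\alpha(X),\beta(Y_0)]\;=\;\beta(XY_0\tilde X - Y_0),\qquad \tilde X := D^{-1}X^tD,
\]
which is derived from the direct computation $\alpha(X)\beta(Y_0)\alpha(X)^{-1}=\beta(XY_0\tilde X)$; the left hand side always lies in $\Gamma^{\mathrm{der}}$, so the right hand side lies in $T$. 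For $U'_{21}$, take $X=\alpha^{-1}\bigl(\mathrm{diag}(I+E_{bc},I)\bigr)\in L'$ with $b\neq c$ in $\{1,\ldots,n\}$ (so $\tilde X$ corresponds to $\mathrm{diag}(I+E_{cb},I)$) and $Y_0\in U'$ with $Y_{21}$-block $\lambda E_{ac}$ for $\lambda\in\Z_2$; a short matrix calculation shows the commutator has $Y_{21}$-block $\lambda E_{ab}$ and all other blocks zero. Because $n\geq 2$, every matrix unit of $\Mat_{k,n}(\Z_2)$ is obtained this way, giving $U'_{21}\subset T$.

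For $U'_{22}$, take $X=\bigl(\begin{smallmatrix}I & 0 \\ X_{21} & I\end{smallmatrix}\bigr)\in L'$ with arbitrary $X_{21}\in \Mat_{k,n}(\Z_2)$, so that $\tilde X=\bigl(\begin{smallmatrix}I & 2X_{21}^t \\ 0 & I\end{smallmatrix}\bigr)$, together with $Y_0\in U'$ supported on its symmetric $Y_{11}$-block. A direct computation yields
\[
XY_0\tilde X - Y_0 \;=\; \begin{pmatrix}0 & 2Y_{11}X_{21}^t \\ X_{21}Y_{11} & 2X_{21}Y_{11}X_{21}^t\end{pmatrix}.
\]
Subtracting the element of $U'_{21}$ with $Y_{21}$-block $X_{21}Y_{11}$, which already lies in $T$ by the previous step, leaves the element whose only nonzero block is $2X_{21}Y_{11}X_{21}^t$ in the $Y_{22}$-position. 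Specializing $X_{21}=E_{ai}$ and $Y_{11}=E_{ii}$ gives $2E_{aa}$, while $X_{21}=E_{ai}+E_{bi}$ and $Y_{11}=E_{ii}$ with $a\neq b\in\{1,\ldots,k\}$ gives $2(E_{aa}+E_{ab}+E_{ba}+E_{bb})$; together these $\Z_2$-span $2\Mat_{k,k}^{\mathrm{sym}}(\Z_2)$ when $k\geq 2$.

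The main obstacle I expect is this $U'_{22}$-step. The naively embedded subgroup $\Sp_{2k}(\Z_2,2)\hookrightarrow \Gamma$ is insufficient, since by Lemma \ref{lemma: abelianization level subgroups symplectic group} its derived subgroup is only $\Sp_{2k}(\Z_2,4,8)$, which misses $\beta(U'_{22})$ by one factor of $2$ on the diagonal entries and by two factors on the antidiagonal entries. The resolution is to couple the two paramodular halves via the off-diagonal $X_{21}$-type elements of $L$, and one must carefully verify both that each commutator produced lies in $U'$ (that is, satisfies the paramodular constraint $Y_{12}=2Y_{21}^t$) and that the spurious $Y_{21}$-contribution can be cancelled using elements already known to lie in $T$.
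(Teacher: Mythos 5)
Your proof is correct and follows essentially the same route as the paper: the same commutator identity $[\alpha(X),\beta(Y)]=\beta(XYW^{-1}-Y)$ evaluated on lower-unipotent and elementary elements of $L'$, the embedded perfect group $\Sp_{2n}(\Z_2)$ supplying the $Y_{11}$-block, and conjugation by $h$ to deduce $U^{\mathrm{opp}}\subset\Gamma^{\mathrm{der}}$. The only (harmless) differences are organizational: you package the generation step via the closed $\Z_2$-submodule $T$ and explicit matrix units rather than $\Z$-spans over arbitrary $X_{11}\in\GL_n(\Z_2)$, and you bypass the paper's (in fact redundant) use of $\Sp_{2k}(\Z_2,2)^{\mathrm{der}}$ to supply the $8\Sym_k(\Z_2)$ elements in the $Y_{22}$-block.
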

\begin{proof}
    Denote the submodule of symmetric matrices of $\Mat_{n,n}(\Z_2)$ by $\Sym_n(\Z_2)$. 
    By considering strictly upper triangular elements in $\Sp_{2n}(\Z_2)$ and $\Sp_{2k}(\Z_2)$ and using Lemma \ref{lemma: abelianization level subgroups symplectic group}, we see that every element of the form 
    \begin{align}\label{equation: second commutator lemma, first type element}
    \beta\left(
    \begin{pmatrix}
        Y_{11} & 0 \\
        0 & Y_{22}
    \end{pmatrix}
    \right),\quad
    Y_{11}\in \Sym_n(\Z_2), Y_{22}\in 8 \Sym_k(\Z_2)
    \end{align}
    lies in $\Gamma^{\mathrm{der}}$. 
    We now calculate commutators between elements of $L$ and $U$. 
    If $X\in L'$ and $Y\in U'$, then
    \begin{align}\label{equation: second commutator lemma, second type element}
    \left[\alpha(X), \beta(Y) \right]
    =
    \begin{pmatrix}I & XYW^{-1}-Y \\ 0 & I \end{pmatrix},
    \end{align}
    where $W = D^{-1}X^{-t} D$.
    We claim that every element of $U$ is a product of elements of the form \eqref{equation: second commutator lemma, first type element} and \eqref{equation: first commutator lemma, second type element}.
    To prove this claim, note that
    \[
XYW^{-1} = 
    \begin{pmatrix}X_{11} & X_{12} \\ X_{21} & X_{22}  \end{pmatrix}
    \begin{pmatrix}Y_{11} & Y_{12} \\ Y_{21} & Y_{22}  \end{pmatrix}
    \begin{pmatrix}X^t_{11} & 2X^t_{21} \\ \frac{1}{2}X^t_{12} & X^t_{22}  \end{pmatrix}.
    \]
    To consider off-diagonal blocks, we compute that if $X_{12} = 0$, $X_{21}=0$, $X_{22}=0$, $Y_{11}=0$ and $Y_{22}=0$, then 
    \begin{align}\label{equation: second commutator lemma, third type element}
    XYW^{-1}-Y = 
    \begin{pmatrix}
        0 & X_{11}Y_{12}-Y_{12} \\
        Y_{21}X_{11}^t-Y_{21} & 0
    \end{pmatrix}.
    \end{align}
    By an argument similar to the proof of Lemma \ref{lemma: block diagonal elements are in the commutator subgroup}, the $\Z$-span of $\{YX^t - Y\colon X\in \GL_n(\Z_2), \, Y\in \Mat_{k,n}(\Z_2)\}$ equals $\Mat_{k,n}(\Z_2)$.
    So every element of $U'$ of the form $\begin{psmallmatrix}0 & * \\ * & 0\end{psmallmatrix}$ is of the form \eqref{equation: second commutator lemma, third type element}.
    To consider the lower-diagonal block, we compute that if $X_{11} = I, X_{12}=0$, $X_{22} = I$, $Y_{12}=0$, $Y_{21} =0$ and $Y_{22}=0$, then 
    \begin{align}\label{equation: second commutator lemma, fourth type element}
    XYW^{-1}-Y = 
    \begin{pmatrix}
        * & * \\
        * & 2X_{21}Y_{11}X_{21}^t
    \end{pmatrix}.
    \end{align}
    By considering elementary matrices, we find that the $\Z$-span of the set $\{2XYX^t\colon X\in \Mat_{k,n}(\Z_2), \, Y\in \Sym_n(\Z_2)  \}$ equals $2\Sym_k(\Z_2)$. 
    Combining all elements of the form \eqref{equation: second commutator lemma, first type element}, \eqref{equation: second commutator lemma, third type element} and \eqref{equation: second commutator lemma, fourth type element} proves the claim, hence proves that $U\subset \Gamma^{\mathrm{der}}$.
    Applying the second automorphism displayed in \eqref{equation: involutions on paramodular group} to this containment also proves that $U^{\mathrm{opp}}\subset \Gamma^{\mathrm{der}}$.
\end{proof}

\begin{lemma}\label{lemma: level D subgroup generated by L, U and Sp's}
    Let $H$ be the closure of the subgroup of $\Gamma$ generated by $L, U$ and $U^{\mathrm{opp}}$.
    Then $H = \Gamma$.
\end{lemma}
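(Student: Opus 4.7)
The plan is to show $\Gamma\subseteq H$ (the reverse inclusion being obvious), via a two-step argument based on the mod-$2$ reduction $r\colon\pargroup{g}{D}(\Z_2)\to\pargroup{g}{D}(\F_2)$. Let $\Gamma^{(2)}:=\Gamma\cap\ker(r)$. Step~1 will show every $\gamma\in\Gamma$ lies in $H\cdot\Gamma^{(2)}$; step~2 will show $\Gamma^{(2)}\subseteq H$ directly via an explicit Bruhat decomposition.

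For step~1, given $\gamma\in\Gamma$ with reduction $\bar\gamma$, the first observation is that the $2n\times 2n$ submatrix $\bar\gamma':=\left(\begin{smallmatrix}\bar X_{11}&\bar Y_{11}\\ \bar Z_{11}&\bar W_{11}\end{smallmatrix}\right)$ lies in $\Sp_{2n}(\F_2)$: this follows from the mod-$2$ symplectic relations, which with $\bar D=\mathrm{diag}(I_n,0_k)$ collapse to $\bar X_{11}^t\bar W_{11}-\bar Z_{11}^t\bar Y_{11}=I_n$ together with symmetry of $\bar X_{11}^t\bar Z_{11}$ and $\bar Y_{11}^t\bar W_{11}$. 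Since $\Sp_{2n}(\F_2)$ is generated by its elementary symplectic transvections and each transvection lifts, via the embedding $i\colon\Sp_{2n}(\Z_2)\hookrightarrow\Gamma$, to an element of $U$ or $U^{\mathrm{opp}}$, there is $h\in H$ whose reduction has $\Sp_{2n}$-part equal to $\bar\gamma'$. Replacing $\gamma$ by $h^{-1}\gamma$, I may assume $\bar\gamma'=I_{2n}$; then the paramodular constraints force $\bar X_{12}=\bar Y_{12}=\bar Z_{12}=\bar W_{12}=0$, the kernel condition forces the $(22)$-blocks to be trivial, and only the $(21)$-blocks $\bar X_{21},\bar Y_{21},\bar Z_{21},\bar W_{21}$ can remain nonzero. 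Crucially, modulo $2$ the paramodular constraint $N_{12}=2N_{21}^t$ defining $U^{\mathrm{opp}}$ (and similarly $U$) becomes vacuous, so arbitrary $\bar Z_{21}$ (resp.\ $\bar Y_{21}$) can be absorbed by an element of $\bar U^{\mathrm{opp}}$ (resp.\ $\bar U$), and the remaining $(\bar X_{21},\bar W_{21})$ fit into an element of $\bar L$ (the freedom in the $\bar W_{21}$-block of $\bar L$ coming from varying the $\Z_2$-lift of $X_{12}\in 2\Z_2$). Hence $\bar\gamma\in\bar H$, and lifting gives $\gamma\in H\cdot\Gamma^{(2)}$.

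For step~2, if $\gamma\in\Gamma^{(2)}$ then $X\equiv I\bmod 2$ is invertible in $\GL_g(\Z_2)$ and the explicit Bruhat-type factorization
$$\gamma=\begin{pmatrix}I&0\\ZX^{-1}&I\end{pmatrix}\begin{pmatrix}X&0\\0&D^{-1}X^{-t}D\end{pmatrix}\begin{pmatrix}I&X^{-1}Y\\0&I\end{pmatrix}$$
exhibits $\gamma$ as a product $u^-\ell u$. The relation $X^tDZ=Z^tDX$ makes $ZX^{-1}$ $D$-symmetric; the dual relation derived from $\gamma^{-1}=J_D^{-1}\gamma^tJ_D$ makes $X^{-1}Y$ $D$-symmetric; and $X^tDW-Z^tDY=D$ fixes the $W$-block of the Levi factor to be $D^{-1}X^{-t}D$. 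Since each factor reduces to the identity mod $2$, they lie in $U^{\mathrm{opp}}$, $L$, and $U$ respectively, so $\gamma\in H$.

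The main obstacle is step~1: the bookkeeping needed to verify that after factoring out $h$ and absorbing the $(21)$-blocks via $\bar U$ and $\bar U^{\mathrm{opp}}$, nothing remains outside the image of $\bar L$ (in particular the compatibility between $\bar W_{21}$ and $\bar X_{21}$ in $\bar L$, mediated by the $\Z_2$-lift of $X_{12}$). Once this is in hand, step~2 is a direct block-matrix calculation from the symplectic relations.
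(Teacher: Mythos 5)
Your proposal is correct, and it reaches the lemma by a genuinely different route than the paper. The paper argues by induction on $n$, in the spirit of Rosati: after showing $i(\Sp_{2n}(\Z_2))\subset H$ and $i(\Sp_{2k}(\Z_2,2))\subset H$ (via the generation of $\Sp_{2n}(\Z)$ by Siegel unipotents and Mumford's generators of $\Sp_{2k}(\Z,2)$), it uses transitivity of $\Sp_{2n}(\Z_2)$ on primitive vectors to move the first column of $\gamma$ to $e_1$ by elements of $H$, and then peels off a hyperbolic plane to reduce to the type with $n-1$ ones. You instead filter by the principal mod-$2$ congruence subgroup. Your Step 2 is sound: for $\gamma\equiv I\bmod 2$ the block $X$ is invertible, the symmetry of $X^tDZ$ gives both that $D\,ZX^{-1}$ is symmetric and that the product of your three factors has $W$-block $ZX^{-1}Y+D^{-1}X^{-t}D=W$, the identity $XD^{-1}Y^t=YD^{-1}X^t$ (from $\gamma J_D^{-1}\gamma^t=J_D^{-1}$) gives that $DX^{-1}Y$ is symmetric, and all three factors reduce trivially in $\Sp(M_D)$ because $Y\equiv Z\equiv 0\bmod 2$, so they lie in $U^{\mathrm{opp}}$, $L$, $U$; this is a genuinely $2$-adic shortcut with no analogue over $\Z$, and it replaces the paper's appeal to Mumford's generation of $\Sp_{2k}(\Z,2)$. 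Your Step 1 also closes up: on reductions of elements of $\Gamma$ whose $(11)$- and $(22)$-parts are trivial, the four $(21)$-blocks simply add under multiplication, $\bar U$ and $\bar U^{\mathrm{opp}}$ realize arbitrary $\bar Y_{21}$ and $\bar Z_{21}$ (the constraint $N_{12}=2N_{21}^t$ indeed evaporates mod $2$), and for $X=\begin{psmallmatrix} I&2S\\ X_{21}&I\end{psmallmatrix}\in L'$ the $W$-block of $\alpha(X)$ has $(21)$-entry congruent to $S^t$ mod $2$, so $\bar X_{21}$ and $\bar W_{21}$ are independent parameters of $\bar L$ --- exactly the compatibility you flagged. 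Two small repairs: not every transvection of $\Sp_{2n}(\F_2)$ lies in a Siegel unipotent subgroup (e.g.\ $T_{e_1+f_1}$), so either use only transvections along the two standard Lagrangians, whose lifts suffice since the two opposite Siegel unipotent subgroups already generate $\Sp_{2n}(\Z)$ and reduction mod $2$ is surjective, or simply argue as in the paper's Claim 1 that all of $i(\Sp_{2n}(\Z_2))$ lies in $H$ and lift $\bar\gamma'$ arbitrarily. In sum, the paper's induction is uniform and parallels classical paramodular generation arguments, whereas your argument is shorter, avoids both the induction and the primitive-vector step, and isolates the remaining work as an explicit finite computation mod $2$.
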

\begin{proof}
    The proof is inspired by the methods of \cite{Rosati-generatingparamodulargroups}.
    Recall that we have an embedding $i\colon \Sp_{2n}(\Z_2) \times \Sp_{2k}(\Z_2, 2)\hookrightarrow \Gamma$ which we use to view $\Sp_{2n}(\Z_2)\times \Sp_{2k}(\Z_2, 2)$ as a subgroup of $\Gamma$.
    
    \underline{Claim 1}: $\Sp_{2n}(\Z_2)\subset H$ and $\Sp_{2k}(\Z_2, 2)\subset H$.
    Indeed, \cite[Corollary 12.5]{BassMilnorSerre} shows that $\Sp_{2n}(\Z)$ is generated by elements in $U$ and $U^{\mathrm{opp}}$, and \cite[p.\,207, Proposition A3]{Mumford-tataI} shows $\Sp_{2k}(\Z,2)$ is generated by elements in $U$, $U^{\mathrm{opp}}$ and $L$. 
    Since the inclusions $\Sp_{2n}(\Z)\subset\Sp_{2n}(\Z_2)$ and $\Sp_{2k}(\Z,2) \subset \Sp_{2k}(\Z_2, 2)$ are dense, the claim is proven.
    
    We will prove the lemma by induction on $n$.
    If $n=0$, then $\Gamma = \Sp_{2k}(\Z_2,2)$, so the lemma is true by Claim 1.
    Therefore we may suppose $n\geq 1$.
    Let $\gamma \in \Gamma$ be an element and let $u = \begin{psmallmatrix}v \\ w\end{psmallmatrix}$ be the first column of $\gamma$, where $v,w\in \Z_2^{g}$.
    We write $v = \begin{psmallmatrix}v_1 \\ v_2\end{psmallmatrix}$ and $w = \begin{psmallmatrix}w_1 \\ w_2\end{psmallmatrix}$, where $v_1, w_1 \in \Z_2^n$ and $v_2,w_2\in \Z_2^{k}$.

    \underline{Claim 2}: There exists an element $\delta_1 \in \Sp_{2n}(\Z_2)$ such that $\delta_1\cdot \begin{psmallmatrix}v_1 \\ w_1\end{psmallmatrix}=\begin{psmallmatrix} 1 & 0 & \cdots & 0 \end{psmallmatrix}^t$
    Indeed, since $\Sp_{2n}(\Z_2)$ acts transitively on primitive vectors (in other words, elements of $\Z_2^{2n} \setminus (2\Z_2^{2n})$) it suffices to prove that $\begin{psmallmatrix}v_1 \\ w_1\end{psmallmatrix}$ is primitive.
    Examining the $(g+1)$-th row of $\gamma^*$ (using the first formula of \eqref{equation: involutions on paramodular group}) shows that $u' = \begin{pmatrix}
        -w_1^t & -2w_2^t & v_1^t & 2v_2^t
    \end{pmatrix}$ appears as a row of an element of $\pargroup{g}{D}(\Z_2) \subset \GL_{2g}(\Z_2)$, so $u'$ must be primitive. 
    This implies that $\begin{psmallmatrix}v_1 \\ w_1\end{psmallmatrix}$ is primitive, proving the claim.

    \underline{Claim 3}: Suppose $v_1 = \begin{psmallmatrix} 1 & 0 & \cdots & 0 \end{psmallmatrix}^t$. Then there exists an element $\delta_2 \in U^{\mathrm{opp}}$ such that $\delta_2 \cdot u$ has first $n$ coordinates equal to $v_1$ and last $k$ coordinates equal to zero.
    Indeed, let $Z_{21}\in \Mat_{k,n}(\Z_2)$ be the matrix with first column equal to $-w_2$ and all other columns equal to zero, and let $Z = \left(\begin{smallmatrix}
        0 & 2Z_{21}^t \\ Z_{21} & 0
    \end{smallmatrix}\right)$.
    Then $\delta_2 = \begin{psmallmatrix}
        I & 0 \\ Z & I
    \end{psmallmatrix}$ satisfies the conclusion of the claim.

    \underline{Claim 4}: There exists an element $\delta \in H$ such that $\delta \cdot u = \begin{psmallmatrix} 1 & 0 &\cdots & 0 \end{psmallmatrix}^t$.
    Indeed, by Claims 1 and 2, there exists a $\delta_1\in H$ such that the first $n$ coordinates of $\delta_1 \cdot u$ equal $1, 0, \dots, 0$.
    By Claim 3, there exists a $\delta_2\in H$ such that the first $n$ coordinates of $\delta_2\cdot (\delta_1 \cdot u)$ equal $1, 0, \dots, 0$ and the last $k$ coordinates equal $0, \dots, 0$.
    Applying Claims 1 and 2 again, there is an element $\delta_3\in \Sp_{2n}(\Z_2)\subset H$ such that $u' = \delta_3\cdot (\delta_2\delta_1 \cdot u)$ has first $n$ coordinates $1,0,\dots,0$ and last $g$ coordinates equal to $0$.
    Let $v' \in \Z_2^g$ be the vector formed of the first $g$ coordinates of $u'$.
    The explicit description \eqref{equation: explicit description Gamma1prime} shows that there exists an element $X\in L'$ such that $X\cdot v' = \begin{psmallmatrix} 1 & 0 & \cdots & 0 \end{psmallmatrix}^t$.
    Then the element $\alpha(X)\in L$ satisfies $\alpha(X) \cdot u' = \begin{psmallmatrix} 1 & 0 & \cdots & 0 \end{psmallmatrix}^t$.
    Taking $\delta = \alpha(X) \delta_3\delta_2\delta_1 \in H$ proves the claim.

    Using Claim 4, it suffices to prove that every element $\gamma \in \Gamma$ whose first column equals $\begin{psmallmatrix} 1 & 0 & \cdots & 0 \end{psmallmatrix}^t$ lies in $H$.
    Let $\gamma$ be such an element.
    Recall that $e_1, \dots, e_g, f_1, \dots, f_g$ denotes the standard basis of $\Lambda_D \otimes \Z_2 = \Z_2^{2g}$.
    By assumption, $\gamma\cdot e_1 = e_1$.
    Therefore $\gamma$ also preserves the submodule $\langle e_1\rangle^{\perp} = \langle e_1, \dots, e_g, f_2, \dots, f_g\rangle$.
    In terms of matrices, this means that $\gamma$ is of the form 
    \begin{align*}
    \left(
\begin{array}{c;{2pt/2pt} c   | c;{2pt/2pt} c }
1 & *    & * & * \\
\hdashline[2pt/2pt]
0  & X_1& * & Y_1 \\
\hline
0  & 0   & *  & 0    \\
\hdashline[2pt/2pt]
 0 &   Z_1  &  *  &  W_1  \\
\end{array}   
\right),
\end{align*}
where we have divided $\gamma$ into $g\times g$-blocks using the solid lines, and where each $g\times g$-block is further divided into blocks of size $1\times 1$, $1\times (g-1)$, $(g-1)\times 1$ and $(g-1)\times (g-1)$ by the dashed lines.
The quotient $N=\langle e_1\rangle^{\perp}/\langle e_1\rangle$ has $\Z_2$-basis given by the images of $\{e_2, \dots, e_g, f_2, \dots, f_g\}$, and the symplectic form on $\Lambda_D \otimes \Z_2$ induces a symplectic form on $N$ of type $D'= (1, \dots, 1, 2, \dots, 2)$, where $1$ occurs $n-1$ times and $2$ occurs $k$ times.
The map $\gamma$ induces a map $\gamma'\in \GL(N)$ that preserves the induced symplectic form.
In the above basis of $N$, $\gamma'=\begin{psmallmatrix} X_1 & Y_1 \\ Z_1 & W_1 \end{psmallmatrix}$.
Writing $\Gamma' = \ker(\pargroup{g}{D'}(\Z_2)\rightarrow \Sp(M_{D'}))$, we see that $\gamma' \in \Gamma'$.
Let $\gamma''\in \Gamma$ be the element that acts on $\langle e_2, \dots, e_g, f_2, \dots, f_g\rangle$ via $\gamma'$ and acts trivially on $\langle e_1, f_1\rangle$.
By the induction hypothesis, $\gamma'$ lies in the analogue $H'\subset \Gamma'$ of the subgroup $H\subset \Gamma$.
Therefore $\gamma'' \in H$, and the matrix $(\gamma'')^{-1}\gamma$ is of the form 
\begin{align*}
    \left(
\begin{array}{c;{2pt/2pt} c   | c;{2pt/2pt} c }
1 & *    & * & * \\
\hdashline[2pt/2pt]
0  & I & * & 0 \\
\hline
0  & 0   & *  & 0    \\
\hdashline[2pt/2pt]
 0 &   0  &  *  &  I  \\
\end{array}   
\right).
\end{align*}
This matrix is block upper triangular, so lies in the subgroup generated by $L$ and $U$, which both lie in $H$.
This shows that $(\gamma'')^{-1}\gamma\in H$, so $\gamma\in H$, proving the lemma.
\end{proof}

\begin{proof}[Proof of Theorem \ref{theorem: abelianization level D subgroup over 2-adics is trivial}]
Combine Lemmas \ref{lemma: block diagonal elements are in the commutator subgroup}, \ref{lemma: unipotent elements are in the commutator subgroup} and \ref{lemma: level D subgroup generated by L, U and Sp's}.
\end{proof}

%\bibliography{references}

\begin{thebibliography}{BCRR18}

\bibitem[ACM24]{AchterCasalainaMartin-puttingpinPrym}
Jeff Achter and Sebastian Casalaina-Martin.
\newblock Putting the p back in {P}rym.
\newblock Arxiv Preprint, available at
  \url{https://arxiv.org/abs/2312.13263v2}, 2024.

\bibitem[Ale02]{alexeev-completemodulisemiabelian}
Valery Alexeev.
\newblock Complete moduli in the presence of semiabelian group action.
\newblock {\em Ann. of Math. (2)}, 155(3):611--708, 2002.

\bibitem[BCRR18]{Bensonetal-cohomologysymplecticgroups}
Dave Benson, Caterina Campagnolo, Andrew Ranicki, and Carmen Rovi.
\newblock Cohomology of symplectic groups and {M}eyer's signature theorem.
\newblock {\em Algebr. Geom. Topol.}, 18(7):4069--4091, 2018.

\bibitem[BL04]{BirkenhakeLange-AVs}
Christina Birkenhake and Herbert Lange.
\newblock {\em Complex abelian varieties}, volume 302 of {\em Grundlehren der
  mathematischen Wissenschaften [Fundamental Principles of Mathematical
  Sciences]}.
\newblock Springer-Verlag, Berlin, second edition, 2004.

\bibitem[BLR90]{BLR-NeronModels}
Siegfried Bosch, Werner L\"{u}tkebohmert, and Michel Raynaud.
\newblock {\em N\'{e}ron models}, volume~21 of {\em Ergebnisse der Mathematik
  und ihrer Grenzgebiete (3) [Results in Mathematics and Related Areas (3)]}.
\newblock Springer-Verlag, Berlin, 1990.

\bibitem[BMS67]{BassMilnorSerre}
H.~Bass, J.~Milnor, and J.-P. Serre.
\newblock Solution of the congruence subgroup problem for {${\rm
  SL}_{n}\,(n\geq 3)$} and {${\rm Sp}_{2n}\,(n\geq 2)$}.
\newblock {\em Inst. Hautes \'{E}tudes Sci. Publ. Math.}, (33):59--137, 1967.

\bibitem[Bor74]{Borel-stablerealcohomologyarithmeticgroupsI}
Armand Borel.
\newblock Stable real cohomology of arithmetic groups.
\newblock {\em Ann. Sci. \'{E}cole Norm. Sup. (4)}, 7:235--272 (1975), 1974.

\bibitem[Bou68]{Bourbaki-groupesalgebresdelieChapIV-VI}
N.~Bourbaki.
\newblock {\em \'{E}l\'{e}ments de math\'{e}matique. {F}asc. {XXXIV}. {G}roupes
  et alg\`ebres de {L}ie. {C}hapitre {IV}: {G}roupes de {C}oxeter et syst\`emes
  de {T}its. {C}hapitre {V}: {G}roupes engendr\'{e}s par des r\'{e}flexions.
  {C}hapitre {VI}: syst\`emes de racines}.
\newblock Actualit\'{e}s Scientifiques et Industrielles [Current Scientific and
  Industrial Topics], No. 1337. Hermann, Paris, 1968.

\bibitem[Bra93]{Brasch-liftinglevelD}
Hans-J\"{u}rgen Brasch.
\newblock Lifting level {$D$}-structures of abelian varieties.
\newblock {\em Arch. Math. (Basel)}, 60(6):553--562, 1993.

\bibitem[Bro82]{Brown-cohomologyofgroups}
Kenneth~S. Brown.
\newblock {\em Cohomology of groups}, volume~87 of {\em Graduate Texts in
  Mathematics}.
\newblock Springer-Verlag, New York-Berlin, 1982.

\bibitem[Chi24]{Chidambaram-modpgaloisrepsnotarisingfromAVs}
Shiva Chidambaram.
\newblock {${\rm Mod}\text{-}p$} {G}alois representations not arising from
  abelian varieties.
\newblock {\em J. Number Theory}, 259:219--237, 2024.

\bibitem[dJ93]{deJong-modulispacepolarized}
A.~J. de~Jong.
\newblock The moduli spaces of polarized abelian varieties.
\newblock {\em Math. Ann.}, 295(3):485--503, 1993.

\bibitem[EvdGM]{AVBOOK}
Bas Edixhoven, Gerard van der Geer, and Ben Moonen.
\newblock Abelian varieties book project.
\newblock Available at
  \url{http://van-der-geer.nl/~gerard/AV.pdf}, November 2025.

\bibitem[FC90]{FaltingsChai-degenerations}
Gerd Faltings and Ching-Li Chai.
\newblock {\em Degeneration of abelian varieties}, volume~22 of {\em Ergebnisse
  der Mathematik und ihrer Grenzgebiete (3) [Results in Mathematics and Related
  Areas (3)]}.
\newblock Springer-Verlag, Berlin, 1990.
\newblock With an appendix by David Mumford.

\bibitem[Gir71]{Giraud-cohomologienonabelienne}
Jean Giraud.
\newblock {\em Cohomologie non ab\'{e}lienne}.
\newblock Die Grundlehren der mathematischen Wissenschaften, Band 179.
  Springer-Verlag, Berlin-New York, 1971.

\bibitem[GMS03]{garibaldimerkurjevserre-cohomologicalinvariants}
Skip Garibaldi, Alexander Merkurjev, and Jean-Pierre Serre.
\newblock {\em Cohomological invariants in {G}alois cohomology}, volume~28 of
  {\em University Lecture Series}.
\newblock American Mathematical Society, Providence, RI, 2003.

\bibitem[Gri73]{Griess-automorphismsofextraspecialgroups}
Robert~L. Griess, Jr.
\newblock Automorphisms of extra special groups and nonvanishing degree {$2$}
  cohomology.
\newblock {\em Pacific J. Math.}, 48:403--422, 1973.

\bibitem[ILF97]{embeddingproblem}
V.~V. Ishkhanov, B.~B. Lure, and D.~K. Faddeev.
\newblock {\em The embedding problem in {G}alois theory}, volume 165 of {\em
  Translations of Mathematical Monographs}.
\newblock American Mathematical Society, Providence, RI, 1997.
\newblock Translated from the 1990 Russian original by N. B. Lebedinskaya.

\bibitem[Lag24]{Laga-ADEpaper}
Jef Laga.
\newblock Graded {L}ie algebras, compactified {J}acobians and arithmetic
  statistics.
\newblock {Journal of the European Mathematical Society},
  \url{https://doi.org/10.4171/jems/1526}, 2024.

\bibitem[LSTX17]{LandesmanSwaminathanetal-liftingsubgroups}
Aaron Landesman, Ashvin~A. Swaminathan, James Tao, and Yujie Xu.
\newblock Lifting subgroups of symplectic groups over {$\Bbb{Z}/\ell\Bbb{Z}$}.
\newblock {\em Res. Number Theory}, 3:Paper No. 14, 12, 2017.

\bibitem[Lur01]{Lurie-minisculereps}
Jacob Lurie.
\newblock On simply laced {L}ie algebras and their minuscule representations.
\newblock {\em Comment. Math. Helv.}, 76(3):515--575, 2001.

\bibitem[MB85]{moretbailly-pinceaux}
Laurent Moret-Bailly.
\newblock Pinceaux de vari\'{e}t\'{e}s ab\'{e}liennes.
\newblock {\em Ast\'{e}risque}, (129):266, 1985.

\bibitem[MFK94]{MumfordFogartyKirwan-GIT}
D.~Mumford, J.~Fogarty, and F.~Kirwan.
\newblock {\em Geometric invariant theory}, volume~34 of {\em Ergebnisse der
  Mathematik und ihrer Grenzgebiete (2) [Results in Mathematics and Related
  Areas (2)]}.
\newblock Springer-Verlag, Berlin, third edition, 1994.

\bibitem[MS23]{MorganSmith-CasselsTateFiniteGalois}
Adam Morgan and Alexander Smith.
\newblock The {C}assels-{T}ate pairing for finite {G}alois modules.
\newblock Arxiv Preprint, available at
  \url{https://arxiv.org/abs/2103.08530v3}, 2023.

\bibitem[MS25]{MerkurjevScavia-negligiblecohomology}
Alexander Merkurjev and Federico Scavia.
\newblock Galois representations modulo $p$ that do not lift modulo $p^2$.
\newblock Journal of the American Mathematical Society:
  \url{https://doi.org/10.1090/jams/1059}, 2025.

\bibitem[Mum66]{Mumford-equationsdefiningabelianvarieties}
D.~Mumford.
\newblock On the equations defining abelian varieties. {I}.
\newblock {\em Invent. Math.}, 1:287--354, 1966.

\bibitem[Mum67]{Mumford-equationsdefiningAVs-part2}
D.~Mumford.
\newblock On the equations defining abelian varieties. {II}.
\newblock {\em Invent. Math.}, 3:75--135, 1967.

\bibitem[Mum83]{Mumford-tataI}
David Mumford.
\newblock {\em Tata lectures on theta. {I}}, volume~28 of {\em Progress in
  Mathematics}.
\newblock Birkh\"{a}user Boston, Inc., Boston, MA, 1983.
\newblock With the assistance of C. Musili, M. Nori, E. Previato and M.
  Stillman.

\bibitem[Mum91]{Mumford-tataIII}
David Mumford.
\newblock {\em Tata lectures on theta. {III}}, volume~97 of {\em Progress in
  Mathematics}.
\newblock Birkh\"{a}user Boston, Inc., Boston, MA, 1991.
\newblock With the collaboration of Madhav Nori and Peter Norman.

\bibitem[Ols08]{olsson-compactifymoduli}
Martin~C. Olsson.
\newblock {\em Compactifying moduli spaces for abelian varieties}, volume 1958
  of {\em Lecture Notes in Mathematics}.
\newblock Springer-Verlag, Berlin, 2008.

\bibitem[Ols12]{olsson-survey}
Martin Olsson.
\newblock Compactifications of moduli of abelian varieties: an introduction.
\newblock In {\em Current developments in algebraic geometry}, volume~59 of
  {\em Math. Sci. Res. Inst. Publ.}, pages 295--348. Cambridge Univ. Press,
  Cambridge, 2012.

\bibitem[Pol02]{Polishchuk-analogueWeilrepresentation}
A.~Polishchuk.
\newblock Analogue of {W}eil representation for abelian schemes.
\newblock {\em J. Reine Angew. Math.}, 543:1--37, 2002.

\bibitem[Pol03]{Polishchuk-abelianvarietiesthetafunctions}
Alexander Polishchuk.
\newblock {\em Abelian varieties, theta functions and the {F}ourier transform},
  volume 153 of {\em Cambridge Tracts in Mathematics}.
\newblock Cambridge University Press, Cambridge, 2003.

\bibitem[Poo17]{Poonen-rationalpointsonvarieties}
Bjorn Poonen.
\newblock {\em Rational points on varieties}, volume 186 of {\em Graduate
  Studies in Mathematics}.
\newblock American Mathematical Society, Providence, RI, 2017.

\bibitem[PR11]{PoonenRains-thetacharacteristics}
Bjorn Poonen and Eric Rains.
\newblock Self cup products and the theta characteristic torsor.
\newblock {\em Math. Res. Lett.}, 18(6):1305--1318, 2011.

\bibitem[PRR23]{PlatonovRapinchuk-secondeditionvolume1}
Vladimir Platonov, Andrei Rapinchuk, and Igor Rapinchuk.
\newblock {\em Algebraic groups and number theory. {V}ol. {I}}, volume 205 of
  {\em Cambridge Studies in Advanced Mathematics}.
\newblock Cambridge University Press, Cambridge, [2023] \copyright 2023.
\newblock Second edition [of 1278263], The translation of the first Russian
  edition was prepared by Rachel Rowen.

\bibitem[PS97]{PoonenSchaefer-descentprojectiveline}
Bjorn Poonen and Edward~F. Schaefer.
\newblock Explicit descent for {J}acobians of cyclic covers of the projective
  line.
\newblock {\em J. Reine Angew. Math.}, 488:141--188, 1997.

\bibitem[PS99]{PoonenStoll-CasselsTate}
Bjorn Poonen and Michael Stoll.
\newblock The {C}assels-{T}ate pairing on polarized abelian varieties.
\newblock {\em Ann. of Math. (2)}, 150(3):1109--1149, 1999.

\bibitem[PSV10]{PrasadShapiroVemuri-symplectic}
Amritanshu Prasad, Ilya Shapiro, and M.~K. Vemuri.
\newblock Locally compact abelian groups with symplectic self-duality.
\newblock {\em Adv. Math.}, 225(5):2429--2454, 2010.

\bibitem[Ray70]{Raynaud-Faisceauxamples}
Michel Raynaud.
\newblock {\em Faisceaux amples sur les sch\'{e}mas en groupes et les espaces
  homog\`enes}.
\newblock Lecture Notes in Mathematics, Vol. 119. Springer-Verlag, Berlin-New
  York, 1970.

\bibitem[Ros78]{Rosati-generatingparamodulargroups}
Mario Rosati.
\newblock On a generalization of the {S}iegel-{C}onforto symplectic modular
  group {$\Gamma (n,\,\Delta )$}.
\newblock {\em Rend. Sem. Mat. Univ. Padova}, 60:229--236 (1979), 1978.

\bibitem[Sat10]{sato-abelianizationlevelD}
Masatoshi Sato.
\newblock The abelianization of the level {$d$} mapping class group.
\newblock {\em J. Topol.}, 3(4):847--882, 2010.

\bibitem[Sch00]{Schmid-automorphismgroupextraspecial}
Peter Schmid.
\newblock On the automorphism group of extraspecial 2-groups.
\newblock volume 234, pages 492--506. 2000.
\newblock Special issue in honor of Helmut Wielandt.

\bibitem[Ser02]{Serre-galoiscohomology}
Jean-Pierre Serre.
\newblock {\em Galois cohomology}.
\newblock Springer Monographs in Mathematics. Springer-Verlag, Berlin, english
  edition, 2002.
\newblock Translated from the French by Patrick Ion and revised by the author.

\bibitem[SGA73]{SGA4Tome3}
{\em Th\'{e}orie des topos et cohomologie \'{e}tale des sch\'{e}mas. {T}ome 3}.
\newblock Lecture Notes in Mathematics, Vol. 305. Springer-Verlag, Berlin-New
  York, 1973.
\newblock S\'{e}minaire de G\'{e}om\'{e}trie Alg\'{e}brique du Bois-Marie
  1963--1964 (SGA 4), Dirig\'{e} par M. Artin, A. Grothendieck et J. L.
  Verdier. Avec la collaboration de P. Deligne et B. Saint-Donat.

\bibitem[{Sta}18]{stacksproject}
The {Stacks Project Authors}.
\newblock \textit{Stacks Project}, 2018.
\newblock \url{https://stacks.math.columbia.edu}.

\bibitem[Tot25]{Totaro-Chowringtwistedcoefficients}
Burt Totaro.
\newblock Chow groups with twisted coefficients.
\newblock Arxiv Preprint, available at
  \url{https://arxiv.org/abs/2502.20618v1}, 2025.

\bibitem[Tsh19]{Tshishiku-borelsstablerange}
Bena Tshishiku.
\newblock Borel's stable range for the cohomology of arithmetic groups.
\newblock {\em J. Lie Theory}, 29(4):1093--1102, 2019.

\bibitem[Vis05]{Vistoli-grothendiecktopologies}
Angelo Vistoli.
\newblock Grothendieck topologies, fibered categories and descent theory.
\newblock In {\em Fundamental algebraic geometry}, volume 123 of {\em Math.
  Surveys Monogr.}, pages 1--104. Amer. Math. Soc., Providence, RI, 2005.

\bibitem[Zim84]{zimmer-ergodictheory}
Robert~J. Zimmer.
\newblock {\em Ergodic theory and semisimple groups}, volume~81 of {\em
  Monographs in Mathematics}.
\newblock Birkh\"{a}user Verlag, Basel, 1984.

\end{thebibliography}
\bibliographystyle{alpha}

 \begin{footnotesize}
 \textsc{Jef Laga  }\;  \texttt{jeflaga@hotmail.com}   \newline
 \textsc{Department of Pure Mathematics and Mathematical Statistics, Wilberforce Road, Cambridge, CB3 0WB, UK}
 \end{footnotesize}

\end{document}